\newtheorem{definition}{Definition}
\newtheorem{proposition}{Proposition}
\newtheorem{remark}{Remark}
\newtheorem{theorem}{Theorem}
\newtheorem{lemma}{Lemma}
\newtheorem*{theorem*}{Theorem}   
\newtheorem*{proposition*}{Proposition}   
\icmltitlerunning{An Algorithmic Framework of Variable Metric Over-Relaxed
            Hybrid Proximal Extra-Gradient Method}
\begin{document}

\twocolumn[
\icmltitle{An Algorithmic Framework of Variable Metric Over-Relaxed\\
            Hybrid Proximal Extra-Gradient Method} 




\begin{icmlauthorlist}
\icmlauthor{Li Shen}{to}
\icmlauthor{Peng Sun}{to}
\icmlauthor{Yitong Wang}{to}
\icmlauthor{Wei Liu}{to}
\icmlauthor{Tong Zhang}{to}
\end{icmlauthorlist}

\icmlaffiliation{to}{Tencent AI Lab, Shenzhen, China}

\icmlcorrespondingauthor{Li Shen}{mathshenli@gmail.com}
\icmlcorrespondingauthor{Peng Sun}{pengsun000@gmail.com}
\icmlcorrespondingauthor{Yitong Wang}{yitongwang@tencent.com}
\icmlcorrespondingauthor{Wei Liu}{wl2223@columbia.edu}
\icmlcorrespondingauthor{Tong Zhang}{tongzhang@tongzhang-ml.org}

\icmlkeywords{Maximal monotone operator, Hybrid proximal extra-gradient, Over-relaxed, Iteration complexities, Local linear convergence rate, first-order algorithm, Multi-block convex optimization}

\vskip 0.3in
]



\printAffiliationsAndNotice{}  

\begin{abstract}
We propose a novel algorithmic framework of {\bf V}ariable {\bf M}etric {\bf O}ver-{\bf R}elaxed  {\bf H}ybrid {\bf P}roximal {\bf E}xtra-gradient (VMOR-HPE) method with a global convergence guarantee for the maximal monotone operator inclusion problem.
Its iteration complexities and local linear convergence rate are provided, which theoretically demonstrate that a large over-relaxed step-size contributes to accelerating the proposed VMOR-HPE as a byproduct. Specifically, we find that a large class of primal and primal-dual operator splitting algorithms are all special cases of VMOR-HPE. Hence, the proposed framework offers a new insight into these operator splitting algorithms.
In addition, we apply VMOR-HPE to the Karush-Kuhn-Tucker (KKT) generalized equation of linear equality constrained multi-block composite convex optimization, yielding a new algorithm, namely nonsymmetric {\bf P}roximal {\bf A}lternating {\bf D}irection {\bf M}ethod of {\bf M}ultipliers with a preconditioned {\bf E}xtra-gradient step in which the preconditioned metric is generated by a blockwise {\bf B}arzilai-{\bf B}orwein line search technique (PADMM-EBB).
We also establish iteration complexities of PADMM-EBB in terms of the KKT residual.
Finally, we apply PADMM-EBB to handle the nonnegative dual graph regularized low-rank representation problem.
Promising results on synthetic and real datasets corroborate the efficacy of PADMM-EBB.
\vspace{-13pt}
\end{abstract}

\section{Introduction}\label{Introduction}
\vspace{-1pt}
Maximal monotone operator inclusion, as an extension of the KKT generalized equations for nonsmooth convex optimization and convex-concave saddle-point optimization, encompasses a class of important problems and has extensive applications in statistics, machine learning, signal and image processing, and so on.
More concrete applications can be found in the literature \cite{combettes2011proximal,boyd2011distributed,bauschke2017convex} and the references therein.
Let $\mathbb{X}$ be a finite-dimensional linear vector space. We focus on the operator inclusion problem:
 \begin{equation}\label{inclusion-problem}
 0 \in T(x),\ x\in \mathbb{X},
 \end{equation}
 where $T:\mathbb{X}\rightrightarrows \mathbb{X}$ is a maximal monotone operator.

One of the most efficient algorithms for problem \eqref{inclusion-problem} is {\bf P}roximal {\bf P}oint {\bf A}lgorithm (PPA) in the seminal work \cite{minty1962}, which was further accelerated \cite{eckstein1992douglas} by attaching an over-relaxed parameter $\theta_{k}$,
 \begin{equation*}
   x^{k+1} := x^{k}+(1+\theta_{k})\big(\mathcal{J}_{c_{k}T}(x^{k})-x^{k}\big),\ \theta_{k}\!\in\!(-1,1)
 \end{equation*}
 for a given positive penalty parameter $c_{k}$. Here, $\mathcal{J}_{c_{k}T}(\cdot)=(I+c_{k}T)^{-1}(\cdot)$ is called the resolvent operator \cite{bauschke2017convex} of $T$.
 In addition, its inexact version
 \begin{equation}\label{inexact-over-relaxed-PPA}
   x^{k+1} := x^{k}+(1+\theta_{k})\big(\overline{x}^{k}-x^{k}\big)
 \end{equation}
 was proposed \cite{rockafellar1976monotone}
 by requiring that either absolute error \eqref{absolute-error}
 or relative error criterion \eqref{relative-error} holds,
  \vspace{-1pt}
 \begin{subnumcases}{}
   \left\|\overline{x}^{k}-\mathcal{J}_{c_{k}T}(x^{k})\right\|\le \xi_{k},  \label{absolute-error}\\
   \left\|\overline{x}^{k}-\mathcal{J}_{c_{k}T}(x^{k})\right\|\le \xi_{k}\big\|\overline{x}^{k}-x^{k}\big\|,  \label{relative-error}
\end{subnumcases}
where $\sum_{k=1}^{\infty}\xi_{k} < \infty$. However, it is too flexible to preset the sequence $\{\xi_{k}\}$ which highly influences the level of the computational cost and quality of iteration \eqref{inexact-over-relaxed-PPA}. For more research on PPA and its inexact variants,
 we refer the readers to the literature \cite{guler1991convergence,burke1999variable,corman2014generalized,shen2015linear,tao2017on}.

 Later on, a novel inexact PPA called {\bf H}ybrid {\bf P}roximal {\bf E}xtra-gradient (HPE) algorithm \cite{Solodov1999A1} was proposed.  
 This algorithm first seeks a triple point $(y^{k},v^{k},\epsilon_{k})\!\in\! \mathbb{X}\!\times\!\mathbb{X}\!\times\!\mathbb{R}_{+}$
 satisfying error criterion \eqref{HPE-a1}-\eqref{HPE-a2}:
  \vspace{-2.5pt}
  \begin{subnumcases}{}
  \!(y^{k},v^{k})\in{\rm gph}\,T^{[\epsilon_{k}]}, \label{HPE-a1}\\
  \!\big\|c_{k}v^{k}+(y^{k}-x^{k})\big\|^2\!+\!2c_{k}\epsilon_{k}\! \leq\! \sigma\big\|y^{k}-x^{k}\big\|^2, \qquad \label{HPE-a2}\\
  \!x^{k+1} := x^{k}-c_{k}v^{k}, \label{HPE-a3}
 \end{subnumcases}
 where $T^{[\epsilon]}$ is the enlargement operator \cite{Burachik1997Enlargement,burachik1998varepsilon,Svaiter2000A} of $T$ and $\sigma\!\in\![0,1)$ is a prespecified parameter,
 and then executes an extra-gradient step \eqref{HPE-a3}
 to ensure its global convergence. Whereafter, a new inexact criterion \eqref{metric-relaxed-HPE-a1}-\eqref{metric-relaxed-HPE-a2} is adopted,
 yielding an over-relaxed HPE algorithm \cite{Svaiter2001A,Parente2008A} as below:
 \begin{subnumcases}{}
 (y^{k},v^{k})\in{\rm gph}\,T^{[\epsilon_{k}]}, \label{metric-relaxed-HPE-a1} \\
  \big\| c_{k} \mathcal{M}^{-1}_{k} v^{k}+(y^{k}\!-\!x^{k})\big\|_{\mathcal{M}_{k}}^2+2 c_{k}\epsilon_{k} \label{metric-relaxed-HPE-a2}\\
  \qquad \leq \sigma\big(\big\|y^{k}-x^{k}\big\|_{\mathcal{M}_{k}}^2+\big\| c_{k} \mathcal{M}^{-1}_{k}\big\|_{\mathcal{M}_{k}}^2\big), \qquad \nonumber\\
  x^{k+1} := x^{k}- (1+\tau_{k})a_{k}\mathcal{M}_{k}v^{k}, \label{metric-relaxed-HPE-a3}
 \end{subnumcases}
 where $\tau_{k}\in (-1,1)$ is the over-relaxed step-size, $a_{k} = \big[\langle v^{k},x^{k}-y^{k}\rangle -\epsilon_{k}\big]{\big /}\big\|\mathcal{M}^{-1}_{k} v^{k}\big\|_{\mathcal{M}_{k}}^2$,
 and $\mathcal{M}_k$ is a self-adjoint positive definite linear operator. An obvious defect of the above algorithm
 is that extra-gradient step-size $a_{k}$ has to be adaptively determined to ensure its global convergence,
 which requires extra computation and may be time-consuming. In addition, Korpelevich's extra-gradient algorithm \cite{Korpelevi1976An},
 forward-backward algorithm \cite{Passty1979Ergodic}, and forward-backward-forward algorithm \cite{Tseng2000A}
 are all shown to be special cases of the HPE algorithm in \citep{Solodov1999A1,Svaiter2014A}.

 In this paper, we propose a new algorithmic framework of {\bf V}ariable {\bf M}etric {\bf O}ver-{\bf R}elaxed {\bf H}ybrid {\bf P}roximal {\bf E}xtra-gradient (VMOR-HPE) method
 with a global convergence guarantee for solving problem \eqref{inclusion-problem}. This framework, in contrast to the existing HPE algorithms,
 generates the iteration sequences in terms of a novel relative error criterion and introduces an
 over-relaxed step-size in the extra-gradient step to improve its performance.
 In particular, the extra-gradient step-size and over-relaxed step-size here can both be set as a fixed constant in advance,
 instead of those obtained from a projection problem, which saves extra computation.
 Its global convergence, $\mathcal{O}(\frac{1}{\sqrt{k}})$ pointwise and $\mathcal{O}(\frac{1}{k})$ weighted iteration complexities,
 and the local linear convergence rate under some mild metric subregularity condition \cite{Dontchev2009Implicit} are also built.
 Interestingly, the coefficients of iteration complexities and linear convergence rate are inversely proportional to the over-relaxed step-size,
 which theoretically demonstrates that a large over-relaxed step-size contributes to accelerating the proposed VMOR-HPE as a byproduct.
 In addition, we rigorously show that a class of primal-dual algorithms, including
 {\bf A}symmetric {\bf F}orward {\bf B}ackward {\bf A}djoint {\bf S}plitting {\bf P}rimal-{\bf D}ual (AFBAS-PD) algorithm \cite{latafat2017asymmetric},
 Condat-Vu {\bf P}rimal-{\bf D}ual {\bf S}plitting (Condat-Vu  PDS) algorithm \cite{vu2013splitting,condat2013primal},
 {\bf P}rimal-{\bf D}ual {\bf F}ixed {\bf P}oint (PDFP) algorithm \cite{chen2016primal},
 {\bf P}rimal-{\bf D}ual three {\bf O}perator {\bf S}plitting (PD3OS) algorithm \cite{yan2016primal},
 Combettes {\bf P}rimal-{\bf D}ual {\bf S}plitting (Combettes PDS) algorithm \cite{combettes2012primal},
 {\bf M}onotone+{\bf S}kew {\bf S}plitting (MSS) algorithm \cite{briceno2011monotone},
 {\bf P}roximal {\bf A}lternating {\bf P}redictor {\bf C}orrector (PAPC) algorithm \cite{drori2015simple},
 and {\bf P}rimal-{\bf D}ual {\bf H}ybrid {\bf G}radient (PDHG) algorithm \cite{chambolle2011first},
 all fall into the VMOR-HPE framework with specific variable metric operators $\mathcal{M}_{k}$ and $T$.
 Besides, {\bf P}roximal-{\bf P}roximal-{\bf G}radient (PPG) algorithm \cite{ryu2017proximal},
 {\bf F}orward-{\bf B}ackward-{\bf H}alf {\bf F}orward (FBHF) algorithm as well as its non self-adjoint metric extensions \cite{briceno2017forward},
 Davis-Yin three {\bf O}perator {\bf S}plitting (Davis-Yin 3OS) algorithm \cite{davis2015three},
 {\bf F}orward {\bf D}ouglas-{\bf R}achford {\bf S}plitting (FDRS) algorithm \cite{Brice2012Forward},
 {\bf G}eneralized {\bf F}orward {\bf B}ackward {\bf S}plitting (GFBS) algorithm \cite{raguet2013generalized},
 and {\bf F}orward {\bf D}ouglas-{\bf R}achford {\bf F}orward {\bf S}plitting (FDRFS) algorithm \cite{Brice2015Forward}
 also fall into the VMOR-HPE framework. Thus, VMOR-HPE largely expands the HPE algorithmic framework to cover a large class of primal and primal-dual
 algorithms and their non self-adjoint metric extensions compared with \cite{Solodov1999A1,shen2017over}. 
 As a consequence, the VMOR-HPE algorithmic framework offers a new insight into aforementioned primal and primal-dual algorithms and serves as a powerful analysis technique for establishing their convergences, iteration complexities, and local linear convergence rates.

 In addition, we apply VMOR-HPE to the KKT generalized equation of linear equality constrained multi-block composite nonsmooth convex optimization as follows:
\begin{align}\label{linearly-constriant}
 & \min_{x_i\in\mathbb{X}_i}\ f(x_1,\ldots,x_{p})+g_1(x_1)+\cdots+g_p(x_p)\\
 &\ \ \ {\rm s.t.}\ \ \ \mathcal{A}_1^*x_1+\mathcal{A}_2^*x_2+\cdots+\mathcal{A}_p^*x_p=b,\nonumber
 \end{align}
 where $\mathcal{A}_{i}^*\!\!:\!\mathbb{Y}\!\!\to\!\!\mathbb{X}_{i}$ is the adjoint linear operator of $\mathcal{A}_{i}$,
 $\mathbb{Y}$ and $\mathbb{X}_{i}$ are given finite-dimensional vector spaces, $g_{i}\!:\mathbb{X}_i\!\to\!(-\infty,+\infty]$ is a proper closed convex function,
 and $f\!:\mathbb{X}_{1}\!\times\!\cdots\!\times\mathbb{X}_{p}\!\to\!\mathbb{R}$ is a gradient Lipschitz continuous convex function.
 Specifically, the proposed VMOR-HPE for solving problem \eqref{linearly-constriant} firstly generates points satisfying the relative inexact criterion in the VMOR-HPE framework
 by a newly developed nonsymmetric {\bf P}roximal {\bf A}lternating {\bf D}irection {\bf M}ethod of {\bf M}ultipliers,
 and then performs an over-relaxed metric {\bf E}xtra-gradient correction step to ensure its global convergence.
 Notably, metric $\mathcal{M}_{k}$ in the extra-gradient step is generated by using a blockwise {\bf B}arzilai-{\bf B}orwein line search technique \cite{barzilai1988two}
 to exploit the curvature information of the KKT generalized equation of \eqref{linearly-constriant}. We thus name the resulting new algorithm as PADMM-EBB.
 Moreover, we establish the $\mathcal{O}(\frac{1}{\sqrt{k}})$ pointwise
 and $\mathcal{O}(\frac{1}{k})$ weighted iteration complexities and the local linear convergence rate for PADMM-EBB on the KKT residual of \eqref{linearly-constriant} by employing
 the VMOR-HPE framework.
 Besides, it is worth emphasizing that the derived iteration complexities do not need any assumption on the boundedness of the feasible set of \eqref{linearly-constriant}.
 At last, we conduct experiments on the nonnegative dual graph regularized low-rank representation problem to verify the efficacy of PADMM-EBB,
 which shows great superiority over {\bf P}roximal {\bf L}inearized ADMM with {\bf P}arallel {\bf S}plitting and {\bf A}daptive {\bf P}enalty (PLADMM-PSAP) \cite{liu2013linearized, lin2015linearized},
 {\bf P}roximal {\bf G}auss-{\bf S}eidel ADMM (PGSADMM) with nondecreasing penalty, and {\bf M}ixed {\bf G}auss-{\bf S}eidel and {\bf J}acobi ADMM (M-GSJADMM) with nondecreasing penalty \cite{lu2017unified}
 on both synthetic and real datasets.

 The major contributions of this paper are fourfold.
 {\bf (i)} We propose a new algorithmic framework of VMOR-HPE for problem \eqref{inclusion-problem} and also establish its global convergence, iteration complexities, and local linear convergence rate.
 {\bf(ii)} The proposed VMOR-HPE gives a new insight into a large class of primal and primal-dual algorithms and provides a unified analysis framework for their convergence properties.
 {\bf(iii)} Applying VMOR-HPE to problem \eqref{linearly-constriant} yields a new convergent primal-dual algorithm
 whose iteration complexities on the KKT residual are also provided without requiring the boundedness of the feasible set of \eqref{linearly-constriant}.
 {\bf(iv)} Numerical experiments on synthetic and real datasets are conducted to demonstrate the superiority of the proposed algorithm.
\vspace{-4pt}
\section{Preliminaries}\label{preliminary}

 Given $\beta>0$, a single-valued mapping $C\!:\mathbb{X}\to\mathbb{X}$ satisfing $\big\langle x-x',C(x)-C(x')\big\rangle \ge \beta\big\|C(x)-C(x')\big\|^2$
 for all $x,x'\in \mathbb{X}$ is called a $\beta$-cocoercive operator.
 A set-valued mapping $T\!:\mathbb{X}\rightrightarrows\mathbb{X}$ satisfying $\langle x-x',v-v'\rangle\ge\alpha\|x-x'\|^2$ with $\alpha\ge0$ for all $v\in T(x)$ and $v' \in T(x')$
 is called $\alpha$-strongly monotone operator if $\alpha>0$, and is called a monotone operator if $\alpha=0$.
 Moreover, $T$ is called a maximal monotone operator if there does not exit any monotone operator $T'$ satisfying ${\rm gph}\,T\subseteq {\rm gph}\,T'$, where ${\rm gph}\,T\!:=\{(x,v)\in \mathbb{X}\times\mathbb{X}\mid v\in T(x), x\in \mathbb{X}\}$.
 In addition, given $\epsilon\ge0$ and a maximal monotone operator ${T}$,
 the $\epsilon$-enlargement ${T}^{[\epsilon]}\!:\mathbb{X}\rightrightarrows\mathbb{X}$ of $T$ \cite{Burachik1997Enlargement,burachik1998varepsilon,Svaiter2000A}
 is defined as
 \begin{equation*}
 {T}^{[\epsilon]}(x):=\big\{v\in\mathbb{X} \mid \langle w-v,z-x \rangle \geq -\epsilon, \forall w\in {T}(z)\big\}.
 \end{equation*}
 Below, we recall the definition of metric subregularity \cite{Dontchev2009Implicit} of set-valued mapping $T$.
 \begin{definition}\label{metric-subregular}
  A set-valued mapping ${T}\!:\mathbb{X}\rightrightarrows\mathbb{X}$ is metric subregular at $(\overline{x},\overline{y})\in{\rm gph}{T}$ with modulus $\kappa>0$,
  if there exists a neighborhood $U$ of $\overline{x}$ such that for all $x\in U$,
  \[
  {\rm dist}\big(x,{T}^{-1}(\overline{y})\big)\le \kappa{\rm dist}\big(\overline{y},{T}(x)\big).
  \]
\end{definition}
\vspace{-6pt}
 Given a self-adjoint positive definite linear operator $\mathcal{M}$, $\|\cdot\|_{\mathcal{M}}$ denotes the generalized norm induced by $\mathcal{M}$, which is defined as
 $\|\cdot\|_{\mathcal{M}}=\sqrt{\langle\cdot,\mathcal{M}\cdot \rangle}$. The generalized distance between a point $z$ and a set $\Omega$ induced by $\mathcal{M}$ is defined as
 ${\rm dist}_{\mathcal{M}}(z,\Omega)\!:=\!\inf_{ x\in \Omega} \|x\!-\!z\|_{\mathcal{M}}$. Let $\mathcal{M}\!=\!\mathcal{I}$. ${\rm dist}_{\mathcal{M}}(z,\Omega)$ reduces to the standard distance function as ${\rm dist}(z,\Omega)\!:=\!\inf_{ x\in \Omega} \|x\!-\!z\|$.
 In addition, given a proper closed convex function $g:\mathbb{X}\!\to\!(\infty,+\infty]$ and a non self-adjoint linear operator $\mathcal{R}$, ${\rm Prox}_{\mathcal{R}^{-1}g}(\cdot)$
 denoting the generalized proximal mapping of $g$ induced by $\mathcal{R}$ is the unique root of inclusion:
 \[
 0 \in \partial{g}(x) + \mathcal{R}(x-\cdot),\ x\in \mathbb{X}.
 \]
 Particularly, if $g(x)=\sum_{i=1}^{n}g_{i}(x_{i})$ is decomposable, ${\rm Prox}_{\mathcal{R}^{-1}g}(\cdot)$ can be calculated in a Gauss-Seidel manner
 by merely setting $\mathcal{R}$ as a block lower-triangular linear operator.

\section{VMOR-HPE Framework}\label{VM-OR-HPE Algorithm}

 In this section, we propose the algorithmic framework of VMOR-HPE (described in Algorithm \ref{Alg:VMOR-HPE}), and establish its global convergence rate, iteration complexities, and local linear convergence rate.
 Let $\mathcal{M}_{k}\!\!=\!\mathcal{I}$ in VMOR-HPE. We recover an enhanced version of an over-relaxed HPE algorithm \cite{shen2017over}
 by allowing a larger over-relaxed step-size $\theta_{k}$.

\begin{algorithm}[H]
   \caption{\, VMOR-HPE Framework}
   \label{Alg:VMOR-HPE}
\begin{algorithmic}
   \STATE {\bf Parameters:} Given $\underline{\omega},\,\overline{\omega}> 0,\,\underline{\theta}>-1,\,\sigma\in[0,1)$ and $\xi_{k} \ge 0$ satisfying $\sum_{k=1}^{\infty}\xi_{k}\!<\infty$.
                           Choose a self-adjoint operator $\mathcal{M}_{0}$ satisfying $\underline{\omega}\mathcal{I}\preceq \mathcal{M}_{0} \preceq \overline{\omega}\mathcal{I}$
                           and $x^{0}\in\mathbb{X}$.
   \FOR{$k=1,2,\cdots,$}
   \STATE  Choose $c_k\ge\underline{c}>0,\, \theta_k\in[\underline{\theta},\infty)$. Find $(\epsilon_k,y^{k},v^{k})\in\mathbb{R}_{+}\times\mathbb{X}\times\mathbb{X}$ satisfying
           the relative error criterion that
    \vspace{-5pt}
                \begin{subnumcases}{}
                 \!(y^{k},v^{k})\in{\rm gph}\,T^{[\epsilon_{k}]},  \label{VMOR-HPE-a} \\
                 \!\theta_k\big\| c_k\mathcal{M}_{k}^{-1}v^{k}\big\|_{\mathcal{M}_{k}}^2+\big\| c_k\mathcal{M}_{k}^{-1}v+(y^{k}-x^{k})\big\|_{\mathcal{M}_{k}}^2 \nonumber\\
                               \qquad\qquad\qquad+2 c_k\epsilon_{k} \leq \sigma\big\|y^{k}- x^{k}\big\|_{\mathcal{M}_{k}}^2. \quad \label{VMOR-HPE-b}
                \end{subnumcases}
   \vspace{-9pt}
   \STATE Let $x^{k+1} := x^k -(1+\theta_k)c_k\mathcal{M}_{k}^{-1} v^{k}$.
   \STATE Update $\mathcal{M}_{k+1}$ with $\underline{\omega}\mathcal{I}\preceq \mathcal{M}_{k+1} \preceq (1+\xi_{k})\mathcal{M}_{k}$.
   \ENDFOR
 \end{algorithmic}
 \end{algorithm}
  \vspace{-8pt}
\begin{remark}
{\bf (i)}  $\theta_k\!\in\![\underline{\theta},\infty)$ breaks the ceiling of over-relaxed step-sizes in the literature \cite{eckstein1992douglas,chambolle2016ergodic,bauschke2017convex,shen2017over,tao2017on} ,in which $\theta_{k}\!\in\!(-1,1)$. Besides, $\mathcal{M}_{k}$ can exploit the curvature information of $T$.
\vspace{1pt}\\
{\bf (ii)} Let $\theta_{k} = -\sigma$ in the VMOR-HPE framework. Criterion \eqref{VMOR-HPE-a}-\eqref{VMOR-HPE-b} coincides with \eqref{metric-relaxed-HPE-a1}-\eqref{metric-relaxed-HPE-a2} in \cite{Parente2008A}, which makes the step-size $(1+\theta_{k})$ be $(1-\sigma)$ that is too small to update $x^{k+1}$ if $\sigma$ is close to $1$. That is the reason why $a_{k}$ in \eqref{metric-relaxed-HPE-a3} has to be adaptively computed with extra computation instead of being a constant.
\end{remark}

\subsection{Convergence Analysis}

 In this subsection, we build the global convergence for the algorithmic framework of VMOR-HPE, as well as its local linear convergence rate
 under a metric subregularity condition of $T$. In addition, its $\mathcal{O}(\frac{1}{\sqrt{k}})$ pointwise and
 $\mathcal{O}(\frac{1}{k})$ weighted iteration complexities depending solely on $(T^{-1}(0),x^{0})$ are provided.
 Denote $\Xi\!:=\!\prod\!_{i=0}^{\infty}(1\!\!+\xi_{i})\!\!<\!\exp\big(\sum\!_{i=0}^{\infty}\xi_{i}\big)\!<\!\infty$.
 \begin{theorem}\label{VMOR-HPE-convergence}
 Let $\big\{(x^{k},y^{k})\big\}$ be the sequence generated by the VMOR-HPE framework. Then,
 $\{x^{k}\}$ and $\{y^{k}\}$ both converge to a point $x^{\infty}$ belonging to $T^{-1}(0)$.
 \end{theorem}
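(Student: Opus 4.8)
The plan is to follow the classical Fejér-monotonicity template for inexact proximal-point methods, but adapted to the variable-metric setting where the metric $\mathcal{M}_k$ is only asymptotically nonincreasing (in the sense $\underline{\omega}\mathcal{I}\preceq \mathcal{M}_{k+1}\preceq(1+\xi_k)\mathcal{M}_k$ with $\sum_k\xi_k<\infty$). First I would establish the key one-step descent inequality. Fix any $x^\ast\in T^{-1}(0)$. Using the definition of the $\epsilon_k$-enlargement applied to the pair $(y^k,v^k)\in{\rm gph}\,T^{[\epsilon_k]}$ together with $0\in T(x^\ast)$, one gets $\langle v^k,y^k-x^\ast\rangle\ge-\epsilon_k$. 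Expanding $\|x^{k+1}-x^\ast\|_{\mathcal{M}_k}^2$ using the update $x^{k+1}=x^k-(1+\theta_k)c_k\mathcal{M}_k^{-1}v^k$ and the generalized norm, I would split the cross term through $y^k$, invoke the monotonicity-type bound just derived, and then absorb the remaining quadratic terms using the relative error criterion \eqref{VMOR-HPE-b}. The term $\theta_k\|c_k\mathcal{M}_k^{-1}v^k\|_{\mathcal{M}_k}^2$ on the left of \eqref{VMOR-HPE-b} is exactly what is needed to handle the over-relaxed step-size $(1+\theta_k)$ and should yield, after rearrangement,
\begin{equation*}
\|x^{k+1}-x^\ast\|_{\mathcal{M}_k}^2 \le \|x^k-x^\ast\|_{\mathcal{M}_k}^2 - (1+\theta_k)(1-\sigma)\|y^k-x^k\|_{\mathcal{M}_k}^2,
\end{equation*}
or a close variant thereof (with possibly a $(1-\sigma)$-type factor and a residual term in $\|c_k\mathcal{M}_k^{-1}v^k\|$). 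Since $\theta_k\ge\underline\theta>-1$, the coefficient $(1+\theta_k)(1-\sigma)$ is bounded below by a positive constant.

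Next I would pass from the $\mathcal{M}_k$-norm to the $\mathcal{M}_{k+1}$-norm to obtain a genuine quasi-Fejér recursion. Using $\mathcal{M}_{k+1}\preceq(1+\xi_k)\mathcal{M}_k$ gives $\|x^{k+1}-x^\ast\|_{\mathcal{M}_{k+1}}^2\le(1+\xi_k)\|x^{k+1}-x^\ast\|_{\mathcal{M}_k}^2$, so that
\begin{equation*}
\|x^{k+1}-x^\ast\|_{\mathcal{M}_{k+1}}^2 \le (1+\xi_k)\|x^k-x^\ast\|_{\mathcal{M}_k}^2 - (1+\xi_k)(1+\theta_k)(1-\sigma)\|y^k-x^k\|_{\mathcal{M}_k}^2.
\end{equation*}
Setting $a_k:=\|x^k-x^\ast\|_{\mathcal{M}_k}^2$, this is of the form $a_{k+1}\le(1+\xi_k)a_k-b_k$ with $b_k\ge0$ and $\sum_k\xi_k<\infty$. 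A standard summability lemma (e.g. via multiplying through by $\prod_{i\ge k}(1+\xi_i)^{-1}$, which is well-defined since $\Xi<\infty$) then yields that $\{a_k\}$ converges and $\sum_k b_k<\infty$. From the convergence of $\{a_k\}$ and $\underline\omega\mathcal{I}\preceq\mathcal{M}_k$ I get that $\{x^k\}$ is bounded; from $\sum_k b_k<\infty$ and the lower bounds $c_k\ge\underline c$, $1+\theta_k\ge1+\underline\theta>0$, $\mathcal{M}_k\succeq\underline\omega\mathcal{I}$ I get $\|y^k-x^k\|\to0$, hence $\{y^k\}$ is bounded too and has the same cluster points as $\{x^k\}$. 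Also from the error criterion \eqref{VMOR-HPE-b}, once $\|y^k-x^k\|\to0$, it forces $\|c_k\mathcal{M}_k^{-1}v^k\|\to0$ and $c_k\epsilon_k\to0$, so (using $c_k\ge\underline c$ and the metric bounds) $v^k\to0$ and $\epsilon_k\to0$.

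Then I would identify the limit. Let $x^\infty$ be any cluster point of $\{x^k\}$ along a subsequence; along that same subsequence $y^k\to x^\infty$ as well, $v^k\to0$, $\epsilon_k\to0$. Since $(y^k,v^k)\in{\rm gph}\,T^{[\epsilon_k]}$, the outer-semicontinuity property of the enlargement operator $T^{[\cdot]}$ with respect to $(\epsilon,x,v)$ as $\epsilon\to0$ — or directly passing to the limit in the defining inequality $\langle w-v^k,z-y^k\rangle\ge-\epsilon_k$ for all $w\in T(z)$ — gives $\langle w-0,z-x^\infty\rangle\ge0$ for all $(z,w)\in{\rm gph}\,T$, and maximal monotonicity of $T$ then yields $0\in T(x^\infty)$, i.e. $x^\infty\in T^{-1}(0)$. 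Finally, to upgrade cluster point to full limit: apply the already-established recursion with the specific choice $x^\ast=x^\infty\in T^{-1}(0)$; then $a_k=\|x^k-x^\infty\|_{\mathcal{M}_k}^2$ converges, and since it has a subsequence tending to $0$, the whole sequence $a_k\to0$, whence $\|x^k-x^\infty\|\to0$ by $\mathcal{M}_k\succeq\underline\omega\mathcal{I}$, and $\|y^k-x^\infty\|\le\|y^k-x^k\|+\|x^k-x^\infty\|\to0$.

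I expect the main obstacle to be the very first step: deriving the one-step inequality cleanly in the variable non-self-adjoint-compatible metric, in particular bookkeeping the over-relaxation term so that the factor $(1+\theta_k)$ — which may now exceed $2$, unlike the classical $\theta_k\in(-1,1)$ regime — still produces a nonnegative descent coefficient. The algebra must use precisely the extra term $\theta_k\|c_k\mathcal{M}_k^{-1}v^k\|_{\mathcal{M}_k}^2$ that the authors inserted into criterion \eqref{VMOR-HPE-b}; getting the signs and the $\mathcal{M}_k$ versus $\mathcal{M}_k^{-1}$ pairings right in the expansion of $\|x^{k+1}-x^\ast\|_{\mathcal{M}_k}^2=\|x^k-x^\ast\|_{\mathcal{M}_k}^2-2(1+\theta_k)c_k\langle v^k,x^k-x^\ast\rangle+(1+\theta_k)^2\|c_k\mathcal{M}_k^{-1}v^k\|_{\mathcal{M}_k}^2$ is the delicate point. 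Everything after that — the quasi-Fejér summability lemma, boundedness, the limit identification via maximality, and the cluster-point-to-limit argument — is routine.
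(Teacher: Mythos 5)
Your proposal is correct and follows essentially the same route as the paper: the same one-step contraction obtained from $\langle v^k,y^k-x^*\rangle\ge-\epsilon_k$ plus criterion \eqref{VMOR-HPE-b}, the same quasi-Fej\'er recursion under $\mathcal{M}_{k+1}\preceq(1+\xi_k)\mathcal{M}_k$, summability, limit identification through the enlargement's definition, and the final re-use of the recursion with $x^*=x^\infty$. The only detail worth noting is that when $\theta_k<0$ the claim ``$\|y^k-x^k\|\to0$ forces $\|c_k\mathcal{M}_k^{-1}v^k\|\to0$ and $c_k\epsilon_k\to0$'' is not immediate from \eqref{VMOR-HPE-b} alone (the $\theta_k$-term has the wrong sign); the paper handles it by expanding the criterion and applying Cauchy--Schwarz to get $\|c_k\mathcal{M}_k^{-1}v^k\|_{\mathcal{M}_k}^2\le\frac{4}{(1+\theta_k)^2}\|x^k-y^k\|_{\mathcal{M}_k}^2$ and $c_k\epsilon_k\le\frac{1}{1+\theta_k}\|x^k-y^k\|_{\mathcal{M}_k}^2$, which slots directly into your outline.
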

 \begin{theorem}\label{linear-rate}
 Let $\{(x^{k},\,y^{k})\}$ be the sequence generated by the VMOR-HPE framework. Assume that
 the metric subregularity of $T$ at $(x^{\infty},0)\in {\rm gph}\,T$ holds with $\kappa >0$. Then, there exits
 $\overline{k} >0$ such that for all $k\ge \overline{k}$,
 \[
  {\rm dist}^{2}_{\mathcal{M}_{k\!+\!1}}\big(x^{k+1},T^{-1}(0)\big)
  \!\leq\! \Big(1\!-\!\frac{\varrho_{k}}{2}\Big){\rm dist}^{2}_{\mathcal{M}_{k}}\big(x^{k},T^{-1}(0)\big),
 \]
 where $\varrho_{k}= \frac{(1-\sigma)(1+\theta_{k})}
            {\Big(1+\frac{\kappa}{\underline{c}}\sqrt{\frac{\Xi\overline{\omega}}{\underline{\omega}}}\Big)^{2}\Big(1+\sqrt{\sigma+\frac{4\max\{-\theta_{k},0\}}{(1+\theta_{k})^2}}\Big)^{2}}\in (0,1)$.
\end{theorem}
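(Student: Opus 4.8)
The plan is to upgrade the Fej\'er-type estimate that underlies Theorem~\ref{VMOR-HPE-convergence} to a geometric one, using the metric subregularity hypothesis to supply a local error bound, and then to pay for the slow growth of the metrics $\mathcal{M}_{k}$. \textbf{(Step 1: one-step descent.)} First I would record the inequality that already drives the convergence proof: for every $x^{*}\in T^{-1}(0)$,
\[
\|x^{k+1}-x^{*}\|_{\mathcal{M}_{k}}^{2}\le\|x^{k}-x^{*}\|_{\mathcal{M}_{k}}^{2}-(1-\sigma)(1+\theta_{k})\|y^{k}-x^{k}\|_{\mathcal{M}_{k}}^{2}.
\]
This comes from expanding $\|x^{k}-x^{*}-(1+\theta_{k})c_{k}\mathcal{M}_{k}^{-1}v^{k}\|_{\mathcal{M}_{k}}^{2}$, invoking $\langle v^{k},y^{k}-x^{*}\rangle\ge-\epsilon_{k}$ (from $(y^{k},v^{k})\in\mathrm{gph}\,T^{[\epsilon_{k}]}$ and $0\in T(x^{*})$), and substituting \eqref{VMOR-HPE-b}; the $\|c_{k}\mathcal{M}_{k}^{-1}v^{k}\|_{\mathcal{M}_{k}}^{2}$ and $\epsilon_{k}$ contributions cancel exactly, and only $1+\theta_{k}>0$ (i.e.\ $\theta_{k}\ge\underline{\theta}>-1$) is needed. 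Taking $x^{*}$ to be the $\mathcal{M}_{k}$-projection of $x^{k}$ onto the closed convex set $T^{-1}(0)$ (nonempty by Theorem~\ref{VMOR-HPE-convergence}) turns the left side into $\mathrm{dist}_{\mathcal{M}_{k}}^{2}(x^{k+1},T^{-1}(0))$ and the first right-hand term into $\mathrm{dist}_{\mathcal{M}_{k}}^{2}(x^{k},T^{-1}(0))$.

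\textbf{(Step 2: local error bound.)} Next I would show that for all large $k$,
\[
\mathrm{dist}_{\mathcal{M}_{k}}(x^{k},T^{-1}(0))\le L_{k}\|y^{k}-x^{k}\|_{\mathcal{M}_{k}},\qquad L_{k}:=\Big(1+\tfrac{\kappa}{\underline{c}}\sqrt{\tfrac{\Xi\overline{\omega}}{\underline{\omega}}}\Big)\Big(1+\sqrt{\sigma+\tfrac{4\max\{-\theta_{k},0\}}{(1+\theta_{k})^{2}}}\Big).
\]
The first ingredient is the bound $\|c_{k}\mathcal{M}_{k}^{-1}v^{k}\|_{\mathcal{M}_{k}}\le(1+\sqrt{\sigma+4\max\{-\theta_{k},0\}/(1+\theta_{k})^{2}})\|y^{k}-x^{k}\|_{\mathcal{M}_{k}}$ read off from \eqref{VMOR-HPE-b}: if $\theta_{k}\ge0$ all left-hand terms are nonnegative and a triangle inequality gives the factor $1+\sqrt{\sigma}$; if $\theta_{k}<0$ one moves the negative term $\theta_{k}\|c_{k}\mathcal{M}_{k}^{-1}v^{k}\|_{\mathcal{M}_{k}}^{2}$ to the right, applies the triangle inequality again, and solves the resulting quadratic, which after simplification yields exactly the $4\max\{-\theta_{k},0\}/(1+\theta_{k})^{2}$ correction inside the root; the same manipulation bounds $c_{k}\epsilon_{k}$ by a multiple of $\|y^{k}-x^{k}\|_{\mathcal{M}_{k}}^{2}$. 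The second ingredient uses $c_{k}v^{k}=\mathcal{M}_{k}(c_{k}\mathcal{M}_{k}^{-1}v^{k})$, $c_{k}\ge\underline{c}$ and $\underline{\omega}\mathcal{I}\preceq\mathcal{M}_{k}\preceq\Xi\overline{\omega}\mathcal{I}$ (the upper bound from iterating $\mathcal{M}_{k+1}\preceq(1+\xi_{k})\mathcal{M}_{k}$, with $\prod_{i}(1+\xi_{i})=\Xi$) so that $v^{k}$ and $\epsilon_{k}$ are small; since $v^{k}\in T^{[\epsilon_{k}]}(y^{k})$ is only an inexact graph point I first pass to a nearby genuine pair $(\hat{y}^{k},\hat{v}^{k})\in\mathrm{gph}\,T$ via the Br{\o}nsted--Rockafellar-type transportation property of $\epsilon$-enlargements, observe that by Theorem~\ref{VMOR-HPE-convergence} (together with $\|\hat{y}^{k}-y^{k}\|\to0$) the point $\hat{y}^{k}$ eventually lies in the subregularity neighbourhood of $x^{\infty}$, apply metric subregularity there, and close with $\mathrm{dist}_{\mathcal{M}_{k}}(x^{k},T^{-1}(0))\le\|x^{k}-y^{k}\|_{\mathcal{M}_{k}}+\mathrm{dist}_{\mathcal{M}_{k}}(y^{k},T^{-1}(0))$; transferring the subregularity estimate from the Euclidean metric to $\mathcal{M}_{k}$ is what produces the $\sqrt{\Xi\overline{\omega}/\underline{\omega}}$ factor, leaving precisely $L_{k}$.

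\textbf{(Step 3: combine and re-scale the metric.)} Plugging Step~2 into Step~1 gives $\mathrm{dist}_{\mathcal{M}_{k}}^{2}(x^{k+1},T^{-1}(0))\le(1-\varrho_{k})\mathrm{dist}_{\mathcal{M}_{k}}^{2}(x^{k},T^{-1}(0))$ with $\varrho_{k}=(1-\sigma)(1+\theta_{k})/L_{k}^{2}$, which is the stated constant; note $\varrho_{k}\le1$ is automatic, since otherwise this contraction inequality and $\mathrm{dist}_{\mathcal{M}_{k}}^{2}\ge0$ would clash, and $\inf_{k}\varrho_{k}>0$ because $\theta_{k}\ge\underline{\theta}>-1$ bounds $L_{k}$ uniformly. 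Finally, $\mathcal{M}_{k+1}\preceq(1+\xi_{k})\mathcal{M}_{k}$ yields $\mathrm{dist}_{\mathcal{M}_{k+1}}^{2}(x^{k+1},\cdot)\le(1+\xi_{k})\mathrm{dist}_{\mathcal{M}_{k}}^{2}(x^{k+1},\cdot)\le(1+\xi_{k})(1-\varrho_{k})\mathrm{dist}_{\mathcal{M}_{k}}^{2}(x^{k},\cdot)$, and since $\sum_{i}\xi_{i}<\infty$ forces $\xi_{k}\to0$ while $\varrho_{k}$ stays bounded below, there is $\overline{k}$ with $(1+\xi_{k})(1-\varrho_{k})\le1-\varrho_{k}/2$ for all $k\ge\overline{k}$; enlarging $\overline{k}$ past the index at which $y^{k}$ and its exact surrogate enter the subregularity neighbourhood finishes the proof.

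\textbf{(Main obstacle.)} The delicate step is the subregularity part of Step~2. Metric subregularity constrains $\mathrm{gph}\,T$, but the algorithm only produces $v^{k}\in T^{[\epsilon_{k}]}(y^{k})$, and because \eqref{VMOR-HPE-b} forces merely $\epsilon_{k}=O(\|y^{k}-x^{k}\|_{\mathcal{M}_{k}}^{2})$, the perturbation one pays when passing to a genuine graph point is itself of order $\|y^{k}-x^{k}\|_{\mathcal{M}_{k}}$ and cannot be dumped into a lower-order remainder; it has to be carried through and re-combined with $\|y^{k}-x^{k}\|_{\mathcal{M}_{k}}$ with exactly the right coefficient. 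Making this re-absorption interact cleanly with the norm-equivalence constants $\underline{\omega}$, $\Xi\overline{\omega}$, $\underline{c}$, so that the quantity emerging is exactly $\varrho_{k}$ and not something larger, is the part that requires the most care.
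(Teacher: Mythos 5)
Your Steps 1 and 3 coincide with the paper's argument (the Fej\'er inequality with $x^{*}$ taken as the projection of $x^{k}$ onto $T^{-1}(0)$, then the $(1+\xi_{k})$ re-scaling and $\xi_{k}\to 0$ to pass from $(1+\xi_{k})(1-\varrho_{k})$ to $1-\varrho_{k}/2$), and the target of your Step 2, namely ${\rm dist}_{\mathcal{M}_{k}}(x^{k},T^{-1}(0))\le L_{k}\|x^{k}-y^{k}\|_{\mathcal{M}_{k}}$ with the stated $L_{k}$, is exactly the paper's key estimate. The gap is in how you propose to prove Step 2. You apply metric subregularity at a point obtained by transporting the inexact pair $(y^{k},v^{k})\in{\rm gph}\,T^{[\epsilon_{k}]}$ to a genuine graph point $(\hat{y}^{k},\hat{v}^{k})$ via a Br{\o}nsted--Rockafellar-type argument. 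The standard transportation estimate only gives $\|\hat{y}^{k}-y^{k}\|\le\sqrt{\epsilon_{k}}$ and $\|\hat{v}^{k}-v^{k}\|\le\sqrt{\epsilon_{k}}$, and since the error criterion \eqref{VMOR-HPE-b} only forces $c_{k}\epsilon_{k}\le\|x^{k}-y^{k}\|_{\mathcal{M}_{k}}^{2}/(1+\theta_{k})$, the perturbation $\sqrt{\epsilon_{k}}$ is of the same order as $\|x^{k}-y^{k}\|_{\mathcal{M}_{k}}$ (and carries an extra $c_{k}^{-1/2}$ scaling). Carrying it through the chain ${\rm dist}(x^{k},T^{-1}(0))\le\|x^{k}-y^{k}\|+\|y^{k}-\hat{y}^{k}\|+\kappa\|\hat{v}^{k}\|$ produces additive terms of the form $(1+\kappa)\sqrt{\epsilon_{k}}$ that do not collapse into the factor $\bigl(1+\sqrt{\sigma+4\max\{-\theta_{k},0\}/(1+\theta_{k})^{2}}\bigr)$; you obtain a valid linear-rate statement, but with a strictly worse constant than the theorem's $\varrho_{k}$. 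You flag precisely this re-absorption as the ``main obstacle'' and do not resolve it, so the proposal as written does not prove the stated result.

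The missing idea is the paper's auxiliary point: let $z^{k}$ solve $0\in c_{k}T(z^{k})+\mathcal{M}_{k}(z^{k}-x^{k})$, i.e.\ the exact variable-metric proximal point of $T$ at $x^{k}$. Then $-c_{k}^{-1}\mathcal{M}_{k}(z^{k}-x^{k})\in T(z^{k})$ is an \emph{exact} graph element by construction, so metric subregularity applied at $z^{k}$ gives ${\rm dist}_{\mathcal{M}_{k}}(z^{k},T^{-1}(0))\le\frac{\kappa}{\underline{c}}\sqrt{\Xi\overline{\omega}/\underline{\omega}}\,\|z^{k}-x^{k}\|_{\mathcal{M}_{k}}$ with no transportation penalty. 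The enlargement inequality between $v^{k}\in T^{[\epsilon_{k}]}(y^{k})$ and this exact element yields the quadratic inequality $\|z^{k}-y^{k}\|_{\mathcal{M}_{k}}^{2}-\|r^{k}\|_{\mathcal{M}_{k}}\|z^{k}-y^{k}\|_{\mathcal{M}_{k}}-c_{k}\epsilon_{k}\le 0$ with $r^{k}=c_{k}\mathcal{M}_{k}^{-1}v^{k}+y^{k}-x^{k}$, hence $\|z^{k}-y^{k}\|_{\mathcal{M}_{k}}\le\sqrt{\|r^{k}\|_{\mathcal{M}_{k}}^{2}+2c_{k}\epsilon_{k}}\le\sqrt{\sigma+4\max\{-\theta_{k},0\}/(1+\theta_{k})^{2}}\,\|x^{k}-y^{k}\|_{\mathcal{M}_{k}}$ by \eqref{VMOR-HPE-b} and the bound $\|c_{k}\mathcal{M}_{k}^{-1}v^{k}\|_{\mathcal{M}_{k}}\le\frac{2}{1+\theta_{k}}\|x^{k}-y^{k}\|_{\mathcal{M}_{k}}$; the triangle inequality then gives $\|z^{k}-x^{k}\|_{\mathcal{M}_{k}}\le\bigl(1+\sqrt{\sigma+4\max\{-\theta_{k},0\}/(1+\theta_{k})^{2}}\bigr)\|x^{k}-y^{k}\|_{\mathcal{M}_{k}}$, which is exactly what produces $L_{k}$ and hence the stated $\varrho_{k}$. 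Replacing your transportation step by this proximal-point device (and checking, as the paper does, that $z^{k}$ eventually lies in the subregularity neighbourhood because $\|z^{k}-x^{k}\|\to 0$ and $x^{k}\to x^{\infty}$) closes the gap; the rest of your outline then goes through unchanged.
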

 Polyhedra operators \cite{robinson1981some} and strongly monotone operators all satisfy metric subregularity.
 For other sufficient conditions that guarantee metric subregulaity of $T$, we refer the readers to the monographs \cite{Dontchev2009Implicit,rockafellar2009variational,ying2016large}.

 Point $x\!\in\!\mathbb{X}$ is called $\varepsilon$-solution \cite{Monteiro2010On} of problem \eqref{inclusion-problem}
 if there exists $(v,\epsilon)\! \in \mathbb{X}\!\times\mathbb{R}_{+}$ satisfying $v \in T^{[\epsilon]}(x)$ and $\max(\|v\|, \epsilon)\le \varepsilon$.
 Below, we globally characterize the rate of $\max(\|v\|,\epsilon)$ decreasing to zero.
\begin{theorem}\label{iteration-complexity-VMOR-HPE}
 Let $\{(x^{k},y^{k},v^{k})\}$ and $\{\epsilon_{k}\}$ be the sequences generated by the VMOR-HPE framework. \\
 {\bf (i)} There exists an integer $k_{0}\!\in\! \{1,2,\ldots,k\}$ such that $v^{k_{0}} \in T^{[\epsilon_{k_0}]}(y^{k_{0}})$
 with $v^{k_{0}}$ and $\epsilon_{k_0}\ge 0$ respectively satisfying
 \begin{gather*}
 \|v^{k_{0}}\|\le \sqrt{\frac{4(1+\sum_{i=1}^{k}\xi_{i})\Xi^2\overline{\omega}}{k(1-\sigma)(1+\underline{\theta})^3\underline{c}^2}}\|x^{0}-x^*\|_{\mathcal{M}_{0}},\\
{\rm and\quad } \epsilon_{k_0} \le \frac{(1+\sum_{i=1}^{k}\xi_{i})\Xi}{k(1-\sigma)(1+\underline{\theta})^2\underline{c}}\|x^{0}-x^*\|_{\mathcal{M}_{0}}^2.
 \end{gather*}
   \vspace{-5pt}
 {\bf (ii)} Let $\{\alpha_{k}\}$ be the nonnegative weight sequence satisfying $\sum_{i=1}^{k}\alpha_{i}>0$.
 Denote $\tau_{i}\!=\!(1+\theta_{i})c_{i}$ and $\overline{y}^{k}\!=\!\frac{{\sum_{i=1}^{k}}\tau_{i}\alpha_{i}y^{i}}{{\sum_{i=1}^{k}}\tau_{i}\alpha_{i}}$,
  \vspace{-6pt}
 \begin{align*}
 \overline{v}^{k}\!=\!\frac{{\sum_{i=1}^{k}}\tau_{i}\alpha_{i}v^{i}}{{\sum_{i=1}^{k}}\tau_{i}\alpha_{i}},
 \overline{\epsilon}_{k}\!=\!\frac{{\sum_{i=1}^{k}}\tau_{i}\alpha_{i}\big(\epsilon_{i}
                    \!+\!\langle y^{i}\!-\!\overline{y}^{k},v^{i}\!-\!\overline{v}^{k}\rangle\big)}{{\sum_{i=1}^{k}}\tau_{i}\alpha_{i}}.
 \end{align*}
 Then, it holds that $\overline{v}^{k} \in T^{[\overline{\epsilon}_{k}]}(\overline{y}^{k})$ with $\overline{\epsilon}_{k}\ge 0$.
 Moreover, if $\mathcal{M}_{k}\le (1+\xi_{k})\mathcal{M}_{k+1}$, it holds that
  \vspace{-5pt}
 \begin{gather}
 \!\!\|\overline{v}^{k}\|
 \!\le\! \frac{\max\limits_{ 1\le i\le k}\{\alpha_{i+1}\}\sum\limits_{i=1}^{k}\xi_{i}\!\!+\!\sum\limits_{i=1}^{k}\big|\alpha_{i}\!-\!\alpha_{i+1}\big|\!+\!\alpha_{k+1}\!+\!\alpha_{1}}
                 {\underline{c}(1+\underline{\theta})\sum_{i=1}^{k}\alpha_{i}}M, \nonumber\\
 \!\!\overline{\epsilon}_{k}\!=\!\frac{(10\!+\!\underline{\theta})\!\!\max\limits_{1\le i\le k}\{\alpha_{i}\}\big(1\!\!+\!\!\sum\limits_{i=1}^{k}\xi_{i}\big)
                               \!+\!(2\!+\!\underline{\theta}){\sum\limits_{i=1}^{k}}\big|\alpha_{i+1}\!\!-\!\alpha_{i}\big|}
         {\underline{c}(1+\underline{\theta})^2\sum_{i=1}^{k}\alpha_i}B, \nonumber 
 \end{gather}
where $M$ and $B$ are two constants which are respectively defined as $M = \Xi\overline{\omega}\big[\|x^*\|+\sqrt{{\Xi}/{\underline{\omega}}}\|x^{0}-x^*\|_{\mathcal{M}_{0}}\big]$ and
\[
B =\max\left\{ \begin{array}{cc}
                          M,\, \Xi\big\|x^*\big\|^2\!+\!\frac{\Xi^2}{\underline{\omega}}\big\|x^{0}\!-\!x^*\big\|_{\mathcal{M}_{0}}^2,\\
                          \frac{\Xi^2}{(1\!-\!\sigma)\underline{\omega}}\big\|x^{0}\!-\!x^*\big\|_{\mathcal{M}_{0}}^2, \frac{\Xi}{(1\!-\!\sigma)}\big\|x^{0}\!-\!x^*\big\|_{\mathcal{M}_{0}}^2\\
                        \end{array}
        \right\}.
\]
\end{theorem}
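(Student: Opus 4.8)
The plan is to isolate a single Fej\'er-type inequality and read off both parts from it: a pigeonhole argument for the pointwise bound (i), and the transportation formula for $\epsilon$-enlargements combined with summation by parts for the ergodic bound (ii).

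\emph{Step 1: the fundamental inequality and uniform bounds.} Fix any $x^{*}\in T^{-1}(0)$. Expanding $\|x^{k+1}-x^{*}\|_{\mathcal{M}_{k}}^{2}$ through the update $x^{k+1}=x^{k}-(1+\theta_{k})c_{k}\mathcal{M}_{k}^{-1}v^{k}$, using $\langle v^{k},y^{k}-x^{*}\rangle\ge-\epsilon_{k}$ (which holds since $0\in T(x^{*})$ and $v^{k}\in T^{[\epsilon_{k}]}(y^{k})$ by \eqref{VMOR-HPE-a}), and multiplying the expanded criterion \eqref{VMOR-HPE-b} by $(1+\theta_{k})>0$, one sees that the $\theta_{k}$-weighted term in \eqref{VMOR-HPE-b} is designed precisely to absorb the $(1+\theta_{k})^{2}c_{k}^{2}\|\mathcal{M}_{k}^{-1}v^{k}\|_{\mathcal{M}_{k}}^{2}$ term coming from the square expansion, leaving
\[
\|x^{k+1}-x^{*}\|_{\mathcal{M}_{k}}^{2}\le\|x^{k}-x^{*}\|_{\mathcal{M}_{k}}^{2}-(1-\sigma)(1+\theta_{k})\|y^{k}-x^{k}\|_{\mathcal{M}_{k}}^{2}.
\]
Combining with $\mathcal{M}_{k+1}\preceq(1+\xi_{k})\mathcal{M}_{k}$, dividing by $\prod_{i}(1+\xi_{i})$ and telescoping yields, on the one hand, $\|x^{k}-x^{*}\|_{\mathcal{M}_{k}}^{2}\le\Xi\|x^{0}-x^{*}\|_{\mathcal{M}_{0}}^{2}$, hence (via $\underline{\omega}\mathcal{I}\preceq\mathcal{M}_{k}$) $\|x^{k}\|\le\|x^{*}\|+\sqrt{\Xi/\underline{\omega}}\,\|x^{0}-x^{*}\|_{\mathcal{M}_{0}}$, which is exactly the bracket appearing in $M$; on the other hand, $\sum_{i=1}^{k}(1-\sigma)(1+\theta_{i})\|y^{i}-x^{i}\|_{\mathcal{M}_{i}}^{2}$ is bounded by a constant multiple of $(1+\sum_{i}\xi_{i})\Xi\|x^{0}-x^{*}\|_{\mathcal{M}_{0}}^{2}$. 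Finally, since every term on the left of \eqref{VMOR-HPE-b} is nonnegative, we read off the per-iteration estimates $\epsilon_{k}\le\frac{1}{2\underline{c}}\|y^{k}-x^{k}\|_{\mathcal{M}_{k}}^{2}$ and, using $c_{k}\ge\underline{c}$ together with $\mathcal{M}_{k}^{-1}\succeq(\Xi\overline{\omega})^{-1}\mathcal{I}$, $\|v^{k}\|^{2}\le\frac{\Xi\overline{\omega}}{(1+\underline{\theta})\underline{c}^{2}}\|y^{k}-x^{k}\|_{\mathcal{M}_{k}}^{2}$.

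\emph{Step 2: part (i).} The summability bound forces some index $k_{0}\in\{1,\dots,k\}$ with $\|y^{k_{0}}-x^{k_{0}}\|_{\mathcal{M}_{k_{0}}}^{2}\le\frac{1}{k}\sum_{i=1}^{k}\|y^{i}-x^{i}\|_{\mathcal{M}_{i}}^{2}$, which by Step 1 is of order $\frac{(1+\sum_{i}\xi_{i})\Xi}{k(1-\sigma)(1+\underline{\theta})}\|x^{0}-x^{*}\|_{\mathcal{M}_{0}}^{2}$. Substituting this into the two per-iteration estimates of Step 1 gives the stated bounds on $\|v^{k_{0}}\|$ and $\epsilon_{k_{0}}$, the displayed powers of $1+\underline{\theta}$, of $\Xi$, and the constant $4$ emerging from the explicit tracking of telescoping constants; the membership $v^{k_{0}}\in T^{[\epsilon_{k_{0}}]}(y^{k_{0}})$ is just \eqref{VMOR-HPE-a}.

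\emph{Step 3: part (ii), the enlargement membership and $\overline{v}^{k}$.} First, the transportation formula for $\epsilon$-enlargements \cite{Burachik1997Enlargement,burachik1998varepsilon,Svaiter2000A}, applied with the convex weights $\lambda_{i}=\tau_{i}\alpha_{i}/\sum_{j}\tau_{j}\alpha_{j}$, shows $\overline{v}^{k}\in T^{[\overline{\epsilon}_{k}]}(\overline{y}^{k})$ with $\overline{\epsilon}_{k}=\sum_{i}\lambda_{i}\big(\epsilon_{i}+\langle y^{i}-\overline{y}^{k},v^{i}-\overline{v}^{k}\rangle\big)\ge0$, which is the first assertion of (ii). For $\|\overline{v}^{k}\|$, the update rule gives $\tau_{i}v^{i}=(1+\theta_{i})c_{i}v^{i}=\mathcal{M}_{i}(x^{i}-x^{i+1})$, so $\sum_{i=1}^{k}\tau_{i}\alpha_{i}v^{i}=\sum_{i=1}^{k}\alpha_{i}\mathcal{M}_{i}(x^{i}-x^{i+1})$; summation by parts rewrites this as $\alpha_{1}\mathcal{M}_{1}x^{1}-\alpha_{k+1}\mathcal{M}_{k+1}x^{k+1}+\sum_{i}\big[(\alpha_{i}-\alpha_{i+1})\mathcal{M}_{i}+\alpha_{i+1}(\mathcal{M}_{i}-\mathcal{M}_{i+1})\big]x^{i+1}$. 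Bounding the operator norms $\|\mathcal{M}_{i}\|\le\Xi\overline{\omega}$ and $\|\mathcal{M}_{i}-\mathcal{M}_{i+1}\|\le\xi_{i}\|\mathcal{M}_{i}\|$ (the latter by combining the hypothesis $\mathcal{M}_{i}\preceq(1+\xi_{i})\mathcal{M}_{i+1}$ with $\mathcal{M}_{i+1}\preceq(1+\xi_{i})\mathcal{M}_{i}$), bounding $\|x^{i}\|$ uniformly by Step 1, and dividing by $\sum_{i}\tau_{i}\alpha_{i}\ge(1+\underline{\theta})\underline{c}\sum_{i}\alpha_{i}$, produces exactly the displayed bound with constant $M$.

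\emph{Step 4: the $\overline{\epsilon}_{k}$ estimate and the main obstacle.} Writing $\overline{\epsilon}_{k}=\sum_{i}\lambda_{i}\epsilon_{i}+\sum_{i}\lambda_{i}\langle y^{i},v^{i}\rangle-\langle\overline{y}^{k},\overline{v}^{k}\rangle$, the term $\sum_{i}\lambda_{i}\epsilon_{i}$ is controlled by $2c_{i}\epsilon_{i}\le\sigma\|y^{i}-x^{i}\|_{\mathcal{M}_{i}}^{2}$ together with the uniform bound $\|y^{i}-x^{i}\|_{\mathcal{M}_{i}}^{2}\le\frac{\Xi}{(1-\sigma)(1+\underline{\theta})}\|x^{0}-x^{*}\|_{\mathcal{M}_{0}}^{2}$ of Step 1; the term $\langle\overline{y}^{k},\overline{v}^{k}\rangle$ by Cauchy--Schwarz using the $\overline{v}^{k}$-bound of Step 3 and a uniform bound on $\|\overline{y}^{k}\|\le\max_{i}\|y^{i}\|$; and the cross sum $\sum_{i}\lambda_{i}\langle y^{i},v^{i}\rangle=(\sum_{j}\tau_{j}\alpha_{j})^{-1}\sum_{i}\alpha_{i}\langle y^{i},\mathcal{M}_{i}(x^{i}-x^{i+1})\rangle$ by the same summation-by-parts device as in Step 3. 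Each resulting piece is a fixed multiple of one of $\|x^{*}\|$, $\Xi\|x^{*}\|^{2}+\tfrac{\Xi^{2}}{\underline{\omega}}\|x^{0}-x^{*}\|_{\mathcal{M}_{0}}^{2}$, $\tfrac{\Xi^{2}}{(1-\sigma)\underline{\omega}}\|x^{0}-x^{*}\|_{\mathcal{M}_{0}}^{2}$, $\tfrac{\Xi}{1-\sigma}\|x^{0}-x^{*}\|_{\mathcal{M}_{0}}^{2}$, or $M$, hence bounded by $B$; dividing by $(1+\underline{\theta})\underline{c}\sum_{i}\alpha_{i}$ gives the displayed estimate. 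I expect this last step to be the main obstacle: the cross term $\langle y^{i},v^{i}\rangle$ is not sign-definite, so monotonicity is of no direct use and one must route everything through the update rule and a careful Abel summation in the \emph{variable} metric, keeping track of the $\xi_{i}$-perturbations of $\mathcal{M}_{i}$, so that every term is dominated by one of the five quantities making up $B$; extracting the exact coefficients $(10+\underline{\theta})$ and $(2+\underline{\theta})$ is where the bookkeeping is heaviest.
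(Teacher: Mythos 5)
Your overall architecture matches the paper's (the Fej\'er inequality from the convergence proof, pigeonhole for part (i), the transportation formula plus Abel summation on $\tau_i v^i=\mathcal{M}_i(x^i-x^{i+1})$ for $\|\overline v^k\|$), but two of your steps, as written, would fail. First, in Step 1 you read off per-iteration bounds on $\epsilon_k$ and $\|v^k\|$ by asserting that ``every term on the left of \eqref{VMOR-HPE-b} is nonnegative''; this is false whenever $\theta_k<0$, which the framework explicitly allows ($\underline{\theta}>-1$ may be negative, and PADMM-EBB even takes $-1<\underline{\theta}<0$). The correct per-iteration estimates---and the only ones that reproduce the stated constants, with the factor $4$ and the powers $(1+\underline{\theta})^{3}$ and $(1+\underline{\theta})^{2}$---come from expanding $\|c_k\mathcal{M}_k^{-1}v^k+y^k-x^k\|_{\mathcal{M}_k}^2$ in \eqref{VMOR-HPE-b} and applying Young's inequality to $2\langle c_kv^k,x^k-y^k\rangle$, which yields $\|c_k\mathcal{M}_k^{-1}v^k\|_{\mathcal{M}_k}^2\le\frac{4}{(1+\theta_k)^2}\|x^k-y^k\|_{\mathcal{M}_k}^2$ and $c_k\epsilon_k\le\frac{1}{1+\theta_k}\|x^k-y^k\|_{\mathcal{M}_k}^2$; your claimed constants $\frac{1}{2\underline{c}}$ and $\frac{\Xi\overline{\omega}}{(1+\underline{\theta})\underline{c}^2}$ are neither justified for $\theta_k<0$ nor compatible with the bounds you must prove.

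Second, Step 4 is where the argument genuinely breaks. (a) You propose to control $\sum_i\lambda_i\epsilon_i$ by a uniform per-iteration bound on $\epsilon_i$ obtained from the uniform bound on $\|y^i-x^i\|_{\mathcal{M}_i}^2$; this only gives an $O(1)$ quantity that does not decay with $\sum_i\alpha_i$, so it cannot yield the stated estimate (which is $O(1/k)$ for $\alpha_i\equiv1$). You must instead use the accumulated bounds established in the convergence proof, namely $\sum_i(1+\theta_i)c_i\epsilon_i\le\frac{(1+\sum_i\xi_i)\Xi}{(1-\sigma)(1+\underline{\theta})}\|x^0-x^*\|_{\mathcal{M}_0}^2$ and the summability of $\|x^i-y^i\|_{\mathcal{M}_i}^2$. (b) For the cross sum you propose a plain Abel summation of $\sum_i\alpha_i\langle y^i,\mathcal{M}_i(x^i-x^{i+1})\rangle$; summation by parts there transfers differences onto $\alpha_i\mathcal{M}_iy^i$, and the increments $y^{i+1}-y^i$ are only square-summable, not summable, so with $\alpha_i\equiv1$ this route gives at best $O(1/\sqrt{k})$, not the claimed $O(1/k)$. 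The paper avoids this by splitting $\langle y^i-\overline y^k,v^i\rangle=\langle y^i-x^i,v^i\rangle+\langle x^i-\overline y^k,v^i\rangle$, absorbing the first piece via Young's inequality into square-summable quantities, and treating the second through the exact quadratic identity $2\langle\tau_iv^i,x^i-\overline y^k\rangle=\|\tau_i\mathcal{M}_i^{-1}v^i\|_{\mathcal{M}_i}^2+\|x^i-\overline y^k\|_{\mathcal{M}_i}^2-\|x^{i+1}-\overline y^k\|_{\mathcal{M}_i}^2$, telescoping the squared distances with the weights $\alpha_i$ and the $\xi_i$-perturbations of $\mathcal{M}_i$, with $B_k=\max_i\|x^{i+1}-\overline y^k\|_{\mathcal{M}_{i+1}}^2$ bounded by convexity of $\|\cdot\|^2$ and the uniform bounds on $\|x^i\|$ and $\|x^i-y^i\|_{\mathcal{M}_i}$. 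It is this quadratic telescoping, not a linear Abel summation, that produces the $\sum_i|\alpha_{i+1}-\alpha_i|$ and $\max_i\{\alpha_i\}\bigl(1+\sum_i\xi_i\bigr)$ structure and the constant $B$; without replacing (a) and (b) by such arguments, your Step 4 does not deliver the stated $\overline{\epsilon}_k$ estimate.
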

\begin{remark}
{\bf (i)}\ The iteration complexities in Theorem \ref{iteration-complexity-VMOR-HPE} merely depend on the solution set $T^{-1}(0)$ and initial point $x^{0}$.
The upper bounds of $(v^{k_{0}},\epsilon_{k_{0}})$ and $(\overline{v}_{k},\overline{\epsilon}_{0})$ are both inversely proportional to $\theta_{k}$,
 which, in combination with Theorem \ref{linear-rate}, theoretically demonstrates that a large over-relaxed step-size contributes to accelerating VMOR-HPE.\\
{\bf (ii)}\ Set $\alpha_k\!= 1$ or $k$. It holds that $\|\overline{v}^{k}\|\!\le \mathcal{O}(\frac{1}{k})$ and $\overline{\epsilon}_{k}\!\le \mathcal{O}(\frac{1}{k})$.
 However, setting $\alpha_k = k$ may lead to better performance than setting $\alpha_k = 1$, since
  $\alpha_{k} = k$ gives more weights on the latest generated points $y^{k}$ and $v^{k}$.
\end{remark}

\subsection{Connection to Existing Algorithms}

 First, we consider $\mathcal{M}_{k}=\mathcal{I}$. Under this situation, the proposed VMOR-HPE reduces to the over-relaxed HPE algorithm \cite{shen2017over} which covers
a number of primal first-order algorithms as special cases, such as FDRS algorithm, GFBS algorithm, FDRFS algorithm, \textit{etc}.
 Hence, they are also covered by the algorithmic framework of VMOR-HPE. Below, we show a large collection of other primal and primal-dual algorithms which
 fall into VMOR-HPE.

 \subsubsection{primal algorithms}

 {\bf FBHF Algorithm}\ tackles problem \eqref{inclusion-problem} as
 \begin{equation*}
 0 \in T(x)=(A+B_1+B_2)(x),\, x\in \Omega,
 \end{equation*}
 where $A$ is a maximal monotone operator, $B_{1}\!:\mathbb{X}\to \mathbb{X}$ is a $\beta$-cocoercive operator, $B_{2}\!:\mathbb{X}\to \mathbb{X}$ is a monotone and $L$-Lipschitz continuous operator,
 and $\Omega$ is a subset of $\mathbb{X}$. The FBHF algorithm has the iterations:
  \vspace{-2pt}
 \begin{subnumcases}{}
  y^{k} := \mathcal{J}_{\gamma_{k}A}\big(x^{k}-\gamma_{k}(B_1+B_2)x^{k}\big),           \nonumber \\
  x^{k+1} := P_{\Omega}\big(y^{k}+\gamma_{k} B_{2}(x^{k})-\gamma_{k} B_{2}(y^{k})\big). \nonumber
 \end{subnumcases}
  \vspace{-2pt}
 In the following, we focus on $\Omega\!=\!\mathbb{X}$ and replace $x^{k+1}$ by
 \begin{equation*}
 \!x^{k+1}\!:=\!x^{k}\!+\!(1+\theta_{k})\big(y^{k}-x^{k}\!+\!\gamma_{k} B_{2}(x^{k})-\gamma_{k}B_{2}(y^{k})\big)
 \end{equation*}
 to obtain an over-relaxed FBHF algorithm.
 The proposition below rigorously reformulates the over-relaxed FBHF algorithm as a specific case of the VMOR-HPE framework.
 \vspace{-6pt}
 \begin{proposition}\label{FBF-VMOR-HPE}
 Let $\{(x^{k},y^{k})\}$ be the sequence generated by the over-relaxed FBHF algorithm.
 Denote $\epsilon_{k}\!=\!\|x^{k}\!-\!y^{k}\|^2/(4\beta)$ and $v^{k}\!=\!\gamma_{k}^{-1}(x^{k}\!-\!y^{k})\!-\!B_{2}(x^{k})\!+\!B_{2}(y^{k})$. Then,
 \vspace{-2pt}
 \begin{subnumcases}{}
 \!(y^{k},v^{k})\in{\rm gph}\,T^{[\epsilon_{k}]}={\rm gph}\,(A+B_1+B_2)^{[\epsilon_{k}]},\nonumber\\
 \!\theta_k\big\| \gamma_kv^{k}\big\|^2\!+\!\big\| \gamma_kv^{k}\!+\!(y^{k}\!-\!x^{k})\big\|^2\!+\!2 \gamma_k\epsilon \!\leq\! \sigma\big\|y^{k}\!-\! x^{k}\big\|^2,\nonumber\\%
 \!x^{k+1}=x^{k}-(1+\theta_{k})\gamma_{k}v^{k},\nonumber 
 \end{subnumcases}
  \vspace{-1pt}
 where $(\gamma_{k},\,\theta_{k})$ satisfies $\theta_{k} \le \frac{\sigma-(\gamma_{k}L)^2+\gamma_{k}/(2\beta))}{1+(\gamma_{k}L)^2}$.
 \end{proposition}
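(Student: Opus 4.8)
The plan is to verify, with $\mathcal{M}_k=\mathcal{I}$ and $c_k=\gamma_k$, the three relations \eqref{VMOR-HPE-a}, \eqref{VMOR-HPE-b} and the update $x^{k+1}=x^k-(1+\theta_k)\gamma_k v^k$ in turn, starting with the two that are essentially bookkeeping. The update relation is immediate: from the definition of $v^k$ one has $-\gamma_k v^k=(y^k-x^k)+\gamma_kB_2(x^k)-\gamma_kB_2(y^k)$, which is exactly the increment appearing in the over-relaxed FBHF step, so $x^{k+1}=x^k+(1+\theta_k)(-\gamma_kv^k)$ as claimed.

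For the inclusion $(y^k,v^k)\in{\rm gph}\,T^{[\epsilon_k]}$, I would first read off from $y^k=\mathcal{J}_{\gamma_kA}\big(x^k-\gamma_k(B_1+B_2)x^k\big)$ that $a^k:=\gamma_k^{-1}(x^k-y^k)-B_1(x^k)-B_2(x^k)\in A(y^k)$, hence $v^k=a^k+B_1(x^k)+B_2(y^k)$. Since $a^k\in A(y^k)\subseteq A^{[0]}(y^k)$ and $B_2(y^k)\in B_2^{[0]}(y^k)$, it only remains to show $B_1(x^k)\in B_1^{[\epsilon_k]}(y^k)$. This is the standard transportation estimate for cocoercive operators: for arbitrary $z$, putting $u=B_1(z)-B_1(x^k)$, $\beta$-cocoercivity together with completing the square gives $\langle u,z-y^k\rangle=\langle u,z-x^k\rangle+\langle u,x^k-y^k\rangle\ge\beta\|u\|^2+\langle u,x^k-y^k\rangle\ge-\tfrac{1}{4\beta}\|x^k-y^k\|^2=-\epsilon_k$. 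The additivity of $\epsilon$-enlargements, $A^{[0]}(y^k)+B_1^{[\epsilon_k]}(y^k)+B_2^{[0]}(y^k)\subseteq(A+B_1+B_2)^{[\epsilon_k]}(y^k)$, which follows in one line from the defining inequality of $T^{[\epsilon]}$, then yields $v^k\in T^{[\epsilon_k]}(y^k)$ (note $T=A+B_1+B_2$ is maximal monotone since $B_1+B_2$ is single-valued with full domain).

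The core of the proof is \eqref{VMOR-HPE-b}. Writing $d^k=x^k-y^k$ and $e^k=\gamma_k\big(B_2(y^k)-B_2(x^k)\big)$, so that $\gamma_kv^k=d^k+e^k$ and $\gamma_kv^k+(y^k-x^k)=e^k$, and using $2\gamma_k\epsilon_k=\tfrac{\gamma_k}{2\beta}\|d^k\|^2$, the left-hand side of \eqref{VMOR-HPE-b} expands to $\big(\theta_k+\tfrac{\gamma_k}{2\beta}\big)\|d^k\|^2+2\theta_k\langle d^k,e^k\rangle+(1+\theta_k)\|e^k\|^2$. I would then invoke $L$-Lipschitz continuity of $B_2$ to bound $\|e^k\|^2\le(\gamma_kL)^2\|d^k\|^2$ and monotonicity of $B_2$ to get $\langle d^k,e^k\rangle=-\gamma_k\langle x^k-y^k,B_2(x^k)-B_2(y^k)\rangle\le0$; the cross term is then harmless when $\theta_k\ge0$ (simply discard it) and, when $\theta_k\in(-1,0)$, is controlled by Cauchy--Schwarz via $2\theta_k\langle d^k,e^k\rangle\le 2|\theta_k|\gamma_kL\|d^k\|^2$. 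Collecting the resulting coefficient of $\|d^k\|^2$ and requiring it not to exceed $\sigma$ produces precisely the displayed admissibility constraint relating $\gamma_k$ and $\theta_k$.

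The main obstacle is this last step: verifying \eqref{VMOR-HPE-b} forces one to combine the Lipschitz and the monotonicity properties of $B_2$ simultaneously, and the admissible range of $\theta_k$ only falls out after a short case distinction on the sign of $\theta_k$ (equivalently, on how the cross term $\langle d^k,e^k\rangle$ enters). Everything else reduces either to rearranging the definitions of $v^k$ and $\epsilon_k$ or to standard identities for $\epsilon$-enlargements.
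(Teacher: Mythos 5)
Your proposal is correct and follows essentially the paper's own route: the extra-gradient identity is read off from the definition of $v^{k}$, the inclusion is obtained from the resolvent characterization of $y^{k}$ plus the cocoercive transportation estimate $B_{1}(x^{k})\in B_{1}^{[\epsilon_{k}]}(y^{k})$ (which you prove directly where the paper cites Svaiter's Lemma 2.2) and additivity of $\epsilon$-enlargements, and the error criterion follows from the same expansion using monotonicity and $L$-Lipschitz continuity of $B_{2}$, yielding the condition $\theta_{k}\big(1+(\gamma_{k}L)^{2}\big)+(\gamma_{k}L)^{2}+\gamma_{k}/(2\beta)\le\sigma$ exactly as in the paper's proof (the ``$+\gamma_{k}/(2\beta)$'' in the displayed statement is a sign typo, cf.\ Remark (i)). The only divergence is your case $\theta_{k}<0$: the Cauchy--Schwarz bound on the cross term leaves an extra $2|\theta_{k}|\gamma_{k}L\|d^{k}\|^{2}$, so you do not recover ``precisely'' the displayed constraint there, but this is no weaker than the paper, whose own proof multiplies the monotonicity inequality by $\theta_{k}$ and is therefore valid only for $\theta_{k}\ge 0$ anyway.
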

 \vspace{-4pt}
 \begin{remark}
 {\bf (i)}\ If $\theta_{k}\!=\!0$, $\gamma_{k}$ reduces to 
 $\gamma_{k}^2L^2\!+\!\gamma_{k}/(2\beta)\!\le\!\sigma\!<\!1\!\Leftrightarrow\! 0\!<\!\gamma_{k}\!<\!4\beta/(1\!+\!\sqrt{1\!+\!16\beta^2L^2})$
 which coincides with the properties of $\gamma_{k}$ in \cite{briceno2017forward}.\\
 {\bf (ii)}\  By \cite{Solodov1999A1}, a slightly modified VMOR-HPE by attaching an extra projection step $P_{\Omega}$ on $x^{k+1}$ can cover the original FBHF algorithm.\\
 {\bf (iii)}\ Let $B_{1}=0$ or $B_{2}=0$.  The over-relaxed FBHF algorithm reduces to over-relaxed Tseng's forward-backward-forward splitting algorithm \cite{Tseng2000A}
 or over-relaxed forward-backward splitting algorithm \cite{Passty1979Ergodic}. Thus, they are special cases of VMOR-HPE by Proposition \ref{FBF-VMOR-HPE}.
 \end{remark}

 {\bf nMFBHF Algorithm }\ The {\bf n}on self-adjoint {\bf M}etric variant of FBHF (nMFBHF) algorithm takes the iterations:
 \begin{subnumcases}\!{}
 \!\!\!y^{k}\!:=\! \mathcal{J}_{P^{-1} A}\big(x^{k}-P^{-1}(B_1+B_2)(x^{k})\big), \nonumber\\
 \!\!\!x^{k+1}\!:=\! P^{U}_{\Omega}\big(y^{k}\!+\!U^{-1}[B_{2}(x^{k})\!-\! B_{2}(y^{k})\!-\!S(x^{k}\!-\!y^{k})]\big), \qquad \nonumber 
 \end{subnumcases}
 where $P$ is a bounded linear operator, $U=(P\!+\!P^*)/2$, $S=(P\!-\!P^*)/2$, and $P^{U}_{\Omega}$ is the projection operator of $\Omega$ under the weighted inner product $\langle\cdot,U\cdot\rangle$.
 Similarly, let $\Omega\!=\!\mathbb{X}$. We obtain the over-relaxed nMFBHF algorithm by replacing the updating step $x^{k+1}$ as the following form
 \begin{align*}
 x^{k+1}&:=x^{k}+(1+\theta_{k})\big(y^{k}-x^{k}\!+\!U^{-1}[B_{2}(x^{k})\!-\! B_{2}(y^{k})]\nonumber\\
                  & \qquad\qquad\qquad\qquad\qquad -U^{-1}[S(x^{k}-y^{k})]\big).
 \end{align*}
 Below, we show that the over-relaxed nMFBHF algorithm also falls into the VMOR-HPE framework.
 Notice that $B_{2}-S$ preserves the monotonicity by the skew symmetry of $S$, and $K$ is denoted as its Lipschitz constant.
 \begin{proposition}\label{nMFBHF}
 Let $\{(x^{k},y^{k})\}$ be the sequence generated by the over-relaxed nMFBHF algorithm.
 Denote $\epsilon_{k}\!= \!\|x^{k}\!-\!y^{k}\|^2/(4\beta)$ and $v^{k}\!=\!P(x^{k}\!-\!y^{k})\!+\!B_{2}(y^{k})\!-\!B_{2}(x^{k})$. The step-size $\theta_{k}$ satisfies
 $\theta_{k}\!+\!\frac{K^2(1\!+\!\theta_{k})}{\lambda^{2}_{\min}(U)}\!+\!\frac{1}{2\beta\lambda_{\min}(U)}\!\le\!\sigma$. Then,
 \begin{subnumcases}\!{}
 \!\!\!(y^{k},v^{k})\in{\rm gph}\,T^{[\epsilon_{k}]}={\rm gph}\,(A+B_1+B_2)^{[\epsilon_{k}]},\nonumber\\
 \!\!\!\theta_k\big\|U^{-1}\!v^{k}\big\|_{\!U}^2\!+\!\big\|U^{-1}\!v\!+\!\!(y^{k}\!-\!x^{k})\big\|_{\!U}^2\!+\!\!2\epsilon\! \leq\! \sigma\big\|y^{k}\!- \!x^{k}\big\|_{\!U}^2, \nonumber\\
 \!\!\! x^{k+1}=x^{k} - (1+\theta_{k})U^{-1}v^{k}. \nonumber
 \end{subnumcases}
 \end{proposition}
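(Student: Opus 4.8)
The plan is to verify, one relation at a time, that the triple $(\epsilon_k,y^k,v^k)$ with $\epsilon_k=\|x^k-y^k\|^2/(4\beta)$ and $v^k=P(x^k-y^k)+B_2(y^k)-B_2(x^k)$ satisfies the three defining conditions of the VMOR-HPE framework with $U$ in the role of $\mathcal{M}_k$, namely \eqref{VMOR-HPE-a}, the inequality \eqref{VMOR-HPE-b}, and the extra-gradient update. I would start with the update, which is purely algebraic: writing $P=U+S$ and setting $e:=S(x^k-y^k)-\big(B_2(x^k)-B_2(y^k)\big)=-\big[(B_2-S)(x^k)-(B_2-S)(y^k)\big]$, one gets the identity $v^k=U(x^k-y^k)+e$, hence $U^{-1}v^k=(x^k-y^k)+U^{-1}e$; plugging this into $x^k-(1+\theta_k)U^{-1}v^k$ reproduces exactly the displayed over-relaxed nMFBHF step, since $y^k-x^k+U^{-1}[B_2(x^k)-B_2(y^k)]-U^{-1}[S(x^k-y^k)]=-(x^k-y^k)-U^{-1}e$. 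The identity $v^k=U(x^k-y^k)+e$ is what I would carry through the rest of the proof.

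Next I would establish the inclusion \eqref{VMOR-HPE-a}. From $y^k=\mathcal{J}_{P^{-1}A}\big(x^k-P^{-1}(B_1+B_2)(x^k)\big)$ and the definition of the resolvent, $P(x^k-y^k)-(B_1+B_2)(x^k)\in A(y^k)$; adding $B_1(x^k)+B_2(y^k)$ to both sides gives $v^k\in A(y^k)+B_1(x^k)+B_2(y^k)$. The only non-routine point is to certify $B_1(x^k)\in B_1^{[\epsilon_k]}(y^k)$: by $\beta$-cocoercivity of $B_1$ and Cauchy--Schwarz, $\langle B_1(x^k)-B_1(z),y^k-z\rangle\ge \beta t^2-t\|x^k-y^k\|$ with $t:=\|B_1(x^k)-B_1(z)\|$, and minimizing the right-hand side over $t\ge 0$ yields exactly $-\epsilon_k$, for every $z$. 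Since $A$ is maximal monotone ($A^{[0]}=A$) and $B_2$ is single-valued monotone ($B_2(y^k)\in B_2^{[0]}(y^k)$), the additivity of $\epsilon$-enlargements, $T_1^{[\eta_1]}(y)+T_2^{[\eta_2]}(y)\subseteq (T_1+T_2)^{[\eta_1+\eta_2]}(y)$, then gives $v^k\in (A+B_1+B_2)^{[\epsilon_k]}(y^k)=T^{[\epsilon_k]}(y^k)$, and plainly $\epsilon_k\ge 0$.

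The heart of the argument is the relative-error inequality \eqref{VMOR-HPE-b}. Using $U^{-1}v^k=(x^k-y^k)+U^{-1}e$, I would expand $\big\|U^{-1}v^k+(y^k-x^k)\big\|_U^2=\|U^{-1}e\|_U^2=\langle e,U^{-1}e\rangle$ and $\|U^{-1}v^k\|_U^2=\|x^k-y^k\|_U^2+2\langle x^k-y^k,e\rangle+\|U^{-1}e\|_U^2$. Then I would invoke the two structural properties of $B_2-S$: monotonicity gives $\langle x^k-y^k,e\rangle\le 0$, so the cross term is discarded when $\theta_k\ge 0$; and $K$-Lipschitz continuity gives $\|e\|\le K\|x^k-y^k\|$, so, using $U\succeq\lambda_{\min}(U)\mathcal{I}$ in the two forms $\|\cdot\|^2\le \lambda_{\min}^{-1}(U)\|\cdot\|_U^2$ and $\langle\cdot,U^{-1}\cdot\rangle\le\lambda_{\min}^{-1}(U)\|\cdot\|^2$, one gets $\|U^{-1}e\|_U^2\le \frac{K^2}{\lambda_{\min}^{2}(U)}\|x^k-y^k\|_U^2$; finally $2\epsilon_k=\frac{1}{2\beta}\|x^k-y^k\|^2\le\frac{1}{2\beta\lambda_{\min}(U)}\|x^k-y^k\|_U^2$. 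Summing the three contributions, the left-hand side of \eqref{VMOR-HPE-b} is bounded by $\big[\theta_k+\frac{K^2(1+\theta_k)}{\lambda_{\min}^{2}(U)}+\frac{1}{2\beta\lambda_{\min}(U)}\big]\|y^k-x^k\|_U^2$, which is $\le\sigma\|y^k-x^k\|_U^2$ precisely by the stated step-size condition.

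I expect the main obstacle to be the bookkeeping in this last step: making the $U$-norm constants tight enough that the three error contributions align exactly with the three summands of the step-size condition, and keeping track of the sign of the cross term $2\theta_k\langle x^k-y^k,e\rangle$ --- it is harmless for $\theta_k\ge 0$, while for negative $\theta_k\in[\underline{\theta},0)$ one must either restrict to the favorable range or absorb the extra term via $\|e\|\le K\|x^k-y^k\|$ at the price of a marginally stronger condition, a point I would pin down against the intended use in the primal--dual reductions. As a sanity check, specializing $P=U=\mathcal{I}$ and $S=0$ (so $K=L$) collapses the whole computation to Proposition~\ref{FBF-VMOR-HPE} with $\gamma_k=1$.
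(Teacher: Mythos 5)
Your proposal is correct and follows essentially the same route as the paper's proof: the same algebraic identity $U^{-1}P=I+U^{-1}S$ to recast the update as $x^{k+1}=x^{k}-(1+\theta_{k})U^{-1}v^{k}$, the same resolvent-plus-enlargement argument for the inclusion (your direct cocoercivity computation is exactly the cited enlargement lemma for $B_{1}$), and the same expansion of the error criterion using monotonicity and $K$-Lipschitz continuity of $B_{2}-S$ together with $\lambda_{\min}(U)$ bounds, matching the stated step-size condition term by term. The cross-term caveat you flag for $\theta_{k}<0$ is implicitly present in the paper's proof as well, whose first inequality discards $-2\theta_{k}\langle x^{k}-y^{k},(B_{2}-S)(x^{k})-(B_{2}-S)(y^{k})\rangle$ using only monotonicity (hence $\theta_{k}\ge 0$), so your conditional handling is consistent with the paper's treatment.
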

 Let $\theta_{k}=0$, and then $\theta_{k}\!+\frac{K^2(1\!+\!\theta_{k})}{\lambda^{2}_{\min}(U)}\!+\!\frac{1}{2\beta\lambda_{\min}(U)}\le\sigma<1$ reduces to
 $\frac{K^2}{\lambda^{2}_{\min}(U)}+\frac{1}{2\beta\lambda_{\min}(U)}< 1$, which coincides with the required condition in \cite{briceno2017forward}.

 \noindent
 {\bf PPG Algorithm}\ Consider the following minimization of a sum of many smooth and nonsmooth convex functions
 \begin{equation}\label{PPG-problem}
 \min_{x\in\mathbb{X}}  r(x)+\frac{1}{n}\sum_{i=1}^{n}f_{i}(x)+\frac{1}{n}\sum_{i=1}^{n}g_{i}(x).
 \end{equation}
 Let $\alpha\in (0,\frac{3}{2L})$. The PPG algorithm takes iterations as
 \begin{subnumcases}{}
 \!\!\!x^{k+\frac{1}{2}}:= {\rm Prox}_{\alpha r}\big(\frac{1}{n}\sum_{i=1}^{n}z_{i}^{k}\big),\nonumber\\
 \!\!\!x_{i}^{k+1}\!\! := \!{\rm Prox}_{\alpha g_{i}}\big(2x^{k\!+\!\frac{1}{2}}\!-\!z_{i}^{k}\!-\!\alpha\nabla\!f_{i}(x^{k\!+\!\frac{1}{2}})\big),\ i\!=\!1,\ldots,n, \qquad\nonumber\\
 \!\!\!z_{i}^{k+1} := z_{i}^{k}+x_{i}^{k+1}-x^{k+\frac{1}{2}},\ i=1,2,\ldots,n,  \nonumber
 \end{subnumcases}
  where $g_{i},r\!:\!\mathbb{X}\!\to\!\!(-\infty,+\infty]$ are proper closed convex functions, and $f_{i}\!:\!\mathbb{X}\!\to\!(-\infty,+\infty)$
 is a differentiable convex function satisfying $\|\nabla f_{i}(x)\!-\!\nabla f_{i}(y)\|\!\le\! L\|x-y\|$ for all $i$.

 Denote $\overline{f}({\bf{x}})\!=\!\frac{1}{n}\sum_{i=1}^{n}f_{i}(x_{i})$, $\overline{g}({\bf{x}})\!=\!\frac{1}{n}\sum_{i=1}^{n}g_{i}(x_{i})$
 and $\overline{r}({\bf x})\!=\!{\bf 1}_{V}({\bf x})\!+\!\frac{1}{n}\sum_{i=1}^{n}r(x_{i})$,
 where ${\bf 1}_{V}({\bf x})$ is an indicator function over $V$. $V=\{{\bf x}=(x_1,x_{2},\ldots,x_{n})\in \mathbb{X}^{n}\mid \mathbb{X}^{n}=\mathbb{X}\times\mathbb{X}\times\ldots\times\mathbb{X},\ x_{1}=x_{2}=\cdots=x_{n}\}$. Then, problem \eqref{PPG-problem} is equivalent to $\min_{\bf x}  \overline{f}({\bf x})+\overline{g}({\bf x})+\overline{r}({\bf x})$ and
 \begin{equation}\label{PPG-problem-eq}
  0\in \nabla\overline{f}({\bf x})+\partial\overline{r}({\bf x}) +\partial\overline{g}({\bf x}), {\bf x} \in \mathbb{X}^{n}.
 \end{equation}
 Following the notation in \cite{shen2017over}, for $\alpha > 0$ we define the set-valued mapping
 $\mathcal{S}_{\alpha,\nabla\overline{f}+\partial\overline{g},\overline\partial{r}}:\mathbb{X}^{n}\rightrightarrows\mathbb{X}^{n}$
  as:
 \begin{align*}
 &\!{\rm gph}\big(\mathcal{S}_{\alpha,\nabla\overline{f}+\partial\overline{g},\overline\partial{r}}\big)
   \!\!=\!\!  \left\{\!({\bf x}_{1}\!+\!\alpha {\bf y}_{2}, {\bf x}_{2}\!-\!{\bf x}_{1})\!\mid\!({\bf x}_{2},{\bf y}_{2})\!\in\!{\rm gph}\overline\partial{r}, \right.\nonumber\\
               & \qquad\quad\quad \left. ({\bf x}_{1},{\bf y}_{1})\!\in\!{\rm gph}\,(\nabla\overline{f}\!+\!\overline\partial{g}),  {\bf x}_{1}\!+\!\alpha{\bf y}_{1} \!=\! {\bf x}_{2}\!-\! \alpha{\bf y}_{2}\right\}.
 \end{align*}
 By the convexity of $\overline{f},\overline{g}$ and $\overline{r}$, $\mathcal{S}_{\alpha,\nabla\overline{f}+\partial\overline{g},\partial\overline{r}}$ is a maximal monotone operator \cite{eckstein1992douglas}.
 To obtain the over-relaxed PPG algorithm, we replace $z_{i}^{k+1}$ by
 \[
 z_{i}^{k+1}\!:=\!z_{i}^{k}\!+\!(1\!+\!\theta_{k})(x_{i}^{k+1}\!-\!x^{k+\frac{1}{2}}),\ i=1,\ldots,n.
 \]
 Below, we show that the over-relaxed PPG algorithm is a specific case of the VMOR-HPE framework.
 \begin{proposition}\label{PPG-HPE}
 Let $(x^{k+\frac{1}{2}},x_{i}^{k},z_{i}^{k})$ be the sequence generated by the over-relaxed PPG algorithm.
 Denote ${\bf x}^{k}=(x_{1}^{k},\cdots,x_{n}^{k})$, ${\bf z}^{k}=(z_{1}^{k},\cdots,z_{n}^{k})$, ${\bf 1}=(1,\cdots,1)\!\in\!\mathbb{X}^{n}$,
 ${\bf y}^{k}={\bf z}^{k}+ {\bf x^{k+1}}- x^{k+\frac{1}{2}}{\bf 1}$, ${\bf v}^{k}= x^{k+\frac{1}{2}}{\bf 1}-{\bf x}^{k+1}$ and $\epsilon_{k}=L\sum_{i=1}^{n}\|x_{i}^{k+1}-x^{k+\frac{1}{2}}\|/4$.
 Parameters $(\theta_{k},\alpha)$ are constrained by $\theta_k + L\alpha/2\le \sigma$. Then, it holds that
  \begin{subnumcases}{}
  \!\!\!({\bf y}^{k},{\bf v}^{k})\in {\rm gph}\,\mathcal{S}_{\alpha,\nabla\overline{f}+\partial\overline{g},\overline\partial{r}}^{[\alpha\epsilon_{k}]}={\rm gph}\,T^{[\alpha\epsilon_{k}]}, \nonumber\\
  \!\!\!\theta_k\big\|{\bf v}^{k}\big\|^2\!+\!\big\|{\bf v}^{k}+({\bf y}^{k}\!-\!{\bf z}^{k})\big\|^2+2\alpha\epsilon_{k}
               \!\leq\! \sigma\big\|{\bf y}^{k}\!-\!{\bf z}^{k}\big\|^2, \nonumber\\
  \!\!\!{\bf z}^{k+1}={\bf z}^{k}- (1+\theta_{k}){\bf v}^{k}. \nonumber
 \end{subnumcases}
 \end{proposition}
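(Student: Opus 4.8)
The plan is to verify the three displayed relations of the proposition one at a time, following the same pattern used (presumably) for Propositions \ref{FBF-VMOR-HPE} and \ref{nMFBHF}. The guiding idea is to recognize the over-relaxed PPG updates as a resolvent step followed by an over-relaxed correction for the lifted maximal monotone operator $T=\mathcal{S}_{\alpha,\nabla\overline{f}+\partial\overline{g},\overline\partial{r}}$, with metric $\mathcal{M}_{k}=\mathcal{I}$, penalty $c_{k}=1$, and over-relaxation $\theta_{k}$; then match the error-criterion terms to \eqref{VMOR-HPE-a}–\eqref{VMOR-HPE-b}.

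First I would establish the inclusion $({\bf y}^{k},{\bf v}^{k})\in{\rm gph}\,\mathcal{S}^{[\alpha\epsilon_{k}]}_{\alpha,\nabla\overline{f}+\partial\overline{g},\overline\partial{r}}$. Set ${\bf x}_{1}=x^{k+\frac12}{\bf 1}$, ${\bf x}_{2}={\bf x}^{k+1}$. The $r$-proximal step $x^{k+\frac12}={\rm Prox}_{\alpha r}\big(\tfrac1n\sum_i z_i^k\big)$ together with the structure of $\overline{r}={\bf 1}_{V}+\tfrac1n\sum r(\cdot)$ gives ${\bf y}_{2}:=\tfrac1\alpha({\bf z}^{k}-{\bf x}_{1})\in\overline\partial{r}({\bf x}_{1})$ after using the averaging/projection onto $V$; and the $g_{i}$-proximal step yields ${\bf y}_{1}$ with ${\bf y}_{1}\in(\nabla\overline f+\overline\partial{g})({\bf x}_{2})$ up to an $\epsilon_{k}$-enlargement coming from evaluating $\nabla f_{i}$ at $x^{k+\frac12}$ rather than at $x_{i}^{k+1}$ — this is exactly where the $L$-Lipschitz bound and the definition $\epsilon_{k}=L\sum_i\|x_i^{k+1}-x^{k+\frac12}\|/4$ enter, via the standard enlargement estimate (the factor $\alpha$ appears because the operator is $\alpha$-scaled). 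One then checks the compatibility ${\bf x}_{1}+\alpha{\bf y}_{1}={\bf x}_{2}-\alpha{\bf y}_{2}$ from the update relations, so $({\bf y}^{k},{\bf v}^{k})$ lies in the graph of the enlargement by the definition of $\mathcal{S}$ and of $T^{[\cdot]}$.

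Next I would verify the extra-gradient step. From ${\bf v}^{k}=x^{k+\frac12}{\bf 1}-{\bf x}^{k+1}$ and the over-relaxed update $z_i^{k+1}=z_i^k+(1+\theta_k)(x_i^{k+1}-x^{k+\frac12})$ one gets immediately ${\bf z}^{k+1}={\bf z}^{k}-(1+\theta_k){\bf v}^{k}$, which is \eqref{VMOR-HPE-a3}-type with $c_k\mathcal{M}_k^{-1}=\mathcal{I}$. Finally, for the relative error inequality, note ${\bf v}^{k}+({\bf y}^{k}-{\bf z}^{k})=\nabla\overline f(\cdot)$-type residual; more precisely ${\bf y}^{k}-{\bf z}^{k}={\bf x}^{k+1}-x^{k+\frac12}{\bf 1}=-{\bf v}^{k}$, so $\|{\bf v}^{k}+({\bf y}^{k}-{\bf z}^{k})\|^{2}=0$ and $\|{\bf y}^{k}-{\bf z}^{k}\|^{2}=\|{\bf v}^{k}\|^{2}$; hence \eqref{VMOR-HPE-b} collapses to $\theta_k\|{\bf v}^{k}\|^2+2\alpha\epsilon_k\le\sigma\|{\bf v}^{k}\|^2$. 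Using $\epsilon_k=L\sum_i\|x_i^{k+1}-x^{k+\frac12}\|/4$ and a Cauchy–Schwarz/norm-equivalence bound $\sum_i\|x_i^{k+1}-x^{k+\frac12}\|\le$ (suitable multiple of) $\|{\bf v}^{k}\|$ — wait, this is the delicate point — one must in fact compare $\sum_i\|\cdot\|$ with $\|{\bf v}^{k}\|=(\sum_i\|\cdot\|^2)^{1/2}$, and the correct route is to bound $2\alpha\epsilon_k$ by $\tfrac{L\alpha}{2}\|{\bf v}^{k}\|^{2}$ using the elementary inequality $2ab\le a^2+b^2$ applied coordinatewise before summing (the "$/4$" in $\epsilon_k$ is chosen precisely so that this works), which then gives $\theta_k\|{\bf v}^{k}\|^2+\tfrac{L\alpha}{2}\|{\bf v}^{k}\|^2\le\sigma\|{\bf v}^{k}\|^2$, i.e. the stated constraint $\theta_k+L\alpha/2\le\sigma$.

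The main obstacle, I expect, is the bookkeeping in the first step: correctly passing from the per-block PPG updates to the lifted operator $\mathcal{S}$ — in particular handling the indicator ${\bf 1}_{V}$ and the averaging map, and getting the $\epsilon_{k}$-enlargement estimate with the right constant (the $L/4$ and the overall factor $\alpha$). The second and third steps are short once the first is in place, because the metric is the identity and the key norm identity ${\bf y}^{k}-{\bf z}^{k}=-{\bf v}^{k}$ trivializes the quadratic terms; the only care needed there is the coordinatewise Young's inequality that absorbs $\alpha\epsilon_k$ into $\sigma\|{\bf v}^{k}\|^{2}$.
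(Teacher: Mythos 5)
Your overall route is the paper's route: read the two proximal steps as producing a point in the graph of the lifted operator $\mathcal{S}_{\alpha,\nabla\overline{f}+\partial\overline{g},\partial\overline{r}}$, put the gradient evaluated at $x^{k+\frac12}$ into an enlargement of $\nabla\overline f$, observe ${\bf y}^{k}-{\bf z}^{k}=-{\bf v}^{k}$ so that the middle term of \eqref{VMOR-HPE-b} vanishes, and invoke $\theta_k+L\alpha/2\le\sigma$. Two points, however, need repair.

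The substantive one is your treatment of $2\alpha\epsilon_k$. You correctly flag the mismatch between $\sum_i\|x_i^{k+1}-x^{k+\frac12}\|$ and $\|{\bf v}^k\|^2=\sum_i\|x_i^{k+1}-x^{k+\frac12}\|^2$, but the fix you propose --- ``$2ab\le a^2+b^2$ applied coordinatewise'' --- cannot work: no Young-type inequality bounds a sum of first powers by a constant multiple of the sum of squares (take all residuals small and the first powers dominate). The way this step actually closes is that $\epsilon_k$ is the \emph{squared} quantity $\frac{L}{4}\|{\bf x}^{k+1}-x^{k+\frac12}{\bf 1}\|^2$; this is exactly what the enlargement estimate of \citep[Lemma 2.2]{Svaiter2014A} delivers (compare Propositions \ref{FBF-VMOR-HPE} and \ref{nMFBHF}, where $\epsilon_k=\|x^k-y^k\|^2/(4\beta)$), and the unsquared norm in the statement is a typo. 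With the squared definition one has the identity $2\alpha\epsilon_k=\frac{L\alpha}{2}\|{\bf v}^k\|^2$, so the left-hand side of \eqref{VMOR-HPE-b} equals $(\theta_k+L\alpha/2)\|{\bf v}^k\|^2$ and the bound follows with no inequality at all; your proposal, as written, would fail at this step. A second, smaller slip is in the bookkeeping for $\mathcal{S}$: in the paper's definition $\overline\partial r$ is evaluated at ${\bf x}_2$ and $\nabla\overline f+\partial\overline g$ at ${\bf x}_1$, whereas you take ${\bf y}_2\in\overline\partial r({\bf x}_1)$ with ${\bf x}_1=x^{k+\frac12}{\bf 1}$, ${\bf x}_2={\bf x}^{k+1}$; with that assignment your compatibility equation ${\bf x}_1+\alpha{\bf y}_1={\bf x}_2-\alpha{\bf y}_2$ reads $3x^{k+\frac12}{\bf 1}-{\bf x}^{k+1}-{\bf z}^k={\bf x}^{k+1}+x^{k+\frac12}{\bf 1}-{\bf z}^k$ and is false. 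Aligning with the definition (take ${\bf x}_1={\bf x}^{k+1}$, ${\bf y}_1=\alpha^{-1}(2x^{k+\frac12}{\bf 1}-{\bf x}^{k+1}-{\bf z}^k)$, ${\bf x}_2=x^{k+\frac12}{\bf 1}$, ${\bf y}_2=\alpha^{-1}({\bf z}^k-x^{k+\frac12}{\bf 1})$) makes both sides equal $2x^{k+\frac12}{\bf 1}-{\bf z}^k$ and gives the graph point $({\bf x}_1+\alpha{\bf y}_2,{\bf x}_2-{\bf x}_1)=({\bf y}^k,{\bf v}^k)$; also note that promoting the enlargement of the inner operator to the $\alpha\epsilon_k$-enlargement of $\mathcal{S}$ is not automatic and is the cited fact from \cite{shen2017over} that your parenthetical remark gestures at.
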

 \begin{remark}\label{PPG-HPE-remark}
 {\bf (i)}\ Let $\theta_{k}\!=\!0$.  $\alpha\!<\!2/L$ can guarantee the global convergence of the original PPG algorithm, which largely expands the region $\alpha\!<\!3/(2L)$ in \cite{ryu2017proximal}.\\
 {\bf (ii)}\ PPG algorithm has been shown to cover ADMM \cite{boyd2011distributed} and Davis-Yin 3OS algorithm \cite{davis2015three}.
 Thus, they also fall into the VMOR-HPE framework.
 \end{remark}
 {\bf AFBAS Algorithm}\ Let $A\!:\!\!\mathbb{X}\!\rightrightarrows\!\mathbb{X}$ be a maximally monotone operator, $M\!:\!\mathbb{X}\!\to\!\mathbb{X}$ be a linear operator,
 and $C\!:\!\mathbb{X}\!\to\!\mathbb{X}$ be a $\beta$-cocoercive operator with respect to $\|\cdot\|_{P}$ satisfying
  $\big\langle x\!-\!x',C(x)\!-\!C(x')\big\rangle\!\ge\!\beta\big\|C(x)\!-\!C(x')\big\|^2_{\!P^{-1}}$, respectively.
  The AFBAS algorithm solves problem \eqref{inclusion-problem} as below:
 \[
 0 \in T(x)=(A+M+C)(x),\ x\in \mathbb{X}.
 \]
 Let $S\!:\mathbb{X}\to \mathbb{X}$ be any self-adjoint positive definite linear operator and $K\!:\mathbb{X}\to\mathbb{X}$ be a skew adjoint operator, respectively.
 Denote $H=P+K$. Then, the AFBAS algorithm is defined as:
 \vspace{-2pt}
 \begin{subnumcases}{}
 \overline{x}^{k} := (H+A)^{-1}\big(H-M-C\big)x^{k}, \nonumber\\ 
 x^{k+1} := x^{k}+\alpha_{k}S^{-1}(H+M^*)(\overline{x}^{k}-x^{k}), \qquad  \nonumber 
  \vspace{-6pt}
 \end{subnumcases}
 where  $\alpha_{k}\!=\!\left[\lambda_{k}\|\overline{z}^{k}\!-\!z^{k}\|^2_{\!P}\|\right]{\big/}\left[\|(H+M^*)(\overline{z}^{k}\!-\!z^{k})\|^{2}_{\!S^{-1}}\right]$
 and $\lambda_{k}\in [\underline{\lambda},\overline{\lambda}]\le [0,2-1/(2\beta)]$.
 Throughout \cite{latafat2017asymmetric}, $M$ is specified to a skew-adjoint linear operator, \textit{i.e.}, $M^*=-M$.
 \begin{proposition}\label{AFBAS-VMORHPE}
 Let $(x^{k},\overline{x}^{k})$ be the sequence generated by the AFBAS algorithm.
 Denote $\theta_{k}=\alpha_{k}-1$, $v^{k}=(H+M^*)(x^{k})-(H+M^*)(\overline{x}^{k})$ and $\epsilon_{k} = \frac{\|\overline{z}^{k}-z^{k}\|^2_{P}}{4\beta}$. Then,
 \vspace{-4pt}
  \begin{subnumcases}\!{}
 \!\!(\overline{x}^{k},v^{k}) \in {\rm gph}\,(A+M+C)^{[\epsilon_{k}]},\nonumber\\
 \!\!\theta_k\big\|S^{-1}v^{k}\big\|_{S}^2\!+\!\big\|S^{-1}v\!+\!(\overline{x}^{k}\!-\!x^{k})\big\|_{S}^2\!+\!2\epsilon \!\leq\! \sigma\big\|\overline{x}^{k}\!-\!x^{k}\big\|_{S}^2,\nonumber\\
 \!\!x^{k+1}:=x^{k}-(1+\theta_{k})S^{-1}v^{k}.\nonumber
 \end{subnumcases}
 \vspace{-12pt}
 \end{proposition}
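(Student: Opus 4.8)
The plan is to show that the three displayed relations are exactly instances of \eqref{VMOR-HPE-a}–\eqref{VMOR-HPE-b} and the update step of Algorithm \ref{Alg:VMOR-HPE} with the metric $\mathcal{M}_k = S$, the penalty $c_k=1$, and the stated substitutions. The argument splits into three parts, matching the three subnumcases, and follows closely the pattern already used in Propositions \ref{FBF-VMOR-HPE}, \ref{nMFBHF}, and \ref{PPG-HPE}.

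First, I would verify the enlargement inclusion $(\overline{x}^k, v^k)\in{\rm gph}\,(A+M+C)^{[\epsilon_k]}$. From the first AFBAS step, $(H-M-C)x^k \in (H+A)\overline{x}^k$, i.e. $H x^k - M x^k - C(x^k) - H\overline{x}^k \in A\overline{x}^k$; using $H = P+K$ and $M^* = -M$ one rewrites this as $(H+M^*)(x^k-\overline{x}^k) + M\overline{x}^k + C(x^k) \in (A+M)\overline{x}^k + C(x^k)$, so that $v^k - C(x^k) + C(\overline{x}^k) \in (A+M+C)(\overline{x}^k)$ up to the error term $C(x^k)-C(\overline{x}^k)$. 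The $\beta$-cocoercivity of $C$ with respect to $\|\cdot\|_P$ then controls this term: a standard Young-type inequality gives $\langle C(x^k)-C(\overline{x}^k), z - \overline{x}^k\rangle \ge -\|x^k-\overline{x}^k\|_P^2/(4\beta)$ for the relevant $z$, which is precisely the statement that $v^k \in (A+M+C)^{[\epsilon_k]}(\overline{x}^k)$ with $\epsilon_k = \|\overline{x}^k-x^k\|_P^2/(4\beta)$. (I am reading $\overline z^k, z^k$ in the definition of $\epsilon_k$ as $\overline x^k, x^k$, consistent with the notation in the AFBAS display.)

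Second, I would check the relative error inequality. Since $x^{k+1} = x^k + \alpha_k S^{-1}(H+M^*)(\overline{x}^k - x^k) = x^k - \alpha_k S^{-1}v^k$ and $\theta_k = \alpha_k - 1$, the update is $x^{k+1} = x^k - (1+\theta_k)S^{-1}v^k$, which is the third relation and matches the algorithm's step with $c_k=1$, $\mathcal{M}_k=S$. For \eqref{VMOR-HPE-b} I would expand $\|S^{-1}v^k + (\overline{x}^k - x^k)\|_S^2 = \|S^{-1}v^k\|_S^2 + 2\langle v^k, \overline{x}^k - x^k\rangle + \|\overline{x}^k - x^k\|_S^2$, so the left side of the criterion becomes $(1+\theta_k)\|S^{-1}v^k\|_S^2 + 2\langle v^k,\overline{x}^k-x^k\rangle + \|\overline{x}^k-x^k\|_S^2 + 2\epsilon_k$. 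The required inequality is then equivalent to $(1+\theta_k)\|S^{-1}v^k\|_S^2 + 2\langle v^k,\overline{x}^k-x^k\rangle + 2\epsilon_k \le (\sigma-1)\|\overline{x}^k-x^k\|_S^2$. Here the definition $\alpha_k = \lambda_k\|\overline{x}^k-x^k\|_P^2 / \|(H+M^*)(\overline{x}^k-x^k)\|_{S^{-1}}^2$ is exactly what makes $(1+\theta_k)\|S^{-1}v^k\|_S^2 = \alpha_k\|S^{-1}v^k\|_S^2 = \lambda_k\|\overline{x}^k-x^k\|_P^2$ (since $\|S^{-1}v^k\|_S^2 = \|v^k\|_{S^{-1}}^2$), collapsing the quadratic term into a clean multiple of $\|\overline{x}^k-x^k\|_P^2$; and by skew-adjointness of $K$ and $M$, $\langle v^k,\overline{x}^k-x^k\rangle = \langle (P+K+M^*)(x^k-\overline{x}^k),\overline{x}^k-x^k\rangle = -\|x^k-\overline{x}^k\|_P^2$ since the skew parts contribute zero. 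Substituting $\epsilon_k = \|\overline{x}^k-x^k\|_P^2/(4\beta)$, the inequality reduces to a scalar relation among $\lambda_k$, $\beta$, and the ratio of $\|\cdot\|_P$ and $\|\cdot\|_S$ norms, which is guaranteed by the hypothesis $\lambda_k \le 2 - 1/(2\beta)$ (together with the implicit normalization relating $S$ and $P$); I would state this final scalar verification cleanly and note where the bound on $\lambda_k$ enters.

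The main obstacle I anticipate is bookkeeping around the non-self-adjoint operator $H = P+K$ and the skew term $M$: one must be careful to track which cross-terms vanish by skew-adjointness and which survive, and to keep the two different norms $\|\cdot\|_P$ (in which $C$ is cocoercive and $\epsilon_k$ is defined) and $\|\cdot\|_S$ (the VMOR-HPE metric) straight throughout the expansion. Once the identity $\langle v^k,\overline{x}^k-x^k\rangle = -\|x^k-\overline{x}^k\|_P^2$ and the norm collapse $\alpha_k\|v^k\|_{S^{-1}}^2 = \lambda_k\|\overline{x}^k-x^k\|_P^2$ are in hand, the remainder is a routine scalar inequality, and the conclusion follows by invoking Algorithm \ref{Alg:VMOR-HPE} with $\mathcal{M}_k = S$, $c_k = 1$.
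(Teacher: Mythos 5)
Your proposal is correct and follows essentially the same route as the paper: the inclusion is obtained by showing $C(x^{k})\in C^{[\epsilon_{k}]}(\overline{x}^{k})$ via cocoercivity in $\|\cdot\|_{P}$ plus additivity of the enlargement, the update step is the same identification, and your inner-product expansion of the error criterion (using skew-adjointness of $K$ and $M$ to reduce $\langle v^{k},\overline{x}^{k}-x^{k}\rangle$ to $-\|x^{k}-\overline{x}^{k}\|_{P}^{2}$ and the definition of $\alpha_{k}$ to collapse $(1+\theta_{k})\|v^{k}\|_{S^{-1}}^{2}$ to $\lambda_{k}\|x^{k}-\overline{x}^{k}\|_{P}^{2}$) is the same computation the paper carries out as a weighted-norm identity, ending at the same scalar inequality governed by $\lambda_{k}\le 2-1/(2\beta)$. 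The residual looseness you flag in that last scalar step is present in the paper's own argument as well, so it is not a gap specific to your plan.
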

 In \cite{latafat2017asymmetric}, a few new algorithms, such as forward-backward-forward splitting algorithm with only one evaluation of $C$,
 Douglas-Rachford splitting algorithm with an extra forward step, \textit{etc}, are put forward based on the AFBAS algorithm.
 By Proposition \ref{AFBAS-VMORHPE}, VMOR-HPE also covers these new splitting algorithms as special cases.

\subsubsection{primal-dual algorithms}\label{primal-dual algorithms}

In this subsection, we focus on the existing primal-dual algorithms in the literature for solving the problem below:
 \begin{equation}\label{three-convex}
 \min f(x)+g(x)+h(Bx),\ x\in \mathbb{X},
  \vspace{-1pt}
 \end{equation}
 where $B\!:\mathbb{X}\!\to\!\mathbb{Y}$ is a linear operator, $g\!:\mathbb{X}\!\to\!(-\infty, +\infty]$ and $h\!:\mathbb{Y}\!\to\!(-\infty, +\infty]$ are closed proper convex functions,
 and $f\!:\mathbb{X}\!\to\!(-\infty,\infty)$ is a differentiable convex function satisfying $\|\nabla f(x)-\nabla f(x')\| \le L\|x-x'\|$ for all $x,x'\in\mathbb{X}$.
  By introducing the dual variable $y\in \mathbb{Y}$ and denoting $\mathbb{Z}=\mathbb{X}\times\mathbb{Y}$, problem \eqref{three-convex} can be formulated as:
  \begin{equation}\label{three-convex-inclusion}
  0\!\in\! T(z)\!=\! \left[
    \begin{array}{c}
    \partial g(x)\\
    \partial h^*(y)\\
    \end{array}
       \right]\!+\!
   \left[
   \begin{array}{c}
   \nabla f(x)\!+\! {B}^*y \\
    -{B}x \\
   \end{array}
   \right], z\in \mathbb{Z}.
 \end{equation}
 {\bf Condat-Vu PDS Algorithm} is proposed to solve problem \eqref{three-convex} with the following iterations:
  \vspace{-2pt}
 \begin{subnumcases}\!{}
 \!\!\!\widetilde{x}^{k+1}:= {\rm Prox}_{r^{-1} g}\big(x^{k}- r^{-1}\nabla f(x^{k})-r^{-1}{B}^*y^{k}\big), \nonumber\\
 \!\!\!\widetilde{y}^{k+1}:={\rm Prox}_{s^{-1} h^*}\big(y^{k}+s^{-1}{B}(2\widetilde{x}^{k+1}-x^{k})\big), \nonumber\\
 \!\!\!\!(x^{k\!+1},y^{k\!+1}) \!:=\! (x^{k},y^{k})\!+\!(1\!+\!\theta_{k})\big((\widetilde{x}^{k\!+1},\widetilde{y}^{k\!+1})\!\!-\!\!(x^{k},y^{k})\big). \nonumber
 \end{subnumcases}
 We denote $\mathcal{M}\!:\!\mathbb{Z}\!\to\!\mathbb{Z}$ as $\mathcal{M}\!\!=\!\![r~ -B^*;-B~ s]$
 and show that the Condat-Vu PDS algorithm is covered by VMOR-HPE.
 \begin{proposition}\label{condat-vu-HPE}
 Let $\{(x^{k},y^{k},\widetilde{x}^{k},\widetilde{y}^{k})\}$ be the sequence generated by the Condat-Vu PDS algorithm.
 Let $z^{k}\!=\!(x^{k},y^{k}),w^{k}\!=\!(\widetilde{x}^{k+1},\widetilde{y}^{k+1})$.
 Parameters $(r,s,\theta_{k})$ satisfy
  \vspace{-2pt}
 \begin{equation}\label{condition-r-s-theta}
 s-r^{-1}\|\mathcal{B}\|^2 > 0, \theta_{k}+L/[2(s-r^{-1}\|\mathcal{B}\|^2)] \le \sigma.
 \end{equation}
 Denote $v^{k}=\mathcal{M}(z^{k}-w^{k})$, $\epsilon_{k}=L\|x^{k}-\widetilde{x}^{k+1}\|^2/4$. Then,
 \vspace{-1pt}
  \begin{subnumcases}{}
 \!\! v^{k} \in T^{[\epsilon_{k}]}(w^{k}), \nonumber\\
 \!\!\!\theta_{k}\!\big\|\!\mathcal{M}^{-1}v^{k}\big\|_{\!\mathcal{M}}^2\!\!+\!\!\big\|\!\mathcal{M}^{-1}v^{k}\!\!+\!w^{k}\!\!-\!\!z^{k}\big\|_{\!\mathcal{M}}^2 \!\!+\!\!2\epsilon_{k}\!\!\leq\! {\sigma}\big\|w^{k}\!\!-\!\!z^{k}\big\|_{\!\mathcal{M}}^2, \qquad \nonumber\\
 \!\! z^{k+1} = z^{k}-(1+\theta_{k})\mathcal{M}^{-1}v^{k}. \nonumber
 \end{subnumcases}
 \end{proposition}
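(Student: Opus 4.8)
The plan is to verify the three displayed relations directly, in the order in which they appear, by unwinding the definitions of $v^{k}$, $\epsilon_{k}$, and $w^{k}$ and using the three constituent updates of the Condat--Vu scheme. First I would record the optimality conditions of the two proximal subproblems: $\widetilde{x}^{k+1}={\rm Prox}_{r^{-1}g}(\cdot)$ gives
\[
r(x^{k}-\widetilde{x}^{k+1})-\nabla f(x^{k})-{B}^{*}y^{k}\in\partial g(\widetilde{x}^{k+1}),
\]
and $\widetilde{y}^{k+1}={\rm Prox}_{s^{-1}h^{*}}(\cdot)$ gives
\[
s(y^{k}-\widetilde{y}^{k+1})+{B}(2\widetilde{x}^{k+1}-x^{k})\in\partial h^{*}(\widetilde{y}^{k+1}).
\]
Writing $z^{k}=(x^{k},y^{k})$, $w^{k}=(\widetilde{x}^{k+1},\widetilde{y}^{k+1})$ and $\mathcal{M}=[r~-B^{*};-B~s]$, a short computation shows $v^{k}=\mathcal{M}(z^{k}-w^{k})$ equals the pair whose first component is $r(x^{k}-\widetilde{x}^{k+1})-{B}^{*}(y^{k}-\widetilde{y}^{k+1})$ and whose second is $-{B}(x^{k}-\widetilde{x}^{k+1})+s(y^{k}-\widetilde{y}^{k+1})$. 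Adding and subtracting $\nabla f(\widetilde{x}^{k+1})+{B}^{*}\widetilde{y}^{k+1}$ in the first component and $-{B}\widetilde{x}^{k+1}$ in the second, the two subgradient inclusions above rearrange to say that $v^{k}$ minus the smooth/skew part $\big(\nabla f(\widetilde{x}^{k+1})+{B}^{*}\widetilde{y}^{k+1},\,-{B}\widetilde{x}^{k+1}\big)$ lies in $\big(\partial g(\widetilde{x}^{k+1}),\partial h^{*}(\widetilde{y}^{k+1})\big)$ up to the single Lipschitz slack $\nabla f(x^{k})-\nabla f(\widetilde{x}^{k+1})$. The cocoercivity/Lipschitz bound $\langle u-u',\nabla f(u)-\nabla f(u')\rangle\ge \tfrac1L\|\nabla f(u)-\nabla f(u')\|^{2}$ together with the definition of the enlargement $T^{[\epsilon]}$ then yields $v^{k}\in T^{[\epsilon_{k}]}(w^{k})$ with $\epsilon_{k}=L\|x^{k}-\widetilde{x}^{k+1}\|^{2}/4$; this is exactly the device already used in Propositions~\ref{FBF-VMOR-HPE}--\ref{AFBAS-VMORHPE}, so I would cite that pattern rather than redo it.

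The third relation $z^{k+1}=z^{k}-(1+\theta_{k})\mathcal{M}^{-1}v^{k}$ is immediate: the Condat--Vu over-relaxation step is $z^{k+1}=z^{k}+(1+\theta_{k})(w^{k}-z^{k})$, and $w^{k}-z^{k}=-\mathcal{M}^{-1}v^{k}$ by the definition of $v^{k}$ (here one uses that the stated condition $s-r^{-1}\|\mathcal{B}\|^{2}>0$ makes $\mathcal{M}$ self-adjoint positive definite, by a Schur-complement argument, so $\mathcal{M}^{-1}$ exists).

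The substantive step is the middle inequality. With $d:=w^{k}-z^{k}=-\mathcal{M}^{-1}v^{k}$, the left side is $\theta_{k}\|\mathcal{M}^{-1}v^{k}\|_{\mathcal{M}}^{2}+\|\mathcal{M}^{-1}v^{k}+d\|_{\mathcal{M}}^{2}+2\epsilon_{k}$, and since $\mathcal{M}^{-1}v^{k}+d=0$ the middle term vanishes, leaving $\theta_{k}\|d\|_{\mathcal{M}}^{2}+2\epsilon_{k}$. So the inequality to prove collapses to
\[
\theta_{k}\|w^{k}-z^{k}\|_{\mathcal{M}}^{2}+\tfrac{L}{2}\|x^{k}-\widetilde{x}^{k+1}\|^{2}\le\sigma\|w^{k}-z^{k}\|_{\mathcal{M}}^{2}.
\]
Thus it suffices to show $\|x^{k}-\widetilde{x}^{k+1}\|^{2}\le \tfrac{1}{s-r^{-1}\|\mathcal{B}\|^{2}}\|w^{k}-z^{k}\|_{\mathcal{M}}^{2}$, because then $\theta_{k}\|d\|_{\mathcal{M}}^{2}+\tfrac{L}{2}\|x^{k}-\widetilde x^{k+1}\|^{2}\le\big(\theta_{k}+\tfrac{L}{2(s-r^{-1}\|\mathcal B\|^{2})}\big)\|d\|_{\mathcal M}^{2}\le\sigma\|d\|_{\mathcal M}^{2}$ by the second hypothesis in \eqref{condition-r-s-theta}. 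The remaining estimate is the quadratic-form bound $\|w^{k}-z^{k}\|_{\mathcal{M}}^{2}=r\|x^{k}-\widetilde x^{k+1}\|^{2}-2\langle {B}(x^{k}-\widetilde x^{k+1}),y^{k}-\widetilde y^{k+1}\rangle+s\|y^{k}-\widetilde y^{k+1}\|^{2}\ge (s-r^{-1}\|\mathcal B\|^{2})\|x^{k}-\widetilde x^{k+1}\|^{2}$, where the last step applies Young's inequality $2\langle {B}a,b\rangle\le r\|a\|^{2}+r^{-1}\|{B}\|^{2}\|b\|^{2}$... wait, one must be careful about which variable absorbs the cross term. The correct route: treat $\|w^k-z^k\|_{\mathcal M}^2$ as a quadratic in $(a,b):=(x^k-\widetilde x^{k+1},\,y^k-\widetilde y^{k+1})$, complete the square in $b$, obtaining $r\|a\|^2 - \|B a\|^2/s\cdot(\text{lower-order})$; minimizing over $b$ gives $\ge (r - \|B\|^2/s)\|a\|^2$, and under \eqref{condition-r-s-theta} one has $r-\|B\|^2/s>0$ as well (equivalently $rs>\|B\|^2$). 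This positivity is what makes the whole argument run. I would therefore carry this out carefully: complete the square, identify the Schur complement $r-\|B\|^{2}/s$ (resp. $s-\|B\|^{2}/r$), and match it against the denominator appearing in \eqref{condition-r-s-theta}. The main obstacle is purely bookkeeping — keeping the two Schur complements straight and confirming the constant in $\epsilon_{k}$ is exactly what the enlargement inequality produces — rather than anything conceptually deep; everything else is a transcription of the HPE-reduction template established in the earlier propositions.
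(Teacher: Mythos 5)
Your treatment of the inclusion $v^{k}\in T^{[\epsilon_{k}]}(w^{k})$ and of the identity $z^{k+1}=z^{k}-(1+\theta_{k})\mathcal{M}^{-1}v^{k}$ is exactly the paper's argument (prox optimality conditions, $\nabla f(x^{k})\in(\nabla f)^{[\epsilon_{k}]}(\widetilde{x}^{k+1})$ via the cocoercivity device, and $w^{k}-z^{k}=-\mathcal{M}^{-1}v^{k}$), and your observation that the middle term of the error criterion vanishes, leaving only $\theta_{k}\|w^{k}-z^{k}\|_{\mathcal{M}}^{2}+2\epsilon_{k}\le\sigma\|w^{k}-z^{k}\|_{\mathcal{M}}^{2}$ to prove, also coincides with the paper's reduction.

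The gap is the step you postpone as ``bookkeeping'': it is not bookkeeping. Writing $a=x^{k}-\widetilde{x}^{k+1}$ and $b=y^{k}-\widetilde{y}^{k+1}$, completing the square in $b$ gives, as you yourself note,
\[
\|w^{k}-z^{k}\|_{\mathcal{M}}^{2}=r\|a\|^{2}-2\langle Ba,b\rangle+s\|b\|^{2}\ \ge\ \bigl(r-s^{-1}\|B\|^{2}\bigr)\|a\|^{2},
\]
so the constant that controls $\|a\|^{2}$ is the Schur complement $r-s^{-1}\|B\|^{2}$, whereas the hypothesis \eqref{condition-r-s-theta} is written with $s-r^{-1}\|B\|^{2}$. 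Only the positivity of these two numbers is equivalent (both say $rs>\|B\|^{2}$); their values are genuinely different, and the needed pointwise estimate fails with the printed constant: take $s$ large and $r$ only slightly above $\|B\|^{2}/s$, let $a$ be a top right singular vector of $B$ and $b=Ba/s$; then $\|w^{k}-z^{k}\|_{\mathcal{M}}^{2}=(r-s^{-1}\|B\|^{2})\|a\|^{2}$ and the inequality to be shown reduces to $L/2\le(\sigma-\theta_{k})(r-s^{-1}\|B\|^{2})$, which \eqref{condition-r-s-theta} does not supply. So your plan closes only after correcting the constant: either read \eqref{condition-r-s-theta} with $r$ and $s$ interchanged --- which matches the original Condat--Vu condition $1/\tau-\sigma\|B\|^{2}\ge L/2$ and is almost surely the intended statement --- or put $\lambda_{\min}(\mathcal{M})$ in the denominator, as Remark (ii) does for the metric version. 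For what it is worth, the paper's own proof stumbles at the same spot: it passes from $L/(2\lambda_{\min}(\mathcal{M}))$ to $L/[2(s-r^{-1}\|B\|^{2})]$, but whenever $rs>\|B\|^{2}$ and $B\neq0$ one has $\lambda_{\min}(\mathcal{M})<\min\{s-r^{-1}\|B\|^{2},\,r-s^{-1}\|B\|^{2}\}$ (since $\lambda_{\min}\lambda_{\max}=rs-\|B\|^{2}$ on the extreme singular block and $\lambda_{\max}>\max\{r,s\}$), so that comparison goes the wrong way. Your componentwise Schur-complement route is the clean repair --- but you must actually resolve the $r\leftrightarrow s$ mismatch rather than defer it, because as stated the key inequality is left unproven.
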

 \begin{remark}
 {\bf (i)}\ The condition \eqref{condition-r-s-theta} is much more mild compared with $s-r^{-1}\|\mathcal{B}\|^2\!>L/2,\theta_{k}\!+\!L/[2(s\!-\!r^{-1}\|\mathcal{B}\|^2)]\!<1$
 in \cite{condat2013primal,vu2013splitting} and $s-r^{-1}\|\mathcal{B}\|^2 > L/2,\theta_{k}+L/[s-r^{-1}\|\mathcal{B}\|^2] < 1$ in \cite{chambolle2016ergodic}. \\
 {\bf (ii)}\ The metric version of Condat-Vu PDS algorithm \cite{Li2016Fast} with $(s = S, r = R)$ also falls into the VMOR-HPE framework
   by replacing condition \eqref{condition-r-s-theta} with $\|R^{-\frac{1}{2}}BS^{-\frac{1}{2}}\|\!< 1$  and $\theta_{k}\!+\!L/(2\lambda_{\min}(\mathcal{M}))\le \sigma$.\\
 {\bf (iii)}\ If $f=0$, the Condat-Vu PDS algorithm recovers PDHG algorithm \cite{chambolle2011first}  which is also covered by the VMOR-HPE framework.
 \end{remark}
 \noindent
 {\bf AFBAS-PD Algorithm}\  Applying the AFBAS algorithm for \eqref{three-convex-inclusion}
 yields the {\bf P}rimal-{\bf D}ual (AFBAS-PD) algorithm:
 \begin{subnumcases}{}
 \overline{x}^{k}:= {\rm Prox}_{\gamma_{1}g}\big(x^{k}-\gamma_{1}{B}^*y^{k}-\gamma_{1}\nabla f(x^{k})\big), \nonumber\\
 \overline{y}^{k}:={\rm Prox}_{\gamma_{2} h^*}\big(y^{k}+\gamma_{2}{B}((1-\theta)x^{k}+\theta\overline{x}^{k})\big),\nonumber\\
 \!\!x^{k+1} \!:=\! x^{k}\!+\!\alpha_{k}\big((\overline{x}^{k}-x^{k})\!-\mu\gamma_{1}(2\!-\theta)B^*(\overline{y}^{k}-y^{k})\big),\nonumber\\
 \!\!y^{k+1} \!:=\!  y^{k}\!+\!\alpha_{k}\big(\gamma_2(1\!-\!\mu)(2\!-\!\theta)B(\overline{x}^{k}\!-\!x^{k})\!+\!(\overline{y}^{k}\!-\!y^{k})\big),\nonumber
 \end{subnumcases}
 where $\alpha_{k}$ is adaptively tuned and $(\gamma_1,\gamma_2, \theta, \mu)$ satisfy $\mu\in [0,1]$, $\theta\in [0,\infty)$ and $\gamma_{1}^{-1}-\gamma_{2}\theta^2\|B\|^2/4 > L/4$.

 Denote a linear operator $M\!:\!\mathbb{Z}\!\to\!\mathbb{Z}$ with $M=\!RS^{-1}$, where $(R,S)$ are defined as
 $R=[\gamma_{1}^{-1}\ \ -B^*;\ \ (1\!-\!\theta)B\ \ \gamma_{2}^{-1} ]$ and
 \vspace{-0.1cm}
 \begin{equation*}
  S =\left[
   \begin{array}{cc}
     1 & -\mu\gamma_{1}(2\!-\!\theta)B^* \\
     \gamma_2(1\!-\!\mu)(2\!-\!\theta)B & 1 \\
   \end{array}
 \right].
 \end{equation*}
 In addition, by \cite{horn1990matrix}, it is easy to verify that $M$ is a self-adjoint positive definite linear operator.
 \vspace{-0.1cm}
 \begin{proposition}\label{AFBAS-PD-HPE}
 Let $\{(\overline{x}^{k},\overline{y}^{k},x^{k},y^{k})\}$ be the sequence generated by the AFBAS-PD algorithm.
 Denote $w^{k}\!=\!(\overline{x}^{k},\overline{y}^{k})$, $z^{k}\!=\!(x^{k},y^{k})$, $v^{k}\!=\!R(z^{k}\!-\!w^{k})$, $\epsilon_{k}\!=\!L\|x^{k}\!-\!\overline{x}^{k}\|^2/4$,
 and $\theta_{k}=\alpha_{k}-1$. Then, it holds that
  \begin{subnumcases}\!{}\!
 \!\!v^{k} \in T^{[\epsilon_{k}]}(w^{k}),\nonumber\\
 \!\!\!\theta_{k}\!\big\|\!\mathcal{M}^{-1}v^{k}\big\|_{\!\mathcal{M}}^2\!\!+\!\!\big\|\!\mathcal{M}^{-1}v^{k}\!\!+\!w^{k}\!\!-\!\!z^{k}\big\|_{\!\mathcal{M}}^2 \!\!+\!\!2\epsilon_{k}\!\!\leq\! {\sigma}\big\|w^{k}\!\!-\!\!z^{k}\big\|_{\!\mathcal{M}}^2,  \nonumber\\
 \!\!z^{k+1} = z^{k}-(1+\theta_{k})\mathcal{M}^{-1}v^{k}.\nonumber 
 \end{subnumcases}
 \end{proposition}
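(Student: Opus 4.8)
The plan is to recognise the AFBAS-PD algorithm as the AFBAS algorithm of Proposition~\ref{AFBAS-VMORHPE} run on the saddle-point inclusion \eqref{three-convex-inclusion}, and then to transport that proposition's conclusion. Split the operator in \eqref{three-convex-inclusion} as $T=A+M+C$ with $A(z)=(\partial g(x),\partial h^*(y))$ maximally monotone, $M(z)=(B^*y,-Bx)$ skew-adjoint, and $C(z)=(\nabla f(x),0)$, which is $\frac{1}{L}$-cocoercive in the Euclidean norm by the Baillon--Haddad theorem. Write $z^k=(x^k,y^k)$, $w^k=(\bar x^k,\bar y^k)$. Expanding the two proximal steps of AFBAS-PD as the inclusions
\[
\partial g(\bar x^k)\ni\gamma_1^{-1}(x^k-\bar x^k)-B^*y^k-\nabla f(x^k),\qquad
\partial h^*(\bar y^k)\ni\gamma_2^{-1}(y^k-\bar y^k)+B\big((1-\theta)x^k+\theta\bar x^k\big),
\]
and matching them term by term against $\bar x^k=(H+A)^{-1}(H-M-C)x^k$ forces the choice $H(z)=(\gamma_1^{-1}x,\,-\theta Bx+\gamma_2^{-1}y)$, whence $H+M^*=R$ with $R$ the operator displayed right before the statement, and $v^k:=R(z^k-w^k)=(H+M^*)(x^k-\bar x^k)$. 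The symmetric part of $R$ is $P=\big[\gamma_1^{-1}I\ \ -\tfrac{\theta}{2}B^*;\ -\tfrac{\theta}{2}B\ \ \gamma_2^{-1}I\big]$, which is positive definite (with room to spare) since the standing hypothesis gives $\gamma_1^{-1}-\gamma_2\theta^2\|B\|^2/4>L/4>0$.

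For the correction step, the last two lines of AFBAS-PD read $z^{k+1}=z^k+\alpha_kS(w^k-z^k)$ with $S$ exactly the matrix in the statement, and since $v^k=R(z^k-w^k)$ this is $z^{k+1}=z^k-\alpha_k(RS^{-1})^{-1}v^k=z^k-(1+\theta_k)\mathcal{M}^{-1}v^k$ with $\mathcal{M}:=RS^{-1}=M$ and $\theta_k=\alpha_k-1$; that $\mathcal{M}$ is self-adjoint positive definite is the \cite{horn1990matrix} remark preceding the statement, so $\mathcal{M}$ is an admissible VMOR-HPE metric. Thus AFBAS-PD is exactly AFBAS with $H+M^*=R$ and self-adjoint operator $\mathcal{M}$, and Proposition~\ref{AFBAS-VMORHPE} applies once its hypotheses are translated: one checks that $C$ is $\beta$-cocoercive with respect to $\|\cdot\|_P$ for $\beta=(\gamma_1^{-1}-\gamma_2\theta^2\|B\|^2/4)/L$ (using that the $(1,1)$-block of $P^{-1}$ has norm $1/(\gamma_1^{-1}-\gamma_2\theta^2\|B\|^2/4)$ by a Schur-complement computation), and that $\beta>1/4$, so the AFBAS step-size interval $[\underline\lambda,\overline\lambda]\subseteq[0,2-1/(2\beta)]$ is nonempty; this is where $\mu\in[0,1]$, $\theta\ge0$ and the standing inequality on $(\gamma_1,\gamma_2,\theta,\|B\|)$ enter.

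The only estimate I would not simply quote from Proposition~\ref{AFBAS-VMORHPE} is the value of $\epsilon_k$, whose coarse $P$-weighted bound can be sharpened here. The enlargement membership stems from the single substitution of $\nabla f(x^k)$ for $\nabla f(\bar x^k)$ in the first prox; since this mismatch lives only in the $x$-block with the standard inner product, the transportation-type bound
\[
\big\langle\nabla f(z)-\nabla f(x^k),\,z-\bar x^k\big\rangle\ge\tfrac1L\|\nabla f(z)-\nabla f(x^k)\|^2-\|\nabla f(z)-\nabla f(x^k)\|\,\|x^k-\bar x^k\|\ge-\tfrac{L}{4}\|x^k-\bar x^k\|^2
\]
(cocoercivity followed by minimising the quadratic in $\|\nabla f(z)-\nabla f(x^k)\|$) yields $\nabla f(x^k)\in(\nabla f)^{[\epsilon_k]}(\bar x^k)$ with the stated $\epsilon_k=L\|x^k-\bar x^k\|^2/4$. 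Combining this with the exact subgradient inclusions for $g,h^*$, the exactness of the skew part $M$, and additivity of $\epsilon$-enlargements under sums gives $v^k\in A(w^k)+M(w^k)+C^{[\epsilon_k]}(w^k)\subseteq T^{[\epsilon_k]}(w^k)$, and the displayed inequality then follows from Proposition~\ref{AFBAS-VMORHPE} with $c_k=1$. The main obstacle is purely the bookkeeping of pushing the non-self-adjoint $R$ and $S$ through consistently so that $\mathcal{M}=RS^{-1}$ is symmetric positive definite and \eqref{VMOR-HPE-b} closes; there is no new analytical difficulty beyond what \cite{latafat2017asymmetric,horn1990matrix} already supply.
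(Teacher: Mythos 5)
Your argument is correct, but it is organized differently from the paper's. The paper proves Proposition~\ref{AFBAS-PD-HPE} by direct verification: it extracts the two subgradient inclusions from the proximal steps, inserts $\nabla f(x^{k})\in(\nabla f)^{[\epsilon_{k}]}(\overline{x}^{k})$ to get $v^{k}=R(z^{k}-w^{k})\in T^{[\epsilon_{k}]}(w^{k})$, rewrites the correction step as $z^{k+1}=z^{k}-\alpha_{k}\mathcal{M}^{-1}v^{k}$ via $MS=R$, and then checks the error inequality by brute force, reducing it to $\alpha_{k}<\big[\|w^{k}-z^{k}\|^{2}_{R^{*}+R}-L\|x^{k}-\overline{x}^{k}\|^{2}/2\big]\big/\|w^{k}-z^{k}\|^{2}_{S^{*}R}$, computing $S^{*}R$ explicitly, identifying $\|w^{k}-z^{k}\|^{2}_{S^{*}R}$ with the quantity $V$ in the step-size rule, and lower-bounding the numerator through the positive definite symmetric block $P$. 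You instead reduce the proposition to Proposition~\ref{AFBAS-VMORHPE} by exhibiting AFBAS-PD as AFBAS applied to \eqref{three-convex-inclusion} with $H(z)=(\gamma_{1}^{-1}x,\,-\theta Bx+\gamma_{2}^{-1}y)$, skew $M$, cocoercive $C=(\nabla f,0)$ with $\beta=(\gamma_{1}^{-1}-\gamma_{2}\theta^{2}\|B\|^{2}/4)/L$ in the $P$-metric, and metric $\mathcal{M}=RS^{-1}$; your identifications of $H+M^{*}=R$ and of the correction step are accurate, and this is indeed how \cite{latafat2017asymmetric} derives AFBAS-PD, so the route is legitimate and arguably cleaner, at the price of pushing the work into translating hypotheses rather than redoing the quadratic-form computation. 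Two translation steps that you leave implicit should be spelled out to make the reduction airtight. First, Proposition~\ref{AFBAS-VMORHPE}'s inequality is proved for the adaptive $\alpha_{k}$ of AFBAS, so you must check that the $\alpha_{k}$ displayed in AFBAS-PD coincides with it under your identification, i.e.\ that the numerator is $\lambda_{k}\|z^{k}-w^{k}\|_{P}^{2}$ and that $V(x^{k}-\overline{x}^{k},y^{k}-\overline{y}^{k})=\|R(z^{k}-w^{k})\|^{2}_{\mathcal{M}^{-1}}=\|z^{k}-w^{k}\|^{2}_{S^{*}R}$ (a short computation the paper carries out explicitly). Second, Proposition~\ref{AFBAS-VMORHPE} delivers the inequality with the $P$-weighted error $\|z^{k}-w^{k}\|_{P}^{2}/(4\beta)$, while the statement uses $\epsilon_{k}=L\|x^{k}-\overline{x}^{k}\|^{2}/4$; the transfer needs $\epsilon_{k}\le\|z^{k}-w^{k}\|_{P}^{2}/(4\beta)$, which follows from $\|z^{k}-w^{k}\|_{P}^{2}\ge(\gamma_{1}^{-1}-\gamma_{2}\theta^{2}\|B\|^{2}/4)\|x^{k}-\overline{x}^{k}\|^{2}$ --- exactly the Schur-complement bound you already invoked for the cocoercivity constant --- so the sharper $\epsilon_{k}$ only decreases the left-hand side. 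With these two lines made explicit, your proof is complete and equivalent in content to the paper's.
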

 \vspace{-0.1cm}
  The AFBAS-PD algorithm \cite{latafat2017asymmetric} recovers: (i) the Condat-Vu PDS algorithm with an adaptive over-relaxed step-size if $\theta\!=\!2$;
(ii) the Combettes PDS algorithm if $\theta\!=\!0$ and $\mu\!=\!\frac{1}{2}$;
(iii)  the MSS algorithm if $\theta\!=\!0,\mu\!= 1/2$ and $h\!=\!0$;
(iv)  the PAPC algorithm if $\theta\!=\!1,\mu\!=\!1$ and $f\!=\!0$.
  Thus, they are also covered by VMOR-HPE.

 To close this subsection \ref{primal-dual algorithms}, we make some comments on the PD3OS and PDFP algorithms which coincide with each other by \cite{tang2017general}.
 By Remark \ref{PPG-HPE-remark} and \cite{o2017equivalence},
 the PD3OS and PDFP algorithms are both covered by the algorithmic framework of VMOR-HPE.
\section{PADMM-EBB Algorithm}
The KKT generalized equation of problem \eqref{linearly-constriant} is defined as
\begin{equation}\label{inclusion-linearly-constriant}
\!\!\!T(z) \!\!=\!\! \left[
   \begin{array}{c}
 \partial g_{1}(x_{1})     \\
 \vdots                    \\
 \partial g_{p}(x_{p})     \\
  b
   \end{array}
   \right]
\!\!+\!\!\left[
   \begin{array}{c}
 \nabla f_{1}(x)  \!+\! \mathcal{A}_{1}y   \\
                   \vdots               \\
  \nabla f_{p}(x) \!+\!  \mathcal{A}_{p}y   \\
  - \sum_{i=1}^{p}\mathcal{A}_{i}^*x_{i}
   \end{array}
   \right],\, 0\!\in\! T(z),
\end{equation}
where $\nabla{f}_{i}(x)$ is the $i$-th component of $\nabla{f}(x)$ and $y\in \mathbb{Y}$ is the Lagrange multiplier.
Let $\mathbb{Z}\!=\!\mathbb{X}\!\times\!\mathbb{Y}$, $\mathbb{X}\!:=\! \mathbb{X}_{1}\!\times\!\cdots\!\times\!\mathbb{X}_{p}$, $x\!=\!(x_{1},\ldots,x_{p})\!\in\!\mathbb{X}$,
$z\!=\!(x_{1},\ldots,x_{p},y)\!\in\!\mathbb{Z}$, and $\widehat{L}_{(\beta,\,x^{k})}$ be the majorized augmented Lagrange function as
\begin{align}
 \widehat{L}_{(\beta_{k},\,x^{k})}(x,y)&=f(x^{k},x)+\big\langle\textstyle\sum_{i=1}^{p}\mathcal{A}^*_{i}x_{i}- b,y\big\rangle  \label{majorized-ALF} \\
                                      &\quad +\!\textstyle{\sum_{i=1}^{p}}g_i(x_i)\!+\!\frac{\beta_{k}}{2}\big\|\textstyle \sum_{i=1}^{p}\mathcal{A}_{i}x_{i}\!-\!b\big\|^2, \nonumber
 \vspace{-5pt}
\end{align}
where $f(x^{k},x)=f(x^{k})+\langle \nabla f(x^{k}),x-x^{k}\rangle+\frac{1}{2}\|x-x^{k}\|^2_{\widehat{\Sigma}}$ and $\hat{\Sigma}$ is a self-adjoint positive semi-definite linear operator.

In the implementation of VMOR-HPE, generating $(v^{k},y^{k},\epsilon_{k})$ satisfying \eqref{VMOR-HPE-a}-\eqref{VMOR-HPE-b} is equal to performing a non self-adjoint {\bf P}roximal ADMM to
problem \eqref{linearly-constriant} and $x^{k+1}\!:=\!x^k\!-\!(1\!+\!\theta_k)c_k\mathcal{M}_{k}^{-1} v^{k}$ in VMOR-HPE for problem \eqref{linearly-constriant} corresponds to
performing an {\bf E}xtra-gradient correction step to ensure the global convergence of PADMM. Additionally, $\mathcal{M}_{k}$ is determined by a {\bf B}arzilai-{\bf B}orwein line search technique
to explore the curvature information of the KKT operator $T$. The PADMM-EBB is described in Algorithm \ref{Alg:Practical-HPE-VMPD}.
\vspace{-5pt}
\begin{algorithm}[h]
   \caption{\, PADMM-EBB Algorithm}
   \label{Alg:Practical-HPE-VMPD}
\begin{algorithmic}
   \STATE {\bf Parameters:} Given $\xi_{k}\ge 0$ satisfying $\sum_{i=1}^{\infty}\!\xi_{i}\!<\infty$,
             $\tau,\overline{\theta}>0,\,-1<\!\underline{\theta}\!<0$, and $\overline{\sigma}\!\in [0,1)$.
             Choose a linear operator $\mathcal{M}_{0}\succ 0$ and starting points $x^{0}\!\in\!\mathbb{X}$, $y^{0}\!\in\!\mathbb{Y}$.
   \FOR{$k=0,1,2,\ldots,$}
   \STATE    For $i=1,2,\ldots, p$, $\widetilde{x}_{i}^{k+1}$ solves the inclusion as below
                 \vspace{-0.3cm}
              \begin{equation*}
               \!0\!\in\! \partial_{x_{i}}\widehat{L}_{(\beta_{k},x^{k})}(\ldots,\widetilde{x}^{k\!+\!1}_{i\!-\!1},x_{i},x^{k}_{i\!+\!1},\ldots,y^{k})
              \!+\!P^{k}_{i}(x_{i}\!-\!x^{k}_{i}).
              \end{equation*}
   \vspace{-0.4cm}
   \STATE $\widetilde{y}^{k+1}:= y^{k}+\beta_{k}\big(\mathcal{A}^*_{1}\widetilde{x}^{k+1}_{1}+\mathcal{A}^*_{2}x^{k}_{2}+\ldots+\mathcal{A}^*_{p}x^{k}_{p}-b\big)$.
   \vspace{-0.1cm}
   \STATE \! Set $\theta_{k}\!\in\!\![\theta^{\rm fix}_{k},\theta^{\rm adap}_{k}]$ with $\theta^{\rm fix}_{k}\!\in\!\!\big[\underline{\theta},\overline{\theta}_k\big]$
           via \eqref{PADMM-EBB-theta1}-\eqref{PADMM-EBB-theta1}.
    \vspace{-0.2cm}
   \STATE $z^{k+1}\!:=\!z^{k}+(1\!+\!\theta_k)\mathcal{M}^{-1}_{k}U_{k}(w^{k}-z^{k})$, where $(z^{k},w^{k})$ are defined as $z^{k}\!=\!(x^{k},y^{k})^\top$, $w^{k} \!=\! (\widetilde{x}^{k+1},\widetilde{y}^{k+1})^\top$.
   \STATE Update $\mathcal{M}^{-1}_{k+1}\!\!=\!\!{\rm Diag}(M^{k+1}_1,\cdots,M^{k+1}_{p},M^{k+1}_{p+1})$.
   \ENDFOR
 \end{algorithmic}
 \end{algorithm}
   \vspace{-10pt}

  In this algorithm, each $M^{k+1}_i$ for $i\!=\!1,\ldots,p$ is defined as
  \begin{gather*}
  M^{k+1}_{i}\!:=\min\big({\|\widetilde{x}_{i}^{k+1}\!\!-\!\widetilde{x}^{k}_{i}\|}/{\|s_{k+1}\!-\!s_{k}\|},(1\!+\!\xi_{k})M^{k}_{i+1}\big),
  \vspace{-5pt}
  \end{gather*}
  where $s_{k+1}=(U^{k}(z^{k}-w^{k}))_{i}+\nabla f_{i}(\widetilde{x}_{i}^{k+1})-\nabla{f}_{i}(x_{i}^{k})$. In addition, let $r_{k+1} = \beta^{-1}_{k}(y^{k}-\widetilde{y}^{k+1})+\sum_{i=2}^{p}\mathcal{A}^*_{p}({x}^{k}_{i}-\widetilde{x}_{i}^{k+1})$. The metric $M^{k+1}_{p+1}$ is defined as
  \[
  M^{k+1}_{p+1}:=\min\big({\big\|\widetilde{y}^{k+1}-\widetilde{y}^{k}\big\|}/{\|r_{k+1}-r_{k} \|},(1+\xi_{k})M^{k}_{p+1}\big).
  \]
  Let $\mathcal{D}={\rm Diag}(L_{1}\mathcal{I}\ \ \cdots\ \ L_{p}\mathcal{I}\ \  0)$ and
  $\Gamma_{k}\!=\! U^{k}\!+\!(U^{k})^*\!+\!(\overline{\sigma}\!-\!1)\mathcal{M}_{k}\!-\!\mathcal{D}/2$.
   Parameters $(\overline{\theta}_{k},  \theta^{\rm adap}_{k})$ are defined as
 \begin{subnumcases}{}
 \!\!\!\overline{\theta}_{k}\!=\!\max\left\{ \theta \mid  (\theta+1)(U^{k})^*\mathcal{M}^{-1}_{k}U^{k}\preceq \Gamma_{k}
  \right\}, \label{PADMM-EBB-theta1} \\
 \!\!\!\theta^{\rm adap}_{k} \!\!=\!-1\!+\! {\big\|z^{k}\!-\!w^{k}\big\|^2_{\Gamma_{k}}}
                  {\Big /}\big\|z^{k}\!-\!w^{k}\big\|^2_{(U^{k})^*\mathcal{M}^{-1}_{k}U^{k}}.\qquad~ \label{PADMM-EBB-theta1}
 \end{subnumcases}
 In addition, $P^{k}_{i}:\mathbb{X}_{i}\to \mathbb{X}_{i}$ for $i=1,2,\ldots,p$ are non self-adjoint linear operators, $\mathcal{T}_{i}=\widehat{\Sigma}_{i}+P^{k}_{i}+\beta_{k}\mathcal{A}_{i}\mathcal{A}_{i}^*$,
 and $U_{k}:\mathbb{Z}\to\mathbb{Z}$ is a block linear operator defined as below
\begin{align*}
\!\!\!U^{k}\!=\!\left(\begin{matrix}
\widehat{\Sigma}_{1}+P^{k}_1 &  0  & \ldots  & 0  & 0  \\
 0    & \mathcal{T}_{2}& \ldots   &   0  & 0  \\
 \vdots    & \vdots & \ddots & \vdots &  \vdots  \\
 0   & \beta_{k}\mathcal{A}_{p}\mathcal{A}_{2}^* & \cdots  &\mathcal{T}_{p} & 0  \\
 0    & \mathcal{A}_{2}^* & \cdots & \mathcal{A}_{n}^* & \beta_{k}^{-1}\mathcal{I}\! \\
   \end{matrix}\right).
\end{align*}
\begin{remark}
To ensure $1+\theta_{k} >0$, $P^{k}_{i}$ should be chosen to make $U^{k}+(U^{k})^*\succ \mathcal{D}/2$. In addition,
the non self-adjoint linear operator $P^{k}_{i}$ in inclusion with respect to $\widetilde{x}_{i}^{k+1}$
is chosen to approximate $\beta_{k}\mathcal{A}_{i}\mathcal{A}_{i}^*+\widehat{\Sigma}$ more tightly
and make the inclusion easier to solve than the common settings.
\end{remark}

 \begin{theorem}\label{convergence-nPADMM-EQN}
 Let $(\widetilde{x}^{k},\widetilde{y}^{k},x^{k},y^{k})$ be the sequence generated by the PADMM-EBB algorithm.
 Denote $v^{k}\!=\!U^{k}(z^{k}\!-\!w^{k})$, $\epsilon_{k}\!=\!\|x^{k}\!-\!\widetilde{x}^{k+1}\|_{\mathcal{D}}/4$ and operator $T$ as \eqref{inclusion-linearly-constriant}.
 Then, it holds
 \begin{subnumcases}{}
 \!\!\!\!v^{k} \in T^{[\epsilon_{k}]}(w^{k}),\nonumber\\
 \!\!\!\!\theta_{k}\big\|\mathcal{M}_{k}^{-1}v^{k}\!\big\|_{\!\mathcal{M}_{k}}^2\!\!\!\!\!+\!\!\big\|\mathcal{M}_{k}^{-1}v^{k}\!+\!w^{k}\!\!\!-\!\!z^{k}\!\big\|_{\!\mathcal{M}_{k}}^2
                 \!\!\!\!\!+\!2\epsilon_{k}\!\leq\! \sigma\big\|w^{k}\!\!\!-\!\!z^{k}\!\big\|_{\!\mathcal{M}_{k}}^2,\qquad\quad \nonumber\\
 \!\!\!\!z^{k+1} = z^{k}-(1+\theta_{k})\mathcal{M}_{k}^{-1}v^{k}.\nonumber
 \end{subnumcases}
 Besides, {\bf (i)}\ $(x^{k},\widetilde{x}^{k})$ and $(y^{k},\widetilde{y}^{k})$ converge to $x^{\infty}$ and $y^{\infty}$ belonging to the primal-dual solution set of problem \eqref{linearly-constriant}. \\
 {\bf (ii)}\ There exits an integer $\overline{k}\in \{1,2,\ldots,k\}$ such that
 \vspace{-0.2cm}
 \[
 \!\!\sum_{i=1}^{p}\!\!{\rm dist}\big((\partial g_{i}\!+\!\nabla\!f_{i})(\widetilde{x}^{\overline{k}})+ \mathcal{A}_{i}\widetilde{y}^{\overline{k}},0\big)
 \!+\!\big\|b-\!\!\sum_{i=1}^{p}\!\!\mathcal{A}_{i}^{*}\widetilde{x}^{\overline{k}}_{i}\big\|\!\le\! \mathcal{O}(\frac{1}{\sqrt{k}}).
 \vspace{-0.1cm}
 \]
 {\bf (iii)}\ Let $\alpha_{i}\!=\!1\ {\rm or}\ i$. There exists $0\le\overline{\epsilon}^{x_{i}}_{k}\le\mathcal{O}(\frac{1}{k})$ such that
 \vspace{-0.2cm}
 \[
 \!\sum_{i=1}^{p}\!\!{\rm dist}\!\big((\partial g_{i}\!+\!\nabla f_{i})_{\overline{\epsilon}^{x_{i}}_{k}}\!(\overline{x}^{k})\!+ \mathcal{A}_{i}\overline{y}^{k},0\big)
 \!+\!\big\|b-\!\!\sum_{i=1}^{p}\!\mathcal{A}_{i}^{*}\overline{x}^{k}_{i}\big\|\!\le\! \mathcal{O}(\frac{1}{k}),
 \vspace{-0.1cm}
 \]
 where $\overline{x}^{k}\!\!=\!\!\frac{\sum_{i=1}^{k}(1+\theta_{i})\alpha_{i}\widetilde{x}^{i+1}}{{\sum_{i=1}^{k}}(1+\theta_{i})\alpha_{i}}$ and
 $\overline{y}^{k}\!\!=\!\!\frac{{\sum_{i=1}^{k}}(1+\theta_{i})\alpha_{i}\widetilde{y}^{i+1}}{{\sum_{i=1}^{k}}(1+\theta_{i})\alpha_{i}}$.
   \vspace{1pt} \\
 {\bf (iv)}\ If $T$ satisfies metric subregularity at $\big((x^{\infty},y^{\infty}),0\big)\!\in\! {\rm gph}T$ with modulus $\kappa\!>\!0$.
 Then, there exits $\overline{k}\!>\!0$ such that
 \begin{align*}
  &\qquad~{\rm dist}_{\mathcal{M}_{k+1}}\big((x^{k+1},y^{k+1}),T^{-1}(0) \big) \nonumber\\
  &\leq  \Big(1-\frac{\varrho_{k}}{2}\Big){\rm dist}_{\mathcal{M}_{k}}\big((x^{k},y^{k}),T^{-1}(0)\big),\ \forall k\ge\overline{k},
 \end{align*}
  \vspace{-0.15cm}
 where $\varrho_{k}= \frac{(1-\sigma)(1+\theta_{k})}{\big(1+\kappa\sqrt{\frac{\Xi\overline{\omega}}{\underline{\omega}}}\big)^{2}\big(1+\sqrt{\sigma+\frac{4\max\{-\theta_{k},0\}}{(1+\theta_{k})^2}}\big)^{2}}\in (0,1)$.
 \end{theorem}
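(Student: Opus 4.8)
The plan is to verify that each iteration of PADMM-EBB coincides with one iteration of the VMOR-HPE framework (Algorithm~\ref{Alg:VMOR-HPE}) applied to the KKT operator $T$ of \eqref{inclusion-linearly-constriant} with $c_k\equiv 1$, the ``proximal'' iterate $y^k$ of Algorithm~\ref{Alg:VMOR-HPE} identified with $w^k=(\widetilde{x}^{k+1},\widetilde{y}^{k+1})$, the variable metric $\mathcal{M}_k$ produced by the blockwise Barzilai--Borwein rule, and $v^k=U^k(z^k-w^k)$, $\epsilon_k=\tfrac14\|x^k-\widetilde{x}^{k+1}\|^2_{\mathcal{D}}$. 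Once the displayed inclusion and relative-error relations are established, items (i)--(iv) follow by invoking, in order, Theorem~\ref{VMOR-HPE-convergence}, Theorem~\ref{iteration-complexity-VMOR-HPE}(i), Theorem~\ref{iteration-complexity-VMOR-HPE}(ii), and Theorem~\ref{linear-rate}.

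\emph{Step 1: the inclusion $v^k\in T^{[\epsilon_k]}(w^k)$.} First I would write out the optimality conditions of the $p$ Gauss--Seidel proximal subproblems for $\widetilde{x}^{k+1}_i$ together with the explicit dual update for $\widetilde{y}^{k+1}$, using \eqref{majorized-ALF} and $\mathcal{T}_i=\widehat{\Sigma}_i+P^k_i+\beta_k\mathcal{A}_i\mathcal{A}_i^*$; collecting these relations blockwise yields $v^k=U^k(z^k-w^k)\in\widehat{T}(w^k)$ exactly, where $\widehat{T}$ is obtained from $T$ by freezing $\nabla f$ at $x^k$ (hence still maximal monotone, being the sum of a monotone map and a constant). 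The sole gap between $\widehat{T}$ and $T$ is the primal term $\nabla f(x)-\nabla f(x^k)$; by the cocoercivity of $\nabla f$ (Baillon--Haddad) and minimisation of the resulting quadratic, $\nabla f(x^k)$ belongs to the $\epsilon_k$-enlargement of $\nabla f$ at $\widetilde{x}^{k+1}$, and the enlargement calculus for sums then gives $v^k\in T^{[\epsilon_k]}(w^k)$ with $\epsilon_k\ge0$ as in the statement.

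\emph{Step 2: the relative-error inequality and the metric bounds.} Substituting $v^k=U^k(z^k-w^k)$ and expanding the $\mathcal{M}_k$-weighted quadratics gives $\|\mathcal{M}_k^{-1}v^k\|^2_{\mathcal{M}_k}=\|z^k-w^k\|^2_{(U^k)^*\mathcal{M}_k^{-1}U^k}$ and $\|\mathcal{M}_k^{-1}v^k+w^k-z^k\|^2_{\mathcal{M}_k}=\|z^k-w^k\|^2_{(U^k)^*\mathcal{M}_k^{-1}U^k}-\langle(U^k+(U^k)^*)(z^k-w^k),z^k-w^k\rangle+\|z^k-w^k\|^2_{\mathcal{M}_k}$, while $2\epsilon_k=\tfrac12\|z^k-w^k\|^2_{\mathcal{D}}$ since $\mathcal{D}$ vanishes on the dual block. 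Hence the displayed relative-error inequality is equivalent to
\[
(\theta_k+1)\,\|z^k-w^k\|^2_{(U^k)^*\mathcal{M}_k^{-1}U^k}\ \le\ \|z^k-w^k\|^2_{\Gamma_k},\qquad \Gamma_k=U^k+(U^k)^*+(\sigma-1)\mathcal{M}_k-\tfrac12\mathcal{D},
\]
which is exactly what the choice $\theta_k\in[\theta^{\rm fix}_k,\theta^{\rm adap}_k]$ through \eqref{PADMM-EBB-theta1} enforces: for $\theta_k\le\overline{\theta}_k$ the operator inequality $(\theta_k+1)(U^k)^*\mathcal{M}_k^{-1}U^k\preceq\Gamma_k$ holds, while $\theta^{\rm adap}_k$ is precisely the value rendering the scalar inequality tight at the current vector $z^k-w^k$; moreover $1+\theta_k>0$ since $\theta_k\ge\underline{\theta}>-1$, so the step is well defined. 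It remains to check that the safeguarded Barzilai--Borwein updates keep $\underline{\omega}\mathcal{I}\preceq\mathcal{M}_k\preceq\overline{\omega}\mathcal{I}$, $\mathcal{M}_{k+1}\preceq(1+\xi_k)\mathcal{M}_k$, and $\mathcal{M}_k\preceq(1+\xi_k)\mathcal{M}_{k+1}$; this is routine from the $\min$ in the definition of the blocks $M^{k+1}_i$, the choice $\mathcal{M}_0\succ0$, and the boundedness of the Barzilai--Borwein ratios, and it places the iteration inside Algorithm~\ref{Alg:VMOR-HPE} and under the hypotheses of Theorem~\ref{iteration-complexity-VMOR-HPE}(ii).

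\emph{Step 3: deriving (i)--(iv).} With Steps 1--2, Theorem~\ref{VMOR-HPE-convergence} gives $z^k\to z^\infty$ and $w^k\to z^\infty$ with $0\in T(z^\infty)$; unravelling \eqref{inclusion-linearly-constriant} this means $(x^\infty,y^\infty)$ is primal-dual optimal for \eqref{linearly-constriant}, and since $(\widetilde{x}^k,\widetilde{y}^k)=w^{k-1}$ this is (i). For (ii)--(iii) I would translate the abstract bounds of Theorem~\ref{iteration-complexity-VMOR-HPE}: using the subproblem optimality conditions, the Lipschitz continuity of $\nabla f$, and the uniform invertibility of $U^k$ (guaranteed by $U^k+(U^k)^*\succ\mathcal{D}/2$ and $\beta_k$ bounded), one shows that $\sum_i{\rm dist}\big((\partial g_i+\nabla f_i)(\widetilde{x}^{k+1})+\mathcal{A}_i\widetilde{y}^{k+1},0\big)+\|b-\sum_i\mathcal{A}_i^*\widetilde{x}_i^{k+1}\|$ is dominated by a constant times $\|v^k\|$ (and, in the ergodic case, by the block pieces $\overline{\epsilon}^{x_i}_k$ obtained by distributing $\overline{\epsilon}_k$ over the blocks via the enlargement calculus); applying Theorem~\ref{iteration-complexity-VMOR-HPE}(i) at the special index $k_0$ and setting $\overline{k}=k_0+1$ yields the $\mathcal{O}(1/\sqrt{k})$ bound of (ii), and applying Theorem~\ref{iteration-complexity-VMOR-HPE}(ii) with $\alpha_k\in\{1,k\}$ (so $\tau_i=1+\theta_i$ and the weighted averages reduce to the $\overline{x}^k,\overline{y}^k$ of the statement) yields the $\mathcal{O}(1/k)$ bound of (iii). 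Finally, (iv) is Theorem~\ref{linear-rate} for the KKT operator with $\underline{c}=1$, which gives exactly the stated $\varrho_k$. I expect the main obstacle to be the bookkeeping in Steps 1--2: identifying $v^k$ with an element of $\widehat{T}(w^k)$ through the non-self-adjoint Gauss--Seidel structure, and matching the $\mathcal{M}_k$-weighted relative-error inequality to the operator relation defining $\overline{\theta}_k$ (in particular absorbing $2\epsilon_k$ into the $-\mathcal{D}/2$ term of $\Gamma_k$), together with the verification of the metric monotonicity for the safeguarded Barzilai--Borwein metric.
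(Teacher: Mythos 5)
Your proposal matches the paper's own proof in all essentials: it verifies the displayed relations by stacking the blockwise optimality conditions and the dual update to get $v^{k}=U^{k}(z^{k}-w^{k})$, transfers $\nabla f(x^{k})$ into the $\epsilon_{k}$-enlargement at $\widetilde{x}^{k+1}$, reduces the relative-error inequality to the operator inequality $(1+\theta_{k})(U^{k})^{*}\mathcal{M}_{k}^{-1}U^{k}\preceq\Gamma_{k}$ enforced by the choice of $\theta_{k}$, and then obtains (i)--(iv) from Theorems~\ref{VMOR-HPE-convergence}, \ref{iteration-complexity-VMOR-HPE}, and \ref{linear-rate} with $c_{k}=\underline{c}=1$, exactly as the paper does. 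The only place your plan is lighter than the paper's argument is part (iii), where the paper carries out an explicit computation showing the blockwise ergodic errors $\overline{\epsilon}^{x_{i}}_{k}$ are nonnegative and sum exactly to $\overline{\epsilon}_{k}$ (the dual-block cross terms cancel), which is the bookkeeping your phrase ``distributing $\overline{\epsilon}_{k}$ over the blocks via the enlargement calculus'' would need to supply.
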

 \begin{remark}
  By Proposition \ref{iteration-complexity-VMOR-HPE}, the constants in $\mathcal{O}(\frac{1}{\sqrt{k}})$ pointwise iteration
  complexity and $\mathcal{O}(\frac{1}{k})$ weighted iteration complexity both depend merely on the primal-dual solution set of problem \eqref{linearly-constriant}
  without requiring the boundedness of $(\mathbb{X},\mathbb{Y})$.
 \end{remark}

\begin{figure*}[htpb]
\centering
\subfigure{\label{fig:pic1}
\includegraphics[width=0.24\linewidth]{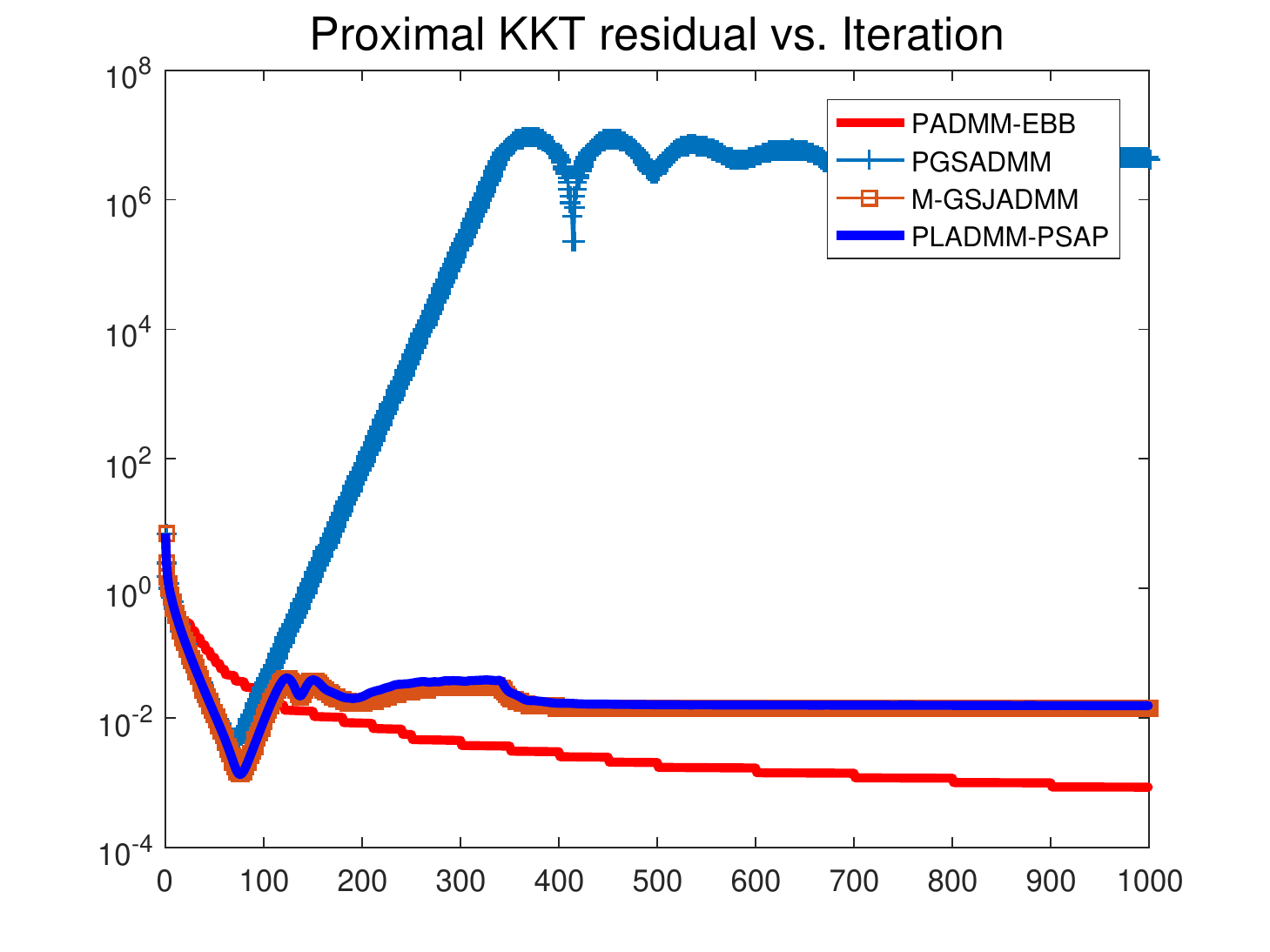}}
\subfigure{\label{fig:pic1}
\includegraphics[width=0.24\linewidth]{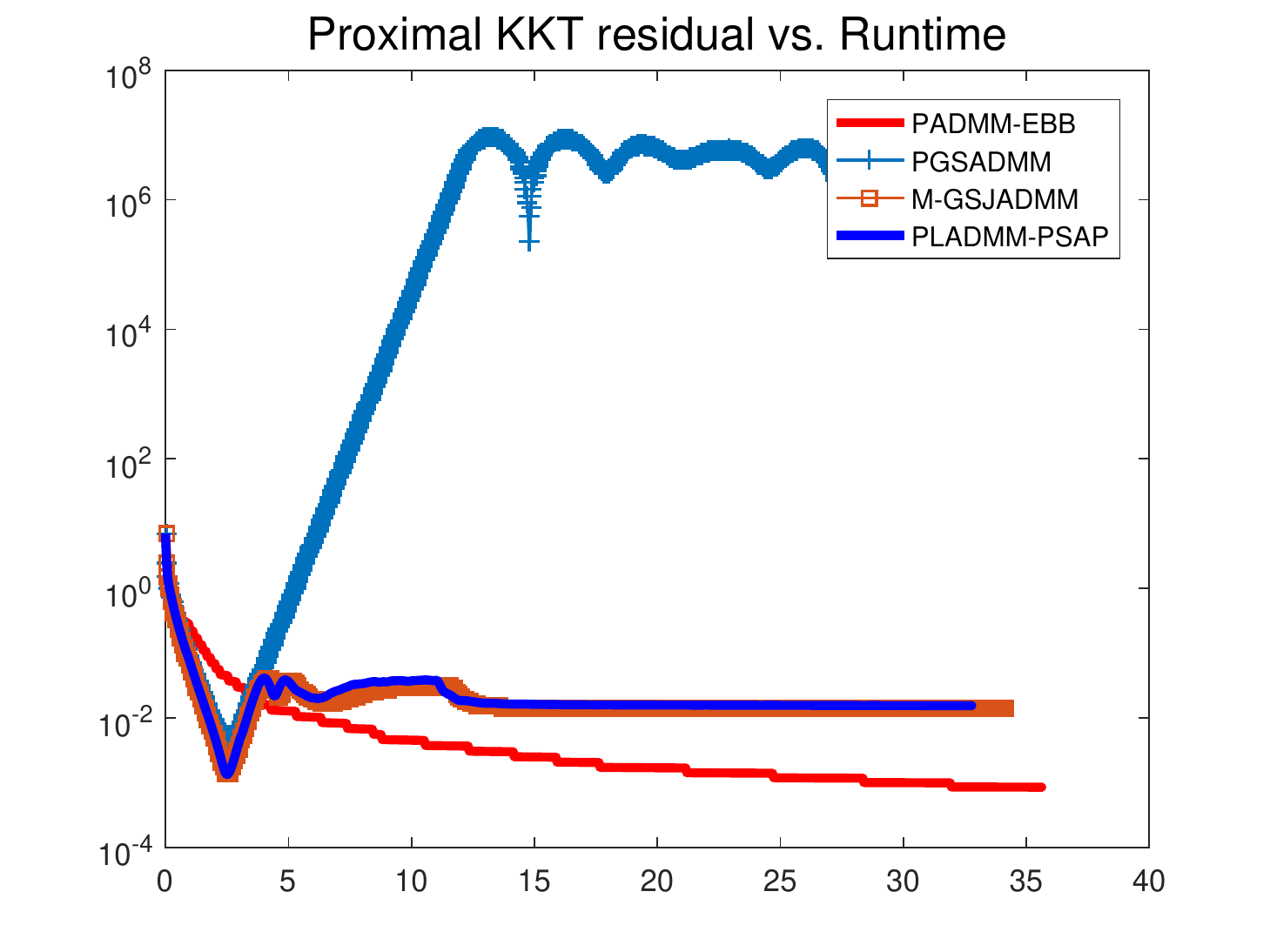}}
\subfigure{\label{fig:pic1}
\includegraphics[width=0.24\linewidth]{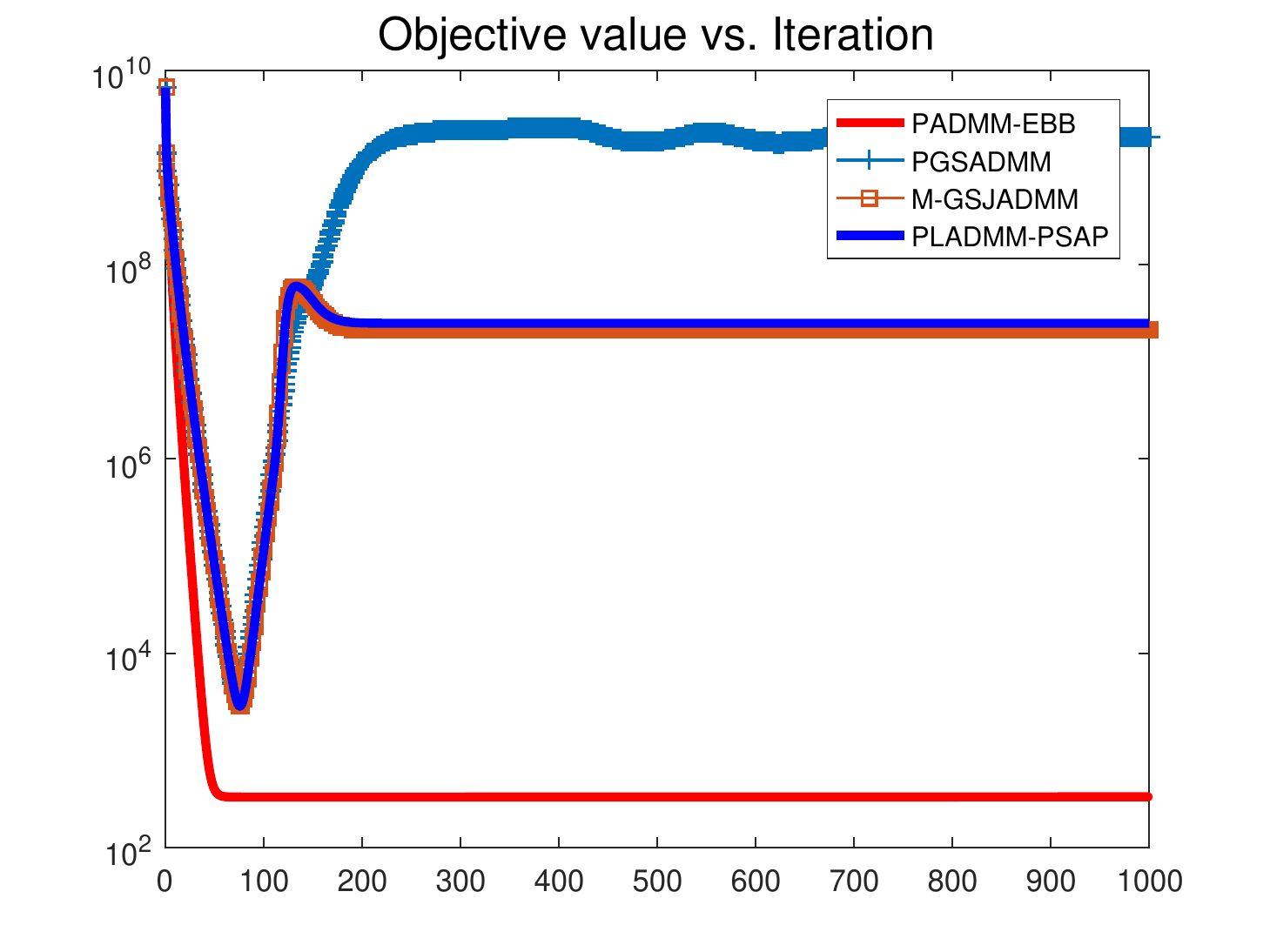}}
\subfigure{\label{fig:pic1}
\includegraphics[width=0.24\linewidth]{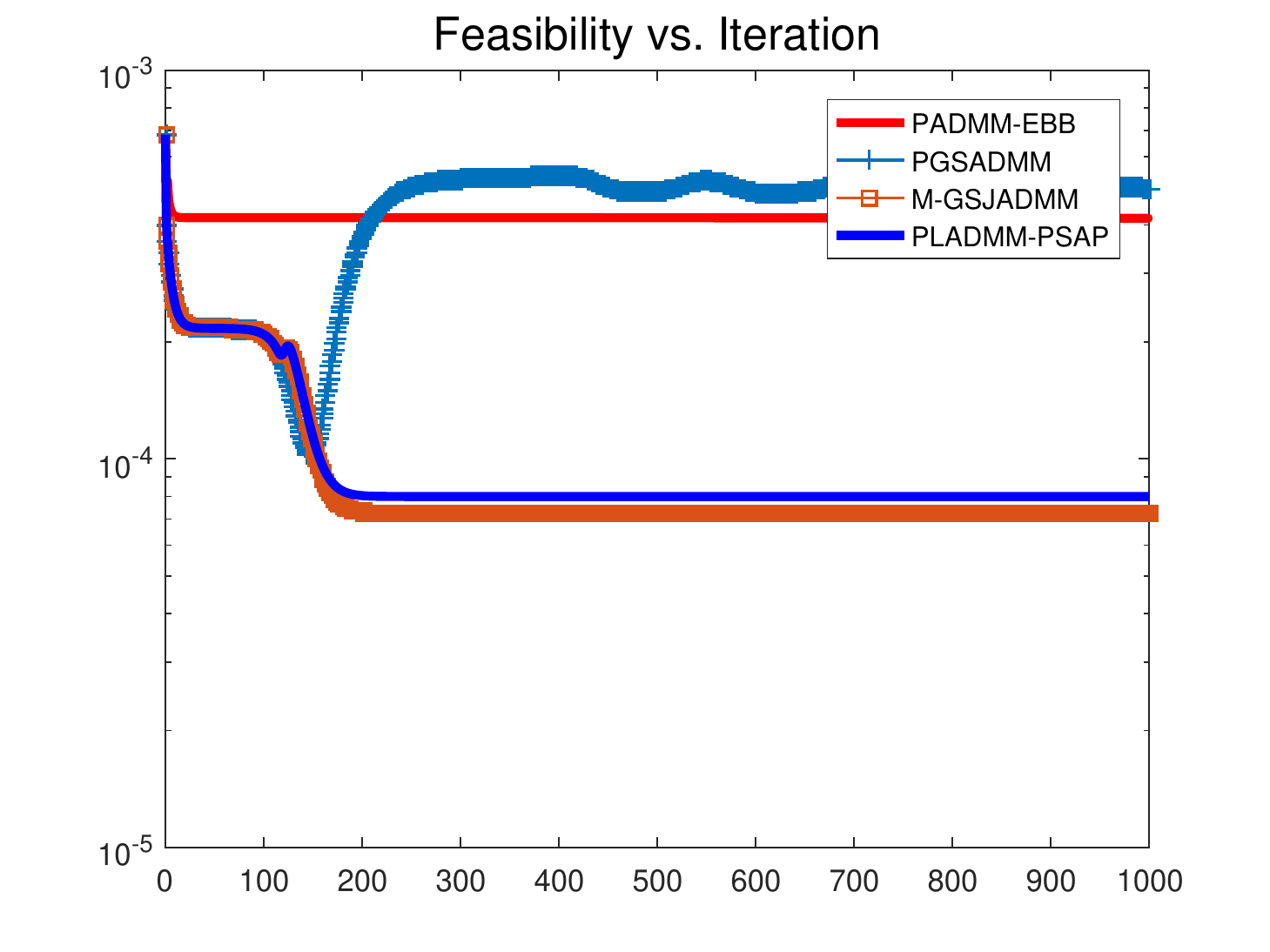}}
\vspace{-0.5cm}
 \caption{ The above four figures illustrate the proximal KKT residual vs. iteration,
  proximal KKT residual vs. runtime, objective value vs. iteration, and
  feasibility vs. iteration on the synthetic dataset with parameters $(\lambda,\mu, \gamma)=(10^3,10^4,10^4)$, respectively.}
\label{fig:overlap-1}
\centering
\subfigure{\label{fig:pic1}
\includegraphics[width=0.24\linewidth]{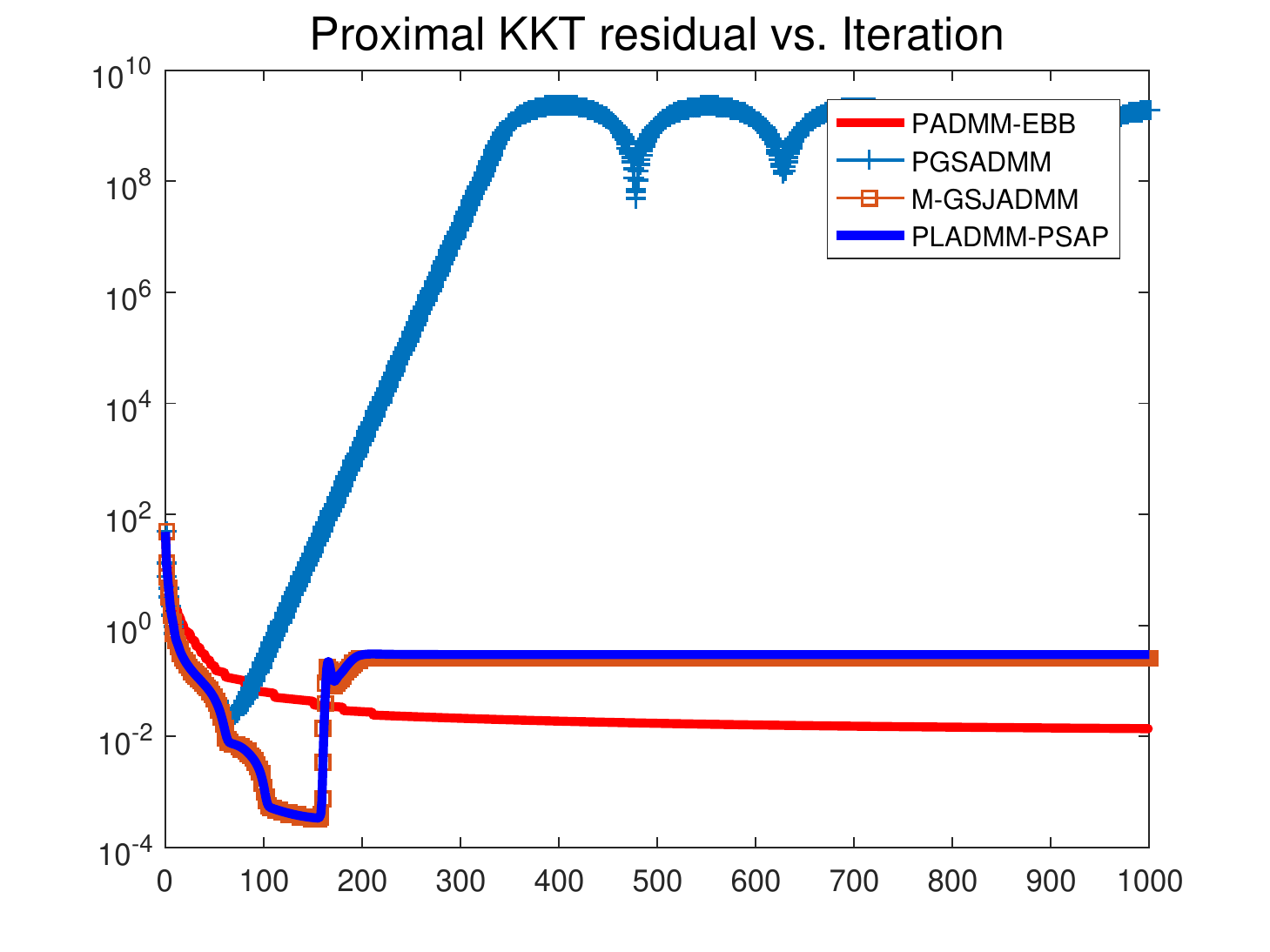}}
\subfigure{\label{fig:pic1}
\includegraphics[width=0.24\linewidth]{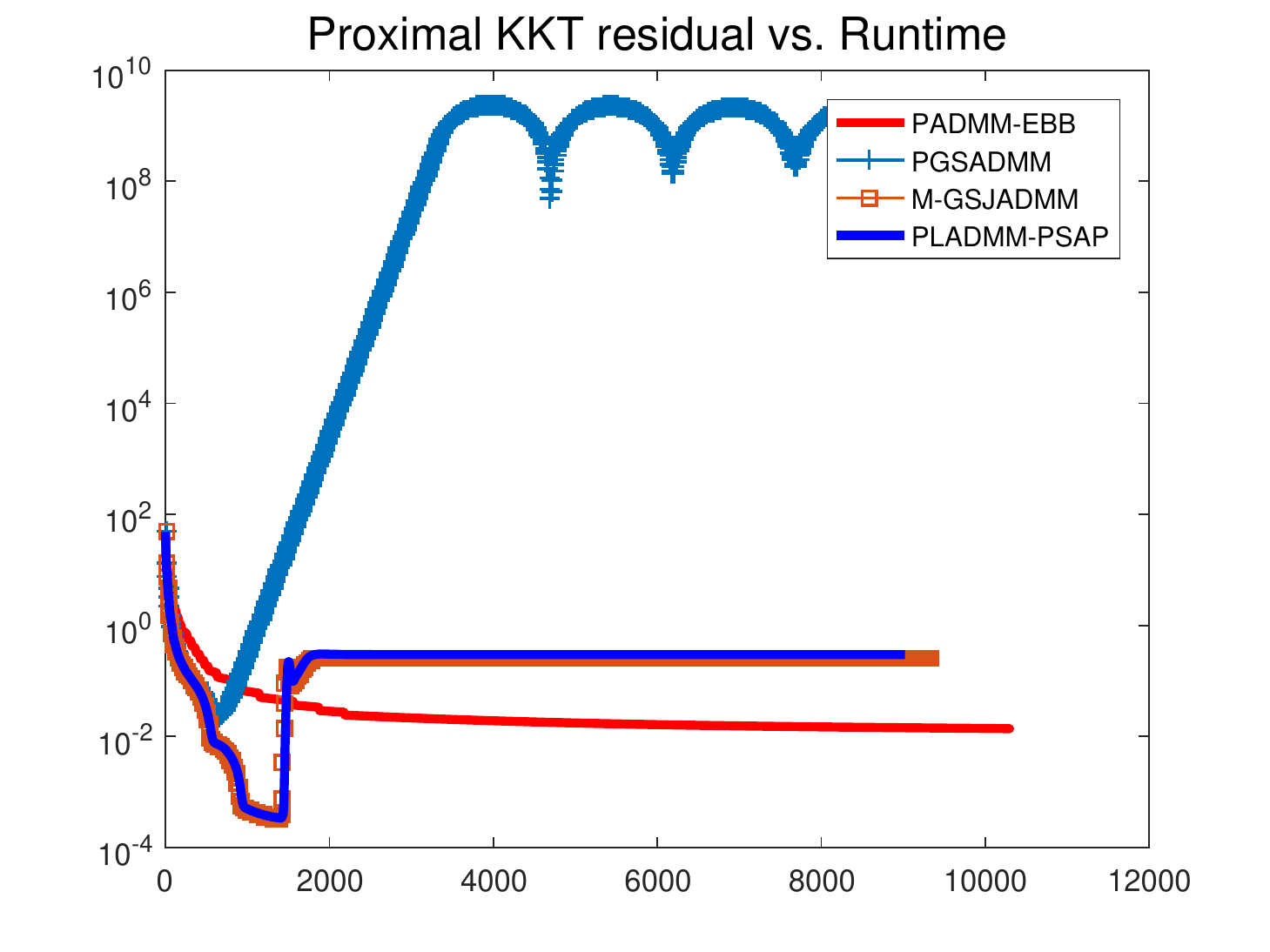}}
\subfigure{\label{fig:pic1}
\includegraphics[width=0.24\linewidth]{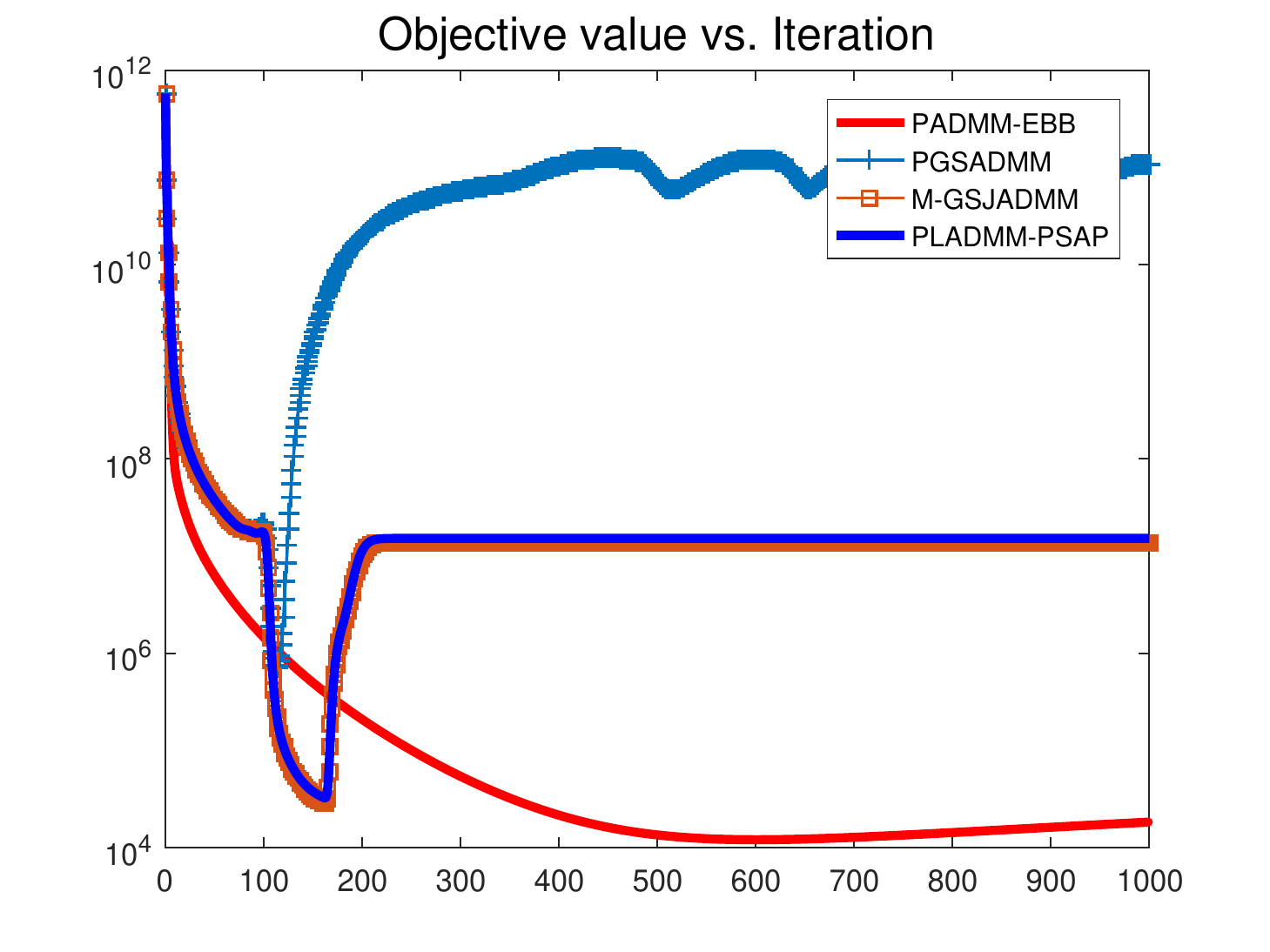}}
\subfigure{\label{fig:pic1}
\includegraphics[width=0.24\linewidth]{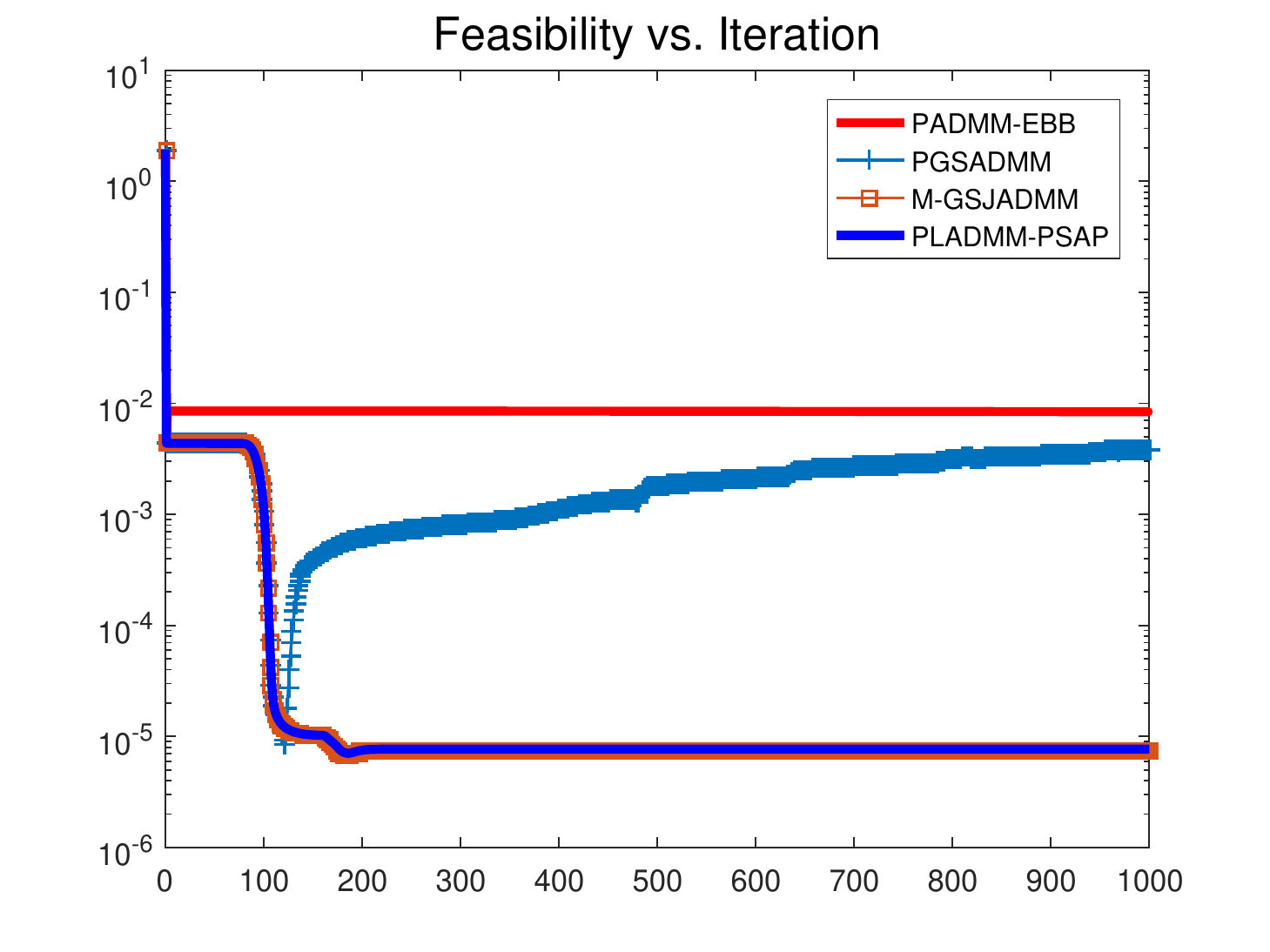}}
\vspace{-0.4cm}
 \caption{The above four figures illustrate the proximal KKT residual vs. iteration,
  proximal KKT residual vs. runtime, objective value vs. iteration, and
  feasibility vs. iteration on the real dataset PIE\_pose27 with parameters $(\lambda,\mu, \gamma)=(10^3,10^4,10^4)$, respectively.}
\label{fig:overlap-2}
\vspace{-0.5cm}
\end{figure*}

\subsection{Experiments}\label{experiments}

 We verify the efficacy of the proposed PADMM-EBB algorithm by solving the nonnegative dual graph regularized low-rank representation problem \cite{yin2015dual} as below:
 \begin{align}\label{DGR-LRR}
 &\min \|Z\|_{*}+\|G\|_{*}+\lambda\|E\|_{1}+\frac{\mu}{2}\|Z\|^2_{L_{Z}}+\frac{\gamma}{2}\|G\|^2_{L_{G}} \nonumber\\
 &~~{\rm s.t.}\  X=XZ+GX+E, Z\ge 0,\ G\ge 0,
 \end{align}
  where $(X, L_{Z}, L_{G})$ are given parameters and $(\lambda,\mu,\gamma)$ are the parameters to control the level of the reconstruction error and graph regularization.
  It is obvious that problem \eqref{DGR-LRR} can be formulated as problem \eqref{linearly-constriant} with $f$ being quadratic and $p = 3$.
  Define the proximal KKT residual of problem \eqref{linearly-constriant} as
  \begin{equation}\label{KKT-residual}
  R(z) =
  \left[
   \begin{array}{c}
        x_1 - {\rm Prox}_{g_{1}}\big(x_{1}- \nabla f_{1}(x)-\mathcal{A}_{1}y\big)   \\
                   \vdots                                             \\
        x_p - {\rm Prox}_{g_{p}}\big(x_{p}- \nabla f_{p}(x) -  \mathcal{A}_{p}y\big)\\
        b  - \sum_{i=1}^{p}\mathcal{A}_{i}^*x_{i}
   \end{array}
   \right].
\end{equation}
 The proximal KKT residual, as a complete characterization of optimality for constrained optimization,
 simultaneously evaluates the performance in terms of the feasibilities of primal-dual equalities,
 violation of nonnegativity, and complementarity condition of nonnegativity for problem \eqref{DGR-LRR}.

  We compare PADMM-EBB with three existing state-of-the-art primal-dual algorithms which are suitable for problem \eqref{linearly-constriant},
  namely PLADMM-PSAP \cite{liu2013linearized,lin2015linearized}, PGSADMM and M-GSJADMM \cite{lu2017unified} in terms of the objective value, feasibility, and proximal KKT residual $R(z)$ over iteration and runtime.
  Notably, PGSADMM and PADMM-EBB are performed with a full Gauss-Seidel updating for the majorized augmented Lagrange function \eqref{majorized-ALF}.
  We conduct experiments on a synthetic dataset $X={\rm randn}(200,200)$ and a real dataset PIE\_pose27\footnote{http://dengcai.zjulearning.org:8081/Data/FaceDataPIE.html}.
 Graph matrices $(L_{Z},L_{G})$ and parameters $(\lambda,\mu,\gamma)=(10^3, 10^4, 10^4)$ are directly borrowed from \cite{yin2015dual}.
 In the implementation, we strictly follow the advice in \cite{lin2015linearized,lu2017unified}
 to adaptively tune the penalty parameter $\beta_{k}$ for PLADMM-PSAP, PGSADMM and M-GSJADMM.

 According to Figures \ref{fig:overlap-1} and \ref{fig:overlap-2}, we know that PADMM-EBB is slightly better than PLADMM-PSAP, PGSADMM and M-GSJADMM in terms of the proximal KKT residual
 and the objective value due to the efficient block Barzilai-Borwein technique, which exploits the curvature information of the KKT generalized equation \eqref{inclusion-linearly-constriant} and the Gauss-Seidel updating for primal variables. PGSADMM, PLADMM-PSAP and M-GSJADMM have lower feasibilities since their penalty parameters $\beta_{k}$ are increasing as iterations proceed to force the equality constraint to hold.
 More experimental results are placed into the supplementary material.

\vspace{-8pt}
\section{Conclusions}

In this paper, we proposed a novel algorithmic framework of {\bf V}ariable {\bf M}etric {\bf O}ver-{\bf R}elaxed  {\bf H}ybrid {\bf P}roximal {\bf E}xtra-gradient (VMOR-HPE) method
and established its global convergence, iteration complexities, and local linear convergence rate.
This framework covers a large class of primal and primal-dual algorithms as special cases, and serves
as a powerful analysis technique for characterizing their convergences.
In addition, we applied the VMOR-HPE framework to linear equality constrained optimization, yielding
a new convergent primal-dual algorithm. The numerical experiments on synthetic and real datasets
demonstrate the efficacy of the proposed algorithm.

\newpage
\bibliography{OR_VM_HPE_ref}
\bibliographystyle{icml2018}

\newpage
\onecolumn
\icmltitle{Supplementary Material for\\ `` An Algorithmic Framework of Variable Metric Over-Relaxed\\ Hybrid Proximal Extra-Gradient Method "}

\appendix
\section{Proof of Theorem \ref{VMOR-HPE-convergence}}
\begin{theorem*}
 Let $\big\{(x^{k},y^{k})\big\}$ be the sequence generated by the VMOR-HPE framework. \\
 {\bf (i)}\ For any given $x^* \in T^{-1}(0)$, the following approximation contractive sequence of $\big\|x^{k}-x^*\big\|_{\mathcal{M}_{k}}^2$ holds
 \begin{align}\label{contractive-sequence}
 \big\|x^{k+1}-x^*\big\|_{\mathcal{M}_{k+1}}^2
 \le (1+\xi_{k})\big\|x^{k}-x^*\big\|_{\mathcal{M}_{k}}^2-(1-\sigma)(1+\xi_{k})(1+\theta_k)\big\|x^{k}-y^{k}\big\|_{\mathcal{M}_{k}}^2.
 \end{align}
 {\bf (ii)}\ $\{x^{k}\}$ and $\{y^{k}\}$ both converge to a point $x^{\infty}$ belonging to $T^{-1}(0)$.
 \end{theorem*}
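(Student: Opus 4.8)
The plan is to prove first the quasi-Fej\'er inequality \eqref{contractive-sequence} stated as part (i) of the supplementary theorem, and then to deduce the convergence claim from it by a standard argument combined with the closedness of the enlargement graph. For the fundamental inequality, fix $x^*\in T^{-1}(0)$. Expanding $\|x^{k+1}-x^*\|_{\mathcal{M}_k}^2$ with the update $x^{k+1}=x^k-(1+\theta_k)c_k\mathcal{M}_k^{-1}v^k$ and the identity $\langle\mathcal{M}_k^{-1}v^k,\cdot\rangle_{\mathcal{M}_k}=\langle v^k,\cdot\rangle$ gives
\[
\|x^{k+1}-x^*\|_{\mathcal{M}_k}^2=\|x^k-x^*\|_{\mathcal{M}_k}^2-2(1+\theta_k)c_k\langle v^k,x^k-x^*\rangle+(1+\theta_k)^2c_k^2\|\mathcal{M}_k^{-1}v^k\|_{\mathcal{M}_k}^2 .
\]
Splitting $x^k-x^*=(x^k-y^k)+(y^k-x^*)$ and using $(y^k,v^k)\in{\rm gph}\,T^{[\epsilon_k]}$ together with $(x^*,0)\in{\rm gph}\,T$ (which yields $\langle v^k,y^k-x^*\rangle\ge-\epsilon_k$ by the definition of the enlargement) gives $\langle v^k,x^k-x^*\rangle\ge\langle v^k,x^k-y^k\rangle-\epsilon_k$; while expanding the completed square in the relative error criterion \eqref{VMOR-HPE-b} rearranges to $2c_k(\langle v^k,x^k-y^k\rangle-\epsilon_k)\ge(1-\sigma)\|y^k-x^k\|_{\mathcal{M}_k}^2+(1+\theta_k)c_k^2\|\mathcal{M}_k^{-1}v^k\|_{\mathcal{M}_k}^2$. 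Multiplying the latter by $(1+\theta_k)>0$ (valid since $\theta_k\ge\underline\theta>-1$) and substituting, the two $(1+\theta_k)^2c_k^2\|\mathcal{M}_k^{-1}v^k\|_{\mathcal{M}_k}^2$ terms cancel exactly, leaving $\|x^{k+1}-x^*\|_{\mathcal{M}_k}^2\le\|x^k-x^*\|_{\mathcal{M}_k}^2-(1-\sigma)(1+\theta_k)\|y^k-x^k\|_{\mathcal{M}_k}^2$; applying $\mathcal{M}_{k+1}\preceq(1+\xi_k)\mathcal{M}_k$ then produces \eqref{contractive-sequence}.

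Next I extract consequences of \eqref{contractive-sequence}. Writing $a_k=\|x^k-x^*\|_{\mathcal{M}_k}^2$ and $b_k=(1-\sigma)(1+\xi_k)(1+\theta_k)\|y^k-x^k\|_{\mathcal{M}_k}^2\ge0$, we have $a_{k+1}\le(1+\xi_k)a_k-b_k$; since $\prod_i(1+\xi_i)=\Xi<\infty$, the rescaled sequence $\widehat a_k:=a_k\prod_{i\ge k}(1+\xi_i)$ is nonincreasing, so $\{a_k\}$ converges and $\sum_kb_k<\infty$. In particular $\|y^k-x^k\|_{\mathcal{M}_k}\to0$, and because $\underline\omega\mathcal{I}\preceq\mathcal{M}_k\preceq\Xi\overline\omega\mathcal{I}$ we get $\|y^k-x^k\|\to0$ and $\{x^k\}$ bounded. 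A short case analysis on the sign of $\theta_k$ in \eqref{VMOR-HPE-b} — dropping the nonnegative squared term when $\theta_k\ge0$, and solving the resulting quadratic in $\|c_k\mathcal{M}_k^{-1}v^k\|_{\mathcal{M}_k}$ when $\theta_k<0$ — shows $\|c_k\mathcal{M}_k^{-1}v^k\|_{\mathcal{M}_k}\le C\|y^k-x^k\|_{\mathcal{M}_k}$ and $c_k\epsilon_k\le C'\|y^k-x^k\|_{\mathcal{M}_k}^2$ for constants depending only on $\underline\theta,\sigma$; with $c_k\ge\underline c$ and the uniform spectral bounds on $\mathcal{M}_k$ this forces $v^k\to0$ and $\epsilon_k\to0$.

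Finally I identify the limit and upgrade to full convergence. As $\{x^k\}$ is bounded in the finite-dimensional $\mathbb{X}$, extract $x^{k_j}\to x^\infty$; then $y^{k_j}\to x^\infty$ too since $\|y^k-x^k\|\to0$. From $v^{k_j}\in T^{[\epsilon_{k_j}]}(y^{k_j})$, the definition of the $\epsilon$-enlargement gives $\langle w-v^{k_j},z-y^{k_j}\rangle\ge-\epsilon_{k_j}$ for every $(z,w)\in{\rm gph}\,T$; letting $j\to\infty$ with $v^{k_j}\to0$, $\epsilon_{k_j}\to0$ yields $\langle w,z-x^\infty\rangle\ge0$ for all $(z,w)\in{\rm gph}\,T$, so maximal monotonicity of $T$ forces $(x^\infty,0)\in{\rm gph}\,T$, i.e. $x^\infty\in T^{-1}(0)$. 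Now apply the previous paragraph with $x^*=x^\infty$: $\{\|x^k-x^\infty\|_{\mathcal{M}_k}^2\}$ converges, and along $k_j$ it tends to $0$ (as $x^{k_j}\to x^\infty$ and $\mathcal{M}_{k_j}$ is bounded), hence the whole sequence does; thus $x^k\to x^\infty$ and, since $\|y^k-x^k\|\to0$, also $y^k\to x^\infty$.

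The crux is the cancellation in the first step: the term $\theta_k\|c_k\mathcal{M}_k^{-1}v^k\|_{\mathcal{M}_k}^2$ deliberately built into the criterion \eqref{VMOR-HPE-b} is exactly what absorbs the $(1+\theta_k)^2$ factor generated by the over-relaxed extra-gradient step, so that no ceiling $\theta_k<1$ is needed; getting this identity precisely right (and tracking the metric update $\mathcal{M}_{k+1}\preceq(1+\xi_k)\mathcal{M}_k$ through it) is the delicate point. A secondary technical nuisance is the uniform estimate on $\|v^k\|$ and $\epsilon_k$ when $\theta_k$ is permitted to be negative, where one must solve a quadratic inequality rather than merely discard nonnegative terms.
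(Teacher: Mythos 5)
Your proposal is correct and follows essentially the same route as the paper: the same expansion of $\|x^{k+1}-x^*\|_{\mathcal{M}_k}^2$, the same use of the enlargement inequality and of criterion \eqref{VMOR-HPE-b} to cancel the $(1+\theta_k)^2\|c_k\mathcal{M}_k^{-1}v^k\|_{\mathcal{M}_k}^2$ term, then summability of $\|x^k-y^k\|_{\mathcal{M}_k}^2$, bounds forcing $v^k\to0$, $\epsilon_k\to0$, a subsequential limit identified in $T^{-1}(0)$ via maximal monotonicity, and a quasi-Fej\'er argument for full convergence. Your minor variations (the tail-product rescaling in place of the paper's $\epsilon$-threshold argument, and the sign case analysis instead of the paper's Cauchy--Schwarz estimate for $\|c_k\mathcal{M}_k^{-1}v^k\|_{\mathcal{M}_k}$ and $c_k\epsilon_k$) are cosmetic and both valid.
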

\begin{proof}
 {\bf (i)} Notice that $v^{k} \in T^{[\epsilon_{k}]}(y^{k})$ and $x^*\in T^{-1}(0)$.
 By utilizing the definition of $T^{[\epsilon_{k}]}$, it holds that $\big\langle v^k,y^k-x^*\big\rangle\ge -\epsilon_k$.
 In combination with this inequality and $x^{k+1} = x^k -(1+\theta_k)c_k\mathcal{M}_{k}^{-1} v^{k}$, we obtain that
 \begin{align}\label{convergence-eq1}
\big\|x^{k+1}-x^*\big\|_{\mathcal{M}_{k}}^2
 &=\big\|x^{k}-x^*\big\|_{\mathcal{M}_{k}}^2\!+(1\!+\!\theta_{k})^2\big\| c_k\mathcal{M}_{k}^{-1}v^{k}\big\|_{\mathcal{M}_{k}}^2
  -2(1\!+\!\theta_{k})\big\langle c_kv^{k},x^{k}-x^*\big\rangle \\
 &= \big\|x^{k}\!-x^*\big\|_{\mathcal{M}_{k}}^2\!+(1\!+\!\theta_{k})^2\big\| c_k\mathcal{M}_{k}^{-1}v^{k}\big\|_{\mathcal{M}_{k}}^2\!-2(1\!+\!\theta_{k})\big\langle c_kv^{k},x^{k}\!-y^{k}\!+y^{k}\!-x^*\big\rangle\nonumber\\
 &= \big\|x^{k}\!-x^*\big\|_{\mathcal{M}_{k}}^2\!+\!(1\!+\!\theta_{k})^2\big\| c_k\mathcal{M}_{k}^{-1}v^{k}\big\|_{\mathcal{M}_{k}}^2\!-2(1\!+\!\theta_{k})\big\langle c_kv^{k},x^{k}\!-y^{k}\big\rangle
              \!-2(1\!+\!\theta_{k})c_k\big\langle v^{k},y^{k}\!-x^*\big\rangle\nonumber\\
 &\leq \big\|x^{k}-x\big\|_{\mathcal{M}_{k}}^2\!+(1\!+\!\theta_{k})^2\big\|c_k\mathcal{M}_{k}^{-1}v^{k}\big\|_{\mathcal{M}_{k}}^2
     \!-2(1\!+\!\theta_{k})\big\langle c_k\mathcal{M}_{k}^{-1}v^{k},\mathcal{M}_{k}(x^{k}-y^{k})\big\rangle\!+ 2(1\!+\!\theta_{k})c_k\epsilon_{k}\nonumber\\
 &=\big\|x^{k}-x\big\|_{\mathcal{M}_{k}}^2\!+\!(1\!+\!\theta_{k})\big[\theta_k\big\|c_k\mathcal{M}_{k}^{-1}v^{k}\big\|_{\mathcal{M}_{k}}^2
       \!+\!\big\| c_k\mathcal{M}_{k}^{-1}v^{k}+y^{k}\!-\!x^{k}\big\|_{\mathcal{M}_{k}}^2\!+\!2c_k\epsilon_{k}\!-\!\big\|y^{k}\!-\!x^{k}\big\|_{\mathcal{M}_{k}}^2\big]\nonumber\\
 &\leq \big\|x^{k}-x\big\|_{\mathcal{M}_{k}}^2-(1-\sigma)(1+\theta_{k})\big\|y^{k}-x^{k}\big\|_{\mathcal{M}_{k}}^2, \nonumber
 \end{align}
 where the last inequality holds according to \eqref{VMOR-HPE-b}. Moreover, according to $\mathcal{M}_{k+1}\preceq (1+\xi_{k})\mathcal{M}_{k}$,
 we obtain $\frac{1}{1+\xi_{k}}\big\|z^{k+1}-z^*\big\|_{\mathcal{M}_{k+1}}^2 \le \big\|z^{k+1}-z^*\big\|_{\mathcal{M}_{k}}^2$.
 Substituting this inequality into \eqref{convergence-eq1} yields the desired approximation contractive sequence
 \begin{align*}
 \big\|x^{k+1}-x^*\big\|_{\mathcal{M}_{k+1}}^2
 \le (1+\xi_{k})\big\|x^{k}-x^*\big\|_{\mathcal{M}_{k}}^2-(1-\sigma)(1+\xi_{k})(1+\theta_k)\big\|x^{k}-y^{k}\big\|_{\mathcal{M}_{k}}^2.
 \end{align*}

 \noindent
 {\bf (ii)}
 By the inequality \eqref{contractive-sequence}, $\theta_{k}\ge\underline{\theta}\ge-1$ and $\sigma<1$, we obtain
 $\big\|x^{k+1}-x^*\big\|_{\mathcal{M}_{k+1}}^2\le (1+\xi_{k})\big\|x^{k}-x^*\big\|_{\mathcal{M}_{k}}^2$ and
 \begin{align}\label{bound-xk}
 \big\|x^{k+1}-x^*\big\|_{\mathcal{M}_{k+1}}^2\le \prod_{i=1}^{k}(1+\xi_{i})\big\|x^{0}-x^*\big\|_{\mathcal{M}_{0}}^2.
 \end{align}
 In addition, for any $t\ge0$, it is easy to verify that $\log(1+t) \le t$. Hence, $\sum_{i=0}^{\infty}\xi_{i}< +\infty$ implies
 \begin{equation*}
 \Xi := \prod_{i=0}^{\infty}(1+\xi_{i})<\exp\Big(\sum_{i=0}^{\infty}\xi_{i}\Big)<+\infty.
 \end{equation*}
 Combing the above two inequalities implies $\big\|x^{k+1}-x^*\big\|_{\mathcal{M}_{k+1}}^2 \le \Xi\big\|x^{0}-x^*\big\|_{\mathcal{M}_{0}}^2$.
 This inequality, in combination with $\mathcal{M}_{k}\succeq \underline{\omega}\mathcal{I}$, implies the boundedness of sequence $\{x^{k}\}$.
 According to \eqref{contractive-sequence} again, we obtain
 \begin{align*}
  (1-\sigma)(1+\xi_{k})(1\!+\!\theta_k)\|x^{k}-y^{k}\|_{\mathcal{M}_{k}}^2
 &\le  (1\!+\!\xi_{k})\|x^{k}-x^*\|_{\mathcal{M}_{k}}^2-\|x^{k+1}-x^*\|_{\mathcal{M}_{k+1}}^2 \nonumber\\
 &\le  \|x^{k}-x^*\|_{\mathcal{M}_{k}}^2-\|x^{k+1}-x^*\|_{\mathcal{M}_{k+1}}^2+ \xi_{k}\Xi\|x^{0}-x^*\|_{\mathcal{M}_{0}}^2.
 \end{align*}
 Using $ \theta_k\ge \underline{\theta}>-1, \sigma <1$ and taking a summation of both sides of the above inequality, we obtain
 \begin{align}\label{convergence-eq1-1}
 (1-\sigma)(1+\underline{\theta})\sum_{i=1}^{k}\big\|x^{i}-y^{i}\big\|_{\mathcal{M}_{i}}^2
 &\le \sum_{i=1}^{k}(1-\sigma)(1+\xi_{i})(1+\theta_i)\big\|x^{i}-y^{i}\big\|_{\mathcal{M}_{i}}^2 \nonumber \\
 &\le \big\|x^{1}-x^*\big\|_{\mathcal{M}_{1}}^2-\big\|x^{k+1}-x^*\big\|_{\mathcal{M}_{k+1}}^2+\sum_{i=1}^{k}\xi_{i}\Xi\big\|x^{0}-x^*\big\|_{\mathcal{M}_{0}}^2\nonumber\\
 &\le \big(1+\sum_{i=1}^{k}\xi_{i}\big)\Xi\big\|x^{0}-x^*\big\|_{\mathcal{M}_{0}}^2.
 \end{align}
 Dividing the term $(1-\sigma)(1+\underline{\theta})$ on both sides of the above inequality, we obtain
 \begin{align}\label{convergence-eq2}
 \sum_{i=1}^{k}\big\|x^{i}-y^{i}\big\|_{\mathcal{M}_{i}}^2
 \le \frac{\big(1+\sum_{i=1}^{k}\xi_{i}\big)\Xi}{(1-\sigma)(1+\underline{\theta})}\big\|x^{0}-x^*\big\|_{\mathcal{M}_{0}}^2.
 \end{align}
 According to $\sum_{i=1}^{\infty}\xi_{i}<\infty$, $\mathcal{M}_{k}\succeq \underline{\omega}\mathcal{I}$, the boundedness of $\{x^{k}\}$ and inequality \eqref{convergence-eq2},
 sequence $\{y^{k}\}$ is apparently bounded and has the same limitation points as sequence $\{x^{k}\}$.
 To show the convergences of $\{x^{k}\}$ and $\{y^{k}\}$, we further need to argue that the accumulated residuals
 $\sum_{i=1}^{k}\|\mathcal{M}^{-1}_{i}v^{i}\|^2_{\mathcal{M}_{i}}$ and the accumulated error $\sum_{i=1}^{k}\epsilon_{i}$ are bounded.
 Expanding the term $\big\|c_k\mathcal{M}_{k}^{-1}v^{k}+y^{k}-x^{k}\big\|_{\mathcal{M}_{k}}^2$ in \eqref{VMOR-HPE-b}, we acquire
 $2\langle c_kv^{k},x^{k}-y^{k}\rangle \ge (1+\theta_k)\big\|c_k\mathcal{M}_{k}^{-1}v^{k}\big\|_{\mathcal{M}_{k}}^2+(1-\sigma)\big\|y^{k}-x^{k}\big\|_{\mathcal{M}_{k}}^2+2c_{k}\epsilon_{k}$.
 In addition, by the Cauchy-Schwartz inequality, it holds that
 \[
 2\langle c_kv^{k},x^{k}-y^{k}\rangle
 \!\le 2\big\|c_k\mathcal{M}_{k}^{-1}v^{k}\big\|_{\mathcal{M}_{k}}\big\|x^{k}-y^{k}\big\|_{\mathcal{M}_{k}}
 \!\le\! \frac{1\!+\theta_{k}}{2}\big\|c_k\mathcal{M}_{k}^{-1}v^{k}\big\|^2_{\mathcal{M}_{k}}\!+ \frac{2}{1\!+\!\theta_{k}}\big\|x^{k}-y^{k}\big\|^2_{\mathcal{M}_{k}}.
 \]
 Substituting the inequality into the above inequality, we obtain
 \begin{equation}\label{convergence-eq3}
 (1+\theta_k)\big\|c_k\mathcal{M}_{k}^{-1}v^{k}\big\|_{\mathcal{M}_{k}}^2+2c_{k}\epsilon_{k}-\frac{1+\theta_{k}}{2}\big\|c_k\mathcal{M}_{k}^{-1}v^{k}\big\|^2_{\mathcal{M}_{k}}
 -\frac{2}{1+\theta_{k}}\big\|x^{k}\!-\!y^{k}\big\|_{\mathcal{M}_{k}}^2 \le0,
 \end{equation}
 which further indicates $\frac{1+\theta_k}{2}\big\|c_k\mathcal{M}_{k}^{-1}v^{k}\big\|_{\mathcal{M}_{k}}^2+2c_{k}\epsilon_{k}\le \frac{2}{1+\theta_{k}}\big\|x^{k}\!-\!y^{k}\big\|_{\mathcal{M}_{k}}^2$. Hence, we have
 \begin{equation}\label{convergence-eq5}
 \big\|c_k\mathcal{M}_{k}^{-1}v^{k}\big\|_{\mathcal{M}_{k}}^2\le \frac{4}{(1+\theta_{k})^2}\big\|x^{k}-y^{k}\big\|_{\mathcal{M}_{k}}^2,\
 c_{k}\epsilon_{k}\le \frac{1}{1+\theta_{k}}\big\|x^{k}\!-\!y^{k}\big\|_{\mathcal{M}_{k}}^2.
 \end{equation}
 Combining \eqref{convergence-eq2} and \eqref{convergence-eq5} yields the bounds of ${\sum_{i=1}^{k}}(1\!+\!\theta_{i})^2\big\|c_{i}\mathcal{M}_{i}^{-1}v^{i}\big\|^2_{\mathcal{M}_{i}}$
 and ${\sum_{i=1}^{k}}(1\!+\!\theta_{i})c_{i}\epsilon_{i}$, which are
 \begin{gather}
 {\sum_{i=1}^{k}}(1+\theta_{i})^2\big\|c_{i}\mathcal{M}_{i}^{-1}v^{i}\big\|^2_{\mathcal{M}_{i}}
 \le \frac{4(1+\sum_{i=1}^{k}\xi_{i}) \Xi}{(1-\sigma)(1+\underline{\theta})}
                                        \big \|x^{0}-x^*\big\|_{\mathcal{M}_{0}}^2, \label{convergence-eq5-1} \\
 {\sum_{i=1}^{k}}(1+\theta_{i})c_{i}\epsilon_{i}
   \le \frac{(1+\sum_{i=1}^{k}\xi_{i})\Xi}{(1-\sigma)(1+\underline{\theta})}\big\|x^{0}-x^*\big\|_{\mathcal{M}_{0}}^2. \label{convergence-eq5-2}
 \end{gather}
 By $\theta_{k}\ge\underline{\theta}$ and $c_{k}\ge \underline{c} >0$,  the upper estimations for $\sum_{i=1}^{k}\big\|\mathcal{M}^{-1}_{i}v^{i}\big\|^2_{\mathcal{M}_{i}}$ and $\sum_{i=1}^{k}\epsilon_{i}$ are given below:
  \begin{align}\label{convergence-eq6}
 {\sum_{i=1}^{k}}\big\|\mathcal{M}_{i}^{-1}v^{i}\big\|^2_{\mathcal{M}_{i}}
 \le \frac{4(1\!+\!\sum_{i=1}^{k}\xi_{i}) \Xi}{(1\!-\!\sigma)\underline{c}^2(1\!+\!\underline{\theta})^{3}}
                                         \big\|x^{0}\!-\!x^*\big\|_{\mathcal{M}_{0}}^2,\
  {\sum_{i=1}^{k}}\epsilon_{i}
   \le \frac{(1\!+\!\sum_{i=1}^{k}\xi_{i})\Xi}{\underline{c}(1\!-\!\sigma)(1\!+\!\underline{\theta})^2}\big\|x^{0}\!-\!x^*\big\|_{\mathcal{M}_{0}}^2.
 \end{align}
 By \eqref{convergence-eq2}, \eqref{convergence-eq6} and $\mathcal{M}_{k}\succeq \underline{\omega}\mathcal{I}$,
 it holds that $\lim_{k\to \infty} \epsilon_{k} = \lim_{k\to \infty} \|v^{k}\| =\lim_{k\to \infty} \|x^{k}-y^{k}\| = 0.$
 In addition, due to the boundedness of $\{x^{k}\}$ and $\{y^{k}\}$, there exists a subsequence $\mathcal{K} \subseteq \{1,2,\ldots\}$ such that
 $\lim_{k\in \mathcal{K}, k\to \infty} x^{k} = \lim_{k\in \mathcal{K}, k\to \infty} y^{k} = x^{\infty}$.
 Let $k\in \mathcal{K}$ tend to be infinity in $v^{k}\in T^{[\epsilon_{k}]}(y^{k})$ in \eqref{VMOR-HPE-a}, and then
 it holds that $0 \in T(x^{\infty})$ by verifying the definition of enlargement operator $T^{[\epsilon_{k}]}$. Hence, $x^{\infty}$ is a root of inclusion problem \eqref{inclusion-problem}.
 Replacing $x^*$ by $x^{\infty}$ in inequality \eqref{contractive-sequence}, we derive
 \begin{align*}
 \big\|x^{k+1}-x^{\infty}\big\|_{\mathcal{M}_{k+1}}^2
 \le (1+\xi_{k})\big\|x^{k}-x^{\infty}\big\|_{\mathcal{M}_{k}}^2-(1+\xi_{k})(1-\sigma)(1+\theta_k)\big\|x^{k}-y^{k}\big\|_{\mathcal{M}_{k}}^2.
 \end{align*}
 Notice that $\lim_{k\in \mathcal{K}, k\to \infty} x^{k}=x^{\infty}$. Therefore, for any given $\epsilon >0$, there exists $\overline{k} \in \mathcal{K}> 0$ such that
 $\|x^{\overline{k}}-x^{\infty}\|_{\mathcal{M}_{\overline{k}}}^2 \le \frac{\epsilon}{\Xi}$.
 Then, for all $k\ge \overline{k}$, the above inequality indicates
 \begin{align*}
 \|x^{k+1}-x^{\infty}\|_{\mathcal{M}_{k+1}}^2
 \le \prod_{i=\overline{k}}^{k}(1+\xi_{i})\|x^{\overline{k}}-x^{\infty}\|_{\mathcal{M}_{\overline{k}}}^2
 \le \prod_{i=0}^{k}(1+\xi_{i})\frac{\epsilon}{\Xi} \le \epsilon.
 \end{align*}
 Hence, it holds that $\lim_{k\to \infty} x^{k}=\lim_{k\to \infty} y^{k}=x^{\infty}$ by $\mathcal{M}_{k}\succeq \underline{w}\mathcal{I}$. We complete the proof.
 \end{proof}

\section{Proof of Theorem \ref{linear-rate}}
 \begin{theorem*}
 Let $\{(x^{k},\,y^{k})\}$ be the sequence generated by the VMOR-HPE framework. Assume that
 the metric subregularity of $T$ at $(x^{\infty},0)\in {\rm gph}\,T$ holds with $\kappa >0$. Then, there exists
 $\overline{k} >0$ such that for all $k\ge \overline{k}$,
 \begin{align}\label{linear-rate-xk}
  {\rm dist}^{2}_{\mathcal{M}_{k\!+\!1}}\big(x^{k+1},T^{-1}(0)\big)
  \leq \Big(1\!-\!\frac{\varrho_{k}}{2}\Big){\rm dist}^{2}_{\mathcal{M}_{k}}\big(x^{k},T^{-1}(0)\big),
 \end{align}
 where $\varrho_{k}= \left[(1-\sigma)(1+\theta_{k})\right]{\Big /}
            \left[\Big(1+\frac{\kappa}{\underline{c}}\sqrt{\frac{\Xi\overline{\omega}}{\underline{\omega}}}\Big)^{2}\Big(1+\sqrt{\sigma+\frac{4\max\{-\theta_{k},0\}}{(1+\theta_{k})^2}}\Big)^{2}\right]\in (0,1)$.
\end{theorem*}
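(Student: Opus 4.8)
The plan is to combine the one-step contraction already isolated in the proof of Theorem~\ref{VMOR-HPE-convergence} with a local error bound coming from the metric subregularity of $T$. Write $d^{k}:=c_{k}\mathcal{M}_{k}^{-1}v^{k}$, so that $x^{k+1}=x^{k}-(1+\theta_{k})d^{k}$, and $e^{k}:=d^{k}+(y^{k}-x^{k})$ for the proximal residual controlled by \eqref{VMOR-HPE-b}. First I would, for each $k$, pick $x^{*,k}\in T^{-1}(0)$ attaining ${\rm dist}_{\mathcal{M}_{k}}(x^{k},T^{-1}(0))$ and apply \eqref{contractive-sequence} with $x^{*}=x^{*,k}$; since ${\rm dist}^{2}_{\mathcal{M}_{k+1}}(x^{k+1},T^{-1}(0))\le\|x^{k+1}-x^{*,k}\|_{\mathcal{M}_{k+1}}^{2}$, this gives
\begin{equation*}
{\rm dist}^{2}_{\mathcal{M}_{k+1}}\big(x^{k+1},T^{-1}(0)\big)\le(1+\xi_{k})\Big[{\rm dist}^{2}_{\mathcal{M}_{k}}\big(x^{k},T^{-1}(0)\big)-(1-\sigma)(1+\theta_{k})\big\|x^{k}-y^{k}\big\|_{\mathcal{M}_{k}}^{2}\Big].
\end{equation*}
Everything then reduces to the error bound ${\rm dist}_{\mathcal{M}_{k}}(x^{k},T^{-1}(0))\le\big(1+\tfrac{\kappa}{\underline{c}}\sqrt{\Xi\overline{\omega}/\underline{\omega}}\big)\big(1+\sqrt{\sigma+4\max\{-\theta_{k},0\}/(1+\theta_{k})^{2}}\big)\|x^{k}-y^{k}\|_{\mathcal{M}_{k}}$ for all large $k$: substituting it into the displayed inequality produces ${\rm dist}^{2}_{\mathcal{M}_{k+1}}(\cdot)\le(1+\xi_{k})(1-\varrho_{k})\,{\rm dist}^{2}_{\mathcal{M}_{k}}(\cdot)$ with exactly the stated $\varrho_{k}$.

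To establish the error bound, I would first turn the enlargement membership $v^{k}\in{\rm gph}\,T^{[\epsilon_{k}]}$ into a genuine element of ${\rm gph}\,T$ nearby: let $\widetilde{y}^{k}$ be the exact proximal point of $x^{k}$ in the metric $c_{k}^{-1}\mathcal{M}_{k}$, i.e.\ the unique solution of $c_{k}^{-1}\mathcal{M}_{k}(x^{k}-\widetilde{y}^{k})\in T(\widetilde{y}^{k})$, and set $\widetilde{v}^{k}:=c_{k}^{-1}\mathcal{M}_{k}(x^{k}-\widetilde{y}^{k})\in T(\widetilde{y}^{k})$. From the identity $c_{k}(v^{k}-\widetilde{v}^{k})=\mathcal{M}_{k}(\widetilde{y}^{k}-y^{k}+e^{k})$, the definition of $T^{[\epsilon_{k}]}$ applied at the pair $(\widetilde{y}^{k},\widetilde{v}^{k})$, Cauchy--Schwarz and Young's inequality (in the $\mathcal{M}_{k}$-inner product), one obtains $\|\widetilde{y}^{k}-y^{k}\|_{\mathcal{M}_{k}}^{2}\le\|e^{k}\|_{\mathcal{M}_{k}}^{2}+2c_{k}\epsilon_{k}$. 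By \eqref{VMOR-HPE-b} the right side is at most $\sigma\|x^{k}-y^{k}\|_{\mathcal{M}_{k}}^{2}+\max\{-\theta_{k},0\}\|d^{k}\|_{\mathcal{M}_{k}}^{2}$, hence, by the first estimate in \eqref{convergence-eq5}, at most $\big(\sigma+4\max\{-\theta_{k},0\}/(1+\theta_{k})^{2}\big)\|x^{k}-y^{k}\|_{\mathcal{M}_{k}}^{2}$. The triangle inequality then gives $\|x^{k}-\widetilde{y}^{k}\|_{\mathcal{M}_{k}}\le\big(1+\sqrt{\sigma+4\max\{-\theta_{k},0\}/(1+\theta_{k})^{2}}\big)\|x^{k}-y^{k}\|_{\mathcal{M}_{k}}$, which accounts for the second factor of $\varrho_{k}$.

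Since $y^{k}\to x^{\infty}\in T^{-1}(0)$ and $\|x^{k}-y^{k}\|_{\mathcal{M}_{k}}\to0$ by Theorem~\ref{VMOR-HPE-convergence}, also $\widetilde{y}^{k}\to x^{\infty}$, so there is $\overline{k}$ with $\widetilde{y}^{k}$ in the subregularity neighbourhood $U$ of $x^{\infty}$ for all $k\ge\overline{k}$. Then ${\rm dist}(\widetilde{y}^{k},T^{-1}(0))\le\kappa\,{\rm dist}(0,T(\widetilde{y}^{k}))\le\kappa\|\widetilde{v}^{k}\|$; bounding $\|\widetilde{v}^{k}\|=\|c_{k}^{-1}\mathcal{M}_{k}(x^{k}-\widetilde{y}^{k})\|$, comparing the norms $\|\cdot\|$ and $\|\cdot\|_{\mathcal{M}_{k}}$ through $\underline{\omega}\mathcal{I}\preceq\mathcal{M}_{k}\preceq\Xi\overline{\omega}\mathcal{I}$ and $c_{k}\ge\underline{c}$, and finally using ${\rm dist}_{\mathcal{M}_{k}}(x^{k},T^{-1}(0))\le\|x^{k}-\widetilde{y}^{k}\|_{\mathcal{M}_{k}}+\sqrt{\Xi\overline{\omega}}\,{\rm dist}(\widetilde{y}^{k},T^{-1}(0))$, yields the first factor $1+\tfrac{\kappa}{\underline{c}}\sqrt{\Xi\overline{\omega}/\underline{\omega}}$ and hence the error bound. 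To close, one checks $\varrho_{k}\in(0,1)$ (using $1-\sigma<(1+\sqrt{\sigma})^{2}$ and that both denominator factors are $\ge1$), and since $\xi_{k}\to0$ while $\theta_{k}\ge\underline{\theta}$ keeps $\varrho_{k}$ bounded away from $0$, enlarging $\overline{k}$ so that $(1+\xi_{k})(1-\varrho_{k})\le1-\varrho_{k}/2$ for all $k\ge\overline{k}$ gives \eqref{linear-rate-xk}.

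The hard part will be the enlargement-to-graph passage in the second paragraph: $v^{k}\in T^{[\epsilon_{k}]}(y^{k})$ gives no control over ${\rm dist}(0,T(y^{k}))$, so one is forced to move to the exact proximal pair $(\widetilde{y}^{k},\widetilde{v}^{k})$ and argue that the substitution error is paid for entirely by the relative-error budget of \eqref{VMOR-HPE-b}; the remainder is routine triangle-inequality and norm-comparison bookkeeping against $\underline{\omega}\mathcal{I}\preceq\mathcal{M}_{k}\preceq\Xi\overline{\omega}\mathcal{I}$ and $c_{k}\ge\underline{c}$.
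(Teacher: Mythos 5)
Your proposal is correct and follows essentially the same route as the paper: your exact proximal pair $(\widetilde{y}^{k},\widetilde{v}^{k})$ is the paper's $z^{k}$ with $-c_{k}^{-1}\mathcal{M}_{k}(z^{k}-x^{k})\in T(z^{k})$, your residual $e^{k}$ is the paper's $r^{k}$, and your enlargement-plus-Young step yields exactly the paper's bound $\|z^{k}-y^{k}\|_{\mathcal{M}_{k}}^{2}\le\|r^{k}\|_{\mathcal{M}_{k}}^{2}+2c_{k}\epsilon_{k}$ (the paper solves the resulting quadratic inequality instead), after which the subregularity estimate, triangle inequalities, norm comparisons, and the combination with \eqref{contractive-sequence} via the projection point are identical. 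The only cosmetic difference is the order in which the subregularity bound and the $\|x^{k}-z^{k}\|$-versus-$\|x^{k}-y^{k}\|$ comparison are assembled.
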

\begin{proof}
Let $x^{\infty}$ be the limitation point of $\{x^{k}\}$ and $z^k$ be the point satisfying $0\in c_kT(z^k)+\mathcal{M}_{k}(z^k-x^{k})$, respectively.
By the metric subregularity of $T$ at $(x^{\infty},0) \in {\rm gph}\,T$, there exists $\widetilde{k}\in\mathbb{N}$ such that for all $k\geq\widetilde{k}$,
\begin{align}\label{linear-rate-eq1}
  {\rm dist}_{\mathcal{M}_{k}}\big(z^{k}, T^{-1}(0)\big)
  &\leq \sqrt{\Xi\overline{\omega}}{\rm dist}\big(z^{k}, T^{-1}(0)\big)
  \leq \sqrt{\Xi\overline{\omega}}\kappa{\rm dist}\big(0,T(z^{k})\big) \nonumber\\
  &\leq \frac{\sqrt{\Xi\overline{\omega}}\kappa}{\underline{c}}\big\|\mathcal{M}_{k}(z^{k}-x^{k})\big\|
  \leq \frac{\kappa}{\underline{c}}\sqrt{\frac{\Xi\overline{\omega}}{\underline{\omega}}}\big\|z^{k}-x^{k}\big\|_{\mathcal{M}_{k}},
 \end{align}
 where the third inequality holds due to $-c^{-1}_{k}\mathcal{M}_{k}(z^{k}-x^{k})\in T(z^{k})$ and $c_{k}\ge \underline{c}$,
 and the last inequality holds due to $\big\|\mathcal{M}^{\frac{1}{2}}_{k}(z^{k}\!-\!x^{k})\big\|\!\ge\! \lambda_{\min}(\mathcal{M}^{\frac{1}{2}}_{k})\big\|z^{k}-x^{k}\big\|$.
 By the triangle inequality, inequality \eqref{linear-rate-eq1} indicates
 \begin{align}\label{linear-rate-eq2}
 {\rm dist}_{\mathcal{M}_{k}}\big(x^{k},T^{-1}(0)\big)
  \le \big\|x^{k}-z^{k}\big\|_{\mathcal{M}_{k}} + {\rm dist}_{\mathcal{M}_{k}}\big(z^{k},T^{-1}(0)\big) \leq \Big(1+\frac{\kappa}{\underline{c}}\sqrt{\frac{\Xi\overline{\omega}}{\underline{\omega}}}\Big)\big\|z^{k}-x^{k}\big\|_{\mathcal{M}_{k}}.
 \end{align}
 Next, we build the connection between $\|z^{k}-x^{k}\|_{\mathcal{M}_{k}}$ and $\|y^{k}-x^{k}\|_{\mathcal{M}_{k}}$, which is crucial
 for establishing the linear convergence rate \eqref{linear-rate-xk}.
 Due to inequality \eqref{VMOR-HPE-a}, $0 \in c_{k}T(z^{k})+\mathcal{M}_{k}(z^{k}-x^{k})$ and the definition of $T^{[\epsilon_{k}]}$, we obtain
 $\big\langle c_kv^{k}\!-\!\mathcal{M}_{k}(x^{k}\!-\!z^{k}),  y^{k}\!-\!z^{k}\big\rangle \!\geq -c_k\epsilon_{k}$ .
 Let $r^{k}:= c_{k}\mathcal{M}_{k}^{-1}v^{k}+y^{k}-x^{k}$, and then it holds that $c_kv^{k}\!=\!\mathcal{M}_{k}r^{k}\!+\!\mathcal{M}_{k}(x^{k}-y^{k})$.
 Substituting this equality into the last inequality yields
 \[
   \|z^{k}-y^{k}\|_{\mathcal{M}_{k}}^2-\|r^{k}\|_{\mathcal{M}_{k}}\|z^{k}-y^{k}\|_{\mathcal{M}_{k}}-c_{k}\epsilon_{k}\leq 0.
 \]
 The above quadratic inequality on the term $\big\|z^{k}-y^{k}\big\|_{\mathcal{M}_{k}}$ directly implies the following result that
 \begin{equation}\label{linear-rate-eq3}
 \big\|z^{k}-y^{k}\big\|_{\mathcal{M}_{k}}
 \le \frac{1}{2}\Big[\big\|r^k\big\|_{\mathcal{M}_{k}}+\sqrt{\big\|r^{k}\big\|_{\mathcal{M}_{k}}^2+4c_{k}\epsilon_{k}}\Big]
 \leq \sqrt{\big\|r^{k}\big\|_{\mathcal{M}_{k}}^2+2c_{k}\epsilon_{k}}.
 \end{equation}
 Moreover, arranging the terms in \eqref{VMOR-HPE-b}, and then using notations $r^{k}$ and inequality \eqref{convergence-eq5}, we have
 \begin{align*}
 \big\|r^{k}\big\|_{\mathcal{M}_{k}}^2+2c_{k}\epsilon_{k}
 \le \sigma\big\|x^k-y^k\big\|_{\mathcal{M}_{k}}^2-\theta_k\big\|c_k\mathcal{M}_{k}^{-1}v^k\big\|_{\mathcal{M}_{k}}^2 \le \big(\sigma+\max\{-\theta_{k},0\}/(1+\theta_k)^2\big)\big\|x^k-y^k\big\|_{\mathcal{M}_{k}}^2.
 \end{align*}
 Substituting this inequality into \eqref{linear-rate-eq3} and using the triangle inequality, we further obtain
 \[
 \big\|z^{k}-x^{k}\big\|_{\mathcal{M}_{k}}
 \leq \big\|z^{k}-y^{k}\big\|_{\mathcal{M}_{k}}+\big\|y^{k}-x^{k}\big\|_{\mathcal{M}_{k}}
  \le\Big(1+\!\sqrt{\sigma+\frac{4\max\{-\theta_{k},0\}}{(1+\theta_k)^2}}\Big)\big\|x^{k}-y^{k}\big\|_{\mathcal{M}_{k}}.
 \]
Substituting this inequality into inequality \eqref{linear-rate-eq2}, for all $k\geq\widetilde{k}$ it holds that
 \begin{align}\label{linear-rate-eq4}
 {\rm dist}_{\mathcal{M}_{k}}\big(x^{k},T^{-1}(0)\big)
  &\le \Big(1+\frac{\kappa}{\underline{c}}\sqrt{\frac{\Xi\overline{\omega}}{\underline{\omega}}}\Big)\Big(1+\!\sqrt{\sigma+\frac{4\max\{-\theta_{k},0\}}{(1+\theta_k)^2}}\Big)\|x^{k}-y^{k}\|_{\mathcal{M}_{k}}  \nonumber\\
  &\le  \Big(1+\frac{\kappa}{\underline{c}}\sqrt{\frac{\Xi\overline{\omega}}{\underline{\omega}}}\Big)\Big(1+\!\sqrt{\sigma+\frac{4\max\{-\theta_{k},0\}}{(1+\theta_{k})^2}}\Big)\|x^{k}-y^{k}\|_{\mathcal{M}_{k}}.
 \end{align}
 According to \eqref{contractive-sequence} in Theorem \ref{VMOR-HPE-convergence}, for all $k\in\mathbb{N}$, it holds that
 \begin{align}\label{linear-rate-eq4.5}
   {\rm dist}_{\mathcal{M}_{k+1}}^2\big(x^{k+1},T^{-1}(0)\big)
  &=\big\|x^{k+1}-\Pi_{T^{-1}(0)}(x^{k+1})\big\|_{\mathcal{M}_{k+1}}^2
      \leq \big\|x^{k+1}-\Pi_{T^{-1}(0)}(x^{k})\big\|_{\mathcal{M}_{k+1}}^2\\
  &\leq (1+\xi_{k})\big\|x^{k}-\!\Pi_{T^{-1}(0)}(x^{k})\big\|_{\mathcal{M}_{k}}^2
   -(1+\!\xi_{k})(1-\sigma)(1+\!\theta_{k})\big\|x^{k}-\!y^{k}\big\|_{\mathcal{M}_{k}}^2\nonumber\\
  &=(1+\xi_{k}){\rm dist}_{\mathcal{M}_{k}}^2\big(x^{k},T^{-1}(0)\big)
      -(1+\xi_{k})(1-\sigma)(1+\theta_{k})\big\|x^{k}-y^{k}\big\|_{\mathcal{M}_{k}}^2,
\end{align}
 where $\Pi_{T^{-1}(0)}(\cdot)=\arg\inf_{x \in T^{-1}(0)}\big\|\cdot-x\big\|_{\mathcal{M}_{k+1}}$, and the first equality and the first inequality hold
 due to the definition of ${\rm dis}_{\mathcal{M}_{k+1}}(\cdot,T^{-1}(0))$. Utilizing  inequalities \eqref{linear-rate-eq4} and \eqref{linear-rate-eq4.5}, we obtain
 \begin{equation}\label{linear-rate-eq5}
  {\rm dist}_{\mathcal{M}_{k+1}}^2\!\big(x^{k+1},T^{-1}(0) \big)
  \leq  (1+\xi_{k})(1-\varrho){\rm dist}_{\mathcal{M}_{k}}^2\big(x^{k},T^{-1}(0) \big),
 \end{equation}
 where $\varrho_{k}= [(1-\sigma)(1+\theta_{k})]{\Big/}
       \Big[\Big(1+\frac{\kappa}{\underline{c}}\sqrt{\frac{\Xi\overline{\omega}}{\underline{\omega}}}\Big)\Big(1+\!\sqrt{\sigma+\frac{4\max\{-\theta_{k},0\}}{(1+\theta_{k})^2}}\Big)\Big]^{2}\in (0,1)$.
In addition, recall $\sum_{k=1}^{\infty}\xi_{k}<\infty$. Hence, there exists
$\widehat{k}\in\mathbb{N}$ such that for all $k \ge \widehat{k}$, it holds that
$\xi_{k}\le \frac{\varrho_{k}}{2(1-\varrho_{k})}$, which means that $(1+\xi_{k})(1-\varrho_{k})\le 1-\frac{\varrho_{k}}{2} < 1$.
Substituting this inequality into \eqref{linear-rate-eq5} and setting $\overline{k}=\max\{\widetilde{k},\widehat{k}\}$,
we acquire the desired result \eqref{linear-rate-xk}. The proof is finished.
\end{proof}

\section{Proof of Theorem \ref{iteration-complexity-VMOR-HPE}}
 \begin{theorem*}
 Let $\{(x^{k},y^{k},v^{k})\}$ and $\{\epsilon_{k}\}$ be the sequences generated by the VMOR-HPE framework. \\
 {\bf (i)} There exists an integer $k_{0}\in \{1,2,\ldots,k\}$ such that $v^{k_{0}} \in T^{[\epsilon_{k_0}]}(y^{k_{0}})$
 with $v^{k_{0}}$ and $\epsilon_{k_0}\ge 0$ respectively satisfying
 \begin{gather}\label{pointwise-vk-epsilonk}
 \|v^{k_{0}}\|\le \sqrt{\frac{4(1+\sum_{i=1}^{k}\xi_{i})\Xi^2\overline{\omega}}{k(1-\sigma)(1+\underline{\theta})^3\underline{c}^2}}\|x^{0}-x^*\|_{\mathcal{M}_{0}},\
{\rm and\quad } \epsilon_{k_0} \le \frac{(1+\sum_{i=1}^{k}\xi_{i})\Xi}{k(1-\sigma)(1+\underline{\theta})^2\underline{c}}\|x^{0}-x^*\|_{\mathcal{M}_{0}}^2.
 \end{gather}
 {\bf (ii)} Let $\{\alpha_{k}\}$ be the nonnegative weight sequence satisfying $\sum_{i=1}^{k}\alpha_{i}>0$.
 Denote $\tau_{i}=(1+\theta_{i})c_{i}$, and
 \begin{align}\label{weight-sequence}
 \overline{y}^{k}=\frac{{\sum_{i=1}^{k}}\tau_{i}\alpha_{i}y^{i}}{{\sum_{i=1}^{k}}\tau_{i}\alpha_{i}}\
 \overline{v}^{k}=\frac{{\sum_{i=1}^{k}}\tau_{i}\alpha_{i}v^{i}}{{\sum_{i=1}^{k}}\tau_{i}\alpha_{i}},\
  \overline{\epsilon}_{k}=\frac{{\sum_{i=1}^{k}}\tau_{i}\alpha_{i}\big(\epsilon_{i}
                    +\langle y^{i}-\overline{y}^{k},v^{i}-\overline{v}^{k}\rangle\big)}{{\sum_{i=1}^{k}}\tau_{i}\alpha_{i}}.
 \end{align}
 Then, it holds that $\overline{v}^{k} \in T^{[\overline{\epsilon}_{k}]}(\overline{y}^{k})$ with $\overline{\epsilon}_{k}\ge 0$.
 Moreover, if $\mathcal{M}_{k}\le (1+\xi_{k})\mathcal{M}_{k+1}$, it holds that
 \begin{gather}\label{weighted-vk}
\|\overline{v}^{k}\|
 \le \frac{\max\limits_{ 1\le i\le k}\{\alpha_{i+1}\}\sum\limits_{i=1}^{k}\xi_{i}+\sum\limits_{i=1}^{k}\big|\alpha_{i}-\alpha_{i+1}\big|+\alpha_{k+1}+\alpha_{1}}
                 {\underline{c}(1+\underline{\theta})\sum_{i=1}^{k}\alpha_{i}}M, \\
 \overline{\epsilon}_{k}=\frac{(10+\underline{\theta})\max\limits_{1\le i\le k}\{\alpha_{i}\}\big(1+\sum\limits_{i=1}^{k}\xi_{i}\big)
                               +(2+\underline{\theta}){\sum\limits_{i=1}^{k}}\big|\alpha_{i+1}-\alpha_{i}\big|}
         {\underline{c}(1+\underline{\theta})^2\sum_{i=1}^{k}\alpha_i}B, \label{weighted-epsilonk}
 \end{gather}
where $M$ and $B$ are two constants that are respectively defined as $M = \Xi\overline{\omega}\left[\big\|x^*\big\|+\sqrt{\frac{\Xi}{\underline{\omega}}}\big\|x^{0}-x^*\big\|_{\mathcal{M}_{0}}\right]$ and
\[
B =\max\left\{ M,\  \Xi\big\|x^*\big\|^2+\frac{\Xi^2}{\underline{\omega}}\big\|x^{0}-x^*\big\|_{\mathcal{M}_{0}}^2,\
                       \frac{\Xi^2}{(1-\sigma)\underline{\omega}}\big\|x^{0}-x^*\big\|_{\mathcal{M}_{0}}^2,\  \frac{\Xi}{(1-\sigma)}\big\|x^{0}-x^*\big\|_{\mathcal{M}_{0}}^2 \right\}.
\]
\end{theorem*}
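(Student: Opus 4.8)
The plan is to reduce everything to three ingredients already available: the per-iteration and accumulated estimates extracted inside the proof of Theorem~\ref{VMOR-HPE-convergence} (in particular \eqref{convergence-eq2}, \eqref{convergence-eq5}, \eqref{convergence-eq6} and the bound $\|x^{i}-x^{*}\|_{\mathcal{M}_{i}}^{2}\le\Xi\|x^{0}-x^{*}\|_{\mathcal{M}_{0}}^{2}$), the bookkeeping identity $\tau_{i}v^{i}=\mathcal{M}_{i}(x^{i}-x^{i+1})$ coming from the update $x^{i+1}=x^{i}-(1+\theta_{i})c_{i}\mathcal{M}_{i}^{-1}v^{i}$, and the transportation formula for $\epsilon$-enlargements of a maximal monotone operator. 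For part (i) I would pick $k_{0}\in\arg\min_{1\le i\le k}\|x^{i}-y^{i}\|_{\mathcal{M}_{i}}^{2}$. By pigeonhole and \eqref{convergence-eq2}, $\|x^{k_{0}}-y^{k_{0}}\|_{\mathcal{M}_{k_{0}}}^{2}\le\frac{(1+\sum_{i=1}^{k}\xi_{i})\Xi}{k(1-\sigma)(1+\underline{\theta})}\|x^{0}-x^{*}\|_{\mathcal{M}_{0}}^{2}$; the membership $v^{k_{0}}\in T^{[\epsilon_{k_{0}}]}(y^{k_{0}})$ is just \eqref{VMOR-HPE-a}. Feeding this into \eqref{convergence-eq5}, using $c_{k}\ge\underline{c}$, $\theta_{k}\ge\underline{\theta}$, and the norm comparison $\|v\|^{2}\le\lambda_{\max}(\mathcal{M}_{k})\|\mathcal{M}_{k}^{-1}v\|_{\mathcal{M}_{k}}^{2}\le\Xi\overline{\omega}\|\mathcal{M}_{k}^{-1}v\|_{\mathcal{M}_{k}}^{2}$ (valid since $\mathcal{M}_{k}\preceq\Xi\overline{\omega}\mathcal{I}$), then gives the claimed bounds on $\|v^{k_{0}}\|$ and $\epsilon_{k_{0}}$ by routine arithmetic.

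For part (ii), the enlargement statement $\overline{v}^{k}\in T^{[\overline{\epsilon}_{k}]}(\overline{y}^{k})$ with $\overline{\epsilon}_{k}\ge0$ is the transportation formula applied to $\{(y^{i},v^{i},\epsilon_{i})\}_{i=1}^{k}$ with the convex weights $\lambda_{i}=\tau_{i}\alpha_{i}/\sum_{j}\tau_{j}\alpha_{j}$: for an arbitrary $(z,w)\in\mathrm{gph}\,T$ one multiplies $\langle w-v^{i},z-y^{i}\rangle\ge-\epsilon_{i}$ by $\lambda_{i}$, sums, and rewrites the left side via the variance identity $\sum_{i}\lambda_{i}\langle y^{i},v^{i}\rangle-\langle\overline{y}^{k},\overline{v}^{k}\rangle=\sum_{i}\lambda_{i}\langle y^{i}-\overline{y}^{k},v^{i}-\overline{v}^{k}\rangle$ as $\langle w-\overline{v}^{k},z-\overline{y}^{k}\rangle+\sum_{i}\lambda_{i}\langle y^{i}-\overline{y}^{k},v^{i}-\overline{v}^{k}\rangle\ge-\sum_{i}\lambda_{i}\epsilon_{i}$; this yields both the inclusion and, by maximality of $T$, the nonnegativity $\overline{\epsilon}_{k}\ge0$.

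The quantitative bounds then run off the telescoping identity $\sum_{i}\tau_{i}\alpha_{i}v^{i}=\sum_{i}\alpha_{i}\mathcal{M}_{i}(x^{i}-x^{i+1})$: Abel summation turns this into $\alpha_{1}\mathcal{M}_{1}x^{1}-\alpha_{k}\mathcal{M}_{k}x^{k+1}+\sum_{i=2}^{k}\big(\alpha_{i}(\mathcal{M}_{i}-\mathcal{M}_{i-1})+(\alpha_{i}-\alpha_{i-1})\mathcal{M}_{i-1}\big)x^{i}$. I would then bound $\|\mathcal{M}_{i}x^{i}\|\le\Xi\overline{\omega}\big(\|x^{*}\|+\sqrt{\Xi/\underline{\omega}}\,\|x^{0}-x^{*}\|_{\mathcal{M}_{0}}\big)=M$ using $\|x^{i}-x^{*}\|_{\mathcal{M}_{i}}^{2}\le\Xi\|x^{0}-x^{*}\|_{\mathcal{M}_{0}}^{2}$ and $\underline{\omega}\mathcal{I}\preceq\mathcal{M}_{i}\preceq\Xi\overline{\omega}\mathcal{I}$, bound $\|\mathcal{M}_{i}-\mathcal{M}_{i-1}\|_{\mathrm{op}}\le\xi_{i-1}\Xi\overline{\omega}$ from $\mathcal{M}_{i}\preceq(1+\xi_{i-1})\mathcal{M}_{i-1}$ (algorithm) together with the extra hypothesis $\mathcal{M}_{i-1}\preceq(1+\xi_{i-1})\mathcal{M}_{i}$, and divide by $\sum_{i}\tau_{i}\alpha_{i}\ge\underline{c}(1+\underline{\theta})\sum_{i}\alpha_{i}$; this produces the bound on $\|\overline{v}^{k}\|$. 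For $\overline{\epsilon}_{k}$ I would split $\sum_{i}\tau_{i}\alpha_{i}\big(\epsilon_{i}+\langle y^{i}-\overline{y}^{k},v^{i}-\overline{v}^{k}\rangle\big)$ into an $\epsilon$-part, handled by $\tau_{i}\epsilon_{i}=(1+\theta_{i})c_{i}\epsilon_{i}\le\|x^{i}-y^{i}\|_{\mathcal{M}_{i}}^{2}$ (from \eqref{convergence-eq5}) and \eqref{convergence-eq2}, and a covariance part, which after the variance identity becomes $\sum_{i}\tau_{i}\alpha_{i}\langle y^{i},v^{i}\rangle-\big(\sum_{j}\tau_{j}\alpha_{j}\big)\langle\overline{y}^{k},\overline{v}^{k}\rangle$; expanding $y^{i}=(y^{i}-x^{i})+(x^{i}-x^{*})+x^{*}$ and re-using $\tau_{i}v^{i}=\mathcal{M}_{i}(x^{i}-x^{i+1})$, one reduces each term either to the $\|x^{i}-y^{i}\|_{\mathcal{M}_{i}}$-scale (via $\|x^{i}-x^{i+1}\|_{\mathcal{M}_{i}}\le2\|x^{i}-y^{i}\|_{\mathcal{M}_{i}}$) or to the $\Xi\|x^{0}-x^{*}\|_{\mathcal{M}_{0}}$-scale (via Cauchy--Schwarz and $\|x^{j}-x^{*}\|_{\mathcal{M}_{i}}\le\Xi\|x^{0}-x^{*}\|_{\mathcal{M}_{0}}$), the constant $B$ being exactly the maximum of the resulting contributions.

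The hard part will be the $\overline{\epsilon}_{k}$ estimate: unlike $\|\overline{v}^{k}\|$, the covariance term $\sum_{i}\lambda_{i}\langle y^{i}-\overline{y}^{k},v^{i}-\overline{v}^{k}\rangle$ does not telescope, so one is forced to re-expand $\langle y^{i},v^{i}\rangle$ around $x^{*}$ and to compare $\|\cdot\|_{\mathcal{M}_{i}}$ with $\|\cdot\|_{\mathcal{M}_{i+1}}$ across the telescoping sum --- which is precisely where the auxiliary hypothesis $\mathcal{M}_{k}\preceq(1+\xi_{k})\mathcal{M}_{k+1}$ is needed, as it converts $\|\mathcal{M}_{i}-\mathcal{M}_{i-1}\|_{\mathrm{op}}$ and the discrepancy between consecutive metric norms into $O(\xi_{i-1})$ quantities, so that $\sum_{i}\xi_{i}<\infty$ keeps all the sums finite. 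Pinning down the numerology so that every contribution collapses into the single constant $B$ with the exact coefficients $(10+\underline{\theta})$ and $(2+\underline{\theta})$ in \eqref{weighted-epsilonk} is where the careful, though entirely elementary, bookkeeping lies.
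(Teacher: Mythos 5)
Your part (i), your direct proof of the transportation formula (the paper simply cites \citet{Monteiro2010On}, so proving it via the variance identity plus maximality is a harmless, slightly more self-contained variant), and your bound on $\|\overline{v}^{k}\|$ (Abel summation on $\sum_{i}\alpha_{i}\mathcal{M}_{i}(x^{i}-x^{i+1})$, two-sided $(1+\xi_{i})$-comparisons to control $\|\alpha_{i}\mathcal{M}_{i}-\alpha_{i+1}\mathcal{M}_{i+1}\|$, division by $\sum_{i}\tau_{i}\alpha_{i}\ge\underline{c}(1+\underline{\theta})\sum_{i}\alpha_{i}$) all follow essentially the same route as the paper, and are correct up to trivial slips ($\|x^{i}-x^{*}\|_{\mathcal{M}_{i}}\le\sqrt{\Xi}\,\|x^{0}-x^{*}\|_{\mathcal{M}_{0}}$, not $\Xi$).

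The gap is in the $\overline{\epsilon}_{k}$ estimate, which you yourself identify as the hard part but for which the tools you list do not suffice. After the variance identity, the problematic piece is $\sum_{i}\tau_{i}\alpha_{i}\langle x^{i}-x^{*},v^{i}\rangle$ (in your decomposition around $x^{*}$; in the paper it is $\sum_{i}\tau_{i}\alpha_{i}\langle x^{i}-\overline{y}^{k},v^{i}\rangle$). Bounding it by Cauchy--Schwarz together with $\|x^{i}-x^{i+1}\|_{\mathcal{M}_{i}}\le 2\|x^{i}-y^{i}\|_{\mathcal{M}_{i}}$, as your sketch indicates, yields a term proportional to $\sum_{i=1}^{k}\|x^{i}-y^{i}\|_{\mathcal{M}_{i}}$, and \eqref{convergence-eq2} only controls the sum of squares, so this is $O(\sqrt{k})$; after dividing by $\sum_{i}\tau_{i}\alpha_{i}\sim k$ (for $\alpha_{i}\equiv 1$) you get $O(1/\sqrt{k})$, strictly weaker than \eqref{weighted-epsilonk}. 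The mechanism that delivers the claimed rate is not Cauchy--Schwarz but the three-point identity $2\langle\mathcal{M}_{i}(x^{i}-x^{i+1}),x^{i}-z\rangle=\|x^{i}-z\|_{\mathcal{M}_{i}}^{2}+\|x^{i}-x^{i+1}\|_{\mathcal{M}_{i}}^{2}-\|x^{i+1}-z\|_{\mathcal{M}_{i}}^{2}$, whose quadratic differences are then Abel-summed against the weights $\alpha_{i}$, with the hypothesis $\mathcal{M}_{i}\preceq(1+\xi_{i})\mathcal{M}_{i+1}$ (and its converse from the algorithm) converting the metric mismatch into $O(\xi_{i})$ terms; the squared-distance terms $\|x^{i}-x^{i+1}\|_{\mathcal{M}_{i}}^{2}$ are then absorbed by \eqref{convergence-eq5} and \eqref{convergence-eq2}. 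This is exactly what the paper does, with $z=\overline{y}^{k}$ as the reference point, which has the extra advantage that $\sum_{i}\tau_{i}\alpha_{i}\langle y^{i}-\overline{y}^{k},\overline{v}^{k}\rangle=0$, so the $\langle\overline{y}^{k},\overline{v}^{k}\rangle$ term you would otherwise have to bound separately never appears (the price is having to bound $\max_{i}\|x^{i+1}-\overline{y}^{k}\|_{\mathcal{M}_{i+1}}^{2}$, done via convexity of $\|\cdot\|^{2}$ and the boundedness of $\{x^{i}\},\{y^{i}\}$). Your decomposition around $x^{*}$ can be pushed through with the same three-point-plus-telescoping device, but then the specific constants $(10+\underline{\theta})$, $(2+\underline{\theta})$ and $B$ in \eqref{weighted-epsilonk} would come out differently (same order, different numerology), whereas the statement fixes them; so as written the proposal neither supplies the decisive identity nor reproduces the stated bound.
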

\begin{proof}
{\bf (i)}\ By \eqref{convergence-eq2}, there exists an integer $k_{0}\in \{1,2,\ldots,k\}$ such that the following inequality holds:
\begin{align}\label{iteration-complexity-VMOR-HPE-eq1}
\big\|x^{k_{0}}-y^{k_{0}}\big\|_{\mathcal{M}_{k_0}}^2
\le \frac{(1+\sum_{i=1}^{k}\xi_{i})\Xi}{k(1-\sigma)(1+\underline{\theta})}\big\|x^{0}-x^*\big\|_{\mathcal{M}_{0}}^2.
\end{align}
Combining this inequality with \eqref{convergence-eq5} and using $\underline{\omega}\mathcal{I}\preceq \mathcal{M}_{k+1} \preceq (1+\xi_{k})\mathcal{M}_{k}$, $c_{k}\ge \underline{c}$, we obtain
\begin{align*}
\|v^{k_{0}}\|\le \sqrt{\frac{4(1+\sum_{i=1}^{k}\xi_{i})\Xi^2\overline{\omega}}{k(1-\sigma)(1+\underline{\theta})^3\underline{c}^2}}\big\|x^{0}-x^*\big\|_{\mathcal{M}_{0}},\
 \epsilon_{k_0} \le \frac{(1+\sum_{i=1}^{k}\xi_{i})\Xi}{k(1-\sigma)(1+\underline{\theta})^2\underline{c}}\big\|x^{0}-x^*\big\|_{\mathcal{M}_{0}}^2.
\end{align*}
In addition, $v^{k_{0}}\in T^{[\epsilon_{k_{0}}]}(y^{k_{0}})$ holds directly due to \eqref{VMOR-HPE-a}. Hence, result {\bf (i)} has been established.

\noindent
{\bf (ii)} By \citep{Monteiro2010On}, it holds that $\overline{v}^{k}\!\in T^{[\overline{\epsilon}_{k}]}(\overline{y}^{k})$ and $\overline{\epsilon}^{k}\!\geq 0$. By \eqref{weight-sequence}, it holds that
 \begin{align}\label{iteration-complexity-VMOR-HPE-eq4}
  \|\overline{v}^{k}\|
  &=\frac{1}{\sum_{i=1}^{k}c_{i}\alpha_{i}(1+\theta_{i})}
            \big\|{\sum_{i=1}^{k}}c_{i}\alpha_{i}(1+\theta_{i})v^{i}\big\|
   =\frac{1}{\sum_{i=1}^{k}c_{i}\alpha_{i}(1+\theta_{i})}\big\|{\sum_{i=1}^{k}}\alpha_{i}\mathcal{M}_{i}(x^{i+1}-x^{i})\big\|\nonumber\\
  &= \frac{1}{\sum_{i=1}^{k}c_{i}\alpha_{i}(1+\theta_{i})}\big\|{\sum_{i=1}^{k}}\big(\alpha_{i+1}\mathcal{M}_{i+1}x^{i+1}-\alpha_{i}\mathcal{M}_{i}x^{i}\big)
     +{\sum_{i=1}^{k}}\big(\alpha_{i}\mathcal{M}_{i}-\alpha_{i+1}\mathcal{M}_{i+1}\big)x^{i+1}\big\|\nonumber\\
  &\le \frac{\big\|{\sum_{i=1}^{k}}\big(\alpha_{i+1}\mathcal{M}_{i+1}x^{i+1}-\alpha_{i}\mathcal{M}_{i}x^{i}\big)\big\|}{\sum_{i=1}^{k}c_{i}\alpha_{i}(1+\theta_{i})}
     +\frac{\big\|{\sum_{i=1}^{k}}\big(\alpha_{i}\mathcal{M}_{i}-\alpha_{i+1}\mathcal{M}_{i+1}\big)x^{i+1}\big\|}{\sum_{i=1}^{k}c_{i}\alpha_{i}(1+\theta_{i})}\nonumber\\
  &\le  \frac{\big\|\alpha_{k+1}\mathcal{M}_{k+1}x^{k+1}-\alpha_{1}\mathcal{M}_{1}x^{1}\big\|}{\sum_{i=1}^{k}c_{i}\alpha_{i}(1+\theta_{i})}
     +\frac{{\sum_{i=1}^{k}}\big\|\alpha_{i}\mathcal{M}_{i}-\alpha_{i+1}\mathcal{M}_{i+1}\big\|\max\limits_{1\le i \le k}\{\|x^{i+1}\|\}}{\sum_{i=1}^{k}c_{i}\alpha_{i}(1+\theta_{i})}\nonumber\\
  &\le  \frac{\alpha_{k+1}\big\|\mathcal{M}_{k+1}x^{k+1}\|+\alpha_{1}\big\|\mathcal{M}_{1}x^{1}\big\|}{\sum_{i=1}^{k}c_{i}\alpha_{i}(1+\theta_{i})}
     +\frac{{\sum_{i=1}^{k}}\big\|\alpha_{i}\mathcal{M}_{i}-\alpha_{i+1}\mathcal{M}_{i+1}\big\|\max\limits_{1\le i \le k}\{\|x^{i+1}\|\}}{\sum_{i=1}^{k}c_{i}\alpha_{i}(1+\theta_{i})}\nonumber\\
  &\le \frac{\alpha_{k+1}\big\|\mathcal{M}_{k+1}\big\|+\alpha_{1}\big\|\mathcal{M}_{1}\big\|+{\sum_{i=1}^{k}}\big\|\alpha_{i}\mathcal{M}_{i}-\alpha_{i+1}\mathcal{M}_{i+1}\big\|}
         {\sum_{i=1}^{k}c_{i}\alpha_{i}(1+\theta_{i})}\max_{1\le i \le k}\{\|x^{i+1}\|\},
 \end{align}
 where the first and the third inequalities hold by the Cauchy-Schwartz inequality.
 By using  $\mathcal{M}_{k}\le (1+\xi_{k})\mathcal{M}_{k+1}$ and $\mathcal{M}_{k+1}\le (1+\xi_{k})\mathcal{M}_{k}$, the following inequality holds that
 \begin{align*}
 &\quad \sum_{i=1}^{k}\|\alpha_{i}\mathcal{M}_{i}-\alpha_{i+1}\mathcal{M}_{i+1}\|  \nonumber\\
 &\le \sum_{i=1}^{k}|\alpha_{i}-\alpha_{i+1}|\max\{\|\mathcal{M}_{i+1}\|,\|\mathcal{M}_{i}\|\} +\sum_{i=1}^{k}\xi_{i}\max\{\alpha_{i+1}\|\mathcal{M}_{i+1}\|,\alpha_{i}\|\mathcal{M}_{i}\|\} \nonumber\\
 & \le \max\limits_{1\le i \le k}\{\|\mathcal{M}_{i+1}\|\}\sum_{i=1}^{k}|\alpha_{i}-\alpha_{i+1}| +\max\limits_{1\le i \le k}\{\alpha_{i+1}\|\mathcal{M}_{i+1}\|\}\sum_{i=1}^{k}\xi_{i}\nonumber\\
 & \le \max\limits_{1\le i \le k}\{\|\mathcal{M}_{i+1}\|\}\Big[\sum_{i=1}^{k}|\alpha_{i}-\alpha_{i+1}|+\max\limits_{1\le i \le k}\{\alpha_{i+1}\}\sum_{i=1}^{k}\xi_{i}\Big].
 \end{align*}
 Substituting this inequality into \eqref{iteration-complexity-VMOR-HPE-eq4} and using $\|\mathcal{M}_{k+1}\|\le \Xi\overline{\omega}$ and $c_{k}\ge \underline{c}>0$, we obtain
  \begin{align*}
 \|\overline{v}^{k}\|
 \le \frac{\max\limits_{1\le i \le k}\{\alpha_{i+1}\}\sum_{i=1}^{k}\xi_{i}+\sum_{i=1}^{k}|\alpha_{i}-\alpha_{i+1}|+\alpha_{k+1}+\alpha_{1}}{\underline{c}\sum_{i=1}^{k}\alpha_{i}(1+\theta_{i})}
             \max\limits_{1\le i \le k}\{\|x^{i+1}\|\}\Xi\overline{\omega}.
 \end{align*}
 By inequality \eqref{bound-xk}, we have $\big\|x^{k}\| \le \big\|x^*\big\|+\sqrt{\frac{\Xi}{\underline{\omega}}}\big\|x^{0}-x^*\big\|_{\mathcal{M}_{0}}$. By using the notation $M$ and $\theta_{k}\ge \underline{\theta}$, it holds that
 \begin{align*}
 \|\overline{v}^{k}\|
 \le \frac{\max\limits_{1\le i \le k}\{\alpha_{i+1}\}\sum_{i=1}^{k}\xi_{i}+\sum_{i=1}^{k}|\alpha_{i}-\alpha_{i+1}|+\alpha_{k+1}+\alpha_{1}}{\underline{c}(1+\underline{\theta})\sum_{i=1}^{k}\alpha_{i}}M.
 \end{align*}
 In the following, we estimate the upper bound for $\overline{\epsilon}_k$. By the definition of $\overline{\epsilon}_k$, we obtain
 \begin{align}\label{iteration-complexity-VMOR-HPE-eq9}
  \overline{\epsilon}_{k}
  &=\frac{{\sum_{i=1}^{k}}\alpha_{i}c_{i}(1\!+\!\theta_{i})\big({\epsilon}^{i}
     \!+\!\langle y^{i}\!-\!\overline{y}^{k},v^{i}\rangle\big)}{\sum_{i=1}^{k}c_{i}\alpha_i(1+\theta_{i})}
  \!=\!\frac{{\sum_{i=1}^{k}}\alpha_{i}c_{i}(1\!+\!\theta_{i}){\epsilon}^{i}}{{\sum_{i=1}^{k}}c_{i}\alpha_i(1+\theta_{i})}
  \!+\!\frac{{\sum_{i=1}^{k}}\alpha_{i}c_{i}(1\!+\!\theta_{i})\langle y^{i}\!-\!\overline{y}^{k},v^{i}\rangle}{\sum_{i=1}^{k}c_{i}\alpha_i(1+\theta_{i})}\nonumber\\
  &=\frac{{\sum_{i=1}^{k}}{\alpha}_{i}(1+\theta_{i})c_{i}{\epsilon}^{i} }{\sum_{i=1}^{k}c_{i}\alpha_i(1+\theta_{i})}
  +\frac{{\sum_{i=1}^{k}}\alpha_{i}c_{i}(1+\theta_{i})\langle x^{i}-\overline{y}^{k},v^{i}\rangle}{\sum_{i=1}^{k}c_{i}\alpha_i(1+\theta_{i})}
  +\frac{{\sum_{i=1}^{k}}\alpha_{i}c_{i}(1+\theta_{i})\langle y^{i}-x^{i},v^{i}\rangle}{\sum_{i=1}^{k}c_{i}\alpha_i(1+\theta_{i})}\nonumber\\
 &\le\frac{\max\limits_{1\le i \le k}\{\alpha_{i}\}{\sum_{i=1}^{k}}(1+\theta_{i})c_{i}{\epsilon}^{i}}{\sum_{i=1}^{k}c_{i}\alpha_i(1+\theta_{i})}
     +\frac{{\sum_{i=1}^{k}}\alpha_{i}c_{i}(1+\theta_{i})\langle x^{i}-\overline{y}^{k},v^{i}\rangle}{\sum_{i=1}^{k}c_{i}\alpha_i(1+\theta_{i})} \nonumber\\
 &\quad   +\frac{\max\limits_{1\le i \le k}\{\alpha_{i}\}{\sum_{i=1}^{k}}\big((1+\theta_{i})^2\|c_{i}\mathcal{M}_{i}^{-1}v^{i}\|_{\mathcal{M}_{i}}^2
       +\|y^{i}-x^{i}\|_{\mathcal{M}_{i}}^2\big)}{\sum_{i=1}^{k}c_{i}\alpha_i(1+\theta_{i})} \nonumber\\
 &\le  \frac{6\max\limits_{1\le i \le k}\{\alpha_{i}\}{\sum_{i=1}^{k}}\big\|y^{i}-x^{i}\big\|_{\mathcal{M}_{i}}^2}{\sum_{i=1}^{k}c_{i}\alpha_i(1+\theta_{i})}
 +\frac{{\sum_{i=1}^{k}}\alpha_{i}\tau_{i}\langle x^{i}-\overline{y}^{k},v^{i}\rangle}{{\sum_{i=1}^{k}}c_{i}\alpha_i(1+\theta_{i})},
\end{align}
where the first inequality holds according to the Cauchy-Schwartz inequality and the last inequality holds according to \eqref{convergence-eq5}. In addition,
$\|x^{i+1}-\overline{y}^{k}\|_{\mathcal{M}_{i}}^2 =\|x^{i}-\overline{y}^{k}\|_{\mathcal{M}_{i}}^2+\|\tau_{i}\mathcal{M}_{i}^{-1}v^{i}\|_{\mathcal{M}_{i}}^2 -2\langle\tau_{i}v^{i},x^{i}-\overline{y}^{k}\rangle$
holds by using  $x^{k+1}=x^{k}-(1+\theta_{k})c_{k}\mathcal{M}_{k}^{-1}v^{k}=x^{k}-\tau_{k}\mathcal{M}_{k}^{-1}v^{k}$.  Hence, we obtain
\begin{align*}
  ~2\alpha_{i}\langle\tau_{i}v^{i},x^{i}-\overline{y}^{k}\rangle
 &=\alpha_{i}\|\tau_{i}\mathcal{M}_{i}^{-1}v^{i}\|_{\mathcal{M}_{i}}^2\!+\!\alpha_{i}\|x^{i}-\overline{y}^{k}\|_{\mathcal{M}_{i}}^2
                  -\alpha_{i}\|x^{i+1}-\overline{y}^{k}\|_{\mathcal{M}_{i}}^2\nonumber\\
  &\le\alpha_{i}\|\tau_{i}\mathcal{M}_{i}^{-1}v^{i}\|_{\mathcal{M}_{i}}^2\!+\!\alpha_{i}\|x^{i}-\overline{y}^{k}\|_{\mathcal{M}_{i}}^2
                  -\frac{\alpha_{i}}{1+\xi_{i}}\|x^{i+1}-\overline{y}^{k}\|_{\mathcal{M}_{i+1}}^2\nonumber\\
  &\le\alpha_{i}\|\tau_{i}\mathcal{M}_{i}^{-1}v^{i}\|_{\mathcal{M}_{i}}^2\!+\!\alpha_{i}\|x^{i}-\overline{y}^{k}\|_{\mathcal{M}_{i}}^2
                  -\alpha_{i}\|x^{i+1}-\overline{y}^{k}\|_{\mathcal{M}_{i+1}}^2
                   +\alpha_{i}\xi_{i}\|x^{i+1}-\overline{y}^{k}\|_{\mathcal{M}_{i+1}}^2\nonumber\\
  &=\alpha_{i}\|\tau_{i}\mathcal{M}_{i}^{-1}v^{i}\|_{\mathcal{M}_{i}}^2\!+\!\alpha_{i}\|x^{i}-\overline{y}^{k}\|_{\mathcal{M}_{i}}^2
                  -\alpha_{i}\|x^{i+1}-\overline{y}^{k}\|_{\mathcal{M}_{i+1}}^2
                  +\alpha_{i}\xi_{i}\|x^{i+1}-\overline{y}^{k}\|_{\mathcal{M}_{i+1}}^2,
\end{align*}
where the first and the second inequalities hold due to $\mathcal{M}_{i+1}\preceq (1+\xi_{i})\mathcal{M}_{i}$ and $\frac{1}{1+\xi_{i}} \ge 1-\xi_{i}$, respectively.
Taking a summation on both sides of the above inequality, it holds that
 \begin{align}\label{iteration-complexity-VMOR-HPE-eq10}
  &\quad~ 2{\sum_{i=1}^{k}}\alpha_{i}\langle\tau_{i}v^{i},x^{i}-\overline{y}^{k}\rangle\\
  &\le\!{\sum_{i=1}^{k}}\!\alpha_{i}\|\tau_{i}\mathcal{M}_{i}^{-1}v^{i}\|_{\mathcal{M}_{i}}^2
    \!+\!\sum_{i=1}^{k}\!(\alpha_{i\!+\!1}\!-\!\alpha_{i})\|x^{i\!+\!1}\!-\!\overline{y}^{k}\|_{\mathcal{M}_{i\!+\!1}}^2\!+\!\alpha_{1}\|x^{1}\!-\!\overline{y}^{k}\|_{\mathcal{M}_{1}}^2
    \!+\!\sum_{i=1}^{k}\!\alpha_{i}\xi_{i}\|x^{i\!+\!1}\!-\!\overline{y}^{k}\|_{\mathcal{M}_{i\!+\!1}}^2 \nonumber\\
  &\le 4\max_{1\le i\le k}\{\alpha_{i}\}{\sum_{i=1}^{k}}\big\|y^{i}\!-\!x^{i}\big\|_{\mathcal{M}_{i}}^2
    +\max_{0\le i\le k}\{\|x^{i+1}\!-\!\overline{y}^{k}\|_{\mathcal{M}_{i+1}}^2\}\Big[{\sum_{i=1}^{k}}|\alpha_{i+1}\!-\!\alpha_{i}|\!+\!{\sum_{i=1}^{k}}\alpha_{i}\xi_{i}+\alpha_{1}\Big] \nonumber\\
  &\le 4\max_{1\le i\le k}\{\alpha_{i}\}{\sum_{i=1}^{k}}\big\|y^{i}\!-\!x^{i}\big\|_{\mathcal{M}_{i}}^2
    +\max_{0\le i\le k}\{\|x^{i+1}\!-\!\overline{y}^{k}\|_{\mathcal{M}_{i+1}}^2\}\Big[{\sum_{i=1}^{k}}|\alpha_{i+1}\!-\!\alpha_{i}|\!+\!\max_{1\le i\le k}\{\alpha_{i}\}({\sum_{i=1}^{k}}\xi_{i}+1)\Big] \nonumber,
 \end{align}
 where the last inequality holds according to \eqref{convergence-eq5}. This inequality combined with \eqref{iteration-complexity-VMOR-HPE-eq9} yields
 \begin{align}\label{iteration-complexity-VMOR-HPE-eq11}
  \overline{\epsilon}_{k}
 &\le  \frac{8\max\limits_{1\le i\le k}\{\alpha_{i}\}{\sum_{i=1}^{k}}\big\|y^{i}-x^{i}\big\|_{\mathcal{M}_{i}}^2}{\sum_{i=1}^{k}c_{i}\alpha_i(1+\theta_{i})}
 +\frac{\Big[{\sum_{i=1}^{k}}|\alpha_{i+1}\!-\!\alpha_{i}|\!+\!\max\limits_{1\le i\le k}\{\alpha_{i}\}({\sum_{i=1}^{k}}\xi_{i}+1)\Big]}{2{\sum_{i=1}^{k}}c_{i}\alpha_i(1+\theta_{i})}B_{k},
\end{align}
where $B_{k} = \max\limits_{0 \le i \le k}\{\|x^{i+1}-\overline{y}^{k}\|_{\mathcal{M}_{i+1}}^2\}$. Moreover, by the definition of $\overline{y}^{k}$, it holds that
 \begin{align*}
 \big\|x^{i+1}-\overline{y}^{k}\big\|^2_{\mathcal{M}_{i+1}}
 &\le 2\big\|x^{i+1}\big\|^2_{\mathcal{M}_{i+1}}+2\big\|\overline{y}^{k}\big\|^2_{\mathcal{M}_{i+1}}
 \le 2\big\|x^{i+1}\big\|^2_{\mathcal{M}_{i+1}}+ 2\max_{0\le j\le k}\{\big\|{y}^{j}\big\|^2_{\mathcal{M}_{i+1}}\},
 \end{align*}
 where the second inequality holds according to the convexity of $\|\cdot\|_{\mathcal{M}_{i+1}}^{2}$. Hence, we obtain
 \begin{align}\label{iteration-complexity-VMOR-HPE-eq12}
 B_{k} \le 2\Xi\overline{\omega}\max\limits_{0 \le i \le k}\big[\big\|x^{i+1}\big\|^2\!+\!\big\|{y}^{i+1}\big\|^2\big]
       \le 2\Xi\overline{\omega}\max\limits_{0 \le i \le k}\big[ 2\big\|x^{i+1}\big\|^2\!+\!\big\|x^{i+1}-{y}^{i+1}\big\|^2\big].
 \end{align}
 By \eqref{contractive-sequence} and \eqref{bound-xk}, it holds that
  $\big\|x^{i}-y^{i}\big\|_{\mathcal{M}_{i}}^2 \le \frac{\Xi}{(1-\sigma)(1+\underline{\theta})}\big\|x^{0}-x^*\big\|_{\mathcal{M}_{0}}^2$.
 Moreover, by \eqref{bound-xk}, it holds that $\frac{1}{2}\big\|x^{k}\big\|^2 \le \big\|x^*\big\|^2+\frac{\Xi}{\underline{\omega}}\big\|x^{0}-x^*\big\|^2_{\mathcal{M}_{0}}$.
 Substituting the two inequalities into \eqref{iteration-complexity-VMOR-HPE-eq12} yields
  \begin{align}\label{iteration-complexity-VMOR-HPE-eq13}
 B_{k} \le 2\Xi\Big[ \big\|x^*\big\|^2+\frac{\Xi}{\underline{\omega}}\big\|x^{0}-x^*\big\|^2_{\mathcal{M}_{0}}
        +\frac{\Xi}{(1-\sigma)\underline{\omega}(1+\underline{\theta})}\big\|x^{0}-x^*\big\|_{\mathcal{M}_{0}}^2\Big].
 \end{align}
 Combining \eqref{convergence-eq2},\eqref{iteration-complexity-VMOR-HPE-eq13} with \eqref{iteration-complexity-VMOR-HPE-eq11} and using the fact that $c_{k}\ge \underline{c}$ and $\theta_{k}\ge \underline{\theta}>-1$, we further obtain
 \begin{align*}
 \overline{\epsilon}_{k}
 &\le  \frac{8\max\limits_{0 \le i \le k}\{\alpha_{i}\}}{\sum_{i=1}^{k}c_{i}\alpha_i(1+\theta_{i})}\frac{\big(1+\sum_{i=1}^{k}\xi_{i}\big)\Xi}{(1-\sigma)(1+\underline{\theta})}\big\|x^{0}-x^*\big\|_{\mathcal{M}_{0}}^2 \\
 &\quad+\frac{{\sum_{i=1}^{k}}|\alpha_{i+1}\!-\!\alpha_{i}|\!+\!\max\limits_{1\le i\le k}\{\alpha_{i}\}({\sum_{i=1}^{k}}\xi_{i}\!+\!1)}{{\sum_{i=1}^{k}}c_{i}\alpha_i(1\!+\!\theta_{i})}
        \Xi\Big[\big\|x^*\big\|^2\!+\!\frac{\Xi}{\underline{\omega}}\Big(1\!+\!\frac{1}{(1\!-\!\sigma)(1\!+\!\underline{\theta})}\Big)\big\|x^{0}\!-\!x^*\big\|_{\mathcal{M}_{0}}^2\Big]\nonumber\\
 &\le \frac{8\max\limits_{0 \le i \le k}\{\alpha_{i}\}\big(1+\sum_{i=1}^{k}\xi_{i}\big)}{\underline{c}(1+\underline{\theta})^2\sum_{i=1}^{k}\alpha_i}
         \frac{\Xi\big\|x^{0}-x^*\big\|_{\mathcal{M}_{0}}^2}{(1-\sigma)}\nonumber \\
 &\quad  +\frac{{\sum_{i=1}^{k}}|\alpha_{i+1}\!-\!\alpha_{i}|\!+\!\max\limits_{1\le i\le k}\{\alpha_{i}\}({\sum_{i=1}^{k}}\xi_{i}+1)}{\underline{c}(1+\underline{\theta}){\sum_{i=1}^{k}}\alpha_i}
        \Big[\Xi\big\|x^*\big\|^2\!+\!\frac{\Xi^2}{\underline{\omega}}\big\|x^{0}-x^*\big\|_{\mathcal{M}_{0}}^2\Big] \nonumber\\
 &\quad  +\frac{{\sum_{i=1}^{k}}|\alpha_{i+1}\!-\!\alpha_{i}|\!+\!\max\limits_{1\le i\le k}\{\alpha_{i}\}({\sum_{i=1}^{k}}\xi_{i}+1)}{\underline{c}(1+\underline{\theta})^2{\sum_{i=1}^{k}}\alpha_i}
        \Big[\frac{\Xi^2}{\underline{\omega}}\frac{\big\|x^{0}-x^*\big\|_{\mathcal{M}_{0}}^2}{(1-\sigma)}\Big] \nonumber\\
 &\le \frac{(10+\underline{\theta})\max\limits_{1\le i\le k}\{\alpha_{i}\}\big(1+\sum_{i=1}^{k}\xi_{i}\big)+(2+\underline{\theta}){\sum_{i=1}^{k}}|\alpha_{i+1}\!-\!\alpha_{i}|}
         {\underline{c}(1+\underline{\theta})^2\sum_{i=1}^{k}\alpha_i}B,  \nonumber
\end{align*}
 where $B =\max\left\{\frac{\Xi}{(1\!-\!\sigma)}\big\|x^{0}\!-\!x^*\big\|_{\mathcal{M}_{0}}^2,\Xi\big\|x^*\big\|^2\!+\!\frac{\Xi^2}{\underline{\omega}}\big\|x^{0}\!-\!x^*\big\|_{\mathcal{M}_{0}}^2,
            \frac{\Xi^2}{(1\!-\!\sigma)\underline{\omega}}\big\|x^{0}\!-\!x^*\big\|_{\mathcal{M}_{0}}^2,M\right\}$.
 The proof is finished.
\end{proof}

\section{Proof of Proposition \ref{FBF-VMOR-HPE}}

 Recall that the over-relaxed {\bf F}orward-{\bf B}ackward-{\bf H}alf {\bf F}orward (FBHF)
 algorithm \cite{briceno2017forward} is defined as
 \begin{subnumcases}{}
  y^{k} := \mathcal{J}_{\gamma_{k}A}\big(x^{k}-\gamma_{k}(B_1+B_2)x^{k}\big),                             \label{FBHF-a} \\
   x^{k+1} := x^{k}+(1+\theta_{k})\big(y^{k}-x^{k}+\gamma_{k} B_{2}(x^{k})-\gamma_{k} B_{2}(y^{k})\big) . \label{FBHF-b-over-relaxed}
 \end{subnumcases}
 \begin{proposition*}
 Let $\{(x^{k},y^{k})\}$ be the sequence generated by the over-relaxed FBHF algorithm.
 Denote $\epsilon_{k}=\|x^{k}-y^{k}\|^2/(4\beta)$ and $v^{k}=\gamma_{k}^{-1}(x^{k}-y^{k})-B_{2}(x^{k})+B_{2}(y^{k})$. Then,
 \vspace{-2pt}
 \begin{subnumcases}{}
 (y^{k},v^{k})\in{\rm gph}\,T^{[\epsilon_{k}]}={\rm gph}\,(A+B_1+B_2)^{[\epsilon_{k}]},    \label{FBHF-VMOR-HPE-a} \\
 \theta_k\big\| \gamma_kv^{k}\big\|^2+\big\| \gamma_k v^{k}+(y^{k}-x^{k})\big\|^2+2 \gamma_k\epsilon \leq \sigma\big\|y^{k}- x^{k}\big\|^2, \label{FBHF-VMOR-HPE-b} \\
 x^{k+1}=x^{k}-(1+\theta_{k})\gamma_{k}v^{k},  \label{FBHF-VMOR-HPE-c}
 \end{subnumcases}
  \vspace{-1pt}
 where $(\gamma_{k},\theta_{k})$ satisfies $\theta_{k}\!\!\le\!\![\sigma\!-\!(\gamma_{k}L)^2\!+\!\gamma_{k}\!/\!(2\beta))]/[1\!+\!(\gamma_{k}L)^2]$.
 \end{proposition*}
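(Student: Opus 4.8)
The plan is to obtain \eqref{FBHF-VMOR-HPE-a}--\eqref{FBHF-VMOR-HPE-c} by pure substitution into the two lines \eqref{FBHF-a}--\eqref{FBHF-b-over-relaxed} of the over-relaxed FBHF recursion, after rewriting everything in terms of $x^k$, $y^k$, and the displacement $b^k:=\gamma_k\big(B_2(y^k)-B_2(x^k)\big)$. With this notation the definition of $v^k$ becomes $\gamma_k v^k=(x^k-y^k)+b^k$, so $\gamma_k v^k+(y^k-x^k)=b^k$ and $2\gamma_k\epsilon_k=\tfrac{\gamma_k}{2\beta}\|x^k-y^k\|^2$. The update \eqref{FBHF-VMOR-HPE-c} is then immediate, since the parenthesis in \eqref{FBHF-b-over-relaxed} equals $y^k-x^k+\gamma_k B_2(x^k)-\gamma_k B_2(y^k)=-\gamma_k v^k$, whence $x^{k+1}=x^k-(1+\theta_k)\gamma_k v^k$.

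For the enlargement membership \eqref{FBHF-VMOR-HPE-a} I would first read off from the resolvent step \eqref{FBHF-a} that $a^k:=\gamma_k^{-1}(x^k-y^k)-B_1(x^k)-B_2(x^k)\in A(y^k)$, and substitute it back to get the clean identity $v^k=a^k+B_1(x^k)+B_2(y^k)$ with $a^k\in A(y^k)$. To check $v^k\in T^{[\epsilon_k]}(y^k)$ I would then take an arbitrary $w=\alpha+B_1(z)+B_2(z)$ with $\alpha\in A(z)$, and split $\langle w-v^k,\,z-y^k\rangle$ into an $A$-term $\langle\alpha-a^k,z-y^k\rangle\ge0$ (monotonicity of $A$), a $B_2$-term $\langle B_2(z)-B_2(y^k),z-y^k\rangle\ge0$ (monotonicity of $B_2$), and the only non-sign-definite piece $\langle B_1(z)-B_1(x^k),z-y^k\rangle$. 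Writing this last piece as $\langle B_1(z)-B_1(x^k),z-x^k\rangle+\langle B_1(z)-B_1(x^k),x^k-y^k\rangle$, lower-bounding the first summand by $\beta\|B_1(z)-B_1(x^k)\|^2$ ($\beta$-cocoercivity) and the second by $-\|B_1(z)-B_1(x^k)\|\,\|x^k-y^k\|$ (Cauchy--Schwarz), and minimizing the resulting scalar quadratic in $\|B_1(z)-B_1(x^k)\|$ yields the lower bound $-\|x^k-y^k\|^2/(4\beta)=-\epsilon_k$. Adding the three contributions gives $\langle w-v^k,z-y^k\rangle\ge-\epsilon_k$ for every such $w$, which is exactly the defining inequality of $v^k\in(A+B_1+B_2)^{[\epsilon_k]}(y^k)$; and $\epsilon_k\ge0$ holds by construction.

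For the relative-error inequality \eqref{FBHF-VMOR-HPE-b} I would expand its left-hand side with $\gamma_k v^k=(x^k-y^k)+b^k$ to get
\[
\theta_k\|\gamma_k v^k\|^2+\|\gamma_k v^k+(y^k-x^k)\|^2+2\gamma_k\epsilon_k
 =\theta_k\|\gamma_k v^k\|^2+\|b^k\|^2+\tfrac{\gamma_k}{2\beta}\|x^k-y^k\|^2 ,
\]
and then feed in two structural bounds: monotonicity of $B_2$ gives $\langle x^k-y^k,b^k\rangle\le0$, hence $\|\gamma_k v^k\|^2\le\|x^k-y^k\|^2+\|b^k\|^2$, while $L$-Lipschitz continuity of $B_2$ gives $\|b^k\|^2\le(\gamma_kL)^2\|x^k-y^k\|^2$. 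For $\theta_k\ge0$ these combine to bound the expression above by $\big[\theta_k\big(1+(\gamma_kL)^2\big)+(\gamma_kL)^2+\tfrac{\gamma_k}{2\beta}\big]\|x^k-y^k\|^2$, which is $\le\sigma\|x^k-y^k\|^2$ exactly under the stated restriction on $(\gamma_k,\theta_k)$; for $\theta_k\le0$ one uses $\theta_k\|\gamma_k v^k\|^2\le0$ directly, so the requirement reduces to the step-size condition $(\gamma_kL)^2+\tfrac{\gamma_k}{2\beta}\le\sigma$, which is exactly the $\theta_k=0$ boundary case of that restriction.

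The one delicate point I anticipate is the cross term $\langle x^k-y^k,b^k\rangle$ hidden inside $\|\gamma_k v^k\|^2$: its contribution to $\theta_k\|\gamma_k v^k\|^2$ flips sign with $\theta_k$, so monotonicity of $B_2$ has to be used jointly with the Lipschitz estimate and the two balanced so as to land precisely on the quoted bound for $(\gamma_k,\theta_k)$ --- which, specialized to $\theta_k=0$, reduces to $(\gamma_kL)^2+\gamma_k/(2\beta)\le\sigma$ and hence, for $\sigma$ close to $1$, to the classical step-size range $0<\gamma_k<4\beta/(1+\sqrt{1+16\beta^2L^2})$ of \cite{briceno2017forward}. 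Everything else is bookkeeping with the resolvent identity and Cauchy--Schwarz.
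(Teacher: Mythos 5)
Your proposal is correct and follows essentially the same route as the paper: read off $a^k\in A(y^k)$ from the resolvent step, absorb the cocoercive term $B_1(x^k)$ into an $\epsilon_k$-enlargement with $\epsilon_k=\|x^k-y^k\|^2/(4\beta)$, and bound the relative-error criterion using monotonicity and $L$-Lipschitz continuity of $B_2$, which reproduces the paper's chain of estimates and the same parameter condition. The only differences are cosmetic: you verify the enlargement membership directly from the definition (thereby reproving the cocoercivity lemma of Svaiter and the additivity property of enlargements that the paper simply cites), and you treat the sign of $\theta_k$ more explicitly than the paper, whose displayed inequality implicitly assumes $\theta_k\ge 0$.
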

 \begin{proof}
 By the definition of resolvent $\mathcal{J}_{\gamma_{k} A}$, the updating step \eqref{FBHF-a} of $y^{k}$ is formulated as follows
 \begin{align}\label{FBF-VMOR-HPE-eq1}
 x^{k}-\gamma_{k}(B_{1}+B_{2})(x^{k}) \in y^{k}+\gamma_{k}A(y^{k}).
 \end{align}
 By \citep[Lemma 2.2]{Svaiter2014A}, it holds that $B_{1}(x^{k}) \in B_1^{[\epsilon_{k}]}(y^{k})$ with $\epsilon_{k}\!=\! \|x^{k}-y^{k}\|^2/(4\beta)$. Then,
  \begin{align*}
 \gamma_{k}^{-1}(x^{k}-y^{k})-B_{2}(x^{k})+B_{2}(y^{k}) \
 &\in A(y^{k})+ B_{2}(y^{k})+B_{1}(x^{k}) \nonumber\\
 &\subseteq  A(y^{k})+ B_{2}(y^{k})+B_1^{[\epsilon_{k}]}y^{k}\nonumber\\
 &\subseteq (A+B_1+B_2)^{[\epsilon_{k}]}(y^{k}),
 \end{align*}
 where the first inclusion holds by \eqref{FBF-VMOR-HPE-eq1}, and the last inclusion holds by using the additivity property of enlargement operator \citep{burachik1998varepsilon}.
 Hence, utilizing $v^{k}=\gamma_{k}^{-1}(x^{k}-y^{k})-B_{2}(x^{k})+B_{2}(y^{k})$, we directly obtain \eqref{FBHF-VMOR-HPE-a} and \eqref{FBHF-VMOR-HPE-c}
 that $(y^{k},v^{k})\in{\rm gph}\,T^{[\epsilon_{k}]}$ and $x^{k+1}=x^{k}-(1+\theta_{k})\gamma_{k}v^{k}$, respectively. Next, we argue that \eqref{FBHF-VMOR-HPE-b} holds.
 By the monotonicity of $B_{2}$, it holds that
 \begin{align*}
 &\quad~\theta_{k}\|\gamma_{k} v^{k}\|^2+\|\gamma_{k} v^{k}+y^{k}-x^{k}\|^2 + 2\gamma_{k}\epsilon_{k} \nonumber\\
 &=\theta_{k}\big\|y^{k}-x^{k}+\gamma_{k} B_{2}(x^{k})-\gamma_{k} B_{2}(y^{k})\big\|^2+\big\|\gamma_{k}(B_{2}x^{k}-B_{2}y^{k})\big\|^2 + 2\gamma_{k}\epsilon_{k} \nonumber\\
 &\le \theta_{k}\big[ \|y^{k}-x^{k}\|^2+\|\gamma_{k} B_{2}(x^{k})-\gamma_{k} B_{2}(y^{k})\big\|^2\big]+\big\|\gamma_{k}(B_{2}x^{k}-B_{2}y^{k})\big\|^2 + 2\gamma_{k}\epsilon_{k} \nonumber\\
 &\le \big[\theta_{k}(1+\gamma_{k}^2L^2)+\gamma_{k}^2L^2+\gamma_{k}/(2\beta)\big]\|x^{k}-y^{k}\|^2 \le \sigma\|x^{k}-y^{k}\|^2,
 \end{align*}
 where the last inequality holds according to the definition of $\theta_{k}$.
 As a consequence, the FBHF algorithm with the iterations \eqref{FBHF-a} and \eqref{FBHF-b-over-relaxed} is a special case of the VMOR-HPE algorithm.
 \end{proof}

\section{Proof of Proposition \ref{nMFBHF}}
 Let $P$ be a bounded linear operator and $U=(P\!+\!P^*)/2$, $S=(P\!-\!P^*)/2$.
 The over-relaxed {\bf n}on self-adjoint {\bf M}etric {\bf F}orward-{\bf B}ackward-{\bf H}alf {\bf F}orward (nMFBHF)
 algorithm \cite{briceno2017forward} is defined as
 \begin{subnumcases}{}
 y^{k} := \mathcal{J}_{P^{-1} A}\big(x^{k}-P^{-1}(B_1+B_2)(x^{k})\big), \label{metric-FBHF-a} \\
 x^{k+1} := x^{k}+(1+\theta_{k})\big(y^{k}-x^{k}+U^{-1}[ B_{2}(x^{k})- B_{2}(y^{k})-S(x^{k}-y^{k})]\big). \label{metric-FBHF-b-over-relaxed}
 \end{subnumcases}
 \begin{proposition*}
 Let $\{(x^{k},y^{k})\}$ be the sequence generated by the over-relaxed nMFBHF algorithm.
 Denote $\epsilon_{k}= \|x^{k}-y^{k}\|^2/(4\beta)$ and $v^{k}=P(x^{k}-y^{k})+B_{2}(y^{k})-B_{2}(x^{k})$. The step-size $\theta_{k}$ satisfies
 $\theta_{k}+\frac{K^2(1+\theta_{k})}{\lambda^{2}_{\min}(U)}+\frac{1}{2\beta\lambda_{\min}(U)}\le\sigma$. Then,
 \begin{subnumcases}{}
 (y^{k},v^{k})\in{\rm gph}\,T^{[\epsilon_{k}]}={\rm gph}\,(A+B_1+B_2)^{[\epsilon_{k}]},  \label{metric-FBHF-VMOR-HPE-a} \\
 \theta_k\big\|U^{-1}v^{k}\big\|_{U}^2+\big\|U^{-1}v^{k}+ (y^{k} - x^{k})\big\|_{U}^2+ 2\epsilon  \leq  \sigma\big\|y^{k} -  x^{k}\big\|_{U}^2,  \label{metric-FBHF-VMOR-HPE-b}\\
  x^{k+1}=x^{k} - (1+\theta_{k})U^{-1}v^{k}. \label{metric-FBHF-VMOR-HPE-c}
 \end{subnumcases}
 \end{proposition*}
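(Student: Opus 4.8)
The plan is to follow the blueprint of the proof of Proposition~\ref{FBF-VMOR-HPE}, adapting each step to the non self-adjoint metric $P$ by writing $P=U+S$ with $U=(P+P^{*})/2$ self-adjoint positive definite and $S=(P-P^{*})/2$ skew-adjoint, and by grouping $S$ with $B_{2}$ so that $B_{2}-S$ remains monotone (this is the point already noted before the proposition).

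First I would establish the inclusion~\eqref{metric-FBHF-VMOR-HPE-a} and the identity~\eqref{metric-FBHF-VMOR-HPE-c}. Applying the definition of the resolvent $\mathcal{J}_{P^{-1}A}$ to~\eqref{metric-FBHF-a} gives $P(x^{k}-y^{k})-(B_{1}+B_{2})(x^{k})\in A(y^{k})$. By the $\beta$-cocoercivity of $B_{1}$ together with \citep[Lemma~2.2]{Svaiter2014A}, $B_{1}(x^{k})\in B_{1}^{[\epsilon_{k}]}(y^{k})$ with $\epsilon_{k}=\|x^{k}-y^{k}\|^{2}/(4\beta)$; adding $B_{2}(y^{k})+B_{1}(x^{k})$ and invoking the additivity of the enlargement operator~\citep{burachik1998varepsilon} then yields
\[
v^{k}=P(x^{k}-y^{k})+B_{2}(y^{k})-B_{2}(x^{k})\in\big(A+B_{1}+B_{2}\big)^{[\epsilon_{k}]}(y^{k})=T^{[\epsilon_{k}]}(y^{k}).
\]
Using $P=U+S$, a direct computation shows $-U^{-1}v^{k}=y^{k}-x^{k}+U^{-1}\big[B_{2}(x^{k})-B_{2}(y^{k})-S(x^{k}-y^{k})\big]$, so the update~\eqref{metric-FBHF-b-over-relaxed} is precisely~\eqref{metric-FBHF-VMOR-HPE-c}.

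Next I would verify the relative error criterion~\eqref{metric-FBHF-VMOR-HPE-b}. Setting $d^{k}:=(B_{2}-S)(x^{k})-(B_{2}-S)(y^{k})$, the same computation gives $U^{-1}v^{k}=(x^{k}-y^{k})-U^{-1}d^{k}$ and $U^{-1}v^{k}+(y^{k}-x^{k})=-U^{-1}d^{k}$. Since $\|U^{-1}d^{k}\|_{U}^{2}=\langle U^{-1}d^{k},d^{k}\rangle\le\lambda_{\min}^{-1}(U)\|d^{k}\|^{2}$, and since $B_{2}-S$ is monotone (skew-adjointness of $S$) and $K$-Lipschitz so that $\|d^{k}\|\le K\|x^{k}-y^{k}\|$ and $\langle x^{k}-y^{k},d^{k}\rangle\ge0$, I would expand $\|(x^{k}-y^{k})-U^{-1}d^{k}\|_{U}^{2}$, discard the nonpositive cross term, add $2\epsilon_{k}=\|x^{k}-y^{k}\|^{2}/(2\beta)$, and finally use $\|x^{k}-y^{k}\|^{2}\le\lambda_{\min}^{-1}(U)\|x^{k}-y^{k}\|_{U}^{2}$ to bound the left-hand side of~\eqref{metric-FBHF-VMOR-HPE-b} by $\big[\theta_{k}+K^{2}(1+\theta_{k})/\lambda_{\min}^{2}(U)+1/(2\beta\lambda_{\min}(U))\big]\|y^{k}-x^{k}\|_{U}^{2}$, which is at most $\sigma\|y^{k}-x^{k}\|_{U}^{2}$ by the assumed condition on $\theta_{k}$.

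I expect the main obstacle to be the careful handling of the non self-adjoint operator $P$: every estimate that in Proposition~\ref{FBF-VMOR-HPE} relied on the Euclidean inner product must be transported to the $U$-metric, and the skew part $S$ must be merged with $B_{2}$ so that the cross terms coming from $\langle x^{k}-y^{k},d^{k}\rangle$ are nonnegative. The bookkeeping of passing between $\|\cdot\|$ and $\|\cdot\|_{U}$ through $\lambda_{\min}(U)$, together with the sign of the $\theta_{k}$-weighted term, is exactly where the hypothesis $\theta_{k}+K^{2}(1+\theta_{k})/\lambda_{\min}^{2}(U)+1/(2\beta\lambda_{\min}(U))\le\sigma$ is tight.
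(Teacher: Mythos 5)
Your proposal is correct and follows essentially the same route as the paper's proof: the resolvent inclusion plus the cocoercivity enlargement and additivity of $(\cdot)^{[\epsilon]}$ for \eqref{metric-FBHF-VMOR-HPE-a}, the identity $P=U+S$ (equivalently $U^{-1}P-I=U^{-1}S$) to rewrite the update as \eqref{metric-FBHF-VMOR-HPE-c}, and the expansion of the $U$-norm with monotonicity and $K$-Lipschitz continuity of $B_{2}-S$ together with the $\lambda_{\min}(U)$ bounds to verify \eqref{metric-FBHF-VMOR-HPE-b}. No substantive differences from the paper's argument.
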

 \begin{proof}
 By the definition of \eqref{metric-FBHF-a}, it holds that $P(x^{k}-y^{k})-(B_{1}+B_{2})(x^{k}) \in A(y^{k})$, which indicates
 \begin{align}
 P(x^{k}-y^{k})+ B_{2}(y^{k})- B_{2}(x^{k})
 & \in A(y^{k})+B_{1}(x^{k})+B_{2}(y^{k}) \nonumber\\
 & \subseteq A(y^{k})+B_{1}^{[\epsilon_{k}]}(y^{k})+B_{2}(y^{k}) \nonumber\\
 & \subseteq (A+B_{1}+B_{2})^{[\epsilon_{k}]}(y^{k}).
 \end{align}
 By the definition of $v^{k}$, we derive \eqref{metric-FBHF-VMOR-HPE-a} that $(y^{k},v^{k})\in{\rm gph}\,T^{[\epsilon_{k}]}$.
 In addition, recall $U\!=\!(P\!+\!P^*)/2$ and  $S\!=\!(P\!-\!P^*)/2$. It is easy to check  $U^{-1}P - I = U^{-1}S$. Hence, we obtain
 \begin{align}
 x^{k+1}
 &= x^{k}+(1+\theta_{k})\big(y^{k}-x^{k}+U^{-1}[ B_{2}(x^{k})- B_{2}(y^{k})-S(x^{k}-y^{k})]\big) \nonumber\\
 &=x^{k}+(1+\theta_{k})\big(y^{k}-x^{k}-U^{-1}\big(S(x^{k}-y^{k})+B_{2}(y^{k}) - B_{2}(x^{k})\big) \big) \nonumber\\
 &=x^{k}+(1+\theta_{k})\big(y^{k}-x^{k}-U^{-1}\big(S(x^{k}-y^{k})\big)-U^{-1}\big(B_{2}(y^{k}) - B_{2}(x^{k})\big) \big) \nonumber\\
 &=x^{k}+(1+\theta_{k})\big(y^{k}-x^{k}+(I-U^{-1}P)(x^{k}-y^{k})-U^{-1}\big(B_{2}(y^{k}) - B_{2}(x^{k})\big) \big) \nonumber\\
 &=x^{k}+(1+\theta_{k})\big( U^{-1}\big(P(y^{k}-x^{k})\big)-U^{-1}\big(B_{2}(y^{k}) - B_{2}(x^{k})\big) \big) \nonumber\\
 &=x^{k}-(1+\theta_{k})U^{-1}v^{k},\nonumber
 \end{align}
 which indicates that \eqref{metric-FBHF-VMOR-HPE-c} holds. In what follows, we argue that \eqref{metric-FBHF-VMOR-HPE-b} holds.
 According to the above equality, it clearly holds that $U^{-1}v^{k}= x^{k}-y^{k}-U^{-1}[ B_{2}(x^{k})- B_{2}(y^{k})-S(x^{k}-y^{k})]$. Hence
 \begin{align*}
 &\quad~ \theta_{k} \big\|U^{-1}v^{k}\big\|^2_{U}+\big\|U^{-1}v^{k}+y^{k}-x^{k}\big\|^2_{U}+2\epsilon_{k}\nonumber\\
 &= \theta_{k}\big\|x^{k}\!-\!y^{k}\!-\!U^{-1}[(B_{2}\!-\!S)(x^{k})\!-\!(B_{2}\!-\!S)(y^{k})]\big\|^2_{U}\!+\!\big\|U^{-1}\big[(B_{2}\!-\!S)(x^{k})\!-\!(B_{2}\!-\!S)(y^{k})]\big\|^2_{U}\!+\!2\epsilon_{k} \nonumber\\
 &\le \theta_{k}\big\|x^{k}\!-\!y^{k}\big\|^2_{U}+ (1+\theta_{k})\big\|U^{-1}\big[(B_{2}\!-\!S)(x^{k})\!-\!(B_{2}\!-\!S)(y^{k})]\big\|^2_{U}+2\epsilon_{k} \nonumber\\
 &\le \theta_{k}\big\|x^{k}\!-\!y^{k}\big\|^2_{U}+(1+\theta_{k})\lambda^{-1}_{\min}(U)\|(B_{2}-S)x^{k}-(B_{2}-S)y^{k}\|^2+2\epsilon_{k} \nonumber\\
 &\le \theta_{k}\big\|x^{k}\!-\!y^{k}\big\|^2_{U}+\big[(1+\theta_{k})\lambda^{-1}_{\min}(U)K^2+1/(2\beta)\big]\|x^{k}-y^{k}\|^2 \nonumber\\
 &\le \big[\theta_{k}+[(1+\theta_{k})\lambda^{-1}_{\min}(U)K^2+1/(2\beta)]\lambda^{-1}_{\min}(U)\big]\big\|x^{k}\!-\!y^{k}\big\|^2_{U}\nonumber\\
 &\le \sigma\|x^{k}-y^{k}\|^2_{U},
 \end{align*}
 where the first inequality holds by the monotonicity of $B_{2}-S$, the second inequality holds by $\|U^{-1}\cdot\|^2_{U}\le \lambda_{\max}(U^{-1})\|\cdot\|^2=\lambda^{-1}_{\min}(U)\|\cdot\|^2$,
 the third inequality holds by the Lipschitz continuity of $B_{2}-S$, the fourth inequality holds by $\|\cdot\|^2\le \lambda^{-1}_{\min}(U)\|\cdot\|^2_{U}$, and the last inequality holds by
 $\theta_{k}+[K^2(1+\theta_{k})]/[\lambda^{2}_{\min}(U)]+1/[2\beta\lambda_{\min}(U)]\le\sigma$. Hence, \eqref{metric-FBHF-VMOR-HPE-b} holds.
 In conclusion, the over-relaxed non self-adjoint metric FBHF algorithm with the iterations \eqref{metric-FBHF-a} and \eqref{metric-FBHF-b-over-relaxed} falls into the framework of VMOR-HPE. The proof is finished.
 \end{proof}

\section{Proof of Proposition \ref{PPG-HPE}}

 The over-relaxed {\bf P}roximal-{\bf P}roximal-{\bf G}radient (PPG) algorithm \cite{ryu2017proximal} takes the following iterations:
 \begin{subnumcases}{}
 x^{k+\frac{1}{2}} := {\rm Prox}_{\alpha r}\big(\frac{1}{n}\sum_{i=1}^{n}z_{i}^{k}\big), \label{PPG-a} \\
 x_{i}^{k+1} := {\rm Prox}_{\alpha g_{i}}\big(2x^{k+\frac{1}{2}}-z_{i}^{k}-\alpha\nabla f_{i}(x^{k+\frac{1}{2}})\big),\ i=1,\ldots,n,  \label{PPG-b}\\
 z_{i}^{k+1} :=z_{i}^{k}+(1+\theta_{k})(x_{i}^{k+1}-x^{k+\frac{1}{2}}),\ i=1,\ldots,n.  \label{PPG-c-over-relaxed}
 \end{subnumcases}

 To establish Proposition \ref{PPG-HPE}, we need the following lemma which characterizes how to calculate the proximal mapping ${\rm Prox}_{\alpha \overline{r}}(\cdot)$.
 \begin{lemma}\label{proximal-bar-r}
 Given ${\bf z}\in \mathbb{X}^{n}$, ${\rm Prox}_{\alpha \overline{r}}(z)=\arg\min_{{\bf x}\in \mathbb{X}^{n}} \overline{r}({\bf x})+ \frac{1}{2\alpha}\|{\bf x}-{\bf z}\|^2$ can be  calculated in parallel with ${\rm Prox}_{\alpha \overline{r}}({\bf z})=\big({\rm Prox}_{\alpha r}(\frac{1}{n}\sum_{i=1}^{n}z_{i}),{\rm Prox}_{\alpha r}(\frac{1}{n}\sum_{i=1}^{n}z_{i}),\cdots,{\rm Prox}_{\alpha r}(\frac{1}{n}\sum_{i=1}^{n}z_{i})\big) \in V$.
 \end{lemma}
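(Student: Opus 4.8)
The plan is to use the definition of $\overline r$ to collapse the $n$-block minimization onto the consensus subspace $V$, and then to reduce the resulting single-block problem to an ordinary proximal evaluation of $r$ by completing the square.

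First I would write out ${\rm Prox}_{\alpha\overline{r}}(\mathbf z)=\arg\min_{\mathbf x\in\mathbb X^n}\,\mathbf 1_V(\mathbf x)+\tfrac1n\sum_{i=1}^{n} r(x_i)+\tfrac1{2\alpha}\|\mathbf x-\mathbf z\|^2$, recalling that on $\mathbb X^n$ the inner product is the averaged one $\langle\mathbf x,\mathbf y\rangle=\tfrac1n\sum_{i=1}^{n}\langle x_i,y_i\rangle$ (consistent with how $\overline f,\overline g,\overline r$ were defined), so that $\|\mathbf x-\mathbf z\|^2=\tfrac1n\sum_{i=1}^{n}\|x_i-z_i\|^2$. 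Because $\mathbf 1_V(\mathbf x)=+\infty$ off the consensus subspace, the minimization runs only over points $\mathbf x=(x,\dots,x)$ with $x\in\mathbb X$; substituting this parametrization makes the indicator term vanish and leaves the single-block problem $\min_{x\in\mathbb X} r(x)+\tfrac1{2\alpha n}\sum_{i=1}^{n}\|x-z_i\|^2$, where the factor $\tfrac1n$ in front of the $r$-sum has cancelled the $n$ identical copies of $r(x)$.

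Next I would complete the square via the elementary identity $\sum_{i=1}^{n}\|x-z_i\|^2=n\|x-\bar z\|^2+\sum_{i=1}^{n}\|z_i-\bar z\|^2$ with $\bar z=\tfrac1n\sum_{i=1}^{n}z_i$; the second term is independent of $x$, so the objective is, up to an additive constant, $r(x)+\tfrac1{2\alpha}\|x-\bar z\|^2$. Since $r$ is proper closed convex, this function is proper closed and strongly convex, hence has the unique minimizer ${\rm Prox}_{\alpha r}(\bar z)$; by the same strong-convexity argument the original $\mathbb X^n$-problem has a unique minimizer, namely $\mathbf x^{\star}=\big({\rm Prox}_{\alpha r}(\bar z),\dots,{\rm Prox}_{\alpha r}(\bar z)\big)\in V$, which is exactly the asserted formula. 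The ``in parallel'' remark then follows at once: the $n$ blocks of $\mathbf x^{\star}$ are mutually independent (indeed, all equal), and each is a single evaluation of ${\rm Prox}_{\alpha r}$ at the average $\tfrac1n\sum_{i=1}^{n}z_i$.

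There is no real analytic obstacle here; the one point that needs care is the bookkeeping of the $\tfrac1n$ weights — the $\tfrac1n$ multiplying $\sum_i r(x_i)$ in $\overline r$ must be matched against the $\tfrac1n$ in the norm on $\mathbb X^n$, since otherwise the effective proximal parameter would emerge as $\alpha/n$ rather than $\alpha$. Once those conventions are pinned down, the proof is just the two short displays above together with the uniqueness of minimizers of strongly convex functions.
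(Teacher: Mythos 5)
Your proof is correct and follows essentially the same route as the paper's: use the indicator $\mathbf{1}_{V}$ to restrict the minimization to the consensus subspace, collapse $\frac{1}{n}\sum_{i}r(x_i)$ to a single $r(x)$, and identify the reduced one-block problem with ${\rm Prox}_{\alpha r}$ evaluated at $\bar z=\frac{1}{n}\sum_{i}z_i$. The only difference is that you carry out the completing-the-square step and pin down the $\tfrac1n$-weighted norm convention on $\mathbb{X}^n$ explicitly (without which the proximal parameter would come out as $\alpha/n$), whereas the paper absorbs this point by directly declaring the minimizer of $r(x)+\frac{1}{2\alpha}\|x\mathbf{1}-\mathbf{z}\|^2$ to be ${\rm Prox}_{\alpha r}(\bar z)$ — so your write-up is, if anything, the more careful version of the same argument.
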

 \begin{proof}
  By the definition of $\overline{r}({\bf x})$, it holds that the components of ${\rm Prox}_{\alpha \overline{r}}({\bf z})$ are equal to each other.
  Let ${\bf 1}=(1,1,\cdots,1)\in\mathbb{X}^{n}$. By definitions of $V$ and $\overline{r}({\bf x})$, the following equalities hold
  \begin{align}
  \arg\min_{{\bf x}\in \mathbb{X}^{n}} \overline{r}({\bf x})+ \frac{1}{2\alpha}\|{\bf x}-{\bf z}\|^2
  &=\arg\min_{{\bf x}\in \mathbb{X}^{n}} {\bf 1}_{V}({\bf x})+\frac{1}{n}\sum_{i=1}^{n}r(x_{i}) + \frac{1}{2\alpha}\|{\bf x}-{\bf z}\|^2 \nonumber\\
  &= \arg\min_{{\bf x}\in V}  \frac{1}{n}\sum_{i=1}^{n}r(x_{i}) + \frac{1}{2\alpha}\|{\bf x}-{\bf z}\|^2. \label{proximal-bar-r-eq1}
  \end{align}
  Let ${\rm Prox}_{\alpha r}(\frac{1}{n}\sum_{i=1}^{n}z_{i})=\arg\min_{x \in \mathbb{X}}  r(x)+\frac{1}{2\alpha}\|x{\bf 1}-{\bf z}\|^2$.
  By the definition of $V$, we obtain
  \[
   \min_{{\bf x}\in V}  \frac{1}{n}\sum_{i=1}^{n}r(x_{i}) + \frac{1}{2\alpha}\|{\bf x}-{\bf z}\|^2
  =\min_{x \in \mathbb{X}}  r(x)+\frac{1}{2\alpha}\|x{\bf 1}-{\bf z}\|^2,
  \]
  and that ${\rm Prox}_{\alpha r}(\frac{1}{n}z{\bf 1}^T)$ solves \eqref{proximal-bar-r-eq1}.
  Hence, ${\rm Prox}_{\alpha r}(\frac{1}{n}z{\bf 1}^T){\bf 1}={\rm Prox}_{\alpha \overline{r}}(z)$. The proof is completed.
 \end{proof}

\begin{proposition*}
 Let $(x^{k+\frac{1}{2}},x_{i}^{k},z_{i}^{k})$ be the sequence generated by the over-relaxed PPG algorithm.
 Denote ${\bf x}^{k}=(x_{1}^{k},\cdots,x_{n}^{k})$, ${\bf z}^{k}=(z_{1}^{k},\cdots,z_{n}^{k})$, ${\bf 1}=(1,\cdots,1)\!\in\!\mathbb{X}^{n}$,
 ${\bf y}^{k}={\bf z}^{k}+ {\bf x^{k+1}}- x^{k+\frac{1}{2}}{\bf 1}$, ${\bf v}^{k}= x^{k+\frac{1}{2}}{\bf 1}-{\bf x}^{k+1}$, and $\epsilon_{k}=L\sum_{i=1}^{n}\|x_{i}^{k+1}-x^{k+\frac{1}{2}}\|/4$.
 Parameters $(\theta_{k},\alpha)$ are constrained by $\theta_k + L\alpha/2\le \sigma$. Then, it holds that
  \begin{subnumcases}{}
  ({\bf y}^{k},{\bf v}^{k})\in {\rm gph}\,\mathcal{S}_{\alpha,\nabla\overline{f}+\partial\overline{g},\overline\partial{r}}^{[\alpha\epsilon_{k}]}={\rm gph}\,T^{[\alpha\epsilon_{k}]},  \label{PPG-VMOR-HPE-a} \\
  \theta_k\big\|{\bf v}^{k}\big\|^2+\big\|{\bf v}^{k}+({\bf y}^{k}-{\bf z}^{k})\big\|^2+2\alpha\epsilon_{k} \leq\sigma\big\|{\bf y}^{k}\!-\!{\bf z}^{k}\big\|^2,  \label{PPG-VMOR-HPE-b}\\
  {\bf z}^{k+1}={\bf z}^{k}- (1+\theta_{k}){\bf v}^{k}.  \label{PPG-VMOR-HPE-c}
 \end{subnumcases}
 \end{proposition*}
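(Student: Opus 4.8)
The plan is to produce explicit resolvent certificates for the two operators $\partial\overline{r}$ and $\nabla\overline{f}+\partial\overline{g}$ that enter the definition of $\mathcal{S}:=\mathcal{S}_{\alpha,\nabla\overline{f}+\partial\overline{g},\partial\overline{r}}$, to read $({\bf y}^{k},{\bf v}^{k})$ off from them, and to absorb the mismatch in the point at which $\nabla\overline{f}$ is evaluated into an $\varepsilon$-enlargement. Concretely, set ${\bf x}_2:=x^{k+\frac12}{\bf 1}$ and ${\bf x}_1:={\bf x}^{k+1}$. By Lemma~\ref{proximal-bar-r}, step \eqref{PPG-a} is ${\bf x}_2={\rm Prox}_{\alpha\overline{r}}({\bf z}^{k})$, whose optimality condition gives ${\bf y}_2:=\frac1\alpha({\bf z}^{k}-x^{k+\frac12}{\bf 1})\in\partial\overline{r}({\bf x}_2)$. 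Blockwise, the optimality condition of step \eqref{PPG-b} reads $\frac1\alpha\big(2x^{k+\frac12}-z_i^{k}-x_i^{k+1}\big)\in\partial g_i(x_i^{k+1})+\nabla f_i(x^{k+\frac12})$, so stacking over $i$ and setting ${\bf y}_1:=\frac1\alpha\big(2x^{k+\frac12}{\bf 1}-{\bf z}^{k}-{\bf x}^{k+1}\big)$ yields ${\bf y}_1\in\partial\overline{g}({\bf x}_1)+\nabla\overline{f}(x^{k+\frac12}{\bf 1})$. A one-line computation then verifies the linking identity ${\bf x}_1+\alpha{\bf y}_1=2x^{k+\frac12}{\bf 1}-{\bf z}^{k}={\bf x}_2-\alpha{\bf y}_2$, together with ${\bf x}_1+\alpha{\bf y}_2={\bf y}^{k}$ and ${\bf x}_2-{\bf x}_1={\bf v}^{k}$, so $({\bf y}^{k},{\bf v}^{k})$ is already of exactly the shape prescribed by ${\rm gph}\,\mathcal{S}$, except that $\nabla\overline{f}$ sits at $x^{k+\frac12}{\bf 1}$ rather than at ${\bf x}_1$.

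To absorb this, I would invoke the Baillon--Haddad / Svaiter lemma (\citep[Lemma 2.2]{Svaiter2014A}, already used in Proposition~\ref{FBF-VMOR-HPE}): each $\nabla f_i$ is $\frac1L$-cocoercive, hence $\nabla f_i(x^{k+\frac12})\in(\nabla f_i)^{[\frac L4\|x^{k+\frac12}-x_i^{k+1}\|^2]}(x_i^{k+1})$; summing over $i$ and using additivity of the enlargement over the product and under addition \citep{burachik1998varepsilon} gives ${\bf y}_1\in(\nabla\overline{f}+\partial\overline{g})^{[\epsilon_k]}({\bf x}_1)$ with $\epsilon_k=\frac L4\sum_{i=1}^{n}\|x_i^{k+1}-x^{k+\frac12}\|^2$ (the stated $\epsilon_k$, read with the square). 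Passing this $\epsilon$-certificate through the definition of the enlargement of $\mathcal{S}$ then gives \eqref{PPG-VMOR-HPE-a}: for an arbitrary $(\widetilde{\bf y},\widetilde{\bf v})\in{\rm gph}\,\mathcal{S}$ with its own certificates $\widetilde{\bf x}_1,\widetilde{\bf y}_1,\widetilde{\bf x}_2,\widetilde{\bf y}_2$, writing $p:=\widetilde{\bf x}_1+\alpha\widetilde{\bf y}_1=\widetilde{\bf x}_2-\alpha\widetilde{\bf y}_2$ and $q:={\bf x}_1+\alpha{\bf y}_1={\bf x}_2-\alpha{\bf y}_2$ one has $\widetilde{\bf y}=\widetilde{\bf x}_1+\widetilde{\bf x}_2-p$, ${\bf y}^{k}={\bf x}_1+{\bf x}_2-q$, and a short manipulation that eliminates $p,q$ reduces $\langle\widetilde{\bf v}-{\bf v}^{k},\,\widetilde{\bf y}-{\bf y}^{k}\rangle$ to $\alpha\langle\widetilde{\bf x}_2-{\bf x}_2,\,\widetilde{\bf y}_2-{\bf y}_2\rangle+\alpha\langle\widetilde{\bf x}_1-{\bf x}_1,\,\widetilde{\bf y}_1-{\bf y}_1\rangle\ \ge\ 0+\alpha(-\epsilon_k)$, using monotonicity of $\partial\overline{r}$ for the first term and the enlargement bound for $({\bf x}_1,{\bf y}_1)$ for the second; hence $({\bf y}^{k},{\bf v}^{k})\in{\rm gph}\,\mathcal{S}^{[\alpha\epsilon_k]}$, and $\mathcal{S}=T$ for problem \eqref{PPG-problem-eq} by definition.

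The update \eqref{PPG-VMOR-HPE-c} is immediate from \eqref{PPG-c-over-relaxed}: $z_i^{k+1}=z_i^{k}-(1+\theta_k)(x^{k+\frac12}-x_i^{k+1})$ is exactly ${\bf z}^{k+1}={\bf z}^{k}-(1+\theta_k){\bf v}^{k}$. For the relative-error inequality \eqref{PPG-VMOR-HPE-b}, the key simplification is that ${\bf y}^{k}-{\bf z}^{k}={\bf x}^{k+1}-x^{k+\frac12}{\bf 1}=-{\bf v}^{k}$, which makes the middle term ${\bf v}^{k}+({\bf y}^{k}-{\bf z}^{k})$ vanish identically; \eqref{PPG-VMOR-HPE-b} then collapses to $\theta_k\|{\bf v}^{k}\|^2+2\alpha\epsilon_k\le\sigma\|{\bf v}^{k}\|^2$, and since $\|{\bf v}^{k}\|^2=\sum_{i}\|x_i^{k+1}-x^{k+\frac12}\|^2$ and $2\alpha\epsilon_k=\frac{\alpha L}{2}\|{\bf v}^{k}\|^2$, this is precisely the hypothesis $\theta_k+L\alpha/2\le\sigma$. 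I expect the only genuinely delicate point to be the enlargement bookkeeping of the second paragraph — pinning down the factor $\alpha$ in $\mathcal{S}^{[\alpha\epsilon_k]}$ and the additivity/monotonicity chain; everything else is substitution.
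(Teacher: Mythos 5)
Your proposal is correct and follows essentially the same route as the paper: the same resolvent certificates from \eqref{PPG-a}--\eqref{PPG-b}, the same use of \citep[Lemma 2.2]{Svaiter2014A} and the additivity of enlargements to handle $\nabla\overline{f}$ being evaluated at $x^{k+\frac12}{\bf 1}$, and the same observation that ${\bf v}^{k}+({\bf y}^{k}-{\bf z}^{k})=0$ reduces \eqref{PPG-VMOR-HPE-b} to $\theta_k+L\alpha/2\le\sigma$. The only difference is that where the paper cites \cite{shen2017over} for the inclusion $\mathcal{S}_{\alpha,[\nabla\overline{f}+\partial\overline{g}]^{[\epsilon_k]},\partial\overline{r}}\subseteq\mathcal{S}^{[\alpha\epsilon_k]}_{\alpha,\nabla\overline{f}+\partial\overline{g},\partial\overline{r}}$, you verify it directly via the monotonicity/enlargement inner-product computation, which is correct and makes the argument self-contained.
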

 \begin{proof}
 By Lemma \ref{proximal-bar-r} and equation \eqref{PPG-a}, we derive $x^{k+\frac{1}{2}}{\bf 1} = {\rm Prox}_{\alpha \overline{r}}({\bf z}^{k})$. Hence,
 \begin{align}\label{kkt-proximal-bar-r}
 \alpha^{-1}\big({\bf z}^{k}- x^{k+\frac{1}{2}}{\bf 1}\big) \in \partial\overline{r}(x^{k+\frac{1}{2}}{\bf 1})
 \end{align}
 Unitizing $\overline{g}$ and $\overline{f}$, \eqref{PPG-b} is reformulated as ${\bf x}^{k+1} = {\rm Prox}_{\alpha \overline{g}}\big(2x^{k+\frac{1}{2}}{\bf 1}-{\bf z}^{k}-\alpha\nabla {\overline f}(x^{k+\frac{1}{2}}{\bf 1})\big)$.
 Then,
 \begin{align}\label{proximal-bar-g}
 \alpha^{-1}\big(2x^{k+\frac{1}{2}}{\bf 1}-{\bf x}^{k+1}-{\bf z}^{k}\big)
 &\in \partial\overline{g}({\bf x}^{k+1})+\nabla {\overline f}(x^{k+\frac{1}{2}}{\bf 1}) \\
 & \subseteq \partial\overline{g}({\bf x}^{k+1})+ \big[\nabla {\overline f}\big]^{[\epsilon_{k}]}({\bf x^{k+1}}) \nonumber\\
 &\subseteq \big[\partial\overline{g}+ \nabla {\overline f}\big]^{[\epsilon_{k}]}({\bf x^{k+1}}), \nonumber
 \end{align}
 where $\epsilon_{k}=L\|{\bf x}^{k+1}-x^{k+\frac{1}{2}}{\bf 1}\|/4=L\sum_{i=1}^{n}\|x_{i}^{k+1}-x^{k+\frac{1}{2}}\|/4$ and the second inclusion holds by \citep[Lemma 2.2]{Svaiter2014A}.
 Combining \eqref{kkt-proximal-bar-r}, \eqref{proximal-bar-g} and using simple calculations, we obtain
 \begin{align*}
 x^{k+\frac{1}{2}}{\bf 1}- {\bf x^{k+1}}
 &\in \mathcal{S}_{\alpha,[\nabla\overline{f}+\partial\overline{g}]^{[\epsilon_{k}]},\overline\partial{r}}\big({\bf x^{k+1}} + \alpha[\alpha^{-1}\big({\bf z}^{k}- x^{k+\frac{1}{2}}{\bf 1}\big)]\big) \nonumber\\
 &=  \mathcal{S}_{\alpha,[\nabla\overline{f}+\partial\overline{g}]^{[\epsilon_{k}]},\overline\partial{r}}\big({\bf z}^{k}+ {\bf x^{k+1}}- x^{k+\frac{1}{2}}{\bf 1}\big) \nonumber\\
 &\subseteq  \mathcal{S}^{[\alpha\epsilon_{k}]}_{\alpha,[\nabla\overline{f}+\partial\overline{g}],\overline\partial{r}}\big({\bf z}^{k}+ {\bf x^{k+1}}- x^{k+\frac{1}{2}}{\bf 1}\big)
  = \mathcal{S}^{[\alpha\epsilon_{k}]}_{\alpha,[\nabla\overline{f}+\partial\overline{g}],\overline\partial{r}}\big({\bf y}^{k}\big),
 \end{align*}
 where the first inclusion holds by
 ${\bf x^{k+1}} +\alpha[\alpha^{-1}\big(2x^{k+\frac{1}{2}}{\bf 1}-{\bf x}^{k+1}-{\bf z}^{k}\big)] = x^{k+\frac{1}{2}}{\bf 1} - \alpha[\alpha^{-1}\big({\bf z}^{k}- x^{k+\frac{1}{2}}{\bf 1}\big)]$
 and using the definition of $\mathcal{S}_{\alpha,\nabla\overline{f}+\partial\overline{g},\overline\partial{r}}$, and the last inclusion holds by \cite{shen2017over}. By using the notation ${\bf v}^{k}$,
 \eqref{PPG-VMOR-HPE-a} directly holds. In addition, \eqref{PPG-c-over-relaxed} can also be equivalently reformulated as ${\bf z}^{k+1} = {\bf z}^{k}+(1+\theta_{k})({\bf x}^{k+1}-x^{k+\frac{1}{2}}{\bf 1})$, which
 is equivalent to ${\bf z}^{k+1} = {\bf z}^{k}-(1+\theta_{k}){\bf v}^{k}$ by utilizing the definition of ${\bf v}^{k}$. Hence, \eqref{PPG-VMOR-HPE-c} holds. Next, using the definition of ${\bf v}^{k}$, it holds that
 \begin{align*}
 &\quad ~\theta_k\big\|{\bf v}^{k}\big\|^2+\big\|{\bf v}^{k}+({\bf y}^{k}-{\bf z}^{k})\big\|^2+2\alpha\epsilon_{k}  \nonumber\\
 &= \theta_k\big\|x^{k+\frac{1}{2}}{\bf 1}-{\bf x}^{k+1}\big\|^2+\big\|x^{k+\frac{1}{2}}{\bf 1}-{\bf x}^{k+1}+({\bf z}^{k}+ {\bf x^{k+1}}- x^{k+\frac{1}{2}}{\bf 1}-{\bf z}^{k})\big\|^2+2\alpha\epsilon_{k} \nonumber\\
 &= \big(\theta_k + L\alpha/2 \big)\big\|x^{k+\frac{1}{2}}{\bf 1}-{\bf x}^{k+1}\big\|^2 \nonumber\\
 &\le  \sigma\big\|{\bf y}^{k}- {\bf z}^{k}\big\|^2, \nonumber
 \end{align*}
 where the first equality holds due to the definitions of ${\bf v}^{k}$ and ${\bf y}^{k}$, the second equality holds due to the definition of $\epsilon_{k}$, and the last inequality holds due to
 $\theta_k + L\alpha/2 \le \sigma$, which indicates that \eqref{PPG-VMOR-HPE-b} holds. In conclusion,
 the over-relaxed PPG algorithm with the iterations \eqref{PPG-a},\eqref{PPG-b},\eqref{PPG-c-over-relaxed} falls into the framework of VMOR-HPE. The proof is finished.
 \end{proof}

\section{Proof of Proposition \ref{AFBAS-VMORHPE}}
The {\bf A}symmetric {\bf F}orward {\bf B}ackward {\bf A}djoint {\bf S}plitting (AFBAS) algorithm \cite{latafat2017asymmetric} is defined as:
 \begin{subnumcases}{}
 \overline{x}^{k} := (H+A)^{-1}\big(H-M-C\big)x^{k}     \label{AFBAS-a} \\
 x^{k+1} := x^{k}+\alpha_{k}S^{-1}(H+M^*)(\overline{x}^{k}-x^{k}), \qquad \label{AFBAS-b}
 \end{subnumcases}
 where  $\alpha_{k}=\left[\lambda_{k}\|\overline{z}^{k}-z^{k}\|^2_{P}\|\right]{\big/}\left[\|(H+M^*)(\overline{z}^{k}-z^{k})\|^{2}_{\!S^{-1}}\right]$
 and $\lambda_{k}\in [\underline{\lambda},\overline{\lambda}]\le [0,(2-1/(2\beta)]$.
\begin{proposition*}
 Let $(x^{k},\overline{x}^{k})$ be the sequence generated by the AFBAS algorithm.
 Denote $\theta_{k}=\alpha_{k}-1$, $v^{k}=(H+M^*)(x^{k})-(H+M^*)(\overline{x}^{k})$, and $\epsilon_{k} = \frac{\|\overline{z}^{k}-z^{k}\|^2_{P}}{4\beta}$. Then,
  \begin{subnumcases}\!{}
 \!\!(\overline{x}^{k},v^{k}) \in {\rm gph}\,(A+M+C)^{[\epsilon_{k}]},   \label{AFBAS-VMOR-HPE-a} \\
 \!\!\theta_k\big\|S^{-1}v^{k}\big\|_{S}^2\!+\!\big\|S^{-1}v\!+\!(\overline{x}^{k}\!-\!x^{k})\big\|_{S}^2\!+\!2\epsilon \!\leq\! \sigma\big\|\overline{x}^{k}\!-\!x^{k}\big\|_{S}^2, \label{AFBAS-VMOR-HPE-b}\\
 \!\!x^{k+1}=x^{k}-(1+\theta_{k})S^{-1}v^{k}.\label{AFBAS-VMOR-HPE-c}
 \end{subnumcases}
 \end{proposition*}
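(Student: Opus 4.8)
The plan is to verify the three relations \eqref{AFBAS-VMOR-HPE-c}, \eqref{AFBAS-VMOR-HPE-a} and \eqref{AFBAS-VMOR-HPE-b} in that order: the first is a mere rewriting of \eqref{AFBAS-b}, the second is an inclusion coming from the resolvent identity together with an enlargement lemma, and the third is an inequality that, after substituting the defining formula for $\alpha_{k}$ and exploiting the skew-symmetry of $K$ and $M$, collapses to a condition on the admissible range of $\lambda_{k}$. First, for \eqref{AFBAS-VMOR-HPE-c}: with $v^{k}=(H+M^{*})(x^{k}-\overline{x}^{k})$ and $\theta_{k}=\alpha_{k}-1$, step \eqref{AFBAS-b} reads $x^{k+1}=x^{k}+\alpha_{k}S^{-1}(H+M^{*})(\overline{x}^{k}-x^{k})=x^{k}-(1+\theta_{k})S^{-1}v^{k}$, which is exactly \eqref{AFBAS-VMOR-HPE-c}.

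Next, \eqref{AFBAS-VMOR-HPE-a}. From \eqref{AFBAS-a} we get $(H-M-C)x^{k}\in(H+A)\overline{x}^{k}$, hence $w_{A}:=H(x^{k}-\overline{x}^{k})-Mx^{k}-Cx^{k}\in A\overline{x}^{k}$. Adding $M\overline{x}^{k}+Cx^{k}$ to both sides and using $M^{*}=-M$ gives $w_{A}+M\overline{x}^{k}+Cx^{k}=(H-M)(x^{k}-\overline{x}^{k})=(H+M^{*})(x^{k}-\overline{x}^{k})=v^{k}$, so $v^{k}\in(A+M)(\overline{x}^{k})+Cx^{k}$. It then remains to treat $Cx^{k}$ as an $\epsilon_{k}$-enlarged image of $C$ at $\overline{x}^{k}$: from the $P$-cocoercivity of $C$ one obtains, by a $\|\cdot\|_{P}$-weighted version of \citep[Lemma~2.2]{Svaiter2014A} (or by a one-line Cauchy--Schwarz/complete-the-square estimate in the $P$-metric), that $Cx^{k}\in C^{[\epsilon_{k}]}(\overline{x}^{k})$ with $\epsilon_{k}=\|\overline{x}^{k}-x^{k}\|_{P}^{2}/(4\beta)$. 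Combining this with the additivity of the enlargement, namely $(A+M)(\overline{x}^{k})+C^{[\epsilon_{k}]}(\overline{x}^{k})\subseteq(A+M+C)^{[\epsilon_{k}]}(\overline{x}^{k})$ (using that $A+M$ is monotone since $A$ is maximal monotone and $M$ is skew), we obtain $v^{k}\in(A+M+C)^{[\epsilon_{k}]}(\overline{x}^{k})$, i.e.\ \eqref{AFBAS-VMOR-HPE-a}.

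Finally, \eqref{AFBAS-VMOR-HPE-b}. Expanding the square and using $\|S^{-1}v^{k}\|_{S}^{2}=\|v^{k}\|_{S^{-1}}^{2}$, its left-hand side equals $\alpha_{k}\|v^{k}\|_{S^{-1}}^{2}+2\langle v^{k},\overline{x}^{k}-x^{k}\rangle+\|\overline{x}^{k}-x^{k}\|_{S}^{2}+2\epsilon_{k}$. Since $K$ and $M$ are skew-adjoint, $\langle(H+M^{*})u,u\rangle=\langle(P+K-M)u,u\rangle=\|u\|_{P}^{2}$, so $\langle v^{k},\overline{x}^{k}-x^{k}\rangle=-\|\overline{x}^{k}-x^{k}\|_{P}^{2}$; and since $(H+M^{*})(\overline{x}^{k}-x^{k})=-v^{k}$, the definition of $\alpha_{k}$ gives $\alpha_{k}\|v^{k}\|_{S^{-1}}^{2}=\lambda_{k}\|\overline{x}^{k}-x^{k}\|_{P}^{2}$. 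Substituting, and using $2\epsilon_{k}=\|\overline{x}^{k}-x^{k}\|_{P}^{2}/(2\beta)$, the left-hand side collapses to $\|\overline{x}^{k}-x^{k}\|_{S}^{2}+\bigl(\lambda_{k}-2+1/(2\beta)\bigr)\|\overline{x}^{k}-x^{k}\|_{P}^{2}$. As $\lambda_{k}\le\overline{\lambda}<2-1/(2\beta)$, the coefficient of $\|\cdot\|_{P}^{2}$ is strictly negative; picking $\mu>0$ with $P\succeq\mu S$ then bounds this by $\bigl(1-\mu(2-1/(2\beta)-\overline{\lambda})\bigr)\|\overline{x}^{k}-x^{k}\|_{S}^{2}$, so \eqref{AFBAS-VMOR-HPE-b} holds for any VMOR-HPE parameter $\sigma\in[0,1)$ with $\sigma\ge\max\{0,\,1-\mu(2-1/(2\beta)-\overline{\lambda})\}$.

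I expect the only genuine subtlety to be \eqref{AFBAS-VMOR-HPE-b}: the cross term and the error term are naturally measured in $\|\cdot\|_{P}$, whereas the right-hand side is in $\|\cdot\|_{S}$, so one must keep track of the spectral comparison between $P$ and $S$ when fixing the ceiling $\overline{\lambda}$ of the adaptive relaxation (equivalently the ceiling of $\theta_{k}=\alpha_{k}-1$) and the corresponding $\sigma$; the verifications of \eqref{AFBAS-VMOR-HPE-a} and \eqref{AFBAS-VMOR-HPE-c} are routine once the enlargement lemma is stated in the $P$-metric.
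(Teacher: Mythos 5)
Your proof is correct and follows essentially the same route as the paper: \eqref{AFBAS-VMOR-HPE-c} by rewriting \eqref{AFBAS-b}, \eqref{AFBAS-VMOR-HPE-a} via the $P$-metric cocoercivity enlargement $C(x^{k})\in C^{[\epsilon_{k}]}(\overline{x}^{k})$ plus additivity of the $\epsilon$-enlargement, and \eqref{AFBAS-VMOR-HPE-b} by the same algebraic reduction using skew-adjointness of $K,M$ and the formula for $\alpha_{k}$, which collapses the left-hand side to $\|\overline{x}^{k}-x^{k}\|_{S}^{2}+\bigl(\lambda_{k}-2+1/(2\beta)\bigr)\|\overline{x}^{k}-x^{k}\|_{P}^{2}$, exactly the weighted-norm expression the paper obtains. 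Your explicit spectral comparison $P\succeq\mu S$ in the final step is in fact slightly more careful than the paper, which simply asserts the bound $\le\sigma\|\overline{x}^{k}-x^{k}\|_{S}^{2}$ ``by the condition on $\theta_{k}=\alpha_{k}-1$''.
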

  \begin{proof}
 We first argue that  $C(z) \in C^{[\epsilon]}(x)$ with $\epsilon =  \|x-z\|^2_{P}/(4\beta)$ for any $x,z \in \mathbb{X}$. Notice that for any $y\in \mathbb{X}$,
 \begin{align*}
 \langle x-y,C(z)-C(y) \rangle
 &= \langle x-z,C(z)-C(y) \rangle+\langle z-y,C(z)-C(y) \rangle \nonumber\\
 &\ge \langle x-z,C(z)-C(y) \rangle+ \beta\|C(z)-C(y)\|_{P^{-1}}^2 \nonumber\\
 &\ge -\|x-z\|_{P}\|C(z)-C(y)\|_{P^{-1}}+\beta\|C(z)-C(y)\|_{P^{-1}}^2 \nonumber\\
 &\ge \inf_{t\ge0} \beta t^2 -\|x-z\|_{P}t = -\|x-z\|_{P}^2/(4\beta),
 \end{align*}
 where the first inequality holds by  $\big\langle x-x',C(x)-C(x')\big\rangle \ge \beta\big\|C(x)-C(x')\big\|^2_{P^{-1}}$,
 which implies $C(z) \in C^{[\epsilon]}(x)$ with $\epsilon = \|x-z\|^2_{P}/(4\beta)$
 by the definition of $C^{[\epsilon]}(x)$. Specifying $(x,z)$ as $(x^{k},\overline{x}^{k})$, it holds that $C(x^{k}) \in C^{[\epsilon_{k}]}(\overline{x}^{k})$
 with $\epsilon_{k} = \|x^{k}-\overline{x}^{k}\|^2_{P}/(4\beta)$.  This inclusion equation, in combination with \eqref{AFBAS-a}, yields
 \begin{align*}
 (H-M)(x^{k})-(H-M)(\overline{x}^{k})
 &\in A(\overline{x}^{k})+M(\overline{x}^{k})+C(x^{k}) \\
 &\subseteq A(\overline{x}^{k})+M(\overline{x}^{k})+C^{[\epsilon_{k}]}(\overline{x}^{k})\nonumber\\
 &\subseteq (A+M+C)^{[\epsilon_{k}]}(\overline{x}^{k}). \nonumber
 \end{align*}
 Due to the definition of $v^{k}$ and the operator $M$ being skew-adjoint, the above inequality indicates $v^{k}\in (A+M+C)^{[\epsilon_{k}]}(\overline{x}^{k})$, \textit{i.e.}, \eqref{AFBAS-VMOR-HPE-a} holds.
 Next, we argue that \eqref{AFBAS-VMOR-HPE-b} holds. Utilizing the formula of $v^{k}$, we obtain
 \begin{align}
 &\quad~\theta\|S^{-1}v^{k}\|^2_{S}+\|S^{-1}v^{k}+\overline{z}^{k}-z^{k}\|^2_{S}+2\epsilon_{k} \nonumber\\
 &=\theta\|(H+M^*)(x^{k}-\overline{x}^{k})\|^2_{S^{-1}}+\|(H+M^*-S)(x^{k}-\overline{z}^{k})\|^2_{S^{-1}}+\|x^{k}-\overline{x}^{k}\|^2_{P}/(2\beta)\nonumber\\
 &=\|x^{k}-\overline{x}^{k}\|^2_{\theta_{k}(H-M)S^{-1}(H+M^*)+(H-M-S)S^{-1}(H+M^*-S)+P/(2\beta)}\nonumber\\
 &=\|x^{k}-\overline{x}^{k}\|^2_{(\theta_{k}+1)(H-M)S^{-1}(H+M^*)-2H+S+P/(2\beta)}\nonumber\\
 &=\|x^{k}-\overline{x}^{k}\|^2_{(\theta_{k}+1)(H-M)S^{-1}(H+M^*)-(2-1/(2\beta)P+S}\nonumber\\
 &\le \sigma\|x^{k}-\overline{x}^{k}\|^2_{S}, \nonumber
 \end{align}
 where the first equality holds by using the definition of $\epsilon_{k}$, the second and the third equalities hold according to $M$ being skew-adjoint, the fourth equality
 holds by $H=P+K$ and $K$ being skew-adjoint, and the last inequality holds by the condition on $\theta_{k}=\alpha_{k}-1$, which implies that \eqref{AFBAS-VMOR-HPE-b} holds.
 At last, $x^{k+1}= x^{k}+\alpha_{k}S^{-1}(H+M^*)(\overline{x}^{k}-x^{k})=x^{k}-(1+\theta_{k})S^{-1}v^{k}$ holds by utilizing the definitions of $v^{k}$ and $\theta_{k}$.
 Hence, \eqref{AFBAS-VMOR-HPE-c} holds. By now, we have shown that the AFBAS algorithm with the iterations \eqref{AFBAS-a}-\eqref{AFBAS-b} falls into the framework of VMOR-HPE.
 The proof is finished.
 \end{proof}

\section{Proof of Proposition \ref{condat-vu-HPE}}

The Condat-Vu {\bf P}rimal-{\bf D}ual {\bf S}plitting (Condat-Vu  PDS) algorithm \cite{vu2013splitting,condat2013primal} takes the following iterations:
 \begin{subnumcases}\!{}
 \widetilde{x}^{k+1}:= {\rm Prox}_{r^{-1} g}\big(x^{k}- r^{-1}\nabla f(x^{k})-r^{-1}{B}^*y^{k}\big), \label{condat-vu-a}\\
 \widetilde{y}^{k+1}:={\rm Prox}_{s^{-1} h^*}\big(y^{k}+s^{-1}{B}(2\widetilde{x}^{k+1}-x^{k})\big), \label{condat-vu-b}\\
 (x^{k\!+1},y^{k\!+1}) := (x^{k},y^{k})+(1+\theta_{k})\big((\widetilde{x}^{k+1},\widetilde{y}^{k+1})-(x^{k},y^{k})\big). \label{condat-vu-c}
 \end{subnumcases}

  \begin{proposition*}
 Let $(x^{k},y^{k},\widetilde{x}^{k},\widetilde{y}^{k})$ be the sequence generated by the Condat-Vu PDS algorithm.
 Let $z^{k}\!=\!(x^{k},y^{k})$, and $w^{k}\!=\!(\widetilde{x}^{k+1},\widetilde{y}^{k+1})$.
 Parameters $(r,s,\theta_{k})$ satisfy $s-r^{-1}\|\mathcal{B}\|^2 > 0$, and $\theta_{k}+L/[2(s-r^{-1}\|\mathcal{B}\|^2)] \le \sigma$.
 Denote $v^{k}=\mathcal{M}(z^{k}-w^{k})$ and $\epsilon_{k}=L\|x^{k}-\widetilde{x}^{k+1}\|^2/4$. Then,
 \begin{subnumcases}{}
  v^{k} \in T^{[\epsilon_{k}]}(w^{k}),\label{Condat-Vu-a}\\
 \theta_{k}\!\big\|\!\mathcal{M}^{-1}v^{k}\big\|_{\!\mathcal{M}}^2+\big\|\mathcal{M}^{-1}v^{k}+w^{k}-z^{k}\big\|_{\!\mathcal{M}}^2 +2\epsilon_{k}\leq {\sigma}\big\|w^{k}-z^{k}\big\|_{\!\mathcal{M}}^2, \label{Condat-Vu-b} \\
 z^{k+1} = z^{k}-(1+\theta_{k})\mathcal{M}^{-1}v^{k}. \label{Condat-Vu-c}
 \end{subnumcases}
 \end{proposition*}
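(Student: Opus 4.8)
The plan is to verify the three relations \eqref{Condat-Vu-a}--\eqref{Condat-Vu-c} directly from the first-order optimality conditions of the two proximal steps \eqref{condat-vu-a}--\eqref{condat-vu-b}, exactly in the style of the proofs of Propositions~\ref{FBF-VMOR-HPE}--\ref{PPG-HPE}. Relation \eqref{Condat-Vu-c} is immediate: by \eqref{condat-vu-c}, $z^{k+1}=z^k+(1+\theta_k)(w^k-z^k)=z^k-(1+\theta_k)\mathcal M^{-1}\big(\mathcal M(z^k-w^k)\big)=z^k-(1+\theta_k)\mathcal M^{-1}v^k$.

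For \eqref{Condat-Vu-a}, first extract the subdifferential inclusions: \eqref{condat-vu-a} gives $r(x^k-\widetilde x^{k+1})-\nabla f(x^k)-B^*y^k\in\partial g(\widetilde x^{k+1})$, and \eqref{condat-vu-b} gives $s(y^k-\widetilde y^{k+1})+B(2\widetilde x^{k+1}-x^k)\in\partial h^*(\widetilde y^{k+1})$. Since $\nabla f$ is $(1/L)$-cocoercive, by \citep[Lemma~2.2]{Svaiter2014A} one has $\nabla f(x^k)\in(\nabla f)^{[\epsilon_k]}(\widetilde x^{k+1})$ with $\epsilon_k=L\|x^k-\widetilde x^{k+1}\|^2/4$. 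Adding these, together with the exact (zero-enlargement) skew-linear terms $B^*\widetilde y^{k+1}$ and $-B\widetilde x^{k+1}$ furnished by $T$ at $w^k$, and using the additivity property of $\epsilon$-enlargements \citep{burachik1998varepsilon} (valid since $\partial g,\partial h^*,\nabla f$ are maximal monotone and the off-diagonal linear part of $T$ is skew-adjoint, hence monotone with single-valued, zero-error enlargement), a coordinatewise check shows the resulting element of $T^{[\epsilon_k]}(w^k)$ equals $\big(r(x^k-\widetilde x^{k+1})-B^*(y^k-\widetilde y^{k+1}),\,s(y^k-\widetilde y^{k+1})-B(x^k-\widetilde x^{k+1})\big)=\mathcal M(z^k-w^k)=v^k$; the factor $2\widetilde x^{k+1}-x^k$ in the dual step is precisely what is needed for the cancellation in the second coordinate.

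For \eqref{Condat-Vu-b}, the crucial simplification is that $\mathcal M^{-1}v^k=z^k-w^k$, so the middle term $\|\mathcal M^{-1}v^k+w^k-z^k\|_{\mathcal M}^2$ vanishes and $\|\mathcal M^{-1}v^k\|_{\mathcal M}^2=\|w^k-z^k\|_{\mathcal M}^2$; the inequality thus collapses to $2\epsilon_k\le(\sigma-\theta_k)\|w^k-z^k\|_{\mathcal M}^2$, with $2\epsilon_k=\tfrac L2\|x^k-\widetilde x^{k+1}\|^2$. Expanding $\|w^k-z^k\|_{\mathcal M}^2=r\|x^k-\widetilde x^{k+1}\|^2-2\langle B(x^k-\widetilde x^{k+1}),y^k-\widetilde y^{k+1}\rangle+s\|y^k-\widetilde y^{k+1}\|^2$ and minimizing this quadratic form over one of the two increments (a completion-of-square / Cauchy--Schwarz estimate, which also yields $\mathcal M\succ0$ under $s-r^{-1}\|\mathcal B\|^2>0$) bounds $\|w^k-z^k\|_{\mathcal M}^2$ below by the positive multiple of $\|x^k-\widetilde x^{k+1}\|^2$ appearing in \eqref{condition-r-s-theta}. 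Combining with $\theta_k+L/[2(s-r^{-1}\|\mathcal B\|^2)]\le\sigma$ then gives $2\epsilon_k\le(\sigma-\theta_k)\|w^k-z^k\|_{\mathcal M}^2$, establishing \eqref{Condat-Vu-b}.

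The only genuinely delicate step is \eqref{Condat-Vu-a}: one must track carefully which summands are carried exactly versus inside the $\epsilon_k$-enlargement, and confirm that the transportation/additivity formula for enlargements applies to the relevant decomposition of $T$ into a block-diagonal maximal monotone part plus a skew-adjoint linear part. Everything else reduces to routine algebra with the block form of $\mathcal M$.
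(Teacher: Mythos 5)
Your route for \eqref{Condat-Vu-a} and \eqref{Condat-Vu-c} is exactly the paper's: read off the optimality conditions of the two proximal steps, insert $\nabla f(x^{k})\in(\nabla f)^{[\epsilon_{k}]}(\widetilde x^{k+1})$ via \citep[Lemma 2.2]{Svaiter2014A}, use additivity of enlargements together with the zero-error skew terms $B^*\widetilde y^{k+1}$, $-B\widetilde x^{k+1}$, and check coordinatewise that the resulting element is $\mathcal M(z^{k}-w^{k})=v^{k}$; and \eqref{Condat-Vu-c} is the same one-line algebra. For \eqref{Condat-Vu-b} you also use the same reduction as the paper: $\mathcal M^{-1}v^{k}=z^{k}-w^{k}$ kills the middle term, so the claim collapses to $2\epsilon_{k}\le(\sigma-\theta_{k})\|w^{k}-z^{k}\|_{\mathcal M}^{2}$ with $2\epsilon_{k}=\tfrac{L}{2}\|x^{k}-\widetilde x^{k+1}\|^{2}$. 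Up to this point everything is correct and coincides with the paper.

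The gap is in the last quantitative step of \eqref{Condat-Vu-b}. Writing $\Delta x=x^{k}-\widetilde x^{k+1}$, $\Delta y=y^{k}-\widetilde y^{k+1}$, completing the square in $\|w^{k}-z^{k}\|_{\mathcal M}^{2}=r\|\Delta x\|^{2}-2\langle B\Delta x,\Delta y\rangle+s\|\Delta y\|^{2}$ over the \emph{dual} increment gives $\|w^{k}-z^{k}\|_{\mathcal M}^{2}\ge\big(r-s^{-1}\|B\|^{2}\big)\|\Delta x\|^{2}$, i.e.\ the Schur complement attached to the block multiplying $\Delta x$ (where $\epsilon_{k}$ lives); minimizing over $\Delta x$ instead only bounds $\|w^{k}-z^{k}\|_{\mathcal M}^{2}$ by a multiple of $\|\Delta y\|^{2}$, which is useless here. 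Your assertion that the quadratic form is bounded below by $\big(s-r^{-1}\|B\|^{2}\big)\|\Delta x\|^{2}$, the constant of \eqref{condition-r-s-theta}, is therefore not what the computation yields; the two constants agree only when $r=s$, and for $s>r$ the asserted bound fails: with $\mathbb X=\mathbb Y=\mathbb R$, $B=\sqrt5$, $r=1$, $s=10$ and $\Delta y=B\Delta x/s$ one gets $\|w^{k}-z^{k}\|_{\mathcal M}^{2}=0.5\,\Delta x^{2}$ while $(s-r^{-1}\|B\|^{2})\Delta x^{2}=5\,\Delta x^{2}$ (your argument, like the paper's, uses only this generic quadratic-form bound and no further structure of the iterates). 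To be fair, the paper's own proof is soft at the same spot: it uses $\|\Delta x\|^{2}\le\|w^{k}-z^{k}\|_{\mathcal M}^{2}/\lambda_{\min}(\mathcal M)$ and then tacitly assumes $\lambda_{\min}(\mathcal M)\ge s-r^{-1}\|B\|^{2}$, which also fails in general (for $r=s=1$, $\|B\|=\beta\in(0,1)$, $\lambda_{\min}(\mathcal M)=1-\beta<1-\beta^{2}$). Your Schur-complement estimate is actually the right repair, but then the stepsize condition carried through the hypothesis and the proof must be $\theta_{k}+L/[2(r-s^{-1}\|B\|^{2})]\le\sigma$ with $r-s^{-1}\|B\|^{2}>0$ (equivalently, \eqref{condition-r-s-theta} with $r$ and $s$ interchanged); as written, your final inequality is a genuine gap.
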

  \begin{proof}
 By the definition of ${\rm Prox}_{r^{-1} g}$, \eqref{condat-vu-a} yields $r(x^{k}-\widetilde{x}^{k+1})-{B}^*y^{k} \in  \partial{g}(\widetilde{x}^{k+1})+\nabla f(x^{k})$.
 Using \citep[Lemma 2.2]{Svaiter2014A}, we obtain $\nabla f(x^{k}) \in (\nabla f)^{[\epsilon_{k}]}(\widetilde{x}^{k+1})$ with $\epsilon_{k}\!=\!L\|x^{k}-\widetilde{x}^{k+1}\|^2/4$.
 Combining the above two inclusions and performing simple calculations yield
 \begin{align}\label{condat-ve-eq1}
 r(x^{k}-\widetilde{x}^{k+1})-{B}^*(y^{k}-\widetilde{y}^{k+1}) \in  \partial{g}(\widetilde{x}^{k+1})+(\nabla f)^{[\epsilon_{k}]}(\widetilde{x}^{k+1})+{B}^*\widetilde{y}^{k+1}.
 \end{align}
 Using the definition of ${\rm Prox}_{s^{-1} h^*}$ and performing similar operations on $\widetilde{y}^{k+1}$ as $\widetilde{x}^{k+1}$, we obtain
 \begin{align}\label{condat-ve-eq2}
 s(y^{k}-\widetilde{y}^{k+1})-{B}(x^{k}-\widetilde{x}^{k+1}) \in  \partial{h}^*(\widetilde{y}^{k+1})-{B}\widetilde{x}^{k+1}.
 \end{align}
 By the definitions of $\mathcal{M},z^{k},w^{k},T $ and $T^{[\epsilon]}$,  \eqref{condat-ve-eq1} and \eqref{condat-ve-eq2} indicate that $\mathcal{M}(z^{k}-w^{k})\in T^{[\epsilon_{k}]}(w^{k})$.
 Thus, \eqref{condat-vu-a} holds by utilizing $v^{k}=\mathcal{M}(z^{k}-w^{k})$. In addition, \eqref{condat-vu-c} can be equivalently reformulated as
 $z^{k+1} = z^{k}+(1+\theta_{k})(w^{k}-z^{k})= z^{k}-(1+\theta_{k})\mathcal{M}^{-1}v^{k}$ by using the definitions of $z^{k},w^{k}$ and $v^{k}$. Hence, \eqref{Condat-Vu-c} holds.
 Below, we argue that \eqref{Condat-Vu-b} holds. By the definition of $v^{k}$, it holds that
 \begin{align}
\theta_{k}\big\|\mathcal{M}v^{k}\big\|_{\mathcal{M}}^2+\big\|\mathcal{M}^{-1}v^{k}+w^{k} -z^{k}\big\|_{\mathcal{M}}^2+2\epsilon_{k}
 &\le \theta_{k}\big\|w^{k}-z^{k}\big\|_{\mathcal{M}}^2+L\|x^{k}-\widetilde{x}^{k+1}\|^2/2 \nonumber\\
 &\le  \big(\theta_{k}+L/(2\lambda_{\min}(\mathcal{M}))\big)\|z^{k}-w^{k}\|_{\mathcal{M}}^2 \nonumber\\
 &\le \big[\theta_{k}+L/[2(s-r^{-1}\|\mathcal{B}\|^2)]\big]\big\|w^{k} -z^{k}\big\|_{\mathcal{M}}^2\nonumber\\
 &\le \sigma\big\|w^{k} -z^{k}\big\|_{\mathcal{M}}^2, \nonumber
 \end{align}
 where the first and the second inequalities hold by using $\epsilon_{k}$ and $\|x^{k}\!-\!\widetilde{x}^{k+1}\|^2 \!\le\! \|z^{k}\!-\!w^{k}\|^2\le \|z^{k}-w^{k}\|_{\mathcal{M}}^2/\lambda_{\min}(\mathcal{M})$, respectively.
 Hence, \eqref{Condat-Vu-b} holds. In conclusion, the Condat-Vu PDS algorithm with the iterations \eqref{condat-vu-a}-\eqref{condat-vu-c} falls into the framework of VMOR-HPE. The proof is finished.
 \end{proof}

 \section{Proof of Proposition \ref{AFBAS-PD-HPE}}

 The {\bf A}symmetric {\bf F}orward {\bf B}ackward {\bf A}djoint {\bf S}plitting {\bf P}rimal-{\bf D}ual (AFBAS-PD) algorithm \cite{latafat2017asymmetric} is defined as
 \begin{subnumcases}{}
 \overline{x}^{k}:= {\rm Prox}_{\gamma_{1}g}\big(x^{k}-\gamma_{1}{B}^*y^{k}-\gamma_{1}\nabla f(x^{k})\big), \label{AFBAS-PD-a}\\
 \overline{y}^{k}:={\rm Prox}_{\gamma_{2} h^*}\big(y^{k}+\gamma_{2}{B}((1-\theta)x^{k}+\theta\overline{x}^{k})\big), \label{AFBAS-PD-b}\\
 x^{k+1} := x^{k}+\alpha_{k}\big((\overline{x}^{k}-x^{k})-\mu\gamma_{1}(2-\theta)B^*(\overline{y}^{k}-y^{k})\big),\label{AFBAS-PD-c} \\
 y^{k+1} :=  y^{k}+\alpha_{k}\big(\gamma_2(1-\mu)(2-\theta)B(\overline{x}^{k}-x^{k})+(\overline{y}^{k}-y^{k})\big), \label{AFBAS-PD-d}
 \end{subnumcases}

 where $\alpha_{k}=\big[\lambda_{k}(\gamma_{1}^{-1}\|\overline{x}^{k}-x^{k}\|^2+\gamma_{2}^{-1}\|\overline{y}^{k}-y^{k}\|^2-\theta\langle \overline{x}^{k}-x^{k},B^*(\overline{y}^{k}-y^{k})\rangle)\big]
                     {\big/}V(\overline{x}^{k}-x^{k},\overline{y}^{k}-y^{k})$, $\lambda_{k}\in [\underline{\lambda},\overline{\lambda}]\subseteq (0,\delta)$,
and $\delta$ and $V(x,y)$ are defined as $\delta=2-L(\gamma_{1}^{-1}-\gamma_{2}\theta^2\|B\|^2/4)^{-1}/2$
 and $V(x,y)=\gamma_{1}^{-1}\|x\|^2+\gamma_{2}^{-1}\|y\|^2+(1-\mu)\gamma_2(1-\theta)(2-\theta)\|Bx\|^2+\mu\gamma_{1}(2-\theta)\|B^*y\|^2+2((1-\mu)(1-\theta)-\mu)\langle x,B^*y\rangle$
 which requires $\gamma_{1}^{-1}-\gamma_{2}\theta^2\|B\|^2/4 > L/4$ and $\mu\in [0,1],\theta\in [0,\infty)$.

 Denote a linear operator $M:\mathbb{Z}\to\mathbb{Z}$ that $M=RS^{-1}$, where $R,S:\mathbb{Z}\to\mathbb{Z}$ are defined as below
 \begin{align}\label{AFBAS-PD-P-R}
 R=\left[
   \begin{array}{cc}
     \gamma_{1}^{-1} & -B^* \\
     (1\!-\!\theta)B & \gamma_{2}^{-1} \\
   \end{array}
 \right],\
  S =\left[
   \begin{array}{cc}
     1 & -\mu\gamma_{1}(2\!-\!\theta)B^* \\
     \gamma_2(1\!-\!\mu)(2\!-\!\theta)B & 1 \\
   \end{array}
 \right].
 \end{align}
  By the block matrix inversion formula \citep{horn1990matrix},  $R^{-1}$ and $M^{-1}$ are derived as below
  \begin{gather*}
  R^{-1}=\left[
   \begin{array}{cc}
      \gamma_{2}^{-1}\Xi &  \Xi B^* \\
     -(1-\theta)B\Xi & \gamma_{2}-\gamma_2(1-\theta)B\Xi B^* \\
   \end{array}
 \right],\  \Xi=\big[\gamma_{1}^{-1}\gamma_{2}^{-1}+(1-\theta)B^*B\big]^{-1}, \\
 M^{-1}=SR^{-1}
  =\left[
   \begin{array}{cc}
      \gamma_{1}\mu(2-\theta)+\gamma_{2}^{-1}[1-\mu(2-\theta)]\Xi &  [1-\mu(2-\theta)]\Xi B^* \\
      {[}1-\mu(2-\theta){]}B\Xi  & \gamma_{2}+\gamma_2[1-\mu(2-\theta)]B\Xi B^* \\
   \end{array}
 \right].
 \end{gather*}
 Here, we claim that $\Xi\!=\!\big[\gamma_{1}^{-1}\gamma_{2}^{-1}\!+\!(1-\theta)B^*B\big]^{-1}\!\succ\! 0$. In fact, if $\theta \!\le\! 1$, it is obvious that  $\Xi\!\succ\! 0$, otherwise,
 $\gamma_{1}^{-1}\!-\!\gamma_{2}\theta^2\|B\|^2/4 > L/4>0$ indicates $\gamma_{1}^{-1}\gamma_{2}^{-1}\!>\!\theta^2\|B\|^2/4 \!>\! (\theta-1)\|B\|^2\succeq (\theta-1)B^*B$. Hence,
 $\Xi\!\succ\! 0$ holds for $\theta\!\ge\! 0$. In addition, $M$ is a self-adjoint positive definite linear operator by
 Schur complement theorem \cite{horn1990matrix}.

  \begin{proposition*}
 Let $\{(\overline{x}^{k},\overline{y}^{k},x^{k},y^{k})\}$ be the sequence generated by the AFBAS-PD algorithm.
 Denote $w^{k}=(\overline{x}^{k},\overline{y}^{k})$, $z^{k}=(x^{k},y^{k})$, $v^{k}=R(z^{k}-w^{k})$, $\epsilon_{k}=L\|x^{k}-\overline{x}^{k}\|^2/4$,
 and $\theta_{k}=\alpha_{k}-1$. Then, it holds that
  \begin{subnumcases}{}
 v^{k} \in T^{[\epsilon_{k}]}(w^{k}),\label{AFBAS-PD-HPE-a}\\
 \theta_{k}\big\|\mathcal{M}^{-1}v^{k}\big\|_{\mathcal{M}}^2+\big\|\!\mathcal{M}^{-1}v^{k}+w^{k}-z^{k}\big\|_{\mathcal{M}}^2+2\epsilon_{k}\leq{\sigma}\big\|w^{k}-z^{k}\big\|_{\mathcal{M}}^2,  \label{AFBAS-PD-HPE-b} \\
 z^{k+1} = z^{k}-(1+\theta_{k})\mathcal{M}^{-1}v^{k}.  \label{AFBAS-PD-HPE-c}
 \end{subnumcases}
 \end{proposition*}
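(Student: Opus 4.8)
The plan is to read off the three VMOR-HPE relations directly from the two proximal steps and the over-relaxation step of AFBAS-PD, with $T$ as in \eqref{three-convex-inclusion}. Throughout I write $d:=w^{k}-z^{k}=(\overline{x}^{k}-x^{k},\overline{y}^{k}-y^{k})=:(d_{1},d_{2})$. Since $\mathcal{M}=M=RS^{-1}$ is self-adjoint positive definite (Schur complement), $\mathcal{M}^{-1}=SR^{-1}$, so $\mathcal{M}^{-1}v^{k}=SR^{-1}R(z^{k}-w^{k})=-Sd$; also $\theta_{k}+1=\alpha_{k}$. Hence $z^{k}-(1+\theta_{k})\mathcal{M}^{-1}v^{k}=z^{k}+\alpha_{k}S(w^{k}-z^{k})$, and a one-line expansion of $S(w^{k}-z^{k})$ against the definition of $S$ reproduces exactly \eqref{AFBAS-PD-c}--\eqref{AFBAS-PD-d}; this settles \eqref{AFBAS-PD-HPE-c}.

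For \eqref{AFBAS-PD-HPE-a} I would unfold the two proximal steps. Step \eqref{AFBAS-PD-a} gives $\gamma_{1}^{-1}(x^{k}-\overline{x}^{k})-B^{*}y^{k}-\nabla f(x^{k})\in\partial g(\overline{x}^{k})$; invoking the Baillon--Haddad/enlargement device \citep[Lemma 2.2]{Svaiter2014A}, $\nabla f(x^{k})\in(\nabla f)^{[\epsilon_{k}]}(\overline{x}^{k})$ with $\epsilon_{k}=L\|x^{k}-\overline{x}^{k}\|^{2}/4$, so adding $B^{*}\overline{y}^{k}$ yields $\gamma_{1}^{-1}(x^{k}-\overline{x}^{k})-B^{*}(y^{k}-\overline{y}^{k})\in(\partial g+\nabla f)^{[\epsilon_{k}]}(\overline{x}^{k})+B^{*}\overline{y}^{k}$. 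Step \eqref{AFBAS-PD-b}, after rewriting $B((1-\theta)x^{k}+\theta\overline{x}^{k})=B\overline{x}^{k}+(1-\theta)B(x^{k}-\overline{x}^{k})$, gives $\gamma_{2}^{-1}(y^{k}-\overline{y}^{k})+(1-\theta)B(x^{k}-\overline{x}^{k})\in\partial h^{*}(\overline{y}^{k})-B\overline{x}^{k}$. Stacking these two lines is precisely $v^{k}=R(z^{k}-w^{k})$. Writing $T=N+Q$ with $N(z)=(\partial g+\nabla f)(x)\times\partial h^{*}(y)$ and $Q(z)=(B^{*}y,-Bx)$ linear and skew (so $Q^{[0]}=Q$), the additivity of the $\epsilon$-enlargement \citep{burachik1998varepsilon} gives $v^{k}\in N^{[\epsilon_{k}]}(w^{k})+Q(w^{k})\subseteq T^{[\epsilon_{k}]}(w^{k})$.

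The computational core is \eqref{AFBAS-PD-HPE-b}. Since $MS=R$, we get $\|\mathcal{M}^{-1}v^{k}\|_{\mathcal{M}}^{2}=\langle Sd,MSd\rangle=\langle Sd,Rd\rangle$, which on expansion equals the quadratic form $V(d_{1},d_{2})$ (the denominator of $\alpha_{k}$), while $\langle d,Rd\rangle=\gamma_{1}^{-1}\|d_{1}\|^{2}+\gamma_{2}^{-1}\|d_{2}\|^{2}-\theta\langle d_{1},B^{*}d_{2}\rangle$ is $\lambda_{k}^{-1}$ times the numerator of $\alpha_{k}$; hence $\alpha_{k}V(d_{1},d_{2})=\lambda_{k}\langle d,Rd\rangle$. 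Using in addition $S^{*}R=R^{*}S$ (equivalent to $M=M^{*}$), one expands $\|\mathcal{M}^{-1}v^{k}+w^{k}-z^{k}\|_{\mathcal{M}}^{2}=\|(I-S)d\|_{\mathcal{M}}^{2}=\langle d,Md\rangle-2\langle d,Rd\rangle+V(d_{1},d_{2})$. Substituting these together with $2\epsilon_{k}=\tfrac{L}{2}\|d_{1}\|^{2}$ and $\theta_{k}+1=\alpha_{k}$ makes the left side of \eqref{AFBAS-PD-HPE-b} collapse to $(\lambda_{k}-2)\langle d,Rd\rangle+\langle d,Md\rangle+\tfrac{L}{2}\|d_{1}\|^{2}$, so \eqref{AFBAS-PD-HPE-b} is equivalent to
\[
(1-\sigma)\langle d,Md\rangle\ \le\ (2-\lambda_{k})\langle d,Rd\rangle-\tfrac{L}{2}\|d_{1}\|^{2}.
\]
Completing the square in $d_{2}$ gives $\langle d,Rd\rangle=\big(\gamma_{1}^{-1}\|d_{1}\|^{2}-\tfrac{\gamma_{2}\theta^{2}}{4}\|Bd_{1}\|^{2}\big)+\gamma_{2}^{-1}\|d_{2}-\tfrac{\gamma_{2}\theta}{2}Bd_{1}\|^{2}\ge(\gamma_{1}^{-1}-\tfrac{\gamma_{2}\theta^{2}\|B\|^{2}}{4})\|d_{1}\|^{2}$, and $\lambda_{k}<\delta=2-\tfrac{L}{2}(\gamma_{1}^{-1}-\gamma_{2}\theta^{2}\|B\|^{2}/4)^{-1}$ forces $(2-\lambda_{k})(\gamma_{1}^{-1}-\gamma_{2}\theta^{2}\|B\|^{2}/4)>L/2$, which makes the right side a positive-definite quadratic form in $d$; since $\|\cdot\|_{\mathcal{M}}^{2}$ is positive definite as well, the displayed inequality, hence \eqref{AFBAS-PD-HPE-b}, holds once $\sigma$ is taken close enough to $1$ (permissible since $\sigma\in[0,1)$). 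Together with the first two parts, AFBAS-PD is thereby exhibited as a special case of VMOR-HPE.

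The main obstacle is the last part: recognizing that the somewhat opaque step-size rule of AFBAS-PD (through $V(\cdot)$ and the threshold $\delta$) is engineered exactly so that the VMOR-HPE relative-error criterion holds, and matching $\|\cdot\|_{\mathcal{M}}^{2}$ against the quadratic forms $V$ and $\langle\cdot,R\cdot\rangle$ via the identities $MS=R$ and $S^{*}R=R^{*}S$; a secondary subtlety is the treatment of the degenerate directions $d_{1}=0$ through the completed square. An alternative route would be to instantiate Proposition \ref{AFBAS-VMORHPE} for the inclusion \eqref{three-convex-inclusion} with appropriate $H,M,C,S$, but verifying the cocoercivity-in-$\|\cdot\|_{P}$ hypothesis for the $\nabla f$ block and tracking the parameter correspondence makes the direct computation above preferable.
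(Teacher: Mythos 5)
Your proposal is correct and follows essentially the same route as the paper's proof: unfold the two proximal steps and invoke $\nabla f(x^{k})\in(\nabla f)^{[\epsilon_{k}]}(\overline{x}^{k})$ plus enlargement additivity for the inclusion \eqref{AFBAS-PD-HPE-a}, use $\mathcal{M}^{-1}=SR^{-1}$ (equivalently $MS=R$) to identify the correction step \eqref{AFBAS-PD-HPE-c}, and reduce the error criterion \eqref{AFBAS-PD-HPE-b} to the same quadratic-form comparison among $S^{*}R$ (which equals $V$), $R+R^{*}$ and $\mathcal{M}$, driven by $\lambda_{k}<\delta$ and $\gamma_{1}^{-1}-\gamma_{2}\theta^{2}\|B\|^{2}/4>L/4$. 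Your explicit completion of the square (in place of the paper's matrix $P$) and your remark that $\sigma$ must be taken close enough to $1$ to absorb the $(1-\sigma)\|w^{k}-z^{k}\|_{\mathcal{M}}^{2}$ slack only make explicit a point the paper treats implicitly, so the two arguments are essentially identical.
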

 \begin{proof}
 By the definition of ${\rm Prox}_{\gamma_{1}g}$, \eqref{AFBAS-PD-a} indicates $x^{k}\!-\!\gamma_{1}{B}^*y^{k}\!-\!\gamma_{1}\nabla f(x^{k})\in \overline{x}^{k}\!+\!\gamma_{1}\partial{g}(\overline{x}^{k})$, \textit{i.e.},
 \begin{align}\label{AFBAS-PD-HPE-eq1}
 \gamma_{1}^{-1}(x^{k}-\overline{x}^{k})-{B}^*(y^{k}-\overline{y}^{k}) \in  \partial{g}(\overline{x}^{k})+(\nabla f)^{[\epsilon_{k}]}(\overline{x}^{k})+{B}^*\overline{y}^{k}
 \end{align}
 by using $\nabla f(x^{k}) \in (\nabla f)^{[\epsilon_{k}]}(\overline{x}^{k})$. Similarly, by the definition of ${\rm Prox}_{\gamma_{2} h^*}$, \eqref{AFBAS-PD-a} indicates that
  \begin{align}\label{AFBAS-PD-HPE-eq2}
 (1-\theta)B(x^{k}-\overline{x}^{k})+\gamma_{2}^{-1}(y^{k}-\overline{y}^{k})\in  \partial{g}(\overline{y}^{k})-B\overline{x}^{k}.
 \end{align}
 By the definitions of $(z^{k},w^{k},v^{k},T^{k})$ and using the additivity property of enlargement operator \citep{burachik1998varepsilon},
 the two inclusions \eqref{AFBAS-PD-HPE-eq1}-\eqref{AFBAS-PD-HPE-eq2} indicate that $v^{k}=R(z^{k}-w^{k})\in T^{[\epsilon_{k}]}(w^{k})$. Hence, \eqref{AFBAS-PD-HPE-a} holds.
 By using $(z^{k},w^{k})$, \eqref{AFBAS-PD-c}-\eqref{AFBAS-PD-d} can be reformulated as a compact form that
 \begin{align}
 z^{k+1}=z^{k}-\alpha_{k}S(z^{k}-w^{k})=z^{k}-\alpha_{k}M^{-1}R(z^{k}-w^{k})=z^{k}-\alpha_{k}M^{-1}v^{k},
 \end{align}
 which indicates that \eqref{AFBAS-PD-HPE-c} holds. At last, we verify \eqref{AFBAS-PD-HPE-b}.  By the definition of $(M,\epsilon_{k},v^{k})$, it holds
 \begin{align*}
 &\quad~\theta_{k}\big\|\mathcal{M}^{-1}v^{k}\big\|_{\mathcal{M}}^2+\big\|\mathcal{M}^{-1}v^{k}+w^{k} -z^{k}\big\|_{\mathcal{M}}^2+2\epsilon_{k}-\sigma\|w^{k} -z^{k}\big\|_{\mathcal{M}}^2\nonumber\\
 &= \|w^{k} -z^{k}\|^2_{(\theta_{k}+1)S^*MS-S^*M-MS+(1-\sigma)M}+L\|x^{k}-\overline{x}^{k}\|^2/2\nonumber\\
 &=\|w^{k} -z^{k}\|^2_{\alpha_{k}S^*R-R^*-R+(1-\sigma)M}+L\|x^{k}-\overline{x}^{k}\|^2/2,
 \end{align*}
 where the first equality holds due to $\mathcal{M}^{-1}v^{k}=S(z^{k} -w^{k})$, and the second equality holds due to $MS=R$.
 Hence, $\theta_{k}\big\|\mathcal{M}^{-1}v^{k}\big\|_{\mathcal{M}}^2+\big\|\mathcal{M}^{-1}v^{k}+w^{k} -z^{k}\big\|_{\mathcal{M}}^2+2\epsilon_{k}<\sigma\|w^{k} -z^{k}\big\|_{\mathcal{M}}^2$,
 \textit{i.e.}, \eqref{AFBAS-PD-HPE-b} holds if it can be shown that $\alpha_{k} < \big{[}\|w^{k} -z^{k}\|^2_{R^*\!+\!R}-L\|x^{k}-\overline{x}^{k}\|^2/2\big{]}{\big/}\|w^{k} -z^{k}\|^2_{S^*R}$.
 Notice
 \[
 S^*R
 = \left[
     \begin{array}{cc}
       \gamma_{1}^{-1}+\gamma_{2}(1-\mu)(2-\theta)(1-\theta)B^*B & {[}(1-\mu)(1-\theta)-\mu{]}B^* \\
       {[}(1-\mu)(1-\theta)-\mu{]}B & \gamma_{2}^{-1}+\mu\gamma_{1}(2-\theta)BB^* \\
     \end{array}
   \right].
 \]
 Simple algebraic manipulations yield $\|w^{k} -z^{k}\|^2_{S^*R}=V(x^{k}-\overline{x}^{k},y^{k}-\overline{y}^{k})$. In addition,
 \begin{align*}
 &\quad~ \|w^{k} -z^{k}\|^2_{R^*\!+\!R}-L\|x^{k}-\overline{x}^{k}\|^2/2 \nonumber \\
 &= 2\big[\gamma^{-1}_{1}\|x^{k}-\overline{x}^{k}\|^2+\gamma^{-1}_{2}\|y^{k}-\overline{y}^{k}\|^2-\theta\langle x^{k}-\overline{x}^{k}, B^*(y^{k}-\overline{y}^{k})\rangle\big] -L\|x^{k}-\overline{x}^{k}\|^2/2\nonumber\\
 &\ge \big[2-L/[2(\gamma_{1}^{-1}-\gamma_{2}\theta^2\|B\|^2/4)]\big]
      \big[\gamma^{-1}_{1}\|x^{k}-\overline{x}^{k}\|^2\!+\!\gamma^{-1}_{2}\|y^{k}\!-\!\overline{y}^{k}\|^2\!-\!\theta\langle x^{k}-\overline{x}^{k}, B^*(y^{k}-\overline{y}^{k})\rangle\big],\nonumber
 \end{align*}
 where the first equality holds by using the definition of $R$, and the second inequality holds by the fact that
 \[
 \|x^{k}-\overline{x}^{k}\|^2
 \le \|x^{k}-\overline{x}^{k}\|_{P}^2\lambda_{\max}(P^{-1})\le \|x^{k}-\overline{x}^{k}\|_{P}^2\lambda^{-1}_{\min}(P) \le (\gamma_{1}^{-1}-\gamma_{2}\theta^2\|B\|^2/4)^{-1}\|x^{k}-\overline{x}^{k}\|_{P}^2,
 \]
 where $P = \left(
              \begin{array}{cc}
                \gamma_{1}^{-1} & -\theta B^*/2 \\
                -\theta B/2 & \gamma_{2}^{-1} \\
              \end{array}
            \right) \succ 0$. Hence, we have that
 $\theta_{k}=\alpha_{k} < \big{[}\|w^{k} -z^{k}\|^2_{R^*\!+\!R}-L\|x^{k}-\overline{x}^{k}\|^2/2\big{]}{\big/}\|w^{k} -z^{k}\|^2_{S^*R}$ holds.
 In conclusion, the AFBAS-PD algorithm with the iterations \eqref{AFBAS-PD-a}-\eqref{AFBAS-PD-d} falls into the framework of VMOR-HPE algorithm.
 The proof is finished.
 \end{proof}

 \section{Proof of Theorem \ref{convergence-nPADMM-EQN}}

 \begin{theorem*}
 Let $(\widetilde{x}^{k},\widetilde{y}^{k},x^{k},y^{k})$ be the sequence generated by the PADMM-EBB algorithm.
 Denote $v^{k} =U^{k}(z^{k}-w^{k})$, $\epsilon_{k}\!=\!\|x^{k}\!-\!\widetilde{x}^{k+1}\|_{\mathcal{D}}/4$, and operator $T$ as \eqref{inclusion-linearly-constriant}.
 Then, it holds that
 \begin{subnumcases}{}
 v^{k} \in T^{[\epsilon_{k}]}(w^{k}),\label{nPADMM-EQN-HPE-a}\\
 \theta_{k}\big\|\mathcal{M}_{k}^{-1}v^{k}\!\big\|_{\mathcal{M}_{k}}^2+\big\|\mathcal{M}_{k}^{-1}v^{k}+w^{k}-z^{k}\big\|_{\mathcal{M}_{k}}^2
                 +2\epsilon_{k}\leq\sigma\big\|w^{k}-z^{k}\!\big\|_{\mathcal{M}_{k}}^2,\qquad \label{nPADMM-EQN-HPE-b} \\
 z^{k+1} = z^{k}-(1+\theta_{k})\mathcal{M}_{k}^{-1}v^{k}. \label{nPADMM-EQN-HPE-c}
 \end{subnumcases}
 Besides, {\bf (i)}\ $(x^{k},\widetilde{x}^{k})$ and $(y^{k},\widetilde{y}^{k})$ converge to $x^{\infty}$ and $y^{\infty}$, respectively, belonging to the optimal primal-dual solution set of \eqref{linearly-constriant}. \\
 {\bf (ii)}\ There exists an integer $\overline{k}\in \{1,2,\ldots,k\}$ such that
 \[
 \sum_{i=1}^{p}{\rm dist}\big((\partial g_{i}+\nabla f_{i})(\widetilde{x}^{\overline{k}})+ \mathcal{A}_{i}\widetilde{y}^{\overline{k}},0\big)
 +\big\|b-\sum_{i=1}^{p}\mathcal{A}_{i}^{*}\widetilde{x}^{\overline{k}}_{i}\big\|\!\le\! \mathcal{O}(\frac{1}{\sqrt{k}}).
 \]
 {\bf (iii)}\ Let $\alpha_{i}=1\ {\rm or}\ i$. There exists $0\le\overline{\epsilon}^{x_{i}}_{k}\le\mathcal{O}(\frac{1}{k})$ such that
 \[
 \sum_{i=1}^{p}{\rm dist}\big((\partial g_{i}+\nabla f_{i})_{\overline{\epsilon}^{x_{i}}_{k}}(\overline{x}^{k})+ \mathcal{A}_{i}\overline{y}^{k},0\big)
 +\big\|b-\sum_{i=1}^{p}\mathcal{A}_{i}^{*}\overline{x}^{k}_{i}\big\|\le \mathcal{O}(\frac{1}{k}),
 \]
 where $\overline{x}^{k}=\frac{\sum_{i=1}^{k}(1+\theta_{i})\alpha_{i}\widetilde{x}^{i+1}}{{\sum_{i=1}^{k}}(1+\theta_{i})\alpha_{i}}$ and
 $\overline{y}^{k}=\frac{{\sum_{i=1}^{k}}(1+\theta_{i})\alpha_{i}\widetilde{y}^{i+1}}{{\sum_{i=1}^{k}}(1+\theta_{i})\alpha_{i}}$.
   \vspace{1pt} \\
 {\bf (iv)}\ If $T$ satisfies metric subregularity at $\big((x^{\infty},y^{\infty}),0\big)\!\in\! {\rm gph}T$ with modulus $\kappa\!>\!0$.
 Then, there exists $\overline{k}\!>\!0$ such that
 \begin{align*}
 {\rm dist}_{\mathcal{M}_{k+1}}\big((x^{k+1},y^{k+1}),T^{-1}(0) \big)\leq  \Big(1-\frac{\varrho_{k}}{2}\Big){\rm dist}_{\mathcal{M}_{k}}\big((x^{k},y^{k}),T^{-1}(0)\big),\ \forall k\ge\overline{k},
 \end{align*}
  \vspace{-0.15cm}
 where $\varrho_{k}:= \left[(1-\sigma)(1+\theta_{k})\right]{\Big /}\left[\big(1+\kappa\sqrt{\frac{\Xi\overline{\omega}}{\underline{\omega}}}\big)^{2}\big(1+\sqrt{\sigma+\frac{4\max\{-\theta_{k},0\}}{(1+\theta_{k})^2}}\big)^{2}\right]\in (0,1)$.
 \end{theorem*}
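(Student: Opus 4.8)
The plan is to establish Theorem~\ref{convergence-nPADMM-EQN} by first proving that PADMM-EBB is a concrete realization of the VMOR-HPE framework with $c_k\equiv 1$, $\underline c=1$, the KKT operator $T$ of \eqref{inclusion-linearly-constriant}, the variable metric $\mathcal{M}_k$ produced by the blockwise Barzilai-Borwein rule, and $\sigma=\overline\sigma$. Once the three relations \eqref{nPADMM-EQN-HPE-a}--\eqref{nPADMM-EQN-HPE-c} are verified, parts (i)--(iv) follow essentially by quoting Theorems~\ref{VMOR-HPE-convergence}, \ref{linear-rate} and \ref{iteration-complexity-VMOR-HPE} for this particular $(T,\mathcal{M}_k)$, together with the standard fact that $T^{-1}(0)$ coincides with the primal-dual optimal solution set of \eqref{linearly-constriant}.

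First I would write down the first-order optimality conditions of the $p$ Gauss-Seidel block-subproblems for $\widetilde x_i^{k+1}$ and of the multiplier update for $\widetilde y^{k+1}$. Substituting the definition $\widetilde y^{k+1}=y^k+\beta_k(\mathcal A_1^*\widetilde x_1^{k+1}+\sum_{j\ge2}\mathcal A_j^*x_j^k-b)$ into each subproblem absorbs the $\mathcal A_i y^k$ term and all $\beta_k\mathcal A_i\mathcal A_1^*\widetilde x_1^{k+1}$ cross terms into $\mathcal A_i\widetilde y^{k+1}$, and collecting the remaining proximal and lower-triangular coupling terms over $i$ together with the dual row shows precisely that $U^k(z^k-w^k)$ is an element of $\partial g(\widetilde x^{k+1})+\nabla f(x^k)+(\text{linear part})(\widetilde x^{k+1},\widetilde y^{k+1})$, i.e.\ of $T(w^k)$ with $\nabla f(x^k)$ in place of $\nabla f(\widetilde x^{k+1})$. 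Invoking \citep[Lemma~2.2]{Svaiter2014A} on the blockwise Lipschitz continuity of $\nabla f$ gives $\nabla f(x^k)\in(\nabla f)^{[\epsilon_k]}(\widetilde x^{k+1})$ with $\epsilon_k=\|x^k-\widetilde x^{k+1}\|_{\mathcal D}^2/4$, and the additivity property of enlargements \citep{burachik1998varepsilon} then yields $v^k=U^k(z^k-w^k)\in T^{[\epsilon_k]}(w^k)$, which is \eqref{nPADMM-EQN-HPE-a}. The identity \eqref{nPADMM-EQN-HPE-c} is immediate from the $z^{k+1}$-step of Algorithm~\ref{Alg:Practical-HPE-VMPD} since $z^{k+1}=z^k+(1+\theta_k)\mathcal{M}_k^{-1}U_k(w^k-z^k)=z^k-(1+\theta_k)\mathcal{M}_k^{-1}v^k$.

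The crux is the relative-error inequality \eqref{nPADMM-EQN-HPE-b}. Writing $d=z^k-w^k$, so that $v^k=U^kd$ and $\mathcal{M}_k^{-1}v^k=\mathcal{M}_k^{-1}U^kd$, and expanding the $\mathcal{M}_k$-norms gives, using $2\epsilon_k=\|d\|_{\mathcal D/2}^2$ (as $\mathcal D$ annihilates the $y$-block),
\[
\theta_k\|\mathcal{M}_k^{-1}v^k\|_{\mathcal{M}_k}^2+\|\mathcal{M}_k^{-1}v^k+w^k-z^k\|_{\mathcal{M}_k}^2+2\epsilon_k=(1+\theta_k)\|d\|_{(U^k)^*\mathcal{M}_k^{-1}U^k}^2-\|d\|_{U^k+(U^k)^*}^2+\|d\|_{\mathcal{M}_k}^2+\|d\|_{\mathcal D/2}^2,
\]
so that \eqref{nPADMM-EQN-HPE-b} is equivalent to $(1+\theta_k)\|d\|_{(U^k)^*\mathcal{M}_k^{-1}U^k}^2\le\|d\|_{\Gamma_k}^2$ with $\Gamma_k=U^k+(U^k)^*+(\overline\sigma-1)\mathcal{M}_k-\mathcal D/2$. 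This holds for every $d$ precisely when $(1+\theta_k)(U^k)^*\mathcal{M}_k^{-1}U^k\preceq\Gamma_k$ — which is the definition of $\overline\theta_k$ — and holds at the actual iterate $d=z^k-w^k$ under the weaker ratio condition defining $\theta^{\rm adap}_k$ in \eqref{PADMM-EBB-theta1}; hence any $\theta_k\in[\theta^{\rm fix}_k,\theta^{\rm adap}_k]$ with $\theta^{\rm fix}_k\in[\underline\theta,\overline\theta_k]$ is admissible. I would also record here that the condition $U^k+(U^k)^*\succ\mathcal D/2+(1-\overline\sigma)\mathcal{M}_k$ imposed on the non self-adjoint $P^k_i$ keeps $\Gamma_k\succ0$, so $\overline\theta_k\ge\underline\theta>-1$ and $1+\theta_k>0$, and that the min in the Barzilai-Borwein updates enforces $\underline\omega\mathcal I\preceq\mathcal{M}_{k+1}\preceq(1+\xi_k)\mathcal{M}_k$ (and, by the same min, $\mathcal{M}_k\preceq(1+\xi_k)\mathcal{M}_{k+1}$), with the uniform lower bound $\underline\omega$ coming from boundedness of the iterates; thus every hypothesis of the VMOR-HPE framework is met.

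With \eqref{nPADMM-EQN-HPE-a}--\eqref{nPADMM-EQN-HPE-c} established, part (i) is Theorem~\ref{VMOR-HPE-convergence} applied to $z^k=(x^k,y^k)$, combined with $\|z^k-w^k\|\to0$ (which forces $w^k=(\widetilde x^{k+1},\widetilde y^{k+1})$ to the same limit $z^\infty\in T^{-1}(0)$, a primal-dual optimal pair by \eqref{inclusion-linearly-constriant}); part (iv) is Theorem~\ref{linear-rate} with $\underline c=1$; and parts (ii), (iii) specialize Theorem~\ref{iteration-complexity-VMOR-HPE}(i) and (ii), where the pointwise/weighted $\varepsilon$-solution bounds on $(v^k,\epsilon_k)$ and on the weighted averages $(\overline v^k,\overline\epsilon_k)$ are translated, via $v^k=U^k(z^k-w^k)$, the enlargement-to-$\epsilon$-subdifferential inclusions, and the nonexpansiveness of the proximal maps, into bounds on the blockwise quantities ${\rm dist}\big((\partial g_i+\nabla f_i)(\cdot)+\mathcal A_i(\cdot),0\big)$ and on the feasibility residual $\|b-\sum_i\mathcal A_i^*\cdot\|$ of the proximal KKT residual \eqref{KKT-residual}, with $\alpha_i\in\{1,i\}$ yielding the $\mathcal O(1/\sqrt k)$ and $\mathcal O(1/k)$ rates and constants depending only on $(T^{-1}(0),z^0)$. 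The hardest part I expect is the block-operator bookkeeping of the previous paragraph: showing that the subproblem optimality conditions assemble exactly into $U^k(z^k-w^k)$ and that \eqref{nPADMM-EQN-HPE-b} collapses exactly to the linear matrix inequality $(1+\theta_k)(U^k)^*\mathcal{M}_k^{-1}U^k\preceq\Gamma_k$, with a secondary technical point being the uniform lower bound $\underline\omega$ for the Barzilai-Borwein metrics.
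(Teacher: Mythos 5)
Your reduction of PADMM-EBB to the VMOR-HPE framework is essentially the paper's own argument: stacking the block optimality conditions after substituting the multiplier update to get $v^{k}=U^{k}(z^{k}-w^{k})\in T^{[\epsilon_{k}]}(w^{k})$ via \citep[Lemma 2.2]{Svaiter2014A} and additivity of enlargements, rewriting the extragradient step as \eqref{nPADMM-EQN-HPE-c}, and collapsing \eqref{nPADMM-EQN-HPE-b} to the weighted-norm condition $(1+\theta_{k})\|z^{k}-w^{k}\|^{2}_{(U^{k})^{*}\mathcal{M}_{k}^{-1}U^{k}}\le\|z^{k}-w^{k}\|^{2}_{\Gamma_{k}}$, which is exactly how the paper justifies the admissible range $[\theta^{\rm fix}_{k},\theta^{\rm adap}_{k}]$ in \eqref{PADMM-EBB-theta1}. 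Parts (i), (iv) are then indeed just Theorems \ref{VMOR-HPE-convergence} and \ref{linear-rate} with $c_{k}=\underline{c}=1$, and part (ii) follows as in the paper by adding back $\nabla f(\widetilde{x}^{k+1})-\nabla f(x^{k})$ so that ${\rm dist}(0,T(w^{k}))\le\|v^{k}\|+L\|x^{k}-\widetilde{x}^{k+1}\|$ and invoking Theorem \ref{iteration-complexity-VMOR-HPE}(i); your mention of ``nonexpansiveness of the proximal maps'' is unnecessary there, since (ii) bounds a distance to the set $(\partial g_{i}+\nabla f_{i})(\cdot)+\mathcal{A}_{i}(\cdot)$, not the proximal residual \eqref{KKT-residual}.

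The genuine gap is part (iii). Theorem \ref{iteration-complexity-VMOR-HPE}(ii) only delivers a single aggregate pair $(\overline{v}^{k},\overline{\epsilon}_{k})$ with $\overline{v}^{k}\in T^{[\overline{\epsilon}_{k}]}(\overline{w}^{k})$, where $\overline{\epsilon}_{k}$ contains the full primal--dual inner products $\langle w^{i}-\overline{w}^{k},v^{i}-\overline{v}^{k}\rangle$; the statement of (iii), however, asserts the existence of \emph{blockwise nonnegative} errors $\overline{\epsilon}^{x_{i}}_{k}\le\mathcal{O}(\tfrac1k)$ attached to each $(\partial g_{i}+\nabla f_{i})$ separately. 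Saying the bounds are ``translated'' does not produce these objects. The paper has to (a) define the blockwise residuals $G^{i}_{x_{j}}$ and apply the transportation formula \citep[Theorem 2.3]{burachik1998varepsilon} block by block to get $\overline{G}^{k}_{x_{j}}\in(\partial g_{j}+\nabla f_{j})_{[\overline{\epsilon}^{x_{j}}_{k}]}(\overline{x}^{k}_{j})$ with $\overline{\epsilon}^{x_{j}}_{k}\ge0$, and then (b) verify by an exact cancellation of the dual-variable cross terms (the computation around \eqref{majorized-epsilon-estimation} and \eqref{majorized-epsilon-estimation-eq1}, where the $\langle\widetilde{y}^{i+1},G^{i}_{y}-\overline{G}^{i}_{y}\rangle$ contributions cancel against the feasibility terms) that $\sum_{j}\overline{\epsilon}^{x_{j}}_{k}$ equals the aggregate $\overline{\epsilon}_{k}$, so that the $\mathcal{O}(\tfrac1k)$ bound of Theorem \ref{iteration-complexity-VMOR-HPE}(ii) transfers to each block. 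Without this decomposition-plus-cancellation argument there is no control on the individual $\overline{\epsilon}^{x_{i}}_{k}$, and this, rather than the block-operator bookkeeping in \eqref{nPADMM-EQN-HPE-b}, is the technically hardest step of the theorem. A secondary loose end you flag yourself but do not resolve: the Barzilai--Borwein min only enforces one-sided control of $\mathcal{M}_{k+1}$ versus $\mathcal{M}_{k}$, while both Theorem \ref{iteration-complexity-VMOR-HPE}(ii) and the bounds $\underline{\omega}\mathcal{I}\preceq\mathcal{M}_{k}\preceq\overline{\omega}\Xi\mathcal{I}$ are used; this needs either a safeguard on the BB ratios or an explicit argument, which your appeal to boundedness of the iterates does not yet supply.
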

\begin{proof}
 By the optimality condition of the subproblem of $\widetilde{x}^{k+1}_{i}$, the following inclusion directly holds for $i=1,\ldots, p$ that
\begin{equation*}
0\!\in\! \nabla{f}_{i}(x^{k})
\!+\!\partial{g}_i(\widetilde{x}^{k+1}_i)\!+\!\mathcal{A}_{i}y^{k}\!+\!\beta_{k}\mathcal{A}_{i}\big(\sum_{j=1}^{i}\mathcal{A}^*_{j}\widetilde{x}^{k+1}_{j}
\!+\!\sum_{j=i+1}^{p}\mathcal{A}^*_{j}x^{k}_{j}\!-\!b\big)\!+\!\big(\widehat{\Sigma}_{i}\!+\!P^{k}_{i}\big)(\widetilde{x}^{k+1}_{i}\!-\!x_{i}^{k}).
\end{equation*}
Substituting $y^{k}=\widetilde{y}^{k+1}-\beta_{k}\big(\mathcal{A}^*_{1}\widetilde{x}^{k+1}_{1}+\sum_{i=2}^{p}\mathcal{A}^*_{i}x^{k}_{i}-b\big)$ into the above inclusion, we obtain
\begin{equation}\label{npADMM-EQN-eq1}
\big(\widehat{\Sigma}_{i}+P^{k}_{i}\big)(x_{i}^{k}-\widetilde{x}^{k+1}_{i})+\beta_{k}\mathcal{A}_{i}\sum_{j=2}^{i}\mathcal{A}^*_{j}(x^{k}_{j}-\widetilde{x}^{k+1}_{j})
\in \nabla{f}_{i}(x^{k})+\partial{g}_i(\widetilde{x}^{k+1}_i)+\mathcal{A}_{i}\widetilde{y}^{k+1}.
\end{equation}
 Stacking \eqref{npADMM-EQN-eq1} for $i=1,2,\ldots, p$ and $y^{k}=\widetilde{y}^{k+1}\!-\!\beta_{k}\big(\mathcal{A}^*_{1}\widetilde{x}^{k+1}_{1}+\sum_{i=2}^{p}\mathcal{A}^*_{p}{x}^{k}_{p}-b\big)$,
 we obtain
 \[\!
 \left[\!
   \begin{array}{c}
 \! (\widehat{\Sigma}_{1}\!+\!P^{k}_{1})(x_{1}^{k}\!-\!\widetilde{x}^{k+1}_{1})  \\
 \! \vdots   \\
 \!\big(\widehat{\Sigma}_{i}\!+\!P^{k}_{i}\big)(x_{i}^{k}\!-\!\widetilde{x}^{k+1}_{i})\!+\!\beta_{k}\mathcal{A}_{i}\sum_{j=2}^{i}\mathcal{A}^*_{j}(x^{k}_{j}-\widetilde{x}^{k+1}_{j}) \!\\
 \!\vdots   \\
 \!\big(\widehat{\Sigma}_{p}\!+\!P^{k}_{p}\big)(x_{p}^{k}\!-\!\widetilde{x}^{k+1}_{p})\!+\!\beta_{k}\mathcal{A}_{p}\sum_{j=2}^{p}\mathcal{A}^*_{j}(x^{k}_{j}-\widetilde{x}^{k+1}_{j}) \!\\
 \!\beta^{-1}_{k}(y^{k}-\widetilde{y}^{k+1})+\sum_{i=2}^{p}\mathcal{A}^*_{p}({x}^{k}_{i}-\widetilde{x}_{i}^{k+1})
   \end{array}
   \!\right]
 \!\in\! \left[\!
   \begin{array}{c}
 \partial g_{1}(\widetilde{x}^{k+1}_{1})     \\
 \vdots   \\
 \partial g_{i}(\widetilde{x}^{k+1}_{i}) \\
 \vdots   \\
 \partial g_{p}(\widetilde{x}^{k+1}_{p}) \\
 b
   \end{array}
 \!  \right]
 \!+\!\left[\!
   \begin{array}{c}
 \nabla f_{1}(x^{k}) \!+\! \mathcal{A}_{1}\widetilde{y}^{k+1}   \\
                   \vdots                                             \\
  \nabla f_{i}(x^{k}) \!+\!  \mathcal{A}_{i}\widetilde{y}^{k+1}  \\
                   \vdots                                             \\
  \nabla f_{p}(x^{k}) \!+\!  \mathcal{A}_{p}\widetilde{y}^{k+1}   \\
  -\sum_{i=1}^{p}\mathcal{A}^*_{i}\widetilde{x}^{k+1}_{i}
   \end{array}
   \!\right].
  \]
 By utilizing the notations $U^{k},z^{k},w^{k}$ and $T$,  the above inclusion is further reformulated as:
\begin{align}\label{npADMM-EQN-eq2}
U^{k}(z^{k}-w^{k})
&\in \left[
                        \begin{array}{c}
                          \partial{g}(\widetilde{x}^{k+1}) \\
                          b \\
                        \end{array}
                      \right]
+\left[
                        \begin{array}{c}
                          \nabla{f}(x^{k}) \\
                          0 \\
                        \end{array}
                      \right]
+\left[
                        \begin{array}{c}
                          \mathcal{A}^*\widetilde{y}^{k} \\
                          -\sum_{i=1}^{p}\mathcal{A}^*_{i}\widetilde{x}^{k+1}_{i} \\
                        \end{array}
                      \right] \\
&\subseteq \left[
                        \begin{array}{c}
                          \partial{g}(\widetilde{x}^{k+1}) \\
                          b \\
                        \end{array}
                      \right]
+\left[
                        \begin{array}{c}
                          \nabla{f}^{[\epsilon_{k}]}(\widetilde{x}^{k+1}) \\
                          0 \\
                        \end{array}
                      \right]
+\left[
                        \begin{array}{c}
                          \mathcal{A}^*\widetilde{y}^{k} \\
                          -\sum_{i=1}^{p}\mathcal{A}^*_{i}\widetilde{x}^{k+1}_{i} \\
                        \end{array}
                      \right], \nonumber
 \end{align}
 where $g(x)=\sum_{i=1}^{p}g_{i}(x_{i})$, and $\mathcal{A}=[\mathcal{A}_{1}\ \ \mathcal{A}_{2}\ \  \cdots\ \ \mathcal{A}_{p}]$. Using the additivity property of enlargement operator \citep{burachik1998varepsilon}
 and the definition of $T$, the above inclusion indicates
 \begin{align*}
 v^{k} = U^{k}(z^{k}-w^{k}) \in T^{[\epsilon_{k}]}(w^{k}).
 \end{align*}
 Besides, by utilizing the updating step of $(x^{k+1},y^{k+1})$  and the definition of $(v^{k},w^{k},z^{k})$ , it holds that
 \begin{align*}
 z^{k+1}
 &=(x^{k+1},y^{k+1})=(x^{k},y^{k})+(1+\theta_k)\mathcal{M}^{-1}_{k}U^{k}\big(\widetilde{x}^{k+1}-x^{k},\widetilde{y}^{k+1}-y^{k})\nonumber\\
 &=z^{k}+(1+\theta_k)\mathcal{M}^{-1}_{k}U^{k}(w^{k}-z^{k})\nonumber\\
 &=z^{k}-(1+\theta_k)\mathcal{M}^{-1}_{k}v ^{k}.
 \end{align*}
 Hence, \eqref{nPADMM-EQN-HPE-a} and \eqref{nPADMM-EQN-HPE-c} hold.  At last, we check \eqref{nPADMM-EQN-HPE-b}. By the definition of $(v^{k},\epsilon_{k})$, it holds that
 \begin{align*}
 &\theta_{k}\big\|\mathcal{M}_{k}^{-1}v^{k}\big\|_{\mathcal{M}_{k}}^2+\big\|\mathcal{M}_{k}^{-1}v^{k}+w^{k} -z^{k}\big\|_{\mathcal{M}_{k}}^2
                 +2\epsilon_{k}- {\sigma}\big\|w^{k} -z^{k}\big\|_{\mathcal{M}_{k}}^2 \nonumber\\
 &=\big\|w^{k} -z^{k}\big\|^2_{(1+\theta_{k})(U^k)^*\mathcal{M}_{k}^{-1}U^{k}-(U^{k})^*-U^{k}+(1-\sigma)\mathcal{M}_{k}+\mathcal{D}/2}\nonumber\\
 &\le 0,
 \end{align*}
 where the last inequality holds by the setting of over-relaxed step-size $\theta_{k}$. Hence, PADMM-EBB is equivalently reformulated as \eqref{nPADMM-EQN-HPE-a}-\eqref{nPADMM-EQN-HPE-c}, \textit{i.e.},
 it falls into the framework of VMOR-HPE.
 By Theorem \ref{VMOR-HPE-convergence}, {\bf (i)} directly holds that $(x^{k},y^{k})$ and $(\widetilde{x}^{k},\widetilde{y}^{k})$ simultaneously converge to a point $(x^{\infty},y^{\infty})$ belonging to $T^{-1}(0)$
 which is exactly the primal-dual optimal solution set of \eqref{linearly-constriant}. In the following, we argue that {\bf (ii)} and {\bf (iii)} hold by utilizing Theorem \ref{iteration-complexity-VMOR-HPE}.
 In fact, using \eqref{npADMM-EQN-eq2}, we have
 \begin{align*}
v^{k}
+\left[
                        \begin{array}{c}
                          \nabla{f}(\widetilde{x}^{k+1}) \\
                          0 \\
                        \end{array}
                      \right]
-\left[
                        \begin{array}{c}
                          \nabla{f}(x^{k}) \\
                          0 \\
                        \end{array}
                      \right]
&\in \left[
                        \begin{array}{c}
                          \partial{g}(\widetilde{x}^{k+1}) \\
                          b \\
                        \end{array}
                      \right]
+\left[
                        \begin{array}{c}
                          \nabla{f}(\widetilde{x}^{k+1}) \\
                          0 \\
                        \end{array}
                      \right]
+\left[
                        \begin{array}{c}
                          \mathcal{A}^*\widetilde{y}^{k} \\
                          -\sum_{i=1}^{p}\mathcal{A}^*_{i}\widetilde{x}^{k+1}_{i} \\
                        \end{array}
                      \right]  = T(w^{k}).
\end{align*}
 Hence, ${\rm dist}\big(T(w^{k}), 0\big)\le \|v^{k}\|+L\|x^{k}-\widetilde{x}^{k+1}\|=\|v^{k}\|+4\epsilon_{k}$. This, in combination with \eqref{pointwise-vk-epsilonk}, yields the desired result {\bf (i)},
 \textit{i.e.},  there exists an integer $\overline{k}\in \{1,2,\ldots,k\}$ such that
 \[
 \sum_{i=1}^{p}{\rm dist}\big(\partial g_{i}(\widetilde{x}^{\overline{k}}_{i})+\nabla f_{i}(\widetilde{x}^{\overline{k}})  + \mathcal{A}_{i}\widetilde{y}^{\overline{k}},0\big)
 +\big\|b-\sum_{i=1}^{p}\mathcal{A}_{i}^{*}\widetilde{x}^{\overline{k}}_{i}\big\|={\rm dist}\big(T(w^{\overline{k}-1}), 0\big) \le \mathcal{O}(\frac{1}{\sqrt{k}}).
 \]
 Next, we claim that $\overline{\epsilon}^{x_{j}}_{k}=\frac{{\textstyle\sum_{i=1}^{k}}(1+\theta_{i})\alpha_{i}
\big(\epsilon^{x_{j}}_{k}+\langle \widetilde{x}_{j}^{i+1}-\overline{x}_{j}^{k},G_{x_{j}}^{i}-\overline{G}_{x_{j}}^{k}\rangle\big)}{{\sum_{i=1}^{k}}(1+\theta_{i})\alpha_{i}}$,
 where $\epsilon^{x_{j}}_{k} = \frac{L_{j}\|x_{j}^{k}-\widetilde{x}^{k+1}_{j}\|^2}{4}$,
 \begin{gather*}
G^{i}_{x_{1}}=(\widehat{\Sigma}_{1}+\beta_{k}{P}^{k}_1)\big({x}_1^k-\widetilde{x}_1^{i+1}\big)-\mathcal{A}_1\widetilde{y}^{i+1},\ \\
G^{i}_{x_{2}}=\big(\widehat{\Sigma}_{2}+\beta_{k}(\mathcal{A}_{2}\mathcal{A}_{2}^*+{P}^k_2)\big)\big({x}_2^i-\widetilde{x}_2^{i+1}\big)-\mathcal{A}_2\widetilde{y}^{i+1},\    \\
 \vdots \\
G^{i}_{x_{p}}=\big(\widehat{\Sigma}_{p}+\beta_{k}(\mathcal{A}_{p}\mathcal{A}_{p}^*+{P}^{k}_p)\big)\big({x}_p^i-\widetilde{x}_p^{i+1}\big)
                    +\sum_{j=2}^{p-1}\sigma\mathcal{A}_{i}\mathcal{A}_{j}^*\big({x}_j^i-\widetilde{x}_j^{i+1}\big)-\mathcal{A}_{p}\widetilde{y}^{i+1},\,\\
\overline{G}_{x_{1}}^{k}=\frac{{\sum_{i=1}^{k}}(1+\theta_{i})\alpha_{i}G_{x_{1}}^{i}}{{\sum_{i=1}^{k}}(1+\theta_{i})\alpha_{i}},\,
\overline{G}_{x_{2}}^{k}=\frac{{\sum_{i=1}^{k}}(1+\theta_{i})\alpha_{i}G_{x_{2}}^{i}}{{\sum_{i=1}^{k}}(1+\theta_{i})\alpha_{i}},\,
            \cdots,
\overline{G}_{x_{p}}^{k}=\frac{{\sum_{i=1}^{k}}(1+\theta_{i})\alpha_{i}G_{x_{p}}^{i}}{{\sum_{i=1}^{k}}(1+\theta_{i})\alpha_{i}},\,\\
G_{y}^{i} = \beta_{k}^{-1}(y^{i}\!-\!\widetilde{y}^{i+1})+\sum_{j=2}^{p}\mathcal{A}^*_{p}({x}^{i}_{j}-\widetilde{x}_{j}^{i+1}),\,
{\rm \ and\ }\overline{G}_{y}^{k}=\frac{{\sum_{i=1}^{k}}(1+\theta_{i})\alpha_{i}G_{y}^{i}}{{\sum_{i=1}^{k}}(1+\theta_{i})\alpha_{i}}.
\end{gather*}
 Define $\overline{w}^{k} = \frac{{\textstyle\sum_{i=1}^{k}}(1+\theta_{i})\alpha_{i}w^{i}}{{\textstyle\sum_{i=1}^{k}}(1+\theta_{i})\alpha_{i}}$,
 $\overline{v}^{k}=\frac{{\textstyle\sum_{i=1}^{k}}(1+\theta_{i})\alpha_{i}{v}^{i}}{{\textstyle\sum_{i=1}^{k}}(1+\theta_{i})\alpha_{i}}$ and
 $\overline{\epsilon}^{k} = \frac{{\textstyle\sum_{i=1}^{k}}(1+\theta_{i})\alpha_{i}\big(\epsilon_{i}+\langle w^{i}-\overline{w}^{k},v^{i}-\overline{v}^{k}\rangle\big)}{{\textstyle\sum_{i=1}^{k}}(1+\theta_{i})\alpha_{i}}$
 as Theorem \ref{iteration-complexity-VMOR-HPE}. Hence, utilizing \eqref{weighted-vk}-\eqref{weighted-epsilonk} and \eqref{nPADMM-EQN-HPE-a}-\eqref{nPADMM-EQN-HPE-a},
 we obtain $\|\overline{v}^{k}\| \le \frac{1}{k}$ and $\overline{\epsilon}^{k}\le \frac{1}{k}$ by setting
 $\alpha_{i}=1$ or $\alpha_{i}=i$. Using \eqref{npADMM-EQN-eq2} and the definitions of $G^{k}_{x_{1}},\cdots,G^{k}_{x_{p}}$ and $\overline{G}^{k}_{x_{1}},\cdots,\overline{G}^{k}_{x_{p}}$,
 we have
 \begin{align*}
\left(
  \begin{array}{c}
    G^{k}_{x_{1}}+\mathcal{A}_1\widetilde{y}^{k+1}\\
    \vdots \\
    G^{k}_{x_{p}}+\mathcal{A}_{p}\widetilde{y}^{k+1}\\
  \end{array}
\right)
\in
\left(
  \begin{array}{c}
    \big(\partial{g}_1+\nabla{f}_{1})_{[\epsilon^{x_{1}}_{k}]}(\widetilde{x}_1^{k+1})+\mathcal{A}_1\widetilde{y}^{k+1}\\
     \vdots \\
    \big(\partial{g}_{p}+\nabla{f}_{p})_{[\epsilon^{x_{p}}_{k}]}(\widetilde{x}_{p}^{k+1})+\mathcal{A}_{p}\widetilde{y}^{k+1}\\
  \end{array}
\right).
\end{align*}
 By utilizing \citep[theorem 2.3]{burachik1998varepsilon},  it holds that $\overline{\epsilon}^{x_{i}}_{k} \ge 0$ for all $i\in \{1,\cdots,p\}$ and
\begin{align*}
&\left(
  \begin{array}{c}
    \overline{G}^{k}_{x_{1}}+\mathcal{A}_1\overline{y}^{k} \\
    \vdots \\
    \overline{G}^{k}_{x_{p}}+\mathcal{A}_{p}\overline{y}^{k}\\
  \end{array}
\right)
\subseteq
\left(
  \begin{array}{c}
    \big(\partial{g}_1+\nabla{f}_{1})_{[\overline{\epsilon}^{x_{1}}_{k}]}(\overline{x}_1^{k})+\mathcal{A}_1\overline{y}^{k}\\
     \vdots \\
    \big(\partial{g}_{p}+\nabla{f}_{p})_{[\overline{\epsilon}^{x_{p}}_{k}]}(\overline{x}_{p}^{k})+\mathcal{A}_{p}\overline{y}^{k}  \\
  \end{array}
\right).
\end{align*}
By \eqref{npADMM-EQN-eq2} and $G_{y}^{k}=\beta^{-1}_{k}(y^{k}-\widetilde{y}^{k+1})+\sum_{i=2}^{p}\mathcal{A}^*_{p}({x}^{k}_{i}-\widetilde{x}_{i}^{k+1})=b-\sum_{i=1}^{p}\mathcal{A}^*_{i}\widetilde{x}^{k+1}_{i}$, we get that
\begin{align*}
\overline{v}^{k}&=\left(
  \begin{array}{c}
    \overline{G}^{k}_{x_{1}}+\mathcal{A}_1\overline{y}^{k} \\
    \vdots \\
    \overline{G}^{k}_{x_{p}}+\mathcal{A}_{p}\overline{y}^{k}\\
    \overline{G}_{y}^{k}
  \end{array}
\right)
\subseteq
\left(
  \begin{array}{c}
    \big(\partial{g}_1+\nabla{f}_{1})_{[\overline{\epsilon}^{x_{1}}_{k}]}(\overline{x}_1^{k})+\mathcal{A}_1\overline{y}^{k}\\
     \vdots \\
    \big(\partial{g}_{p}+\nabla{f}_{p})_{[\overline{\epsilon}^{x_{p}}_{k}]}(\overline{x}_{p}^{k})+\mathcal{A}_{p}\overline{y}^{k}  \\
    b-\sum_{i=1}^{p}\mathcal{A}^*_{i}\overline{x}^{k}_{i}
  \end{array}
\right)
\subseteq  T^{[\overline{\epsilon}^{x_{1}}_{k}+\ldots+\overline{\epsilon}^{x_{p}}_{k}]}(w^{i}).
\end{align*}
Hence, we obtain $\sum_{i=1}^{p}{\rm dist}\big((\partial g_{i}+\nabla f_{i})_{\overline{\epsilon}^{x_{i}}_{k}}(\overline{x}^{k})  + \mathcal{A}_{i}\overline{y}^{k},0\big)
 +\big\|b-\sum_{i=1}^{p}\mathcal{A}_{i}^{*}\overline{x}^{k}_{i}\big\|\le \|\overline{v}^{k}\| \le \mathcal{O}(\frac{1}{k})$.
 Next, we show that $0\!\le\!\overline{\epsilon}^{x_{i}}_{k}\!\le\!\mathcal{O}(\frac{1}{k})$ for all $i=1,2,\ldots,p$.  Notice
 \begin{align}\label{majorized-epsilon-estimation}
\overline{\epsilon}^{x_{1}}_{k}+\cdots+\overline{\epsilon}^{x_{p}}_{k}
&=\sum_{j=1}^{p}\left\{\frac{1}{{\sum_{i=1}^{k}}(1+\theta_{i})\alpha_{i}}{\sum_{i=1}^{k}}(1+\theta_{i})\alpha_{i}
\big(\epsilon^{x_{i}}_{k}+\langle \widetilde{x}_{j}^{i+1}-\overline{x}_{j}^{k},G_{x_{j}}^{i}-\overline{G}_{x_{j}}^{k}\rangle\big)\right\}\nonumber\\
&=\frac{1}{{\sum_{i=1}^{k}}(1+\theta_{i})\alpha_{i}}{\sum_{i=1}^{k}}(1+\theta_{i})\alpha_{i}
\big(\sum_{j=1}^{p}\epsilon^{x^{j}}_{i}+\sum_{j=1}^{p}\langle \widetilde{x}_{j}^{i+1}-\overline{x}_{j}^{k},G_{x_{j}}^{i}-\overline{G}_{x_{j}}^{k}\rangle\big)\nonumber\\
&=\frac{1}{{\sum_{i=1}^{k}}(1+\theta_{i})\alpha_{i}}{\sum_{i=1}^{k}}(1+\theta_{i})\alpha_{i}
\big(\epsilon_{i}+\langle \widetilde{x}^{i+1}-\overline{x}^{k},G_{x}^{i}-\overline{G}_{x}^{k}\rangle\big),
\end{align}
where the third equality holds according to $\epsilon_{i}=\sum_{j=1}^{p}\epsilon^{x_{j}}_{i}$, and $(\widetilde{x}^{i+1},\overline{x}^{k},G_{x}^{i},\overline{G}_{x}^{k})$ are defined as

\[
\widetilde{x}^{i+1}=\left(
          \begin{array}{c}
            \widetilde{x}_{1}^{i+1} \\
            \vdots \\
            \widetilde{x}_{p}^{i+1} \\
          \end{array}
        \right),\,
\overline{x}^{k}=\left(
                       \begin{array}{c}
                         \overline{x}_{1}^{k} \\
                         \vdots \\
                         \overline{x}_{p}^{k} \\
                       \end{array}
                     \right),\,
G_{x}^{i}=\left(
                \begin{array}{c}
                  G_{x_{1}}^{i} \\
                  \vdots \\
                  G_{x_{p}}^{i} \\
                \end{array}
              \right),\,
\overline{G}_{x}^{k}=\left(
                           \begin{array}{c}
                             \overline{G}_{x_{1}}^{k} \\
                             \vdots \\
                             \overline{G}_{x_{p}}^{k} \\
                           \end{array}
                         \right).
\]
Let $v_{i}^{k}=  G^{k}_{x_{i}}+\mathcal{A}_{i}\widetilde{y}^{k+1}$ be the $i$-th component of $v^{k}$. Using
$\widetilde{x}^{i+1},\,\overline{x}^{k},\,G_{x}^{i},\,\overline{G}_{x}^{k}$,  we obtain
\begin{align}\label{majorized-epsilon-estimation-eq1}
&\quad \sum_{i=1}^{k}(1+\theta_{i})\alpha_{i}\langle \widetilde{x}^{i+1}-\overline{x}^{k},G_{x}^{i}-\overline{G}_{x}^{k}\rangle
 = \sum_{i=1}^{k}(1+\theta_{i})\alpha_{i}\langle \widetilde{x}^{i+1}-\overline{x}^{k},G_{x}^{i}\rangle \\
&= \sum_{i=1}^{k}(1+\theta_{i})\alpha_{i}\langle \widetilde{x}^{i+1}-\overline{x}^{k},[v^{i}_{1}-\mathcal{A}_{1}\widetilde{y}^{i+1},\cdots,v^{i}_{p}-\mathcal{A}_{p}\widetilde{y}^{i+1}]^T\rangle\nonumber\\
&= \sum_{i=1}^{k}(1+\theta_{i})\alpha_{i}\langle \widetilde{x}^{i+1}-\overline{x}^{k},[v^{i}_{1},\cdots, v^{i}_{p}]^T\rangle
-\sum_{i=1}^{k}(1+\theta_{i})\alpha_{i}\langle \widetilde{x}^{i+1}-\overline{x}^{k},[\mathcal{A}_{1}\widetilde{y}^{i+1}, \cdots,\mathcal{A}_{p}\widetilde{y}^{i+1}]^T
\rangle\nonumber\\
&= \sum_{i=1}^{k}(1+\theta_{i})\alpha_{i}\langle \widetilde{x}^{i+1}-\overline{x}^{k},[v^{i}_{1}, \cdots, v^{i}_{p}]^T\rangle
-\sum_{i=1}^{k}(1+\theta_{i})\alpha_{i}(\widetilde{y}^{i+1})^\top\sum_{j=1}^{p}\mathcal{A}_{j}^*({x}_{j}^{i+1}-\overline{x}_{j}^{k})\nonumber\\
& = -\sum_{i=1}^{k}(1+\theta_{i})\alpha_{i}\langle \widetilde{y}^{i+1},G_{y}^{i}-\overline{G}_{y}^{i})\rangle
    -\sum_{i=1}^{k}(1+\theta_{i})\alpha_{i}(\widetilde{y}^{i+1})^\top\sum_{j=1}^{p}\mathcal{A}_{j}^*(\widetilde{x}_{j}^{i+1}-\overline{x}_{j}^{k})
     +\sum_{i=1}^{k}(1+\theta_{i})\alpha_{i}\langle w^{i}-\overline{w}^{k},v^{i}\rangle,\nonumber
\end{align}
where the last equality holds by using the definitions of $v^{k},w^{k}$ and $\overline{v}^{k},\overline{w}^{k}$. In addition,
\begin{align*}
&\quad~\sum_{i=1}^{k}(1+\theta_{i})\alpha_{i}(\widetilde{y}^{i+1})^\top\sum_{j=1}^{p}\mathcal{A}_{j}^*(\widetilde{x}_{j}^{i+1}-\overline{x}_{j}^{k})
+\sum_{i=1}^{k}(1+\theta_{i})\alpha_{i}\langle \widetilde{y}^{i+1},G_{y}^{i}-\overline{G}_{y}^{i})\rangle \nonumber\\
&=\sum_{i=1}^{k}(1\!+\!\theta_{i})\alpha_{i}(\widetilde{y}^{i+1})^\top\left\{\sum_{j=1}^{p}\mathcal{A}_{j}^*\widetilde{x}_{j}^{i+1}\!-\!b
                        \!-\!\big(\sum_{j=1}^{p}\mathcal{A}_{j}^*\overline{x}_{j}^{k}\!-\!b\big)\right\}
\!+\!\sum_{i=1}^{k}(1\!+\!\theta_{i})\alpha_{i}\langle \widetilde{y}^{i+1},G_{y}^{i}\!-\!\overline{G}_{y}^{i})\rangle \nonumber\\
&=\sum_{i=1}^{k}(1+\theta_{i})\alpha_{i}\langle \widetilde{y}^{i+1}, \overline{G}^{i}_{y}-G^{i}_{y}\rangle
+\sum_{i=1}^{k}(1+\theta_{i})\alpha_{i}\langle \widetilde{y}^{i+1},G_{y}^{i}-\overline{G}_{y}^{i})\rangle =0 \nonumber.
\end{align*}
By the definition of $\epsilon_{k}$ and combining the above equality with \eqref{majorized-epsilon-estimation} and \eqref{majorized-epsilon-estimation-eq1}, it directly holds that
\[
\overline{\epsilon}^{x_{1}}_{k}+\cdots+\overline{\epsilon}^{x_{p}}_{k} = {\epsilon}^{x}_{k} \le \mathcal{O}\big(\frac{1}{k}\big).
\]
Thus,  {\bf (iii)} has been established. At last, {\bf (iv)} is directly derived according to Theorem \ref{linear-rate} by setting $c_{k}=\underline{c}=1$.
As a consequence, the proof is completed.
\end{proof}

\section{More Experiments}
 Actually, to make the subproblems of PADMM-EBB, PLADMM-PSAP \cite{liu2013linearized,lin2015linearized}, PGSADMM and M-GSJADMM \cite{lu2017unified}
 have closed-form solutions, we equivalently reformulate problem \eqref{DGR-LRR} as the following form by introducing two slack variables $(H,F)$ to separate
 the sparsity and nonnegativity of  $(Z,G)$:
 \begin{align}\label{slack-DGR-LRR}
 &\min~ \|H\|_{*}+\|F\|_{*}+\lambda\|E\|_{1}+\frac{\mu}{2}\|Z\|^2_{L_{Z}}+\frac{\gamma}{2}\|G\|^2_{L_{G}}\\
 &~~ {\rm s.t.}\  X=XZ+GX+E,Z\ge 0,\ G\ge 0,Z = H,G = F. \nonumber
 \end{align}

 In the implementation, we measure the performance of the four solvers of PADMM-EBB, PLADMM-PSAP \cite{liu2013linearized,lin2015linearized}, PGSADMM and M-GSJADMM \cite{lu2017unified}
 in terms of the proximal KKT residual defined as \eqref{inclusion-linearly-constriant}, objective value, and feasibility of \eqref{DGR-LRR} over iterations and runtime.
  Below, we report the performance on $X={\rm randn}(200,200)$ and PIE\_pose27 of PADMM-EBB, PLADMM-PSAP, PGSADMM and M-GSJADMM with new hyperparameters $(\lambda,\mu, \gamma)=(10^2,10^4,10^4)$.
  In addition, we conduct experiments on two extra real datasets (COIL20, YaleB\_32x32)\footnote{http://dengcai.zjulearning.org:8081/Data/FaceDataPIE.html}
  with hyperparameters $(\lambda,\mu, \gamma)=(10^2,10^4,10^4)$ and  $(\lambda,\mu, \gamma)=(10^3,10^4,10^4)$ .
  In the implementation of PLADMM-PSAP, PGSADMM and M-GSJADMM, the penalty parameters $\beta_{k}$ are all updated via the suggestions from \cite{lu2017unified},
  \textit{i.e.}, $\beta_{k+1}= \min(\rho\beta_{k},1.0e10)$  where $\rho = 1.1$ and $\beta_{0} = 1.0e-4$.

  \begin{figure*}[htpb]
\centering
\subfigure{\label{fig:pic1}
\includegraphics[width=0.24\linewidth]{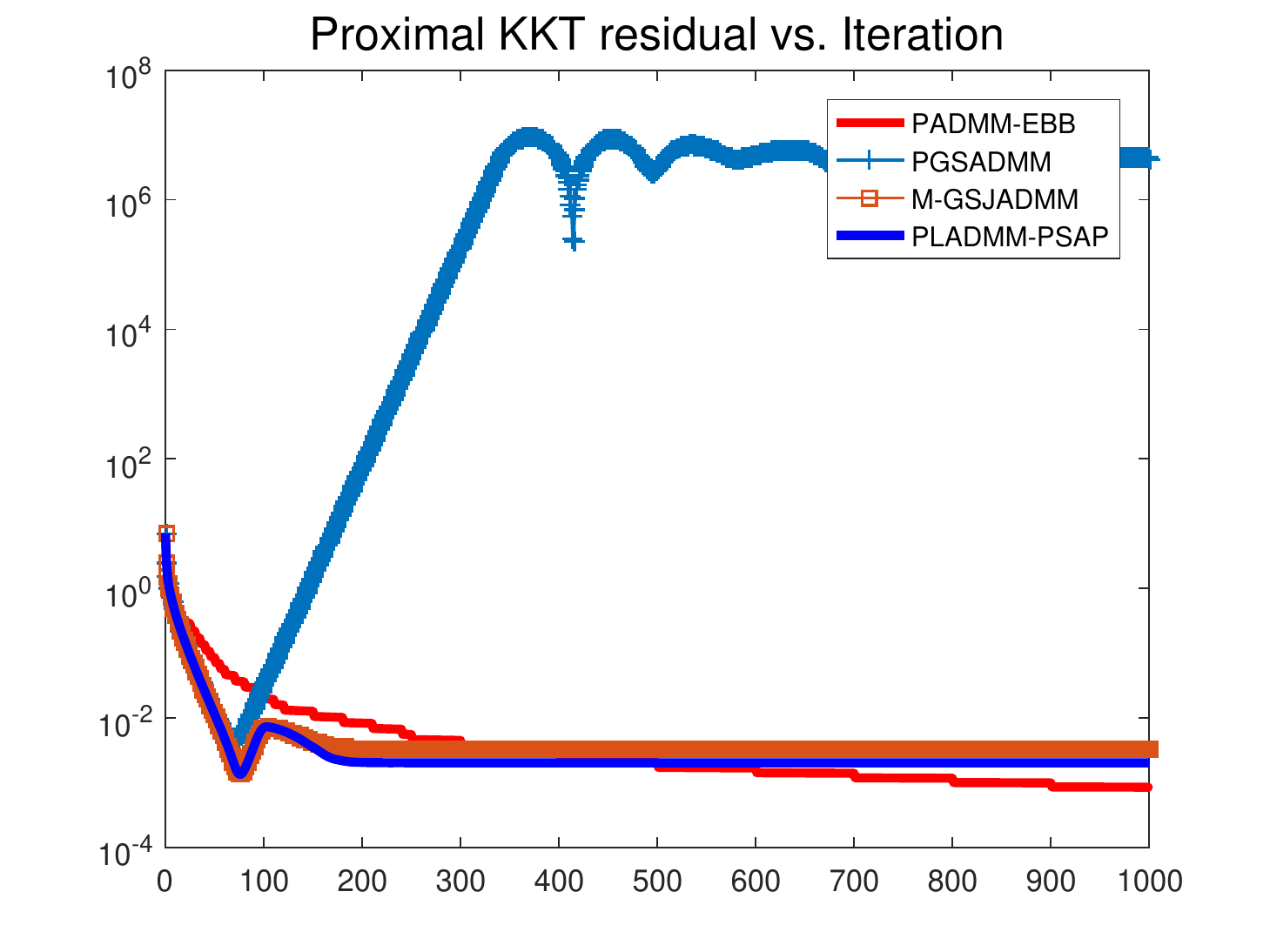}}
\subfigure{\label{fig:pic1}
\includegraphics[width=0.24\linewidth]{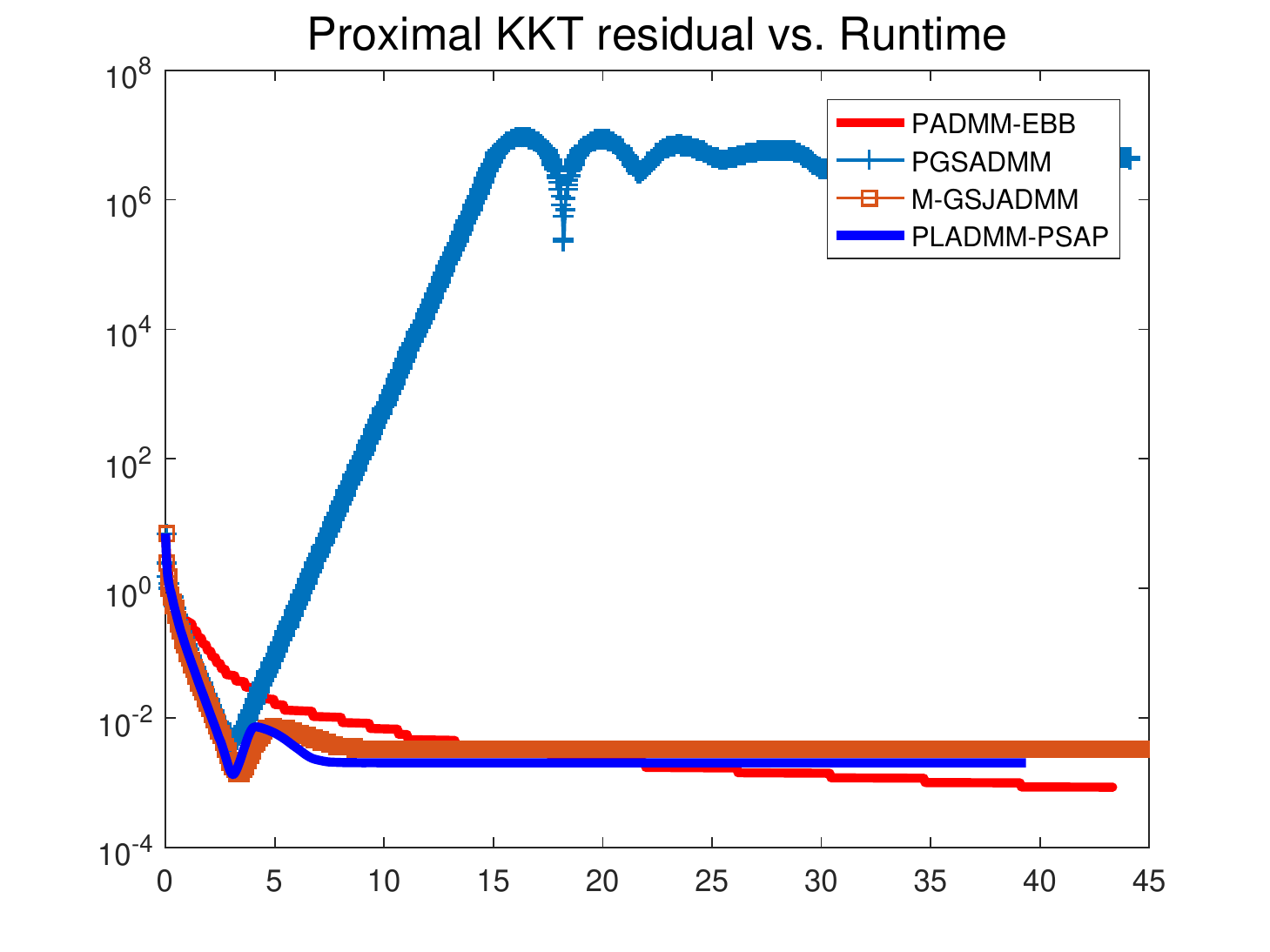}}
\subfigure{\label{fig:pic1}
\includegraphics[width=0.24\linewidth]{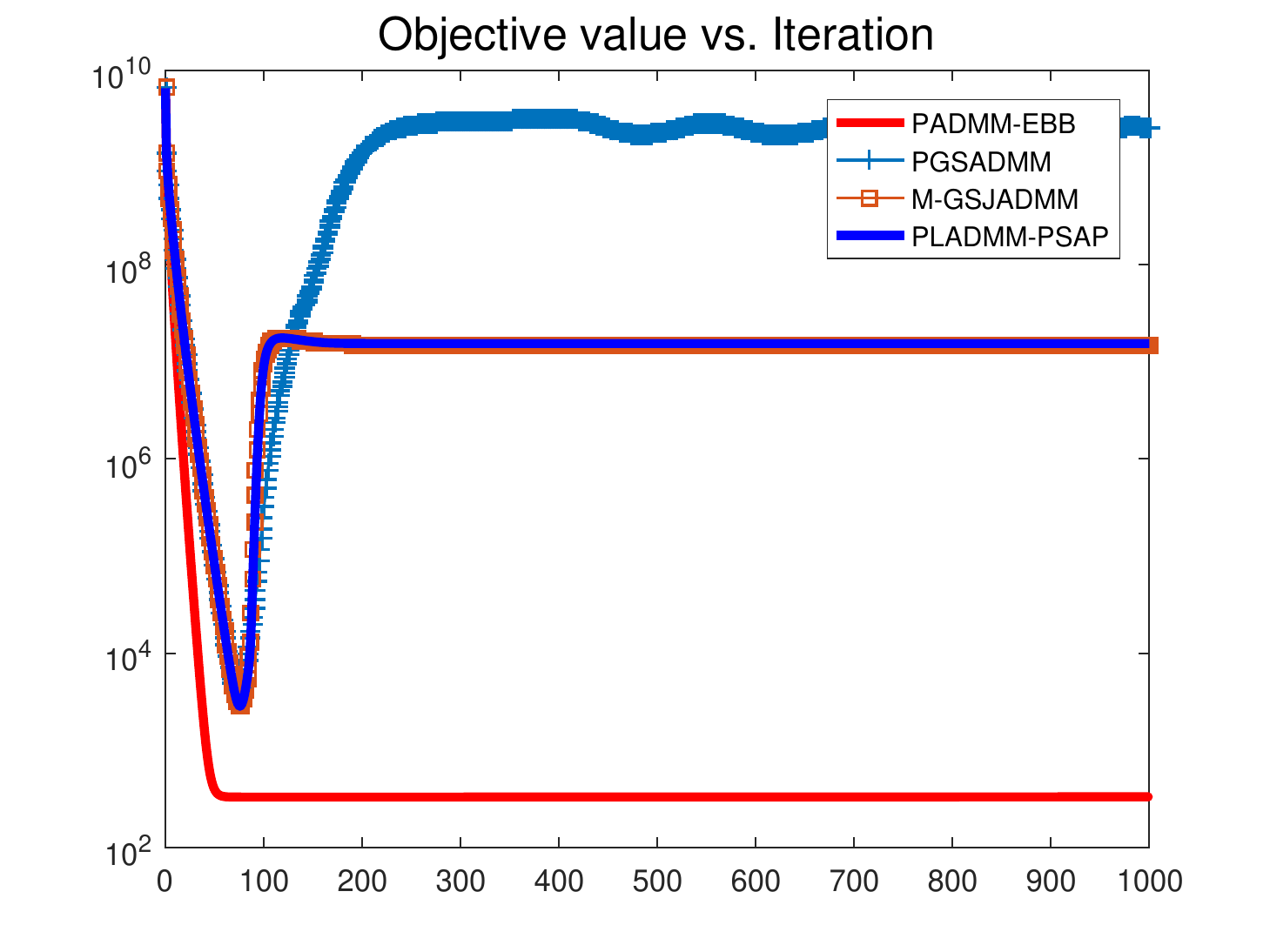}}
\subfigure{\label{fig:pic1}
\includegraphics[width=0.24\linewidth]{random_feasi_iter_1000_10000-eps-converted-to}}
 \caption{ The above four figures illustrate the proximal KKT residual vs. iteration,
  proximal KKT residual vs. runtime, objective value vs. iteration,
  and feasibility vs. iteration on the synthetic dataset with parameters $(\lambda,\mu, \gamma)=(10^2,10^4,10^4)$, respectively.}
\label{fig:overlap-3}
\end{figure*}
\begin{figure*}[htpb]
\centering
\subfigure{\label{fig:pic1}
\includegraphics[width=0.24\linewidth]{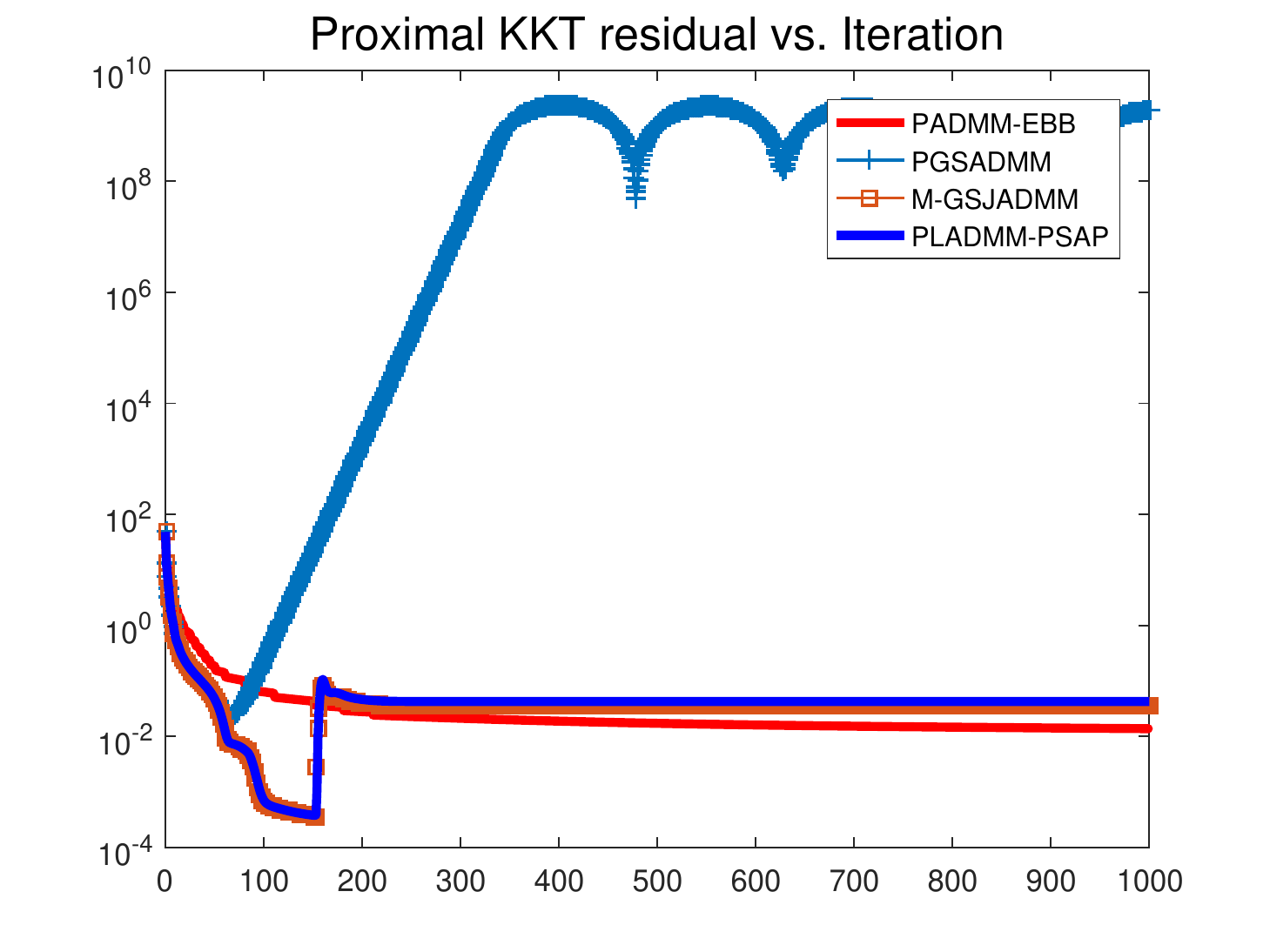}}
\subfigure{\label{fig:pic1}
\includegraphics[width=0.24\linewidth]{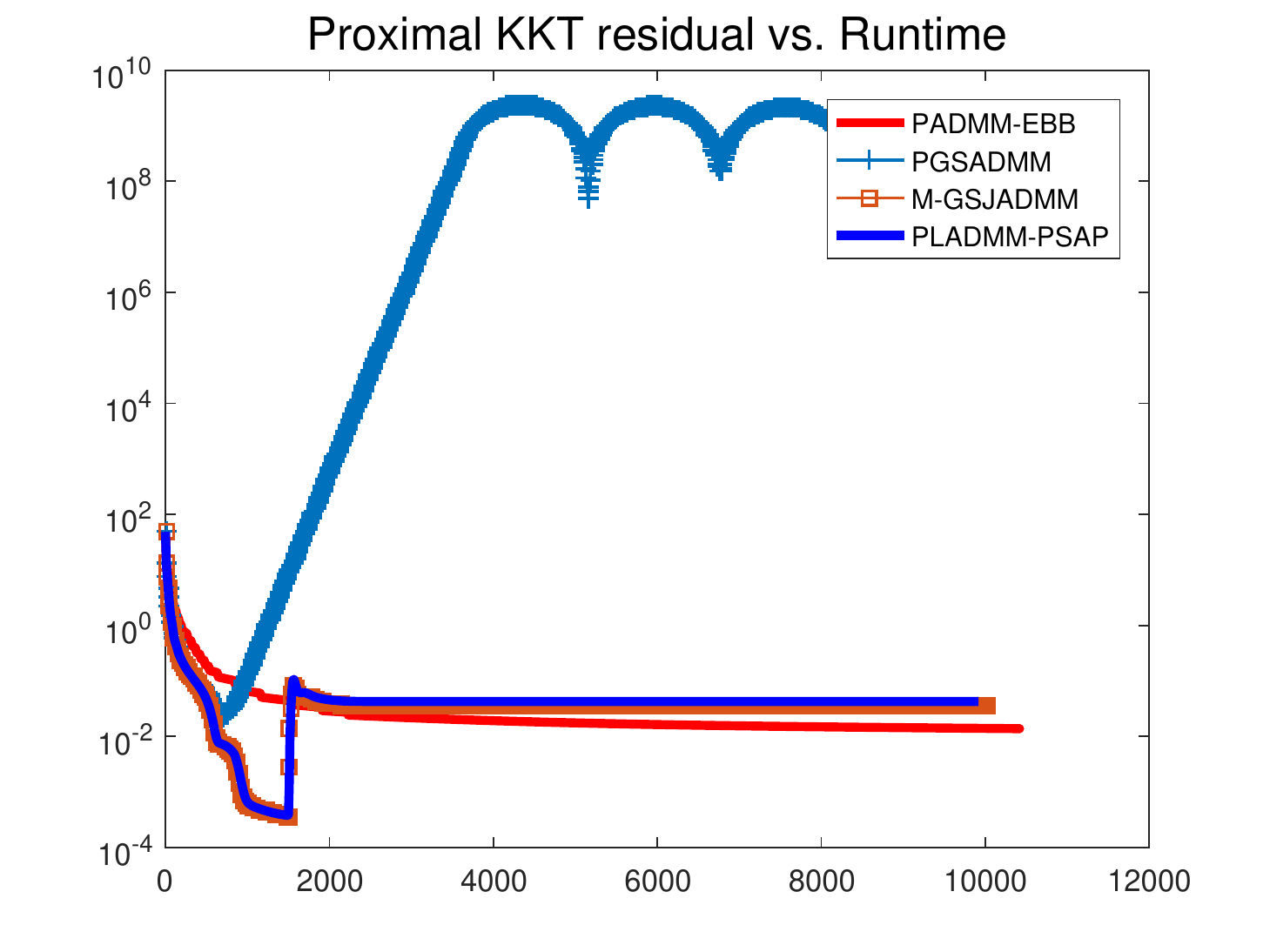}}
\subfigure{\label{fig:pic1}
\includegraphics[width=0.24\linewidth]{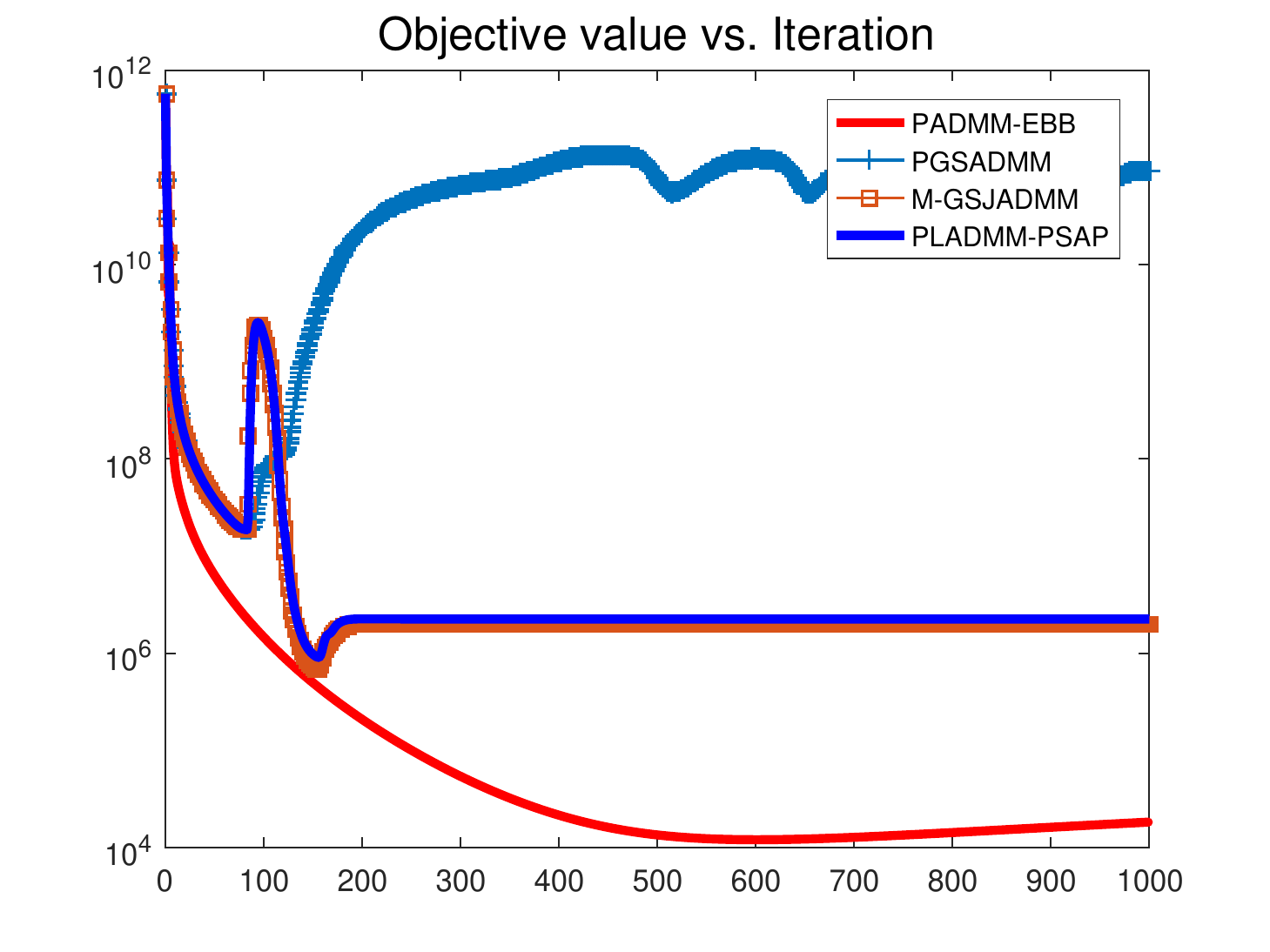}}
\subfigure{\label{fig:pic1}
\includegraphics[width=0.24\linewidth]{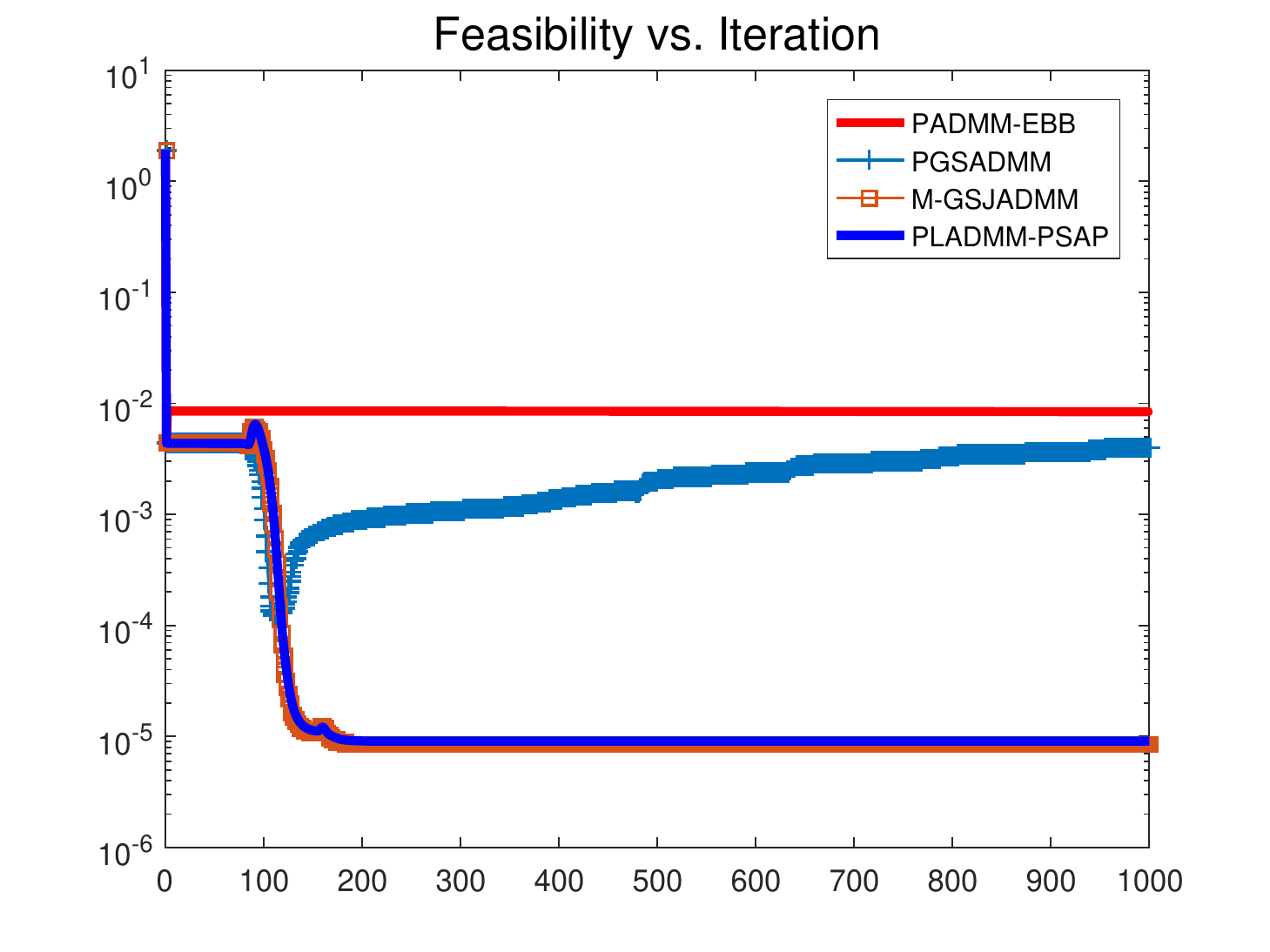}}
 \caption{The above four figures illustrate the proximal KKT residual vs. iteration,
  proximal KKT residual vs. runtime, objective value vs. iteration, and feasibility vs. iteration
  on the real dataset PIE\_pose27 with parameters $(\lambda,\mu, \gamma)=(10^2,10^4,10^4)$, respectively.}
\label{fig:overlap-4}
\end{figure*}

  \begin{figure*}[htpb]
\centering
\subfigure{\label{fig:pic1}
\includegraphics[width=0.24\linewidth]{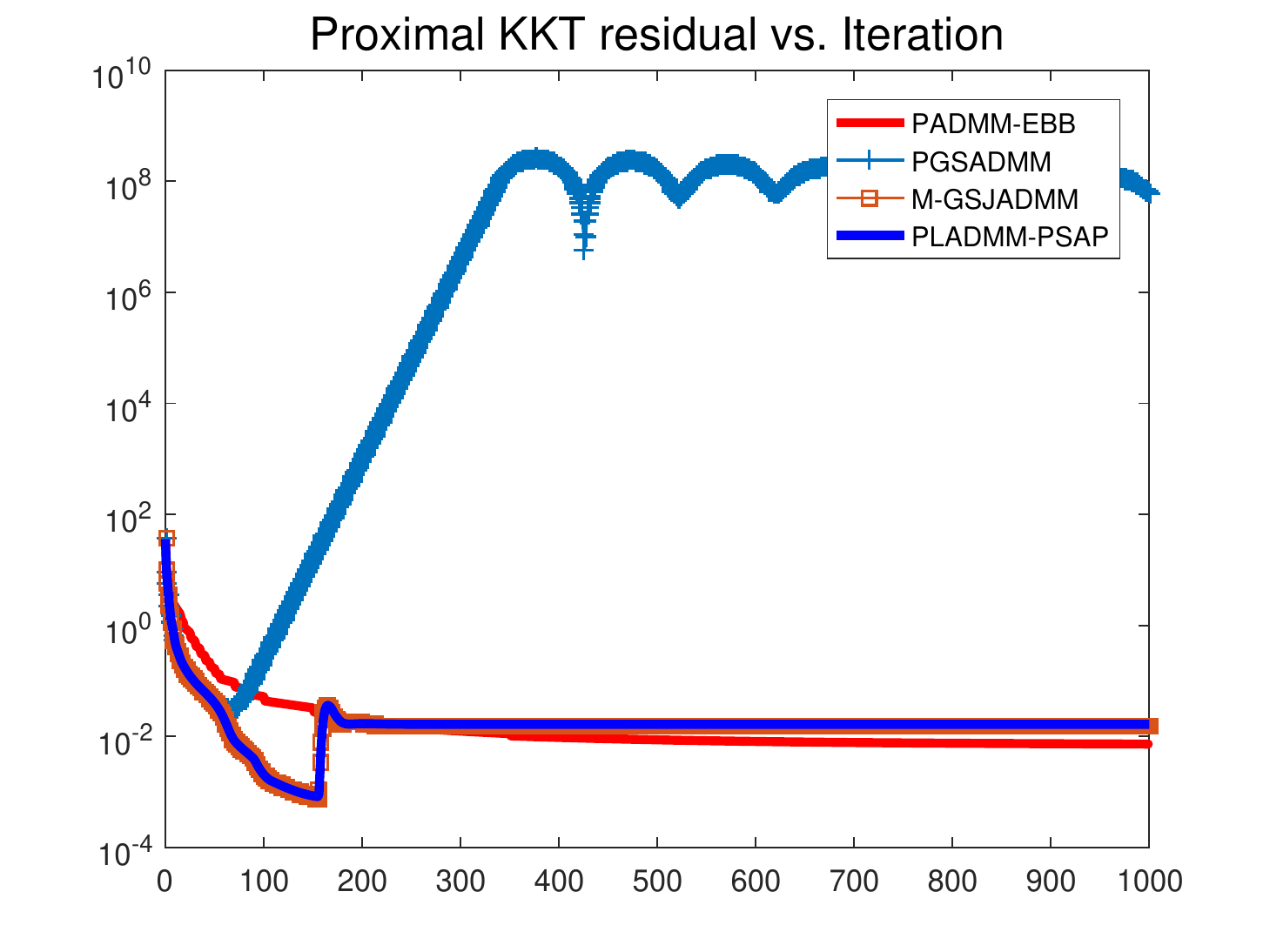}}
\subfigure{\label{fig:pic1}
\includegraphics[width=0.24\linewidth]{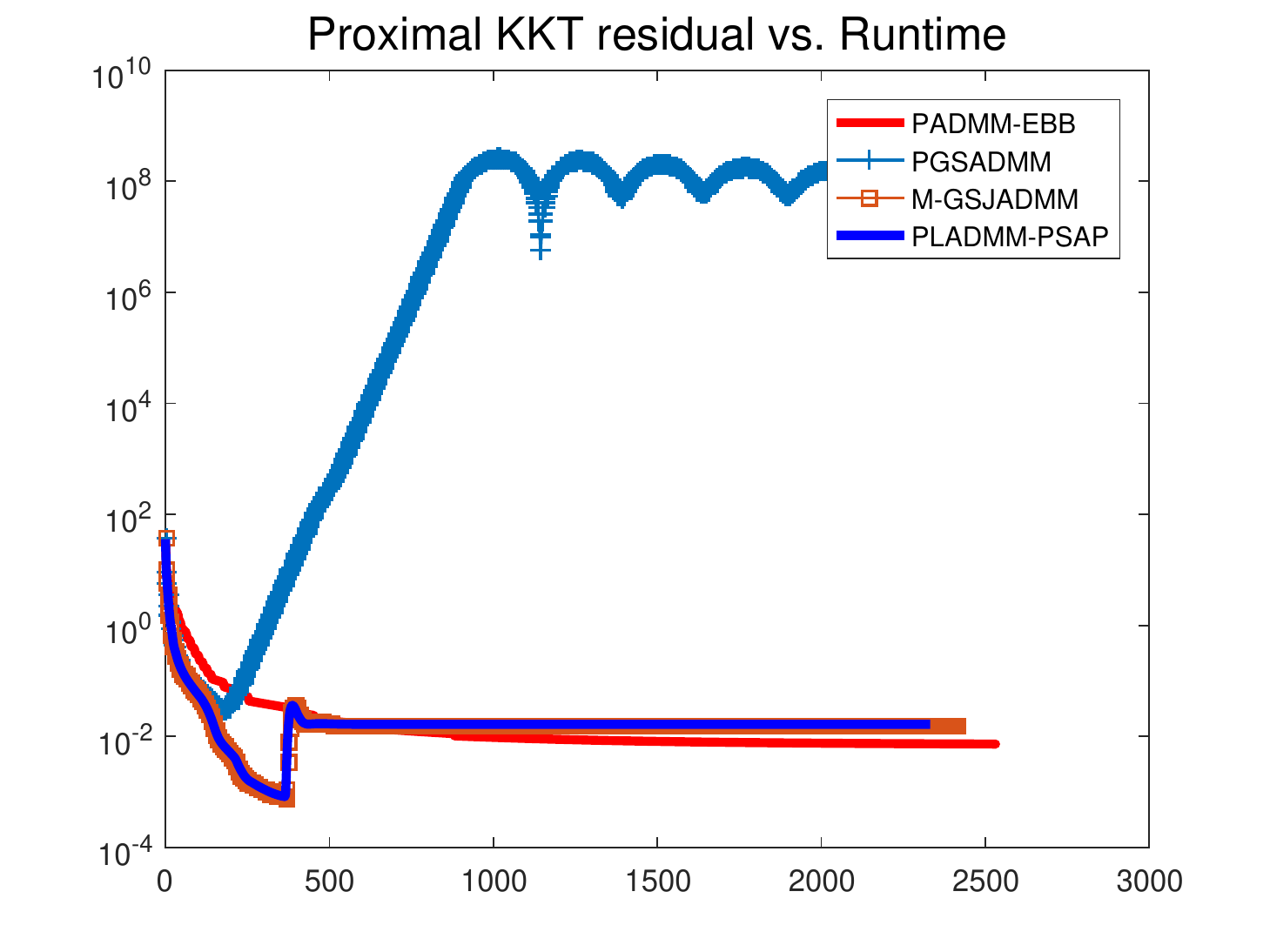}}
\subfigure{\label{fig:pic1}
\includegraphics[width=0.24\linewidth]{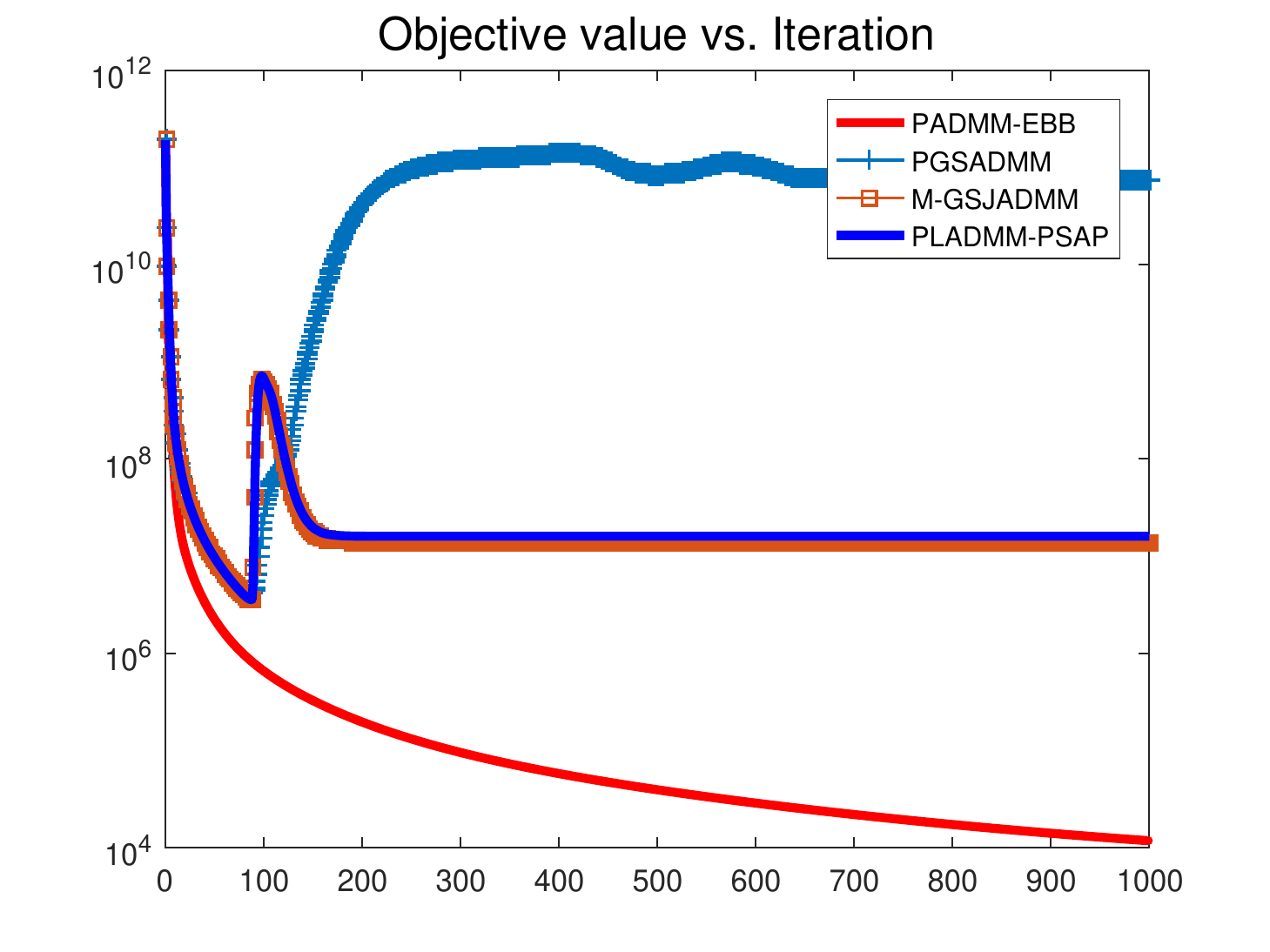}}
\subfigure{\label{fig:pic1}
\includegraphics[width=0.24\linewidth]{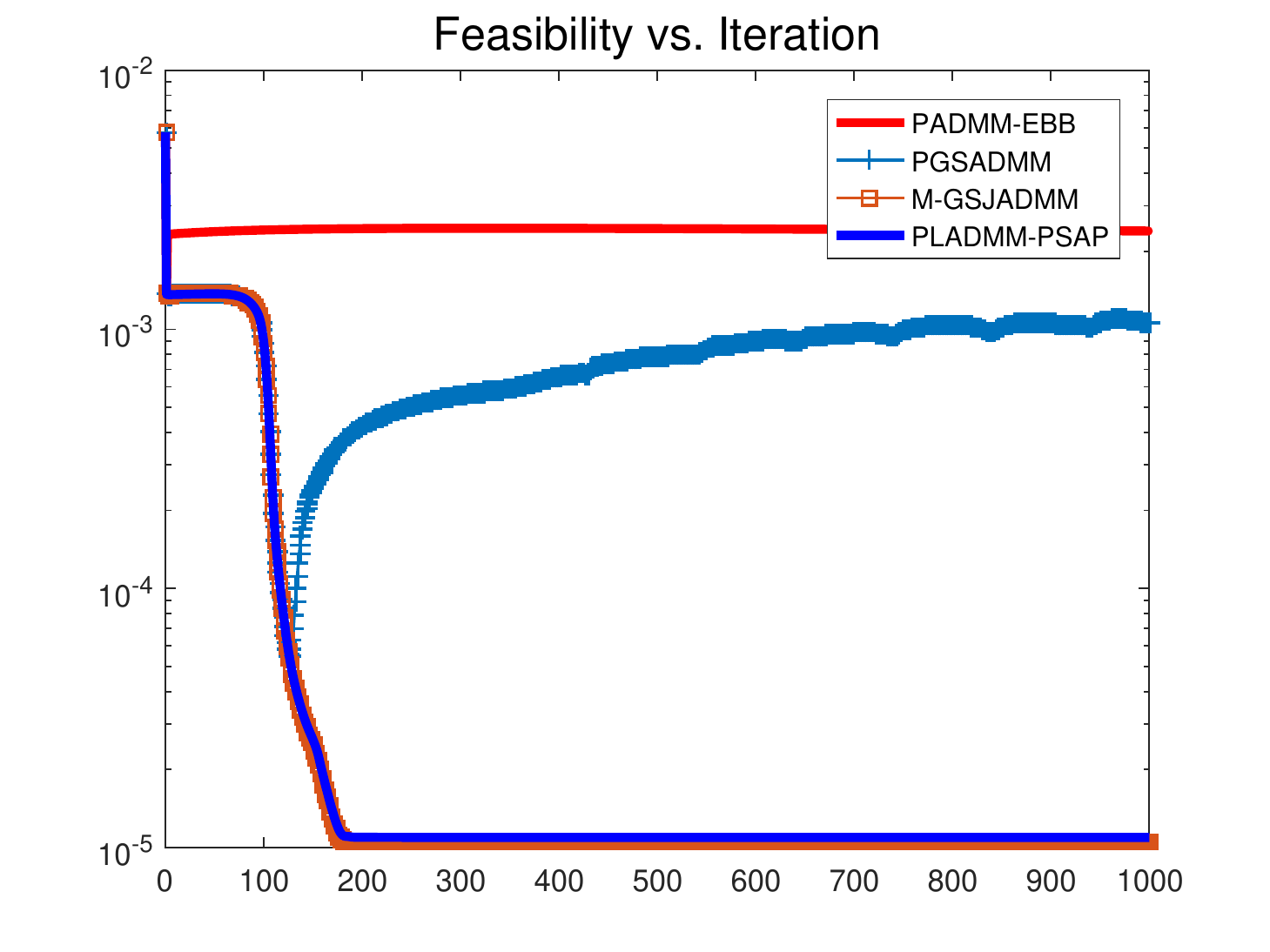}}
 \caption{ The above four figures illustrate the proximal KKT residual vs. iteration,
  proximal KKT residual vs. runtime, objective value vs. iteration,
  and feasibility vs. iteration on the real dataset COIL20  with parameters $(\lambda,\mu, \gamma)=(10^2,10^4,10^4)$, respectively.}
\label{fig:overlap-5}
\end{figure*}
\begin{figure*}[htpb]
\centering
\subfigure{\label{fig:pic1}
\includegraphics[width=0.24\linewidth]{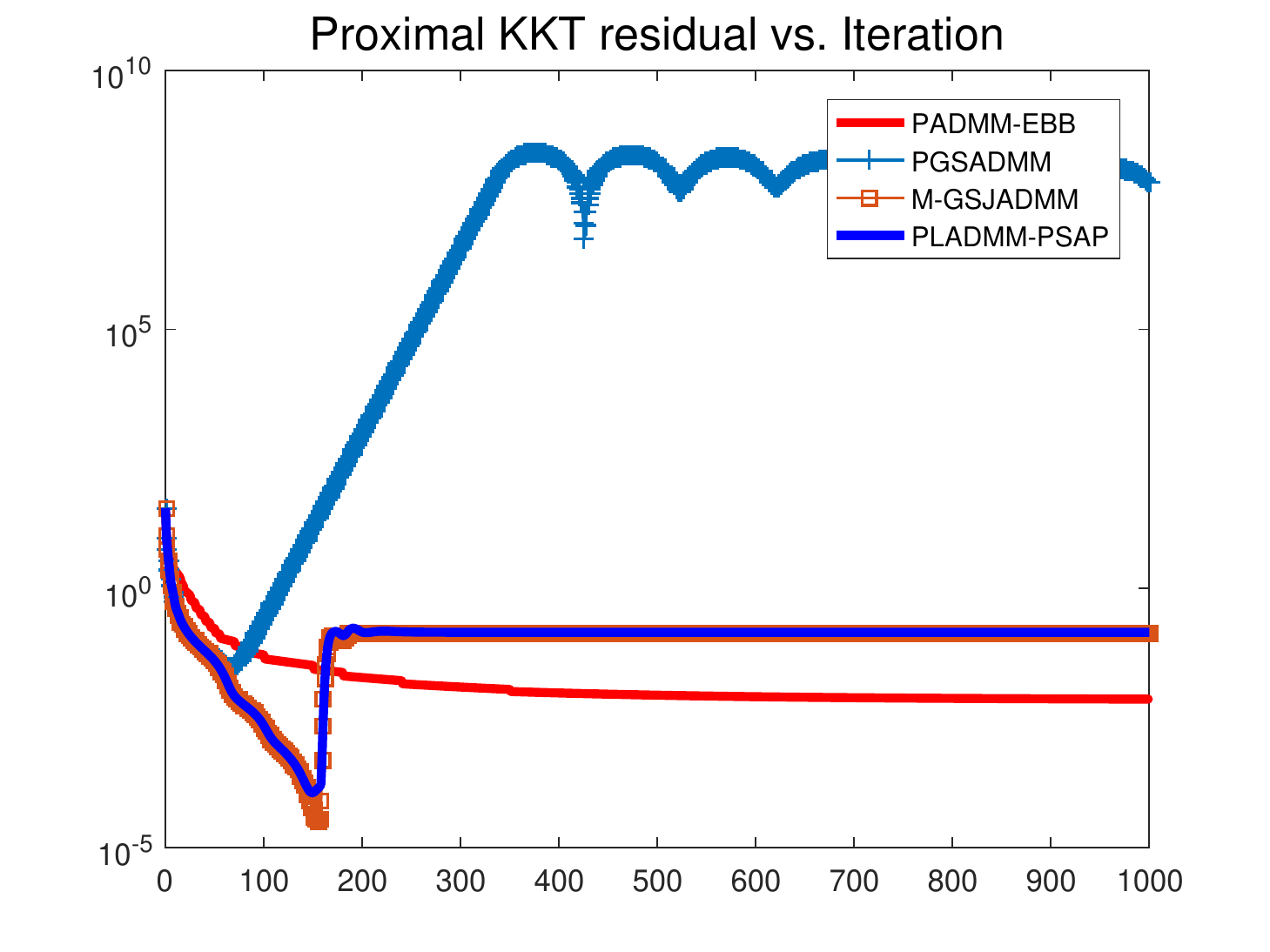}}
\subfigure{\label{fig:pic1}
\includegraphics[width=0.24\linewidth]{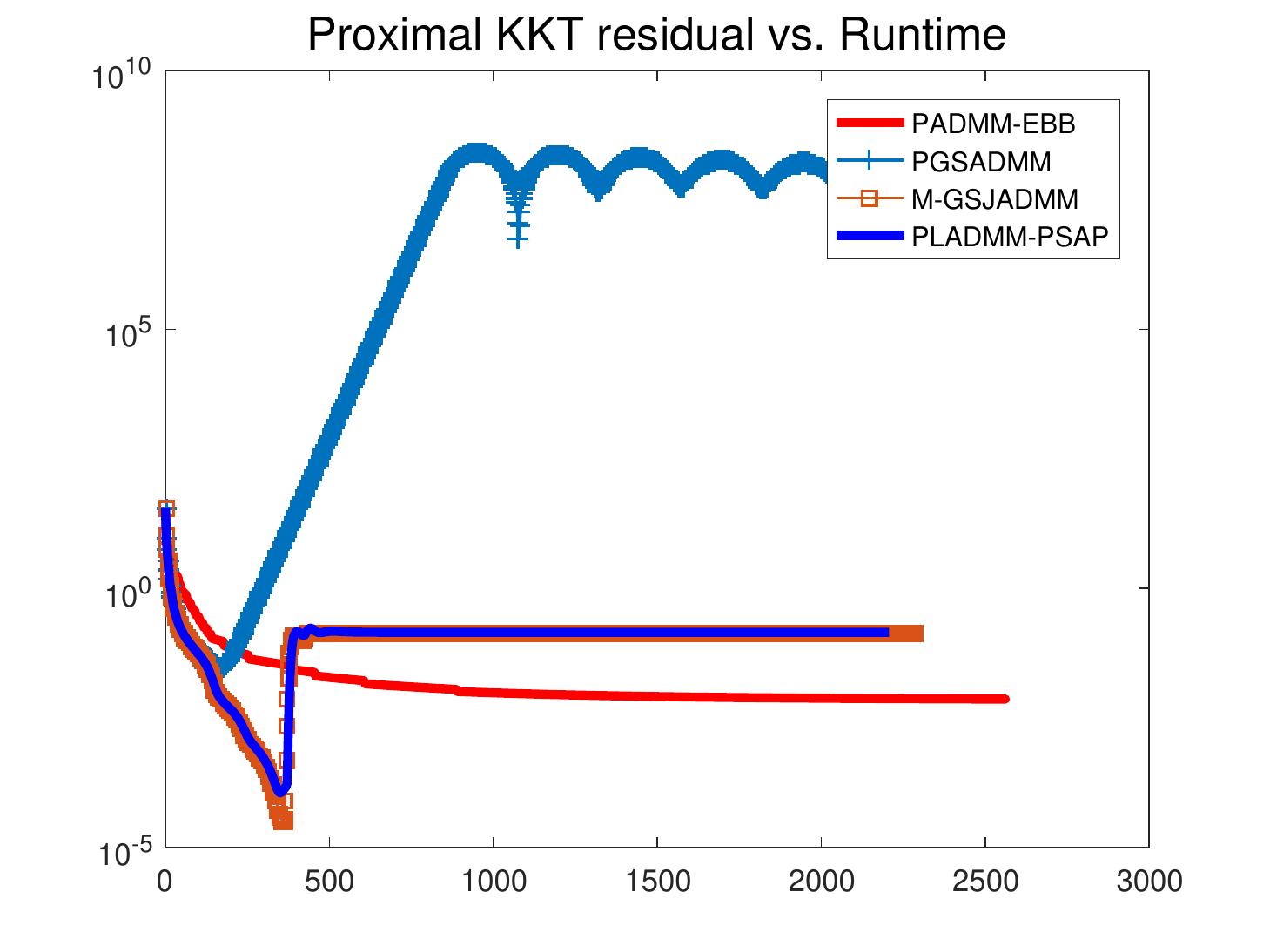}}
\subfigure{\label{fig:pic1}
\includegraphics[width=0.24\linewidth]{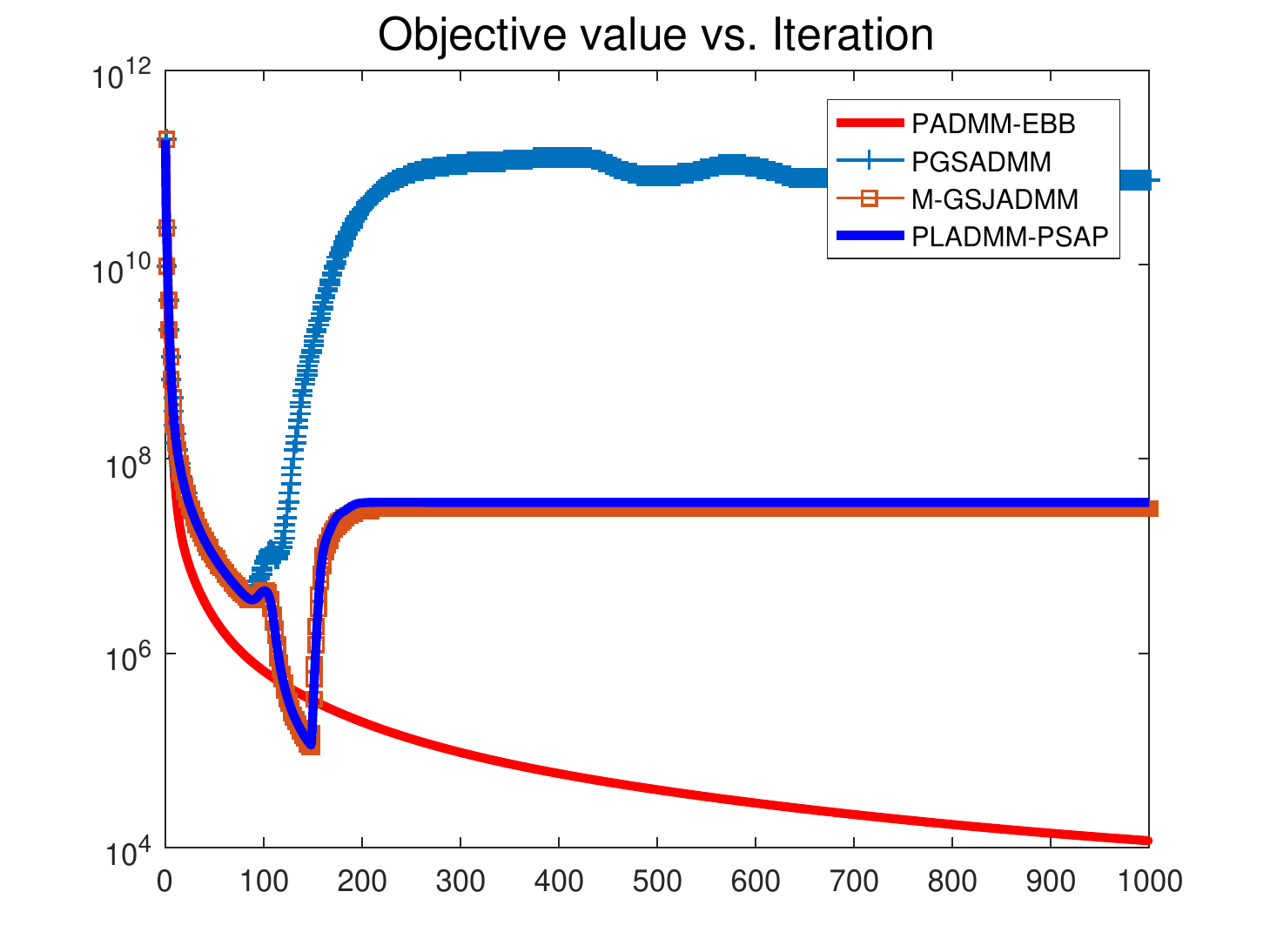}}
\subfigure{\label{fig:pic1}
\includegraphics[width=0.24\linewidth]{coil_feasi_iter_1000_10000-eps-converted-to}}
 \caption{ The above four figures illustrate the proximal KKT residual vs. iteration,
  proximal KKT residual vs. runtime, objective value vs. iteration, and feasibility vs. iteration
  on  the real dataset COIL20  with parameters $(\lambda,\mu, \gamma)=(10^3,10^4,10^4)$, respectively.}
\label{fig:overlap-6}
\end{figure*}

 \begin{figure*}[htpb]
\centering
\subfigure{\label{fig:pic1}
\includegraphics[width=0.24\linewidth]{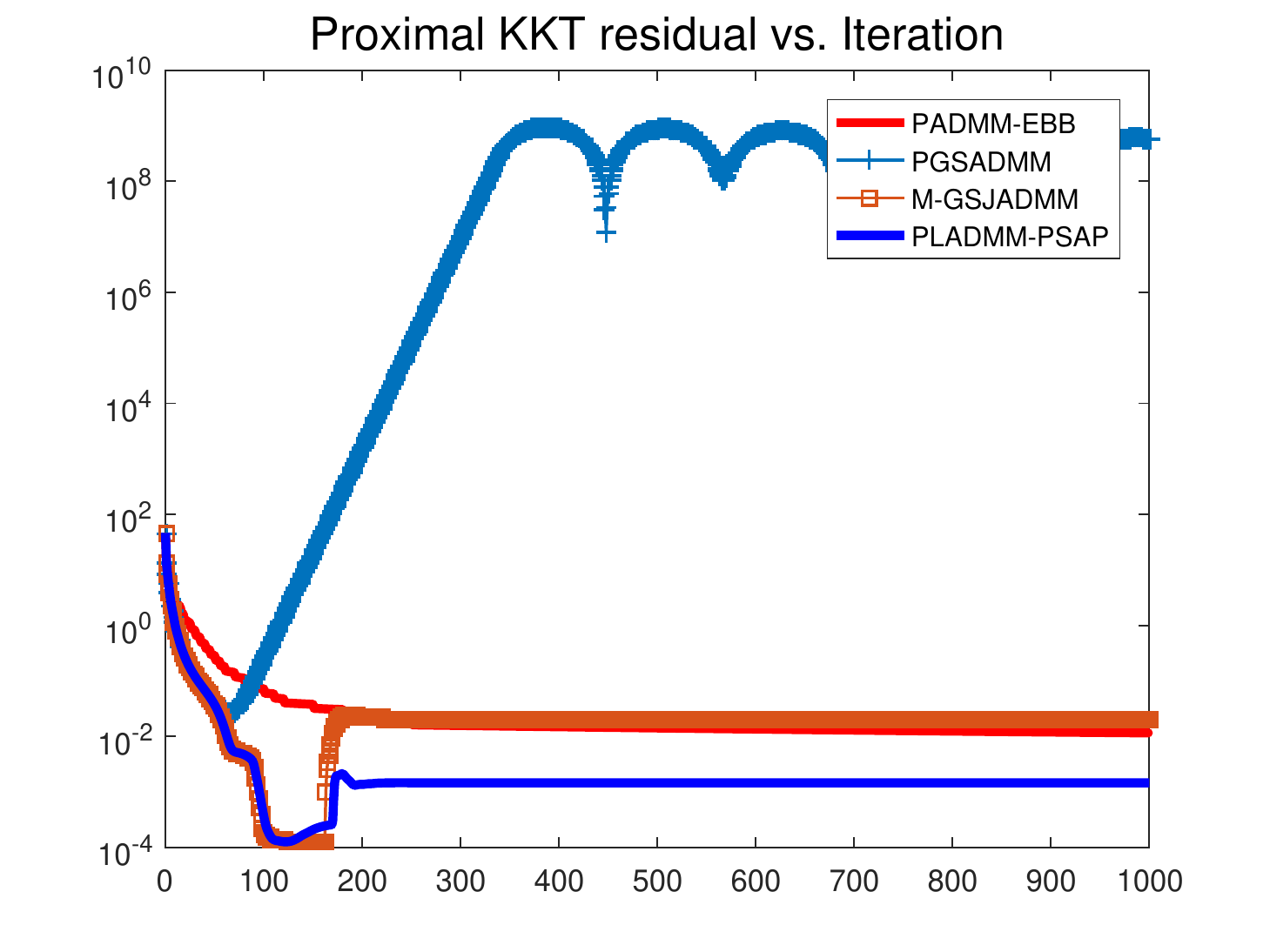}}
\subfigure{\label{fig:pic1}
\includegraphics[width=0.24\linewidth]{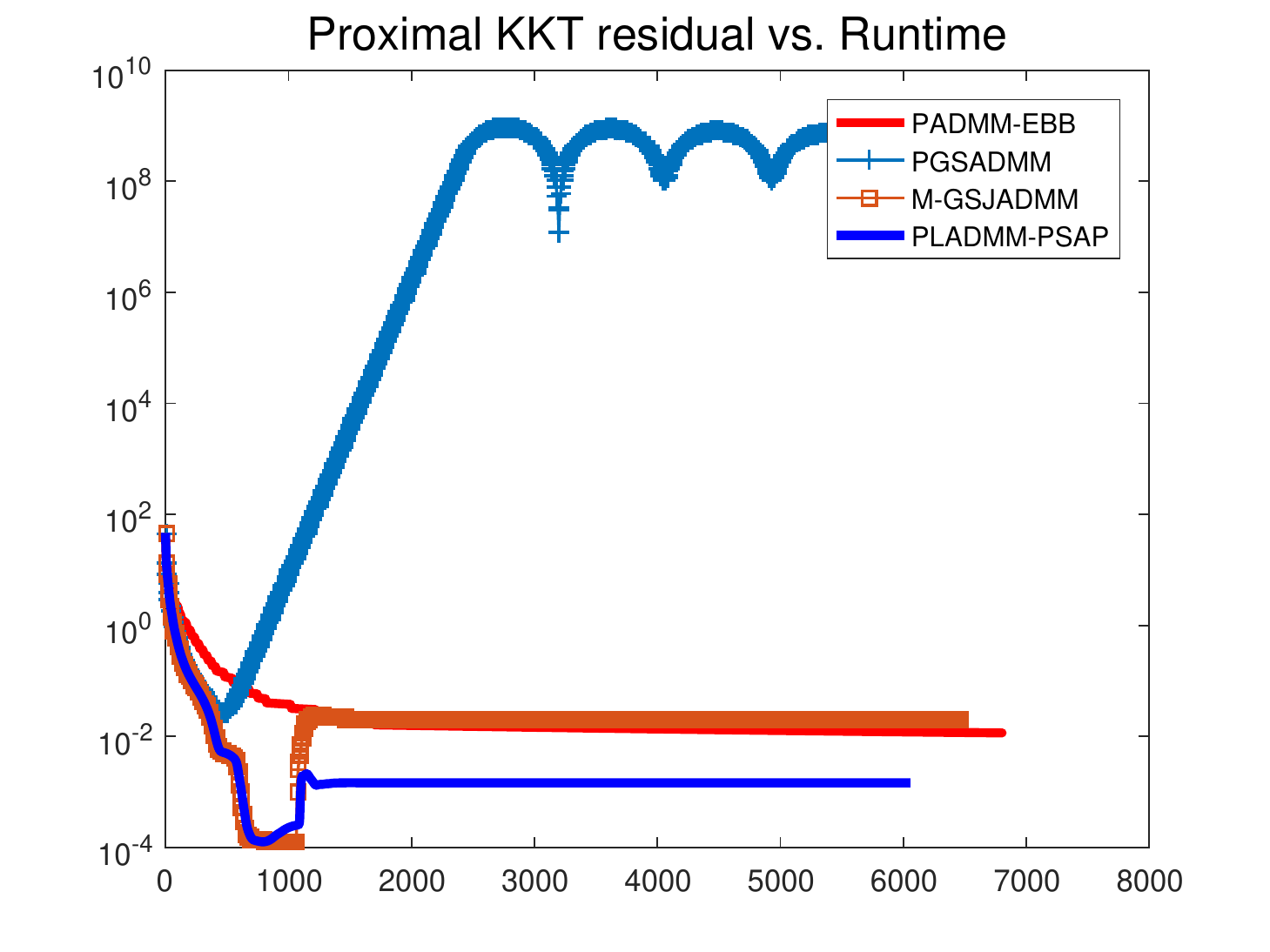}}
\subfigure{\label{fig:pic1}
\includegraphics[width=0.24\linewidth]{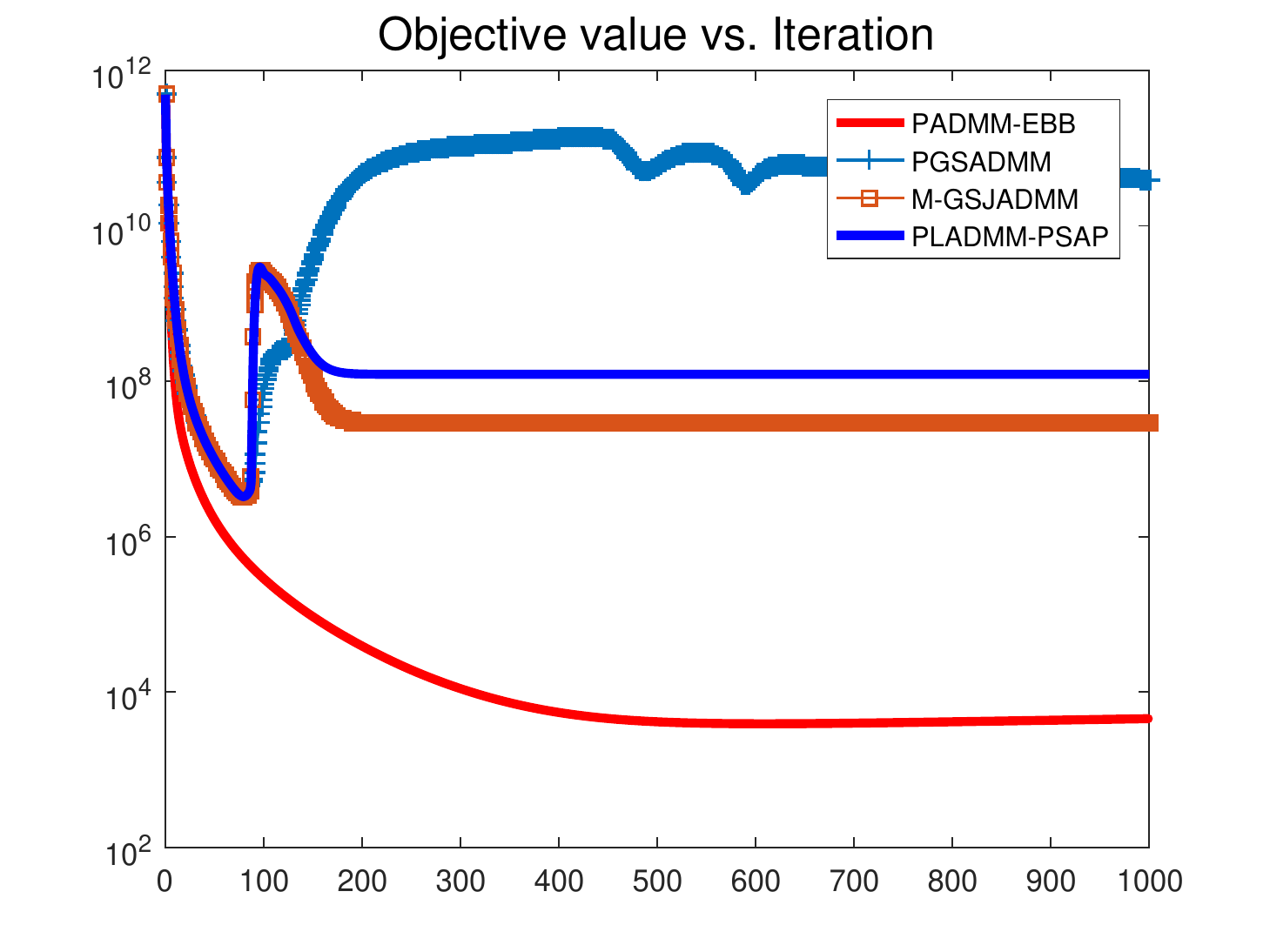}}
\subfigure{\label{fig:pic1}
\includegraphics[width=0.24\linewidth]{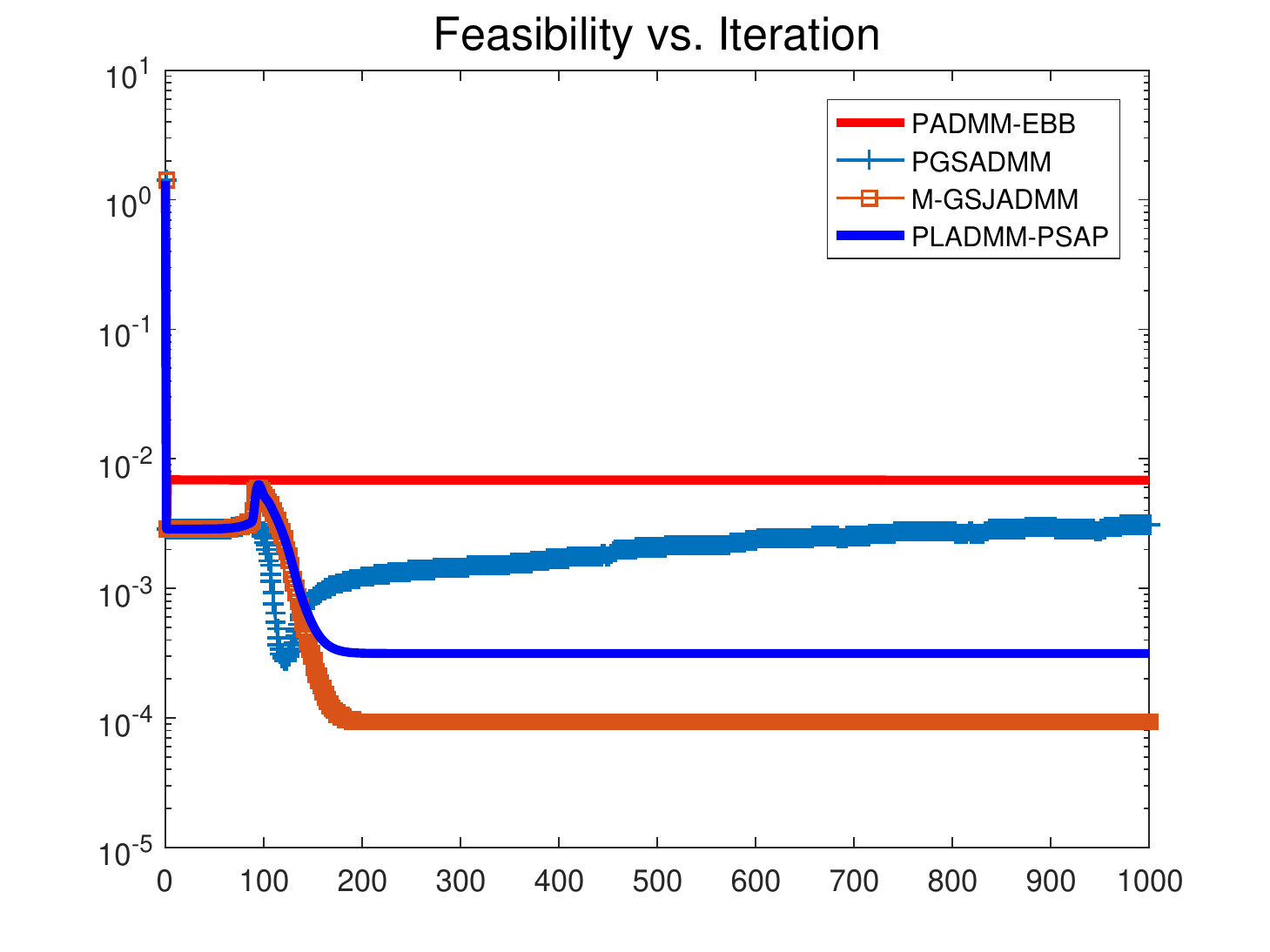}}
 \caption{ The above four figures illustrate the proximal KKT residual vs. iteration,
  proximal KKT residual vs. runtime, objective value vs. iteration, and feasibility vs. iteration
  on the real dataset YaleB\_32x32  with parameters $(\lambda,\mu, \gamma)=(10^2,10^4,10^4)$, respectively.}
\label{fig:overlap-7}
\end{figure*}
\begin{figure*}[htpb]
\centering
\subfigure{\label{fig:pic1}
\includegraphics[width=0.24\linewidth]{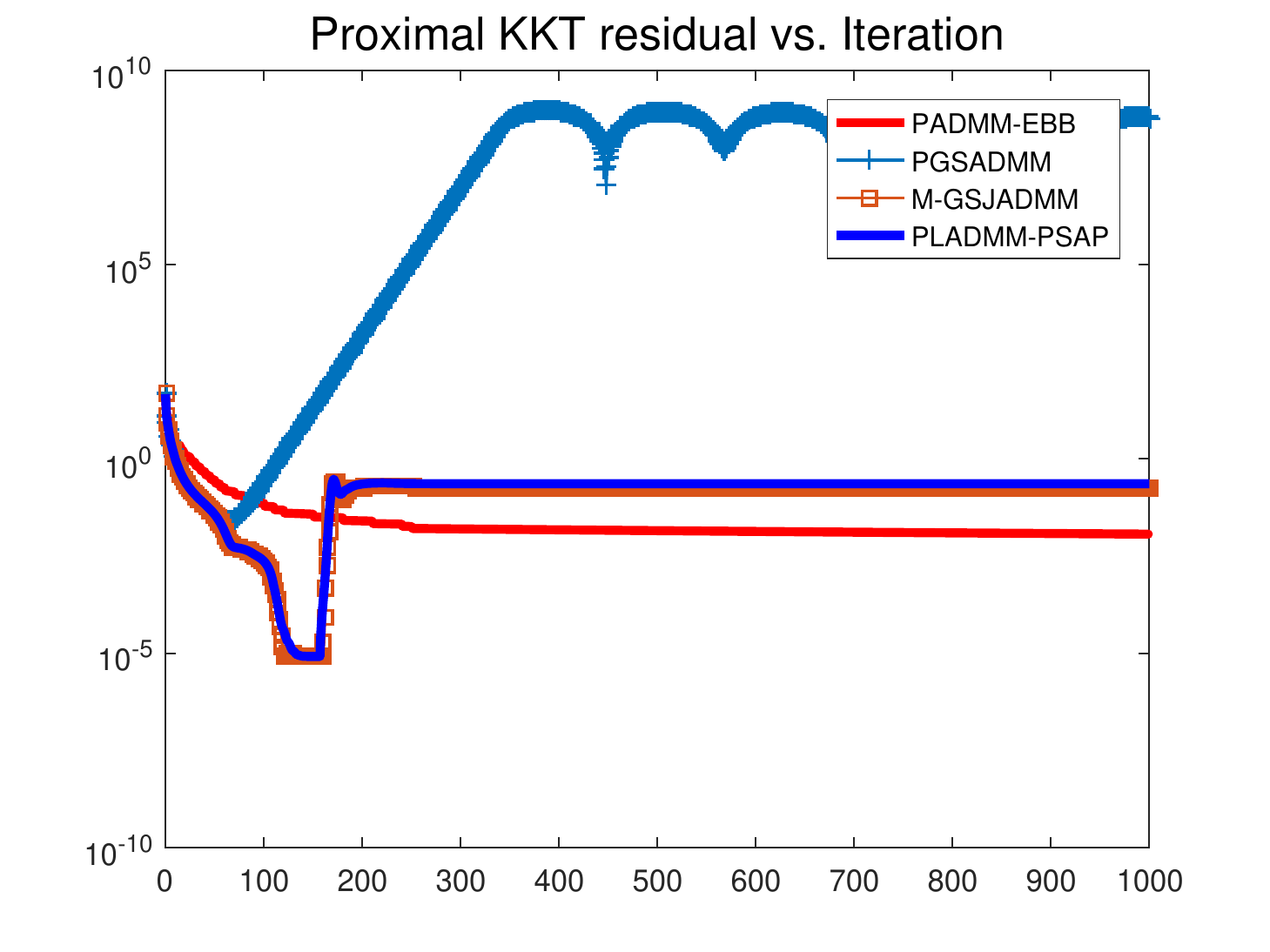}}
\subfigure{\label{fig:pic1}
\includegraphics[width=0.24\linewidth]{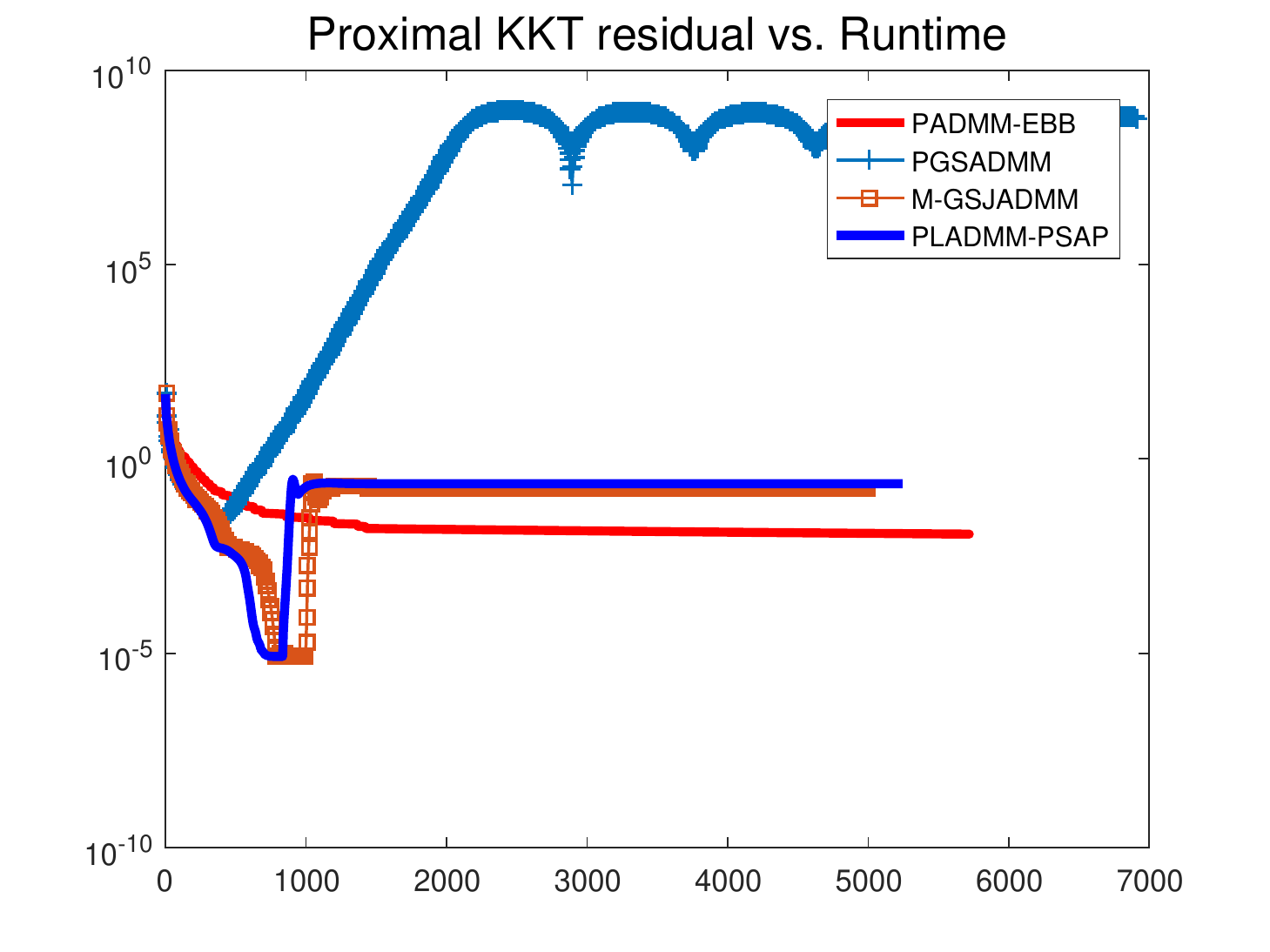}}
\subfigure{\label{fig:pic1}
\includegraphics[width=0.24\linewidth]{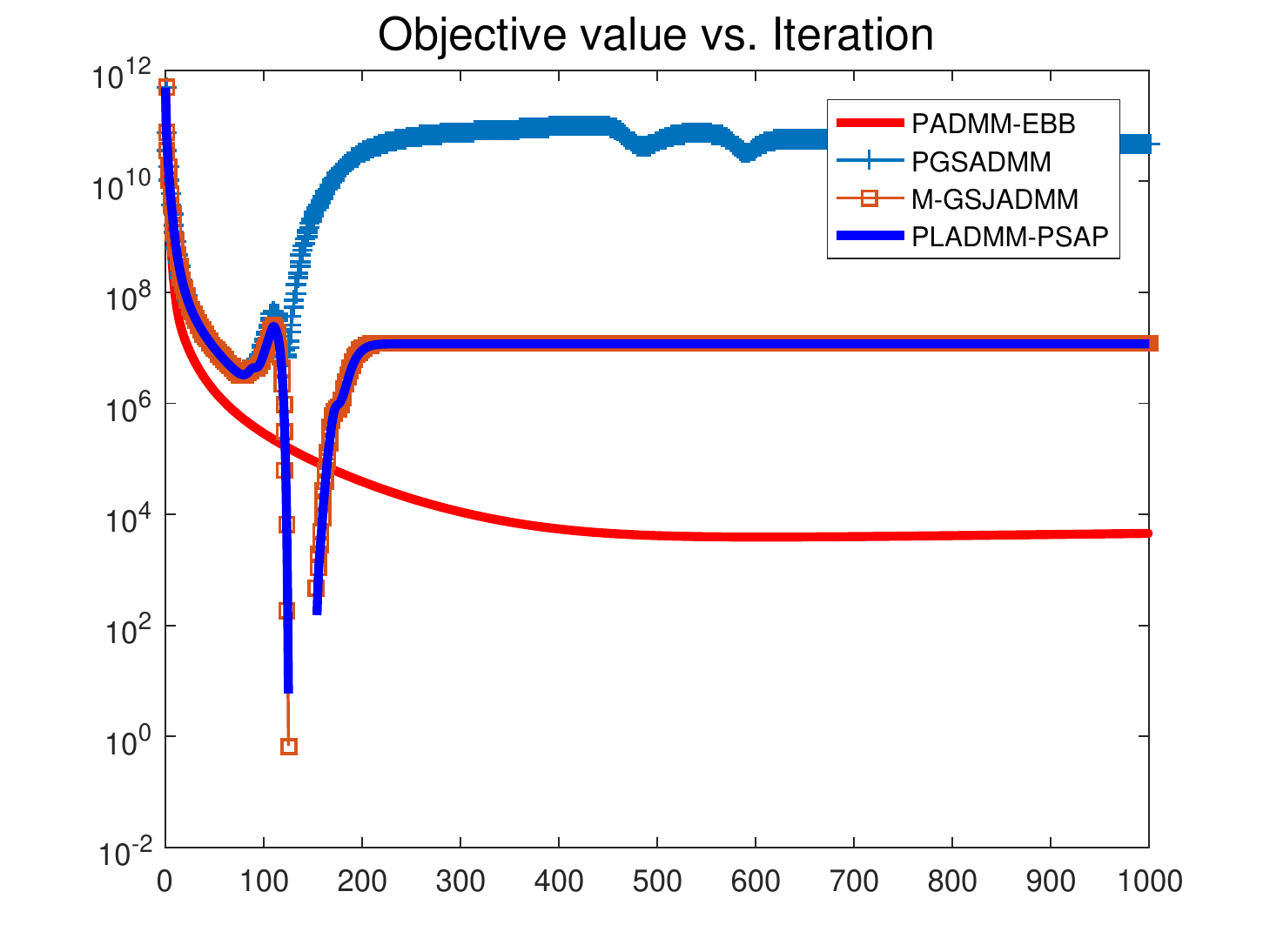}}
\subfigure{\label{fig:pic1}
\includegraphics[width=0.24\linewidth]{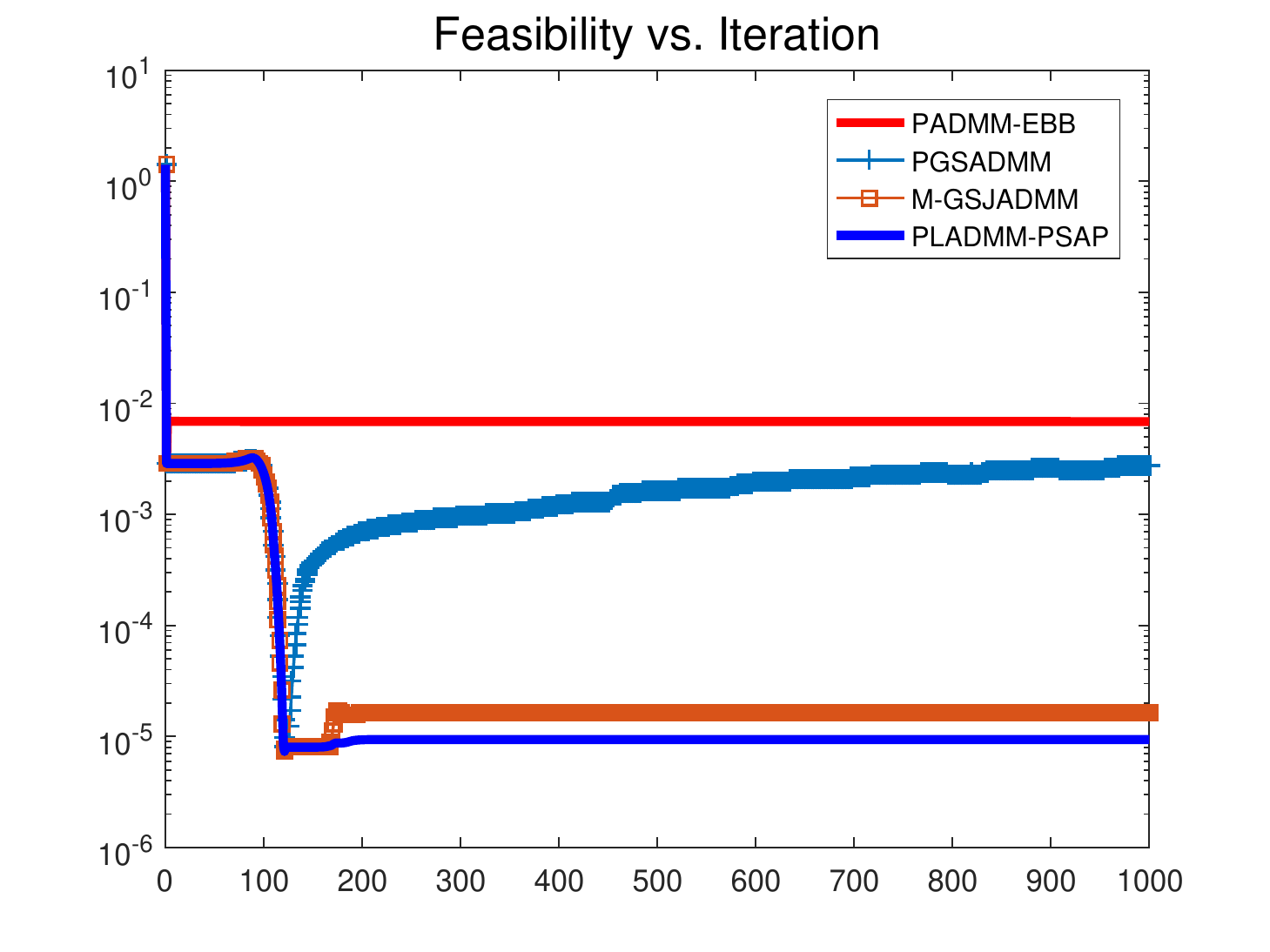}}
 \caption{ The above four figures illustrate the proximal KKT residual vs. iteration,
  proximal KKT residual vs. runtime, objective value vs. iteration, and feasibility vs. iteration on
  the real dataset YaleB\_32x32  with parameters $(\lambda,\mu, \gamma)=(10^3,10^4,10^4)$, respectively.}
\label{fig:overlap-8}
\end{figure*}

%
%
%

\end{document}